\documentclass{amsart}
\usepackage[T1]{fontenc}
\usepackage[dvipsnames]{xcolor}
\usepackage{tabularx}
\usepackage{rotating}
\usepackage[all]{xy}
\usepackage{hypbmsec}
\usepackage{amssymb}
\usepackage{epsfig}
\usepackage{amsfonts}
\usepackage{euscript}

\theoremstyle{plain}
\newtheorem{theor}{Theorem}[section]
\newtheorem{prop}[theor]{Proposition}
\newtheorem{lem}[theor]{Lemma}
\newtheorem{cor}[theor]{Corollary}
\newtheorem{const}[theor]{Construction}
\newtheorem{alg}[theor]{Algorithm}
\newtheorem*{set}{Settings}
\theoremstyle{definition}
\newtheorem{de}[theor]{Definition}
\newtheorem{ex}[theor]{Example}
\newtheorem{prob}[theor]{Problem}
\newtheorem{q}[theor]{Question}
\theoremstyle{remark}
\newtheorem{re}[theor]{Remark}

\DeclareMathOperator{\Spec}{Spec}
\DeclareMathOperator{\KK}{\mathbb{K}}
\def\BG{{\mathbb G}}

\DeclareMathOperator{\ML}{ML}
\DeclareMathOperator{\FML}{FML}

\def\SAut{\mathrm{SAut}}
\def\Quot{\mathrm{Quot}}
\DeclareMathOperator{\Aut}{Aut}
\DeclareMathOperator{\Ker}{Ker\,}

\DeclareMathOperator{\VV}{\mathbb{V}}
\DeclareMathOperator{\LND}{\mathrm{LND}}
\DeclareMathOperator{\Susp}{\mathrm{Susp}}
\DeclareMathOperator{\Bisusp}{\mathrm{Bisusp}}
\DeclareMathOperator{\Dan}{\mathrm{Dan}}

\title[Non-cancellative varieties]{Non-cancellative varieties, maximal tori, and the Makar-Limanov invariant}
\author{Sergey Gaifullin and Mikhail Petrov}
\address{Lomonosov Moscow State University, Faculty of Mechanics and Mathematics, Department of Higher Algebra, Leninskie Gory 1, Moscow, 119991 Russia;
\linebreak and \linebreak
HSE University, Faculty of Computer Science, Pokrovsky Boulvard 11, Moscow, 109028 Russia.}
\email{sgayf@yandex.ru, mikhail.petrov@math.msu.ru}
\date{}

\subjclass[2020]{Primary 14R10, 14R20; \ Secondary 14R05, 13A50}

\keywords{Cancellation problem, maximal torus,  Danielewski surface, locally nilpotent derivation, Makar-Limanov invariant, automorphism}

\thanks{The work was supported by the Theoretical Physics and Mathematics Advancement Foundation <<BASIS>>, project number 24-7-2-20-1.}

\begin{document}
\begin{abstract}
In this paper, we construct a wide class of new counterexamples to the generalized Zariski cancellation problem. The cylinders over these counterexamples have infinitely many non-conjugate maximal tori in their regular automorphism groups. We provide an example of a variety with maximal tori of different dimensions in its automorphism group. Additionally, we prove that a cylinder over a non-rigid trinomial variety without a line factor is generically flexible, and hence, has a trivial Makar-Limanov invariant.
\end{abstract}
\maketitle
\section{Introduction}

Let $\KK$ be an algebraically closed field of characteristic zero and $X$ be an affine algebraic variety over $\KK$. The variety $X$ is said to be {\it cancellative} if $X\times \KK\cong Y\times \KK$ implies $X\cong Y$ for any affine algebraic variety $Y$. One of the most challenging problems in Aﬃne Algebraic Geometry is the Zariski Cancellation Problem, which asks whether the affine space is cancellative. For fields of positive characteristic, the Zariski Cancellation Problem has a negative answer, see~\cite{Gu2} and~\cite{Gu}. For fields of characteristic zero, it remains open.

The first example of a non-cancellative variety was provided in 1989 by Danielew\-ski~\cite{Da}. He proved that cylinders $X_n\times \KK$ over the surfaces $X_n=\{x^ny=f(z)\}$, where $\deg f\geq 2$ and $f$ has no multiple roots, are isomorphic for all positive integers~$n$ while $X_1$ and $X_2$ are not isomorphic. Non-cancellative varieties are often said  to be {\it counterexamples to the generalized Zariski Cancellation Problem}, which asks whether $X\times \KK\cong Y\times \KK$ implies $X\cong Y$. Now some other examples of non-cancellative varieties are known; see, for example,~\cite{Du, Du2, DF, GS, DG, FKZ}. Although constructions are mostly geometric, many known examples of non-cancellative varieties have the same algebraic form: the variety $Y$ can be obtained from $X$ by replacing the combination $xy^2$ by $xy$ in all equations defining $X$.

In this paper we develop a purely algebraic technique for constructing non-cancellative varieties. This technique allows one to replace $xy^2$ by $xy$ in all equations when $X$ satisfies some conditions and obtain a variety $Y$ with an isomorphic cylinder. This allows us to construct many new examples of non-cancellative varieties; see Section~\ref{chetvert}. We use the Slice Theorem to prove the isomorphism of the cylinders. Thus, computing the kernel of a locally nilpotent derivation (LND) plays a key role in this technique. We use van den Essen's algorithm~\cite{Ess} and Gr\"obner basis calculations for this purpose. Section~\ref{kol} is devoted to describing such kernels.

In his fundamental work~\cite{Da}, Danielewski used non-cancellative surfaces $X$ to obtain non-conjugate two-dimensional tori in the group of regular automorphisms $\Aut(X\times\KK)$. Since the considered varieties are not toric, these two-dimensional tori are maximal in $\Aut(X\times\KK)$. Danielewski proved that there exists infinite sequence of pairwise non-conjugate algebraic 2-tori in $\Aut(X\times\KK)$. One more known example of a variety with infinite number of conjugate classes of maximal tori in the automorphism group is the variety $\mathrm{SL}(2)\times\KK$; see~\cite{Da} and~\cite[Example~8.14]{KrZ}.
The idea is that for each decomposition of a variety into a product $X\times \KK$ one obtains a maximal torus in $\Aut(X\times \KK)$ being the product of maximal tori in $\Aut(X)$ and $\Aut(\KK)$. Then one needs to prove that these tori are non-conjugate. We use the technique of so-called {\it isolated irreducible semi-invariants} for this purpose, see~\cite{BG} and Section~\ref{secsemiinv}. This technique allows us to prove that a wide class of non-cancellative varieties $X$ obtained in Section~\ref{chetvert} has infinitely many non-conjugate maximal tori in $\Aut(X\times \KK)$; see Section~\ref{secconj}. We obtain such examples in all dimensions $\geq 3$. In particular, each non-rigid trinomial hypersurface of Type I that is not isomorphic to a product of a rigid trinomial hypersurface and an affine space, has infinitely many non-conjugate tori of maximal dimension; see Corollary~\ref{ssylkan}. 

Moreover, we demonstrate a new phenomenon: the automorphism group of an affine variety can contain maximal tori of different dimensions.  We construct a family of varieties $X_n$ of dimension $n$ such that $\Aut(X_n)$ admits  maximal tori of dimensions $1, 2, \ldots, n-1$; see Theorem~\ref{raztor}. Let us mention the important result due to Popov, who proves that the Cremona group $\mathrm{Cr}_n(\KK)$ admits maximal tori of different dimensions~\cite[Theorem~2]{Po}. The varieties considered in our work are rational, and hence, their automorphism groups are included in Cremona group. But it is easy to show that in the Cremona group none of the considered tori are maximal; rather, they and are conjugate to subtori of the standard torus. 

The rest of the work is devoted to describing the Makar-Limanov invariant and proving the generic flexibility of a wide family of varieties including cylinders over trinomial varieties; see Section~\ref{trvtrv} for definitions. In \cite{ML96} Makar-Limanov introduced an invariant $\ML(X)$ of a variety $X$ (initially it was denoted $\mathrm{AK}(X)$), which was later named the {\it Makar-Limanov invariant}. This invariant is the  subalgebra of the algebra of regular functions  $\KK[X]$ given by the intersection of  kernels of all LNDs on $X$:
$$
\ML(X) = \bigcap_{\partial \in \LND(X)} \Ker\partial.
$$ 
 This invariant allows one to prove that the  Koras-Russell cubic 
 $$C=\{x+x^2y+z^2+t^3=0\}$$ 
 is not isomorphic to the three-dimensional affine space. Namely, it is easy to see that $\ML(\KK^n)=\KK$, and Makar-Limanov proved that $\ML(C)=\KK[x]$, hence, it is not isomorphic to $\KK$. Therefore, $C$ is not isomorphic to~$\KK^3$. Generalization of this result given by Kaliman and Makar-Limanov  in~\cite{KML} was the final step in the proof due to Koras and Russell of the Linearization Problem for $\KK^\times$-action on $\mathbb{K}^3$. 
The Makar-Limanov invariant became a powerful tool for distinguishing non-isomorphic varieties. Also it can be used for studying the automorphism group of a variety, see~\cite{ML96, KML, ML01, MJ, P, Ga}. 

Since the Koras-Russell cubic is not isomorphic to the three dimensional affine space, but is rather similar to it, it became a natural candidate to be a counterexample to the Zariski Cancellation Problem. The natural question was: does Makar-Limanov invariant allow to distinguish the cylinder over the Koras-Russell cubic $C\times \KK$ and the four-dimensional affine space? 
It is easy to see that $\ML(X\times\KK) \subseteq \ML(X)$. Therefore $\ML(C\times \KK)\subseteq \KK[x]$. 
In~\cite{D} Dubouloz showed that $\ML(C\times\KK)=\KK$. And hence, $\ML$ does not distinguish $C\times \KK$ and~$\KK^4$. Some results and questions in the area of distinguishing $C\times \KK$ and~$\KK^4$ can be found in~\cite{GSh} and~\cite{BGSh}.

 This paper was inspired by a discussion with N. Gupta and P. Ghosh. They showed a purely algebraic proof of Dubouloz's result that  $\ML(C\times\KK)=\KK$. Unfortunately, this proof seems to be unpublished, so we cannot provide a reference. The key point in this proof is a technique that, under certain conditions, allows one to lift an LND $\delta$ on $A=\Ker D$, where $D$ is an LND on $B$, to an LND on $B$. The author of this proof is apparently Bhatwadekar. So, the technique which we introduce in this paper and which is based on this example, we call Bhatwadekar's technique. We elaborate generalizations of this technique and apply to a wide class of varieties obtained by construction of $m$-suspension, which was introduced in~\cite{Ga}, and Bisuspensions; see Sections~\ref{SB}, \ref{IAGBT}, and \ref{secbis}. 

It turns out that it is more convenient to investigate the {\it field Makar-Limanov} invariant $\mathrm{FML}(X)$ instead of the Makar-Limanov invariant by this technique. Let us recall that the field Makar-Limanov invariant is a subfield in the field of rational functions on $X$ defined by
$$
\FML(X)=\bigcap_{D\in \LND(X)}\Quot(\Ker D)\subseteq \KK(X). 
$$
By a $\BG_a$-subgroup of $\Aut(X)$ we mean an algebraic subgroup isomorphic to the additive group of the ground field $\KK$. The subgroup $\SAut(X)$ in $\Aut(X)$ generated by all $\BG_a$-subgroups in $\Aut(X)$ is called the subgroup of {\it special automorphisms} of~$X$. This subgroup was investigated in~\cite{AFKKZ}. It is easy to see that the Makar-Limanov invariant coincides with the algebra of regular $\SAut(X)$-invariants $\KK[X]^{\SAut(X)}$, and the field Makar-Limanov invariant coincides with the  field of rational $\SAut(X)$-invariants $\KK(X)^{\SAut(X)}$. In particular, $\FML(X)=\KK$ implies $\ML(X)=\KK$. 
The condition $\FML(X)=\KK$ is equivalent to the existence of an open $\SAut(X)$-orbit on $X$; see~\cite{AFKKZ}. Then the variety $X$ is called {\it generically flexible}; see Section~\ref{fmlgf} for details. As an application of the above technique in
Section~\ref{cyltrv} we prove that cylinders over non-rigid trinomial varieties, that are not isomorphic to a product of a rigid trinomial variety and an affine space, are generically flexible.

The authors are grateful to Neena Gupta and Parnashree Ghosh for introducing them to the Bhatwadekar's technique of computing the Makar-Limanov invariant of the cylinder over the Koras-Russell cubic. The authors are also grateful to Ivan Arzhantsev and Mikhail Zaidenberg for fruitful discussions.

\section{Preliminaries}
\subsection{Derivations}
Recall that $\KK$ is an algebraically closed field of characteristic zero. Let $B$ be a $\KK$-domain. A linear map $\partial\colon B\rightarrow B$ is called a \emph{derivation} if it satisfies the Leibniz rule: $\partial(ab)=a\partial(b)+b\partial(a)$. A derivation is called {\it locally nilpotent} (LND) if for any $b \in B$ there exists $n\in \mathbb{N}$ such that $\partial^n(b)=0$. We denote the set of all LNDs on $B$ by $\LND(B).$ We assume that $B = \KK[X]$ is the algebra of regular functions on an irreducible affine variety $X$. So, by $\LND(X)$ we mean $\LND(\KK[X]).$

Each locally nilpotent derivation $\partial$ corresponds to an algebraic action of the group $\BG_a = (\KK, +)$ on $B$, and hence on $X$ either:
$$s\cdot b = \mathrm{exp}(s\partial)(b) = \sum_{i=0}^{+\infty} \frac{s^i\partial^i(b)}{i!},$$
where $s \in \BG_a,\ b\in B.$ Here we mean $\partial^0=\mathrm{id}$. The sum is well-defined since for each $b$ only a finite number of terms are nonzero. If $\partial$ is nonzero, this action defines an injective homomorphism to the group of regular automorphisms on $X$.  Denote the image of this homomorphism by $H_\partial$. Such subgroups of $\Aut(X)$ we call $\BG_a$-{\it subgroups}.  It is known that every homomorphism $\BG_a \to \Aut(X)$ is obtained by this construction for some LND $\partial$, see \cite{F} for details. If $\partial \in \mathrm{LND}(X)$ then the kernel of $\partial$ 
$$\mathrm{Ker}\ \partial = \{f\ |\ \partial(f) = 0\}$$
coincides with the set of regular invariants $\KK[X]^{H_\partial}$ with respect to the natural action of the group $H_\partial\subseteq \mathrm{Aut}(X)$ on $\KK[X].$

Let $G$ be a commutative group. Algebra $B$ is said to be \textit{$G$-graded} if there exists a decomposition into a direct sum of subspaces
$$B = \bigoplus_{g \in G} B_g,$$
such that for any $a \in B_n$ and $b \in B_m$ we have $ab \in B_{n+m}$.
A derivation $D$ on $B$ is called \textit{$G$-homogeneous of degree $d$} if for any $b \in B_n$ we have $D(b) \in B_{n+d}$. Let $B$ be a finitely generated $\mathbb{Z}$-graded algebra, and let $D$ be a derivation on $B$. Then there exists a decomposition $D = \sum\limits_{i=k}^{l} D_i$, where every $D_i$ is $\mathbb{Z}$-homogeneous of degree $i$. The following lemma was proved in~\cite{Re}.
\begin{lem}
If $D\in\LND(B)$ and $D = \sum\limits_{i=k}^{l} D_i$, then $D_k$ and $D_l$ are locally nilpotent.
\end{lem}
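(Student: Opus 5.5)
We argue by symmetry: it suffices to show that $D_l$ is locally nilpotent. The statement for $D_k$ then follows by applying the same argument to the reversed grading $B'_n=B_{-n}$. In that grading $D$ decomposes with extreme top component $D_k$.

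Fix a homogeneous element $b\in B_n$. Since $B$ is finitely generated and graded, we may choose homogeneous generators $b_1,\dots,b_r$. A derivation is locally nilpotent as soon as it is nilpotent on each element of a generating set, by the Leibniz rule, so it is enough to prove that $D_l^N(b)=0$ for $N\gg 0$ and every homogeneous $b$.

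The key observation concerns top-degree components. Expand
$$D^m(b)=\sum_{i_1,\dots,i_m} D_{i_m}\cdots D_{i_1}(b).$$
The summand $D_{i_m}\cdots D_{i_1}(b)$ is homogeneous of degree $n+i_1+\dots+i_m$. Degree $n+ml$ is reached only when all $i_j=l$. Hence the homogeneous component of $D^m(b)$ in degree $n+ml$ is exactly $D_l^m(b)$. Since $D$ is locally nilpotent, $D^m(b)=0$ for $m$ large, and then every homogeneous component vanishes. In particular $D_l^m(b)=0$.

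Thus $D_l$ is nilpotent on every homogeneous element, and hence on the generators. By the Leibniz rule,
$$D_l^{N}(xy)=\sum_{j=0}^{N}\binom{N}{j}D_l^j(x)\,D_l^{N-j}(y),$$
so the set of elements killed by some power of $D_l$ is a subalgebra. Since it contains the generators, it is all of $B$, and $D_l$ is locally nilpotent.

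No step here is a serious obstacle. The only point requiring care is that $D_l$ is well defined as a derivation: each $D_i$ is the degree-$i$ part of $D$, defined by $D_i(b)=(D(b))_{n+i}$ for $b\in B_n$. One checks directly from the Leibniz rule for $D$ and the grading that each $D_i$ is a derivation. The decomposition into finitely many $D_i$ is given in the statement, coming from finite generation.
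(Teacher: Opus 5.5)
Your proof is correct and is the standard argument behind this result, which the paper does not prove itself but cites from Rentschler~\cite{Re}: for $b\in B_n$ the degree-$(n+ml)$ component of $D^m(b)$ is exactly $D_l^m(b)$, and reversing the grading handles $D_k$. The passage through generators and the Leibniz formula is harmless but unnecessary, since every element is a finite sum of homogeneous ones and nilpotence of $D_l$ on each summand already gives it on the sum.
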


An element $r \in B$ is called a \textit{local slice} for $D \in \LND(B)$ if $D^2(r) = 0$ but $D(r) \neq 0$. An element $s \in B$ is called a \textit{slice} for $D \in \LND(B)$ if $D(s) = 1$.
\begin{re}
    For any nonzero LND there exists at least one local slice, but there may not be a slice.
\end{re}
For a local slice $r$ of $D$, the \textit{Dixmier map} induced by $r$ is a homomorphism $\pi_r: B \rightarrow B_{D(r)}$ defined as follows:
$$\pi_r(f) = \sum_{i \geq 0} \frac{(-1)^i}{i!}D^i(f)\frac{r^i}{(D(r))^i}.$$
If it is not clear, for which LND we consider the Dixmier map, we use denotation $\pi_{D,r}$ instead of $\pi_r$. 
We need the following statement.
\begin{prop}\cite[Principle~11]{F}\label{ssnf}
    Let $D\in\LND(B)$ be given, $D\neq 0$, and set $A=\Ker D$. Choose a local
slice $r\in B$ of $D$, and letr $\pi_r: B \rightarrow B_{D(r)}$ denote the Dixmier map defined by $r$. Then
\begin{itemize}
    \item[(a)] $\pi_r(B)\subseteq A_{D(r)}$;
    \item[(b)] $\pi_r(B)$ is a $\KK$-algebra homomorphism;
    \item[(c)] $\Ker\pi_r=rB_{D(r)}\cap B$;
    \item[(d)] $B_{D(r)} = A_{D(r)}[r]$;
    \item[(e)] The transcendence degree of $B$ over $A$ is 1.
\end{itemize}
\end{prop}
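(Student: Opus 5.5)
The plan is to prove the five items in the order (b), (a), (d), (c), (e). Throughout, write $t=D(r)\in A$ (it lies in $A$ because $D^2(r)=0$). We extend $D$ to the localization $B_t$ by the quotient rule. Since $t\in\Ker D$, the extension is still locally nilpotent, it still satisfies $D(t^{-1})=0$, and its kernel is $A_t$.

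The central tool is the element $s=r/t\in B_t$, which satisfies $D(s)=1$, so $s$ is a slice for the extended $D$. With this notation,
$$\pi_r(f)=\sum_{i\ge0}\frac{(-s)^i}{i!}D^i(f)=\exp(-sD)(f).$$
Here $\exp(-sD)$ means the specialization, at the kernel-commuting parameter $-s$, of the $\BG_a$-action. Concretely, $\pi_r$ is the composite of the ring homomorphism $B_t\to B_t[T]$, $f\mapsto\exp(TD)f$, with the evaluation $T\mapsto -s$. Both maps are $\KK$-algebra homomorphisms; the first is multiplicative by the Leibniz rule and the binomial identity for $D^n(fg)$. This gives (b).

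For (a), differentiate termwise, using $D(s^i)=is^{i-1}$:
$$D\pi_r(f)=\sum_i\frac{(-1)^i}{i!}\bigl(-is^{i-1}D^i f+s^iD^{i+1}f\bigr).$$
This sum telescopes to $0$, so $\pi_r(f)\in\Ker(D|_{B_t})=A_t$.

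For (d), the inclusion $A_t[r]\subseteq B_t$ is clear. For the reverse, we invert the Dixmier map. By the formal identity $\exp(sD)\exp(-sD)=\mathrm{id}$ (which holds for the extended LND with slice $s$), we get
$$f=\sum_i \frac{s^i}{i!}\,\pi_r(D^i f).$$
Each $\pi_r(D^if)$ lies in $A_t$ by (a), and $s=r/t$, so $f\in A_t[r]$. Proving this identity cleanly is the main technical step. I would prove it by induction on the nilpotency degree $\nu(f)=\min\{n: D^{n}f=0\}$. Apply $D$ to $g=f-\sum_i \frac{s^i}{i!}\pi_r(D^if)$; using (a), the derivative of the sum is the same expression for $Df$. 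Hence $D(g)=g'$ with $g'$ the analogous difference for $Df$, which vanishes by induction. Therefore $g\in A_t$. Finally, $g$ is fixed by $\pi_r$, because $\pi_r$ is the identity on $A_t$ and $\pi_r(s)=0$. Applying $\pi_r$ to $g$ gives $g=\pi_r(f)-\pi_r(f)=0$.

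For (c), if $f=rb$ then $\pi_r(f)=\pi_r(r)\pi_r(b)=0$, since $\pi_r(r)=r-tr/t=0$. Conversely, suppose $\pi_r(f)=0$. Then in the expansion $f=\sum_i \frac{s^i}{i!}\pi_r(D^if)$ the $i=0$ term vanishes, so $f\in sA_t[s]=rB_t$. Thus $f\in rB_t\cap B$.

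For (e), $r$ is transcendental over $A$, since otherwise applying $D$ to a minimal polynomial relation would give a shorter one. Then (d) yields $\operatorname{Frac}B=\operatorname{Frac}(A)(r)$, so the transcendence degree of $B$ over $A$ is exactly $1$.
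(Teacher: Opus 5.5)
The paper gives no proof of this proposition; it is quoted from \cite[Principle~11]{F}. Your argument is a correct, self-contained version of the standard proof: pass to $B_t$ with $t=D(r)$, where $s=r/t$ is a slice, view $\pi_r$ as $\exp(TD)$ evaluated at $T=-s$, and derive (c)--(e) from the inversion formula $f=\sum_i\frac{s^i}{i!}\pi_r(D^if)$.

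There is one sign slip. In (a) the displayed derivative is wrong: since $D(s^i)=is^{i-1}$, the bracket should read $is^{i-1}D^if+s^iD^{i+1}f$. With the minus sign you wrote, consecutive terms would add instead of cancelling.

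The inversion formula needs no induction. It also does not need the heuristic $\exp(sD)\exp(-sD)=\mathrm{id}$, which is false as an operator identity because multiplication by $s$ does not commute with $D$; literally, $\exp(sD)\circ\pi_r=\pi_r$. All sums are finite and $s$ is just an element of the commutative ring $B_t$, so
\[
\sum_i\frac{s^i}{i!}\,\pi_r(D^if)=\sum_{i,j}\frac{s^i(-s)^j}{i!\,j!}\,D^{i+j}f=\sum_n\frac{(s-s)^n}{n!}\,D^nf=f .
\]

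Likewise (c) does not need (d). Every term of $\pi_r(f)-f$ carries a factor $s^i$ with $i\geq1$, so $\pi_r(f)=0$ gives $f\in sB_t=rB_t$ immediately.
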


In particular case when $r$ is a slice of $D$ the point (d) gives the assertion which is called the Slice Theorem.
\begin{cor}[Slice Theorem]
    Let $D$ be an LND on $B$ and $s$ be a local slice for~$D$. Then $B = (\Ker D)[s]$.
\end{cor}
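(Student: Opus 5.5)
The plan is to obtain the corollary directly from Proposition~\ref{ssnf}(d), as the sentence preceding it indicates ("in the particular case when $r$ is a slice"). Put $A=\Ker D$ and let $s$ be the local slice of the statement, so $D^2(s)=0$ and $c:=D(s)\neq 0$ lies in $A$. Applying Proposition~\ref{ssnf}(d) with $r=s$ gives $B_c=A_c[s]$. The whole argument is then to remove the localization at $c$.

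\textbf{Step 1 (the hypothesis used).} The descent needs $c$ to be invertible in $A$, i.e.\ $s$ is a slice up to a constant. In the case the paper has in mind, $c\in\KK^\times$; after replacing $s$ by $c^{-1}s$, which does not change $A[s]$, we get $D(s)=1$. Then $B_c=B$ and $A_c=A$, so (d) already reads $B=A[s]$.

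\textbf{Step 2 (a self-contained check of the inclusion $B\subseteq A[s]$).} This avoids localization. For $f\in B$, use the Dixmier map with $D(s)=1$:
$$\pi_s(f)=\sum_{i\ge0}\frac{(-1)^i}{i!}D^i(f)\,s^i .$$
By Proposition~\ref{ssnf}(a) this lies in $A$, since no denominators appear. Set $g_i=\pi_s(D^i f)\in A$. Taylor inversion then gives
$$f=\sum_{i\ge0}\frac{1}{i!}\,g_i\,s^i .$$
This identity can be verified by induction on the nilpotency degree of $f$: apply $D$ to both sides and use $D(s)=1$ and $D(g_i)=0$. The sum is finite, so $f\in A[s]$.

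\textbf{Step 3 (algebraic independence).} The reverse inclusion $A[s]\subseteq B$ is trivial. For $s$ to be a polynomial variable over $A$, suppose $\sum a_i s^i=0$ with $a_i\in A$ not all zero, of minimal degree $n\ge 1$. Applying $D$ gives $\sum i a_i s^{i-1}=0$, a nontrivial relation of degree $n-1$ (its leading coefficient is $na_n\neq0$ in characteristic zero), which contradicts minimality. Hence $B=A[s]$.

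\textbf{Main obstacle.} The only delicate point is Step~1, namely that $D(s)$ is a unit, so that $B_{D(s)}=B$ and $A_{D(s)}=A$. For a genuine local slice with $D(s)$ a non-unit the conclusion does not hold as stated. For example, on $\KK[x,y]$ with $D=\partial_y$, the element $s=xy$ is a local slice, yet $\KK[x][xy]\neq\KK[x,y]$. So the proof establishes the corollary exactly in the slice case that the paper introduces it for; in general only the localized form (d) is available.
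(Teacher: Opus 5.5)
Your proof is correct and follows the paper's own route. The paper obtains the Slice Theorem by specializing Proposition~\ref{ssnf}(d) to the case $D(s)=1$, where $B_{D(s)}=B$ and $A_{D(s)}=A$; that is your Step~1, and Steps~2 and~3 are optional extras. You are also right that the corollary as printed should say ``slice'' rather than ``local slice'': the sentence just before it in the paper says so, and your example $D=\frac{\partial}{\partial y}$, $s=xy$ on $\KK[x,y]$ shows that the literal version is false.
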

By definition, $\pi_r(a)=a$ for all $a\in A$. This implies the following two lemmas.

\begin{lem}\label{yadrodiks}
    $A=\pi_r(B)\cap B$.
\end{lem}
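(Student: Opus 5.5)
We prove the two inclusions of $A=\pi_r(B)\cap B$ separately; here $B$ is regarded as a subring of $B_{D(r)}$, which is possible since $B$ is a domain and $D(r)\neq 0$.

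For $A\subseteq \pi_r(B)\cap B$, take $a\in A$. Then $D^i(a)=0$ for all $i\geq 1$, so only the $i=0$ term of the defining sum survives and $\pi_r(a)=a$. Hence $a\in\pi_r(B)$, and trivially $a\in B$.

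For the reverse inclusion, take $f\in \pi_r(B)\cap B$. By Proposition~\ref{ssnf}(a), $\pi_r(B)\subseteq A_{D(r)}$, so $f\in A_{D(r)}\cap B$. It remains to show $A_{D(r)}\cap B=A$. Write $f=a/D(r)^k$ with $a\in A$, which is possible because $D(r)\in A$ (as $D^2(r)=0$). Then $D(r)^k f=a$ in $B$. Apply $D$ and use the Leibniz rule together with $D(D(r))=0$:
$$D(r)^k D(f)=D(a)=0.$$
Since $B$ is a domain and $D(r)\neq 0$, this forces $D(f)=0$, i.e.\ $f\in A$.

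There is no real obstacle here. The only point requiring care is the identification of $B$ and $A_{D(r)}$ inside the common localization $B_{D(r)}$, which relies on $B$ being a domain.
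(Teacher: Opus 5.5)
Your proof is correct and follows the paper's argument: both inclusions are obtained the same way, via $\pi_r(a)=a$ for $a\in A$ and $\pi_r(B)\subseteq A_{D(r)}$. The only difference is that the paper finishes by citing $\Quot(A)\cap B=A$ from \cite[Principle~13]{F} (the displayed ``$=B$'' there is a typo for $=A$), whereas you verify $A_{D(r)}\cap B=A$ directly via $D(r)^k D(f)=D(a)=0$, which is the same Leibniz-rule argument that proves that principle.
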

\begin{proof}
    For each $a\in A$ we have $a=\pi_r(a)\in \pi_r(B)\cap B$. Conversely, 
    $$\pi_r(B)\cap B\subseteq A_{D(r)}\cap B\subseteq \Quot(A)\cap B=B.$$
    The last equality holds by \cite[Principle~13]{F}.
\end{proof}
\begin{lem}\label{yadrofild}
$\Quot(A)=\Quot(\pi_r(B))$.
\end{lem}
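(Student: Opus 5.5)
We prove the two inclusions separately; both follow from Proposition~\ref{ssnf} and the identity $\pi_r(a)=a$ for $a\in A$, just as in Lemma~\ref{yadrodiks}.

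\emph{The inclusion $\Quot(A)\subseteq\Quot(\pi_r(B))$.} Every $a\in A$ satisfies $a=\pi_r(a)$, so $A\subseteq\pi_r(B)$. Both rings are subrings of the field $\Quot(B)$, so passing to fraction fields inside $\Quot(B)$ gives the inclusion.

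\emph{The inclusion $\Quot(\pi_r(B))\subseteq\Quot(A)$.} By Proposition~\ref{ssnf}(a), $\pi_r(B)\subseteq A_{D(r)}$. Since $D^2(r)=0$, the element $D(r)$ is nonzero and lies in $A$. Hence every element of $A_{D(r)}$ has the form $a/D(r)^k$ with $a\in A$, and so it lies in $\Quot(A)$. Thus $\pi_r(B)\subseteq\Quot(A)$. Because $\pi_r(B)$ is a subring of $B_{D(r)}\subseteq\Quot(B)$, and hence a domain, its fraction field, taken inside $\Quot(B)$, is contained in $\Quot(A)$.

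The only point needing care is to regard all these rings as subrings of the single field $\Quot(B)$, so that the fraction fields can be compared directly. This holds because $B$ is a domain and $D(r)\neq 0$. Once that is fixed, the argument has no real obstacle.
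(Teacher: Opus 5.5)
Your proof is correct and matches the paper's argument: both use the chain $A\subseteq\pi_r(B)\subseteq A_{D(r)}$ inside $\Quot(B)$ and pass to fraction fields. One wording fix: $D^2(r)=0$ gives $D(r)\in A$, but $D(r)\neq 0$ comes from the definition of a local slice, not from $D^2(r)=0$.
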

\begin{proof}
We have $A\subseteq \pi_r(B)\subseteq A_{D(r)}$. Moving on to inclusions of quotient fields, we obtain
$\Quot(A)\subseteq\Quot(\pi_r(B))\subseteq \Quot(A)$.
\end{proof}

\subsection{Makar-Limanov invariant, field Makar-Limanov invariant and generic flexibility}\label{fmlgf}

Let us remind the definition of the Makar-Limanov invariant. 
\begin{de}
    The {\it Makar-Limanov invariant} of an algebra $B$ is the intersection of kernels of all LNDs on $B$. 
    $$\ML(B)=\bigcap\limits_{D\in \LND(B)}\Ker D.$$ 
    If $B=\KK[X]$ is the algebra of regular functions on an affine variety $X$, we use the notation $\ML(X)$ instead of $\ML(\KK[X])$.  
\end{de}

Following \cite{AFKKZ}, we consider the subgroup of special automorphisms of $X$. 
\begin{de}
    The subgroup $\SAut(X)$ in $\Aut(X)$ generated by all $\BG_a$-subgroups in $\Aut(X)$ is called the subgroup of {\it special automorphisms} of $X$.
\end{de}

Because of the correspondence between LNDs and $\BG_a$-subgroups, $\ML(X)=\KK[X]^{\SAut(X)}$. 

\begin{de}
    An affine variety $X$ is called {\it rigid} if $\ML(X)=\KK[X]$. 
\end{de}
In other words $X$ is rigid if $\KK[X]$ does not admit any nonzero LNDs, or equivalently if $X$ does not admit any nontrivial $\BG_a$-actions.

Each derivation of $B$ can be naturally extended to a derivation of the quotient field $\Quot(B)$. 
\begin{de}
    The {\it field Makar-Limanov invariant} of an irreducible variety $B$ is the intersection of kernels of all derivations of the field of rational functions $\KK(X)$ that are extensions of LNDs on~$\KK[X]$. 
    $$\FML(X)=\bigcap\limits_{D\in \LND(X)}\Ker \overline{D},$$ 
    where $\overline{D}$ is the extension of $D$ to $\KK(X)=\Quot(\KK[X])$. 
\end{de}
It is easy to see that $\FML(X)=\KK(X)^{\SAut(X)}$. 
By~\cite[Corollary~1.29]{F}, $\Ker\overline{D}=\Quot(\Ker D)$. This implies that 
$$
\FML(X)=\bigcap_{D\in \LND(X)}\Quot(\Ker D)\subseteq \KK(X). 
$$

\begin{de}
A point $x\in X$ is called {\it flexible} if the tangent space $\mathrm{T}_xX$ is spanned by the tangent vectors to orbits of $\BG_a$-subgroups in $\Aut(X)$. 
\end{de}
It is easy to see that a flexible point is regular; see for example~\cite[Lemma~4.2]{Ga2}.
\begin{de}
    A variety $X$ is called {\it flexible} if each regular point is flexible. 
\end{de}
An action of a group $G$ on a set $Y$ is called $m$-transitive if for every
two $m$-tuples of pairwise distinct points $(a_1,...,a_m)$ and $(b_1,...,b_m)$ on $Y$
there exists an element $g\in G$ such that for all $i$ we have $g\cdot a_i = b_i$.
An action is {\it infinitely transitive} if it is $m$-transitive for all positive
integers $m$.

The following wonderful fact inspired interest to flexible varieties. 
\begin{prop}\cite[Theorem 0.1]{AFKKZ} For an irreducible affine variety
$X$ of dimension $\geq 2$ the following conditions are equivalent:
\begin{itemize}
    \item[(i)] the group $\SAut(X)$ acts transitively on the regular locus $X^{\mathrm{reg}}$;
    \item[(ii)] the group $\SAut(X)$ acts infinitely transitively on $X^{\mathrm{reg}}$;
    \item[(iii)] the variety $X$ is flexible.
\end{itemize}
\end{prop}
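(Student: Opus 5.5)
The implication (ii)$\Rightarrow$(i) is immediate, so I would prove (i)$\Leftrightarrow$(iii) first and then (i)$\Rightarrow$(ii). The basic tool is a Ramanujam-type generation lemma, which I would establish first. For $\BG_a$-subgroups $H_1,\dots,H_s\subseteq\SAut(X)$, consider the orbit map
$$\Phi\colon H_1\times\dots\times H_s\to X,\qquad (h_1,\dots,h_s)\mapsto h_1\cdots h_s\cdot x.$$
I would show that, after choosing the sequence $H_1,\dots,H_s$ long enough (repetitions allowed), the image of $\Phi$ contains a dense open subset of the orbit $\SAut(X)\cdot x$. Moreover, the differential of $\Phi$ at suitable points is spanned by the velocity vectors $\partial_i|_y$, where $\partial_i$ is the LND corresponding to $H_i$. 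It follows that every $\SAut(X)$-orbit is locally closed. Its tangent space at each point is spanned by the tangent vectors to the orbits of the $\BG_a$-subgroups, since the orbit is homogeneous, so a property holding at one point holds at all of them.

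For (i)$\Rightarrow$(iii), I would apply the lemma to the orbit $X^{\mathrm{reg}}$. Its tangent space at every point is $\mathrm{T}_xX$ and is spanned by $\BG_a$-orbit directions, so every regular point is flexible. For (iii)$\Rightarrow$(i), take a flexible point $x$ and pick $\BG_a$-subgroups $H_1,\dots,H_n$, $n=\dim X$, whose velocity vectors at $x$ form a basis of $\mathrm{T}_xX$. The map $(t_1,\dots,t_n)\mapsto \exp(t_1\partial_1)\cdots\exp(t_n\partial_n)\cdot x$ then has invertible differential at the origin. By the implicit function theorem (or by a dominance and dimension argument) the orbit of $x$ contains an open neighbourhood of $x$. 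Hence all orbits meeting $X^{\mathrm{reg}}$ are open in $X^{\mathrm{reg}}$. Since $X^{\mathrm{reg}}$ is irreducible and thus connected, and it is a disjoint union of these open orbits, it is a single orbit.

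For (i)$\Rightarrow$(ii), I would argue by induction on $m$ and use \emph{replicas}. If $\partial\in\LND(X)$ and $f\in\Ker\partial$, then $f\partial$ is again an LND, and its $\BG_a$-action fixes the zero set of $f$ pointwise. Given pairwise distinct $a_1,\dots,a_{m-1},a_m$ and $a_m'$ in $X^{\mathrm{reg}}$, I want an element of $\SAut(X)$ fixing $a_1,\dots,a_{m-1}$ and sending $a_m$ to $a_m'$. Let $G$ be the subgroup generated by all replicas $f\partial$ with $f\in\Ker\partial$ vanishing at $a_1,\dots,a_{m-1}$. I would show that the $G$-orbits on $X^{\mathrm{reg}}\setminus\{a_1,\dots,a_{m-1}\}$ are open, again via the tangent-space criterion. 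It then suffices to show that $\mathrm{T}_yX$ is spanned by the vectors $f(y)\partial|_y$ for such $f$. Connectedness of $X^{\mathrm{reg}}\setminus\{a_i\}$, which holds because $\dim X\geq 2$, then finishes the induction.

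The main obstacle is this spanning step. One needs an $f\in\Ker\partial$ with $f(a_i)=0$ for all $i\le m-1$ but $f(y)\neq0$, and this fails if $y$ lies in the same fibre of the quotient by $H_\partial$ as some $a_i$. My first approach would be a general-position argument. Using the transitivity already available and the fact that $\Ker\partial$ separates generic $H_\partial$-orbits (as in Proposition~\ref{ssnf}(e), $\Ker\partial$ has transcendence degree $\dim X-1$), I would first conjugate $\partial$ by elements of $\SAut(X)$. The aim is that no $a_i$ lies on the $H_\partial$-orbit closures through $y$ for a spanning family of $\partial$'s. Choosing enough conjugates $g\partial g^{-1}$ should provide such a family, because the bad set for each conjugate is a proper closed subset.
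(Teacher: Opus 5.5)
The paper does not prove this statement; it only quotes \cite[Theorem~0.1]{AFKKZ}. Your treatment of (ii)$\Rightarrow$(i), (i)$\Rightarrow$(iii) and (iii)$\Rightarrow$(i) follows the standard architecture: an orbit lemma of Ramanujam type, tangent spaces of orbits spanned by $\BG_a$-directions, openness of orbits through flexible points, and connectedness of $X^{\mathrm{reg}}$. These parts are fine up to two small inaccuracies.
\begin{itemize}
\item The image of $d\Phi$ at $(h_1,\dots,h_s)$ is spanned by the values at $y$ of the conjugates $(h_1\cdots h_{i-1})\,\partial_i\,(h_1\cdots h_{i-1})^{-1}$, not of the $\partial_i$ themselves. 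This is harmless, since the conjugates are again LNDs.
\item $X^{\mathrm{reg}}\setminus\{a_1,\dots,a_{m-1}\}$ is irreducible, hence connected, in every positive dimension. So connectedness is not where $\dim X\ge 2$ is used.
\end{itemize}

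The genuine gap is the spanning step in (i)$\Rightarrow$(ii). You flag it but do not prove it, and it carries the entire content of infinite transitivity. There are two problems.

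\textbf{The target condition is wrong.} $\BG_a$-orbits on an affine variety are closed, and $a_i\notin H_\partial\cdot y$ does not give $f\in\Ker\partial$ with $f(a_i)=0\neq f(y)$, because $\Ker\partial$ need not separate distinct orbits. For example, on $\{xy=z^2-1\}$ with $\partial(x)=0$, $\partial(z)=x$, $\partial(y)=2z$, one has $\Ker\partial=\KK[x]$, and the orbits $\{x=0,\ z=1\}$ and $\{x=0,\ z=-1\}$ are not separated. What you need is that $y,a_1,\dots,a_{m-1}$ lie in a slice locus $X_{\partial(r)}$, where $(\Ker\partial)_{\partial(r)}$ separates orbits by Proposition~\ref{ssnf}(d), and lie on pairwise different orbits. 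Prime avoidance for the maximal ideals $\mathfrak{m}_{a_i}\cap\Ker\partial$ then produces a single $f$ that works for all $i$.

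\textbf{The genericity claim is unproved and, as stated, circular.} Your assertion that ``the bad set for each conjugate is a proper closed subset'' is exactly what has to be shown.
\begin{itemize}
\item Membership in $X_{\partial(r)}$ is a condition on single points, so you can arrange it by a generic conjugation using transitivity.
\item Requiring $g^{-1}y$ and $g^{-1}a_i$ to lie on different $H_\partial$-orbits is a condition on the diagonal action on pairs. Getting it by genericity would need the diagonal orbit of $(y,a_i)$ to be dense in $X\times X$, which is essentially the $2$-transitivity the first induction step is supposed to prove.
\item The claim is not formal. On $X=\mathbb{A}^1$ translations act transitively and every point is flexible, yet every conjugate is bad because $\Ker\partial=\KK$. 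So $\dim X\ge 2$ must enter precisely here, and your sketch only gestures at it through the transcendence degree of $\Ker\partial$.
\end{itemize}

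You need an independent lemma. One way to get it is as follows.
\begin{enumerate}
\item Suppose $g^{-1}a\in H_\partial\cdot g^{-1}y$ whenever both points lie in the slice locus. Write $v=\exp(t_0\partial)u$, with $t_0\neq0$, for pairs $(u,v)$ in the diagonal orbit.
\item Then for every $\varphi\in\Ker\partial$ the polynomial $t\mapsto\varphi(k\exp(t\partial)u)$ is $t_0$-periodic, hence constant.
\item Consequently the flows of all LNDs map generic $\partial$-orbits into $\partial$-orbits.
\item Applying this to all replicas $f\partial'$ with $f\in\Ker\partial'$ gives $[f\partial',\partial]=f[\partial',\partial]-\partial(f)\partial'\in\KK(X)\partial$.
\item This forces $\Ker\partial'=\Ker\partial$, hence $\partial'\in\KK(X)\partial$, for every LND $\partial'$. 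That contradicts flexibility when $\dim X\ge2$.
\end{enumerate}

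Only after such a lemma does your density argument finish the proof. That argument says the admissible $g$ form a nonempty open subset of an irreducible parameter space containing the identity, so $\partial(y)$ lies in the span of the vectors $(g\partial g^{-1})(y)$ for admissible $g$.
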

In the same paper~\cite{AFKKZ} it is proved that each $\SAut(X)$-orbit is open
in its closure. In particular, if $\SAut(X)$-orbit is dense in $X$, it is open.
\begin{de}
If $\SAut(X)$ acts on X with an open orbit, the variety $X$ is called
{\it generically flexible}. 
\end{de}
Generic flexibility does not imply flexibility; see~\cite{Ga2} for more information in this field. 

The following proposition connects generically flexibility and triviality of $\FML$.
\begin{prop}\cite[Proposition 5.1.]{AFKKZ}
 An irreducible aﬃne variety $X$ possesses a flexible point if and only
if the group $\SAut(X)$ acts on $X$ with an open orbit, if and only if the field Makar-Limanov invariant $\FML(X)$
is trivial. In the latter case X is unirational.
\end{prop}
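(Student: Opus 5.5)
The plan is to prove the cycle of implications
\[
\text{flexible point}\;\Rightarrow\;\text{open $\SAut(X)$-orbit}\;\Rightarrow\;\FML(X)=\KK\;\Rightarrow\;\text{flexible point},
\]
and then to read off unirationality from the construction used in the first step. The one tool used throughout is the following. For $\BG_a$-subgroups $H_1,\dots,H_N$ with LNDs $\partial_1,\dots,\partial_N$ and a point $x\in X$, consider the orbit map
$$\varphi_x\colon H_1\times\dots\times H_N\to X,\qquad (h_1,\dots,h_N)\mapsto h_1\cdots h_N\cdot x .$$
Its source is $\KK^N$. Its differential at the origin has image spanned by the tangent vectors $\partial_i(x)$ to the $H_i$-orbits through $x$.

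\emph{Flexible point $\Rightarrow$ open orbit.} Suppose $x$ is flexible. Choose $H_1,\dots,H_N$ whose orbit tangent vectors at $x$ span $\mathrm{T}_xX$. Then $d\varphi_x$ at the origin is surjective, so $\varphi_x$ is dominant. Its image is constructible and contained in the orbit $\SAut(X)\cdot x$, hence this orbit is dense in $X$. By the result of~\cite{AFKKZ} quoted above, every orbit is open in its closure, so this orbit is open.

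\emph{Open orbit $\Rightarrow$ trivial $\FML$.} Recall that $\FML(X)=\KK(X)^{\SAut(X)}$. An invariant rational function $f$ is constant along each orbit on its domain of definition. If some orbit $O$ is open, then $O$ meets the domain of $f$ in a dense set on which $f$ is constant, so $f\in\KK$. The same reasoning also covers open orbit $\Rightarrow$ flexible point directly, which is useful as a check. By~\cite{AFKKZ}, $O$ is the image of some $\varphi_x$ as above. In characteristic zero, generic smoothness says $d\varphi_x$ is surjective at a general point $h$. Moving the tangent vectors to $h\cdot x$ by the group elements turns them into orbit tangent vectors of the conjugate subgroups $gH_ig^{-1}$. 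These conjugates are again $\BG_a$-subgroups, so the point $h\cdot x$ is flexible.

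\emph{Trivial $\FML$ $\Rightarrow$ flexible point.} This is the step I expect to be the main obstacle. Let $L\subseteq \mathrm{Der}\,\KK(X)$ be the $\KK(X)$-span of the extensions $\overline{D}$ of all $D\in\LND(X)$, and let $r=\dim_{\KK(X)}L$.
\begin{itemize}
\item First I would show that $L$ is closed under the Lie bracket. For LNDs $D_1,D_2$, the conjugate $\exp(tD_1)D_2\exp(-tD_1)$ is again an LND for every $t$. It is polynomial in $t$, and its $t$-coefficient is $[D_1,D_2]$. Hence $[D_1,D_2]$ is a $\KK$-linear combination of LNDs and lies in $L$.
\item Next, the algebraic Frobenius theorem should give the key relation. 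For a $\KK(X)$-subspace of $\mathrm{Der}_\KK\KK(X)$ that is closed under brackets, the field of common constants has transcendence degree $n-r$, where $n=\dim X$. To get this, I would pick a separating transcendence basis and reduce $r$ basis derivations of $L$ to a commuting, triangular form.
\item The common constant field of $L$ is exactly $\FML(X)$. So $\FML(X)=\KK$ forces $r=n$.
\item Now choose LNDs $D_1,\dots,D_n$ that are linearly independent over $\KK(X)$. At a general point $x$ the vectors $D_i(x)$ span $\mathrm{T}_xX$, so $x$ is flexible.
\end{itemize}

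Finally, with an open orbit in hand, the map $\varphi_x\colon\KK^N\to X$ from the first step is a dominant morphism from affine space. Hence $X$ is unirational.
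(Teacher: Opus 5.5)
The gap is the ``algebraic Frobenius theorem'' you invoke in the third implication. It is false in the direction you need. The rest is sound: your first two implications, the side argument for open orbit $\Rightarrow$ flexible point, and the unirationality claim are fine. Dominance of $\varphi_x$ from surjectivity of $d\varphi_x$ at the origin needs lower semicontinuity of the rank, which is standard. Your proof that $L$ is closed under brackets is also correct.

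\emph{Why the Frobenius step fails.} For a $\KK(X)$-subspace $L\subseteq\mathrm{Der}_{\KK}\KK(X)$ of dimension $r$ with common constant field $C$, one always has $\mathrm{tr.deg}_{\KK}C\le n-r$, since $L\subseteq\mathrm{Der}_{C}\KK(X)$. This is vacuous when $C=\KK$. You need the reverse inequality, and involutivity does not give it. Take $L=\KK(x,y)\cdot\theta$ with $\theta=x\partial_x+\lambda y\partial_y$ and $\lambda\in\KK\setminus\mathbb{Q}$.
\begin{itemize}
\item $L$ is trivially closed under brackets, and $r=1$.
\item If $P/Q$ is a rational first integral in lowest terms, then $Q\theta(P)=P\theta(Q)$ forces $\theta(P)=cP$ and $\theta(Q)=cQ$ for some $c\in\KK$.
\item Since $\theta(x^ay^b)=(a+\lambda b)x^ay^b$ and these eigenvalues are pairwise distinct, $P$ and $Q$ are multiples of the same monomial, hence constants.
\end{itemize}
So $C=\KK$ although $n-r=1$. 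Putting a basis of $L$ into commuting triangular form does not help. Solving $\partial_if=0$ is an integration problem, and its solutions (here $x^{\lambda}/y$) generally lie outside $\KK(X)$. Your argument uses nothing about $L$ beyond involutivity, so it cannot succeed as stated.

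\emph{What is missing.} You need the fact that the integral varieties of $L$ are algebraic. They are the $\SAut(X)$-orbits, which are locally closed and of the form $H_1\cdots H_s\cdot x$ by \cite{AFKKZ}. You also need a Rosenlicht-type statement: rational $\SAut(X)$-invariants cut general orbits out up to dimension. One way to get this:
\begin{itemize}
\item By noetherianity, the closure $\overline{R}\subseteq X\times X$ of the orbit relation equals the closure of the image of $(h,x)\mapsto(x,h\cdot x)$ on $H_1\times\dots\times H_s\times X$, for a single finite sequence of $\BG_a$-subgroups.
\item $\overline{R}$ is stable under $\SAut(X)\times\SAut(X)$.
\item Hence the coefficients of the reduced Gr\"obner basis of its generic fibre over $\KK(X)$ are $\SAut(X)$-invariant rational functions.
\item Points of a dense open set with equal values of these invariants lie in a common fibre $\overline{R}_x$, which has dimension $d$ (the dimension of a general orbit).
\end{itemize}
Hence $\mathrm{tr.deg}_{\KK}\FML(X)\ge n-d$, so $\FML(X)=\KK$ forces a dense, hence open, orbit. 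Your own open orbit $\Rightarrow$ flexible point argument then finishes the cycle. Since general orbits have dimension $r$, this is exactly the inequality your Frobenius step was meant to supply.

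The paper does not prove this statement; it quotes it from \cite{AFKKZ}. So without an input of this kind, your third implication remains unproved.
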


\subsection{Trinomial varieties}\label{trvtrv}

In this section we give a rigorous definition of a trinomial variety according to~\cite{H-W}.

\begin{const}\label{odyn} \cite[Construction 1.1]{H-W}. Suppose we are given positive integers~$r$ and~$n$, a nonnegative integer  $\theta$ and $q \in \left\{ {0, 1} \right\}$. Let us fix a partition  $n = n_{q} +  \ldots+ n_{r}$ of~$n$ into positive integer summands. 
Let us consider the polynomial algebra $B$ in $\theta+n$ variables. These variables we denote $T_{ij}$ and~$S_k$:
$$
B= \KK \left[ {T_{ij}, S_{k} \,|\, q \leq i \leq r, \!\ 1 \leq j \leq n_{i}, \!\ 1 \leq k \leq \theta} \right].
$$
For each $i = q, \ldots, r$ we fix a tuple of positive integers $l_{i} = \left( {l_{i1},  \ldots , l_{in_{i}}} \right)$. So we can consider the following monomial:
$$
	T_{i}^{l_{i}} = T_{i1}^{l_{i1}}  \ldots T_{in_{i}}^{l_{in_{i}}} \in B. 
$$
Now we define the {\it trinomial algebra} $R(A)$, which we construct by some data $A$.  These data are different for two types of trinomial algebras. 
\par \emph{Type I.} $ q = 1, \!\ A = (a_{1},  \ldots, a_{r}), \!\ a_{j} \in \KK, \!\ a_{i} \neq a_{j} $ if $i \neq j$.  Let us put  $I = \left\{ {1, \ldots, r - 1} \right\}$ and 
\begin{equation*}
	g_{i} = T_{i}^{l_{i}} - T_{i+1}^{l_{i+1}} - (a_{i+1} - a_{i}) \in B, \ i \in I.
\end{equation*}
\par \emph{Type II.} $ q = 0$, 
\begin{equation*}
	A = \begin{pmatrix}
		a_{10} \ a_{11} \ a_{12} \cdots a_{1r} \\
		a_{20} \ a_{21} \ a_{22} \cdots a_{2r} \\
	\end{pmatrix}
\end{equation*}
is such a matrix with elements from $\mathbb{K}$, that every two columns are linearly independent. Let us put $I = \left\{ {0,  \ldots , r - 2} \right\}$ and 
\begin{equation*}
	g_{i} = \det \begin{pmatrix}
		T_{i}^{l_{i}} \ T_{i+1}^{l_{i+1}} \ T_{i+2}^{l_{i+2}} \\
		a_{1i} \ a_{1i+1} \ a_{1i+2}\\
		a_{2i} \ a_{2i+1} \ a_{2i+2}
	\end{pmatrix} \in B, \ i \in I.
\end{equation*}
For both types $R(A) = B/(g_{i}\, |\, i \in I)$.
\end{const}
\begin{de}\label{trvarde} The variety $X(A) = \mathrm{Spec}(R(A))$ is called a {\it trinomial variety}.  The type of a trinomial variety is the type of the corresponding trinomial algebra.\end{de} 

\begin{re}
It is easy to see that the dimension of $X(A)$ equals $\theta+n-r+1$.
\end{re}

\begin{re}\label{initr} By definition, 
$X(A)\cong Y(A)\times\mathbb{A}^\theta$. To obtain the algebra of regular functions on  $Y(A)$ one should eliminate generators $S_k$  from $R(A)$. The type of $Y(A)$ coincides with the type of $X(A)$.
\end{re}
\begin{re}
    In the definition of trinomial variety of Type~I sometimes it is convenient to put $n_0=0$ and $T_0^{l_0}=1$. Then we can consider all the equations as the equationa of the form $T_i^{l_i}-T_{i-1}^{l_{i-1}}-(a_{i+1}-a_i)T_0^{l_0}$.
\end{re}

One can define an effective action of an algebraic torus $\mathbb{T}\cong (\KK^\times)^{n+\theta-r}$ of complexity 1 on the variety $X(A)$. Moreover, any trinomial variety with a torus action of complexity 1 is connected with thinomial varieties via the Cox construction. Namely~\cite[Theorem~1.8]{H-W} states that the Cox ring of an irreducible, normal, rational, affine variety (and more general, $A_2$-variety) with only constant invertible functions, finitely generated divisor class group and
a torus action of complexity one is isomorphic to $R(A)$ for some $A$. 

In~\cite[Theorem~4]{E-G-S} the following criterion of rigidity of a trinomial variety is obtained.
\begin{prop}\label{EGSh}
A trinomial variety $X(A)$ is not rigid if and only if one of the following conditions holds:
\begin{enumerate}
\item[1)] $\theta \neq 0$,
\item[2)] The variety $X(A)$ has type I and there exists such an $a \in \lbrace1,\ldots,r\rbrace$, that for all $i \in \lbrace1,\ldots,r\rbrace\setminus\lbrace a\rbrace$ there is $j(i) \in\lbrace1,\ldots,n_i\rbrace$, such that $l_{ij(i)}=1$,
\item[3)] The variety $X(A)$ has type II and ane of the following conditions holds:

\begin{enumerate}
	\item[a)] There are two such numbers $a, b \in\lbrace0,\ldots,r\rbrace$, that for each $i \in\lbrace0,\ldots,r\rbrace\setminus\lbrace a,b\rbrace$ there is $j(i) \in \lbrace1,\ldots,n_i\rbrace$, such that $l_{ij(i)}=1$,
	\item[b)]  There are three such numbers $a, b, c \in\lbrace 0,\ldots,r\rbrace$, that for each  
 $$k \in\lbrace0,\ldots,r\rbrace\setminus\lbrace a,b,c\rbrace$$ there is $j(k) \in \lbrace1,\ldots,n_k\rbrace$, such that $l_{kj(k)}=1$, and for each $i \in \lbrace a,b\rbrace$ there is $v(i) \in\lbrace1,\ldots,n_i\rbrace$, such that $l_{iv(i)}=2$ and for all $w \in\lbrace1,\ldots,n_i\rbrace$ the numbers $l_{iw}$ are even.
\end{enumerate}
	
\end{enumerate}
	
\end{prop}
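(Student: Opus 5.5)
The statement is the rigidity criterion \cite[Theorem~4]{E-G-S}, and I would prove the two directions separately: sufficiency by writing down explicit nonzero LNDs, necessity by reducing to LNDs that are homogeneous with respect to the grading by the character lattice of the complexity-one torus $\mathbb{T}$.

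\emph{Sufficiency.} If $\theta\neq0$, the derivation $\partial/\partial S_1$ is a nonzero LND, by Remark~\ref{initr}.

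In case 2), take linear combinations of the $g_i$ to rewrite the relations as
$$T_i^{l_i}=T_a^{l_a}+c_i,\qquad i\neq a,\quad c_i\in\KK .$$
Put $D(T_{ak})=0$ for all $k$, put
$$D(T_{ij(i)})=\prod_{m\neq i,a}\frac{\partial T_m^{l_m}}{\partial T_{mj(m)}}\quad (i\neq a),$$
and let $D$ vanish on all other variables. Then $D(T_i^{l_i})$ equals the same product $\prod_{m\neq a}\partial T_m^{l_m}/\partial T_{mj(m)}$ for every $i\neq a$. On the right-hand side $D(T_a^{l_a}+c_i)=0$, so this still fails; I would correct it by also making $T_a$ non-invariant only if needed. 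The cleaner choice is to keep the products but multiply them by suitable constants, so that $D$ kills the linear relations among the $T_i^{l_i}$. Since there is one free index $a$, the $r-1$ images $D(T_i^{l_i})$ can be matched consistently with the differences $T_i^{l_i}-T_{i'}^{l_{i'}}$ being constants.

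Because $l_{ij(i)}=1$, the factor $\partial T_m^{l_m}/\partial T_{mj(m)}$ does not involve $T_{mj(m)}$. Hence $D$ is triangular with respect to a suitable ordering of the variables, and so it is locally nilpotent.

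In case 3a) the same construction works with two distinguished indices $a,b$. The linear span of the $T_i^{l_i}$ is $2$-dimensional, so one can take $T_a$ in the kernel and let $D$ move $T_{bv}$ for one suitable $v$ together with the $T_{ij(i)}$, $i\neq a,b$. The coefficients are determined by the $2\times 2$ minors of the matrix $A$.

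In case 3b) all exponents in the blocks $a$ and $b$ are even, so $T_a^{l_a}=u^2$ and $T_b^{l_b}=v^2$ with $u,v$ monomials. The quadratic form $\alpha u^2+\beta v^2+\gamma T_c^{l_c}$ factors, and we are in the situation of the $\mathrm{SL}(2)$-type hypersurface $xy=z^2+\ldots$. There I would use an LND with $D(u)$ proportional to a product of partial derivatives and $D(v)$ chosen so that $D(\alpha u^2+\beta v^2)=0$, with $T_c$ and the degree-one variables handled as before. Here $l_{av(a)}=2$ and $l_{bv(b)}=2$ are exactly what makes the variables $T_{av(a)}$ and $T_{bv(b)}$ enter $u$ and $v$ linearly.

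\emph{Necessity.} Suppose $\theta=0$ and there is a nonzero LND $D$. Using the lemma from \cite{Re} repeatedly along a basis of one-parameter subgroups of $\mathbb{T}$, I take extremal homogeneous components. This yields a nonzero LND that is homogeneous for the full $\mathbb{T}$-grading.

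The kernel of a homogeneous LND is a graded subalgebra of codimension one in transcendence degree, by Proposition~\ref{ssnf}(e). The monomials $T_i^{l_i}$ span a $1$-dimensional (Type~I) or $2$-dimensional (Type~II) space of homogeneous elements of a single degree. Using factorial closedness of $\Ker D$ and the fact that all $T_{ij}$ are homogeneous primes, I would show the following:
\begin{itemize}
\item at most one block $T_a$ (Type~I), or at most two blocks (Type~II), can fail to be killed by $D$ in a nontrivial way;
\item for every other block $i$, $D$ must move some variable $T_{ij}$ whose exponent forces $l_{ij}=1$. Otherwise, comparing $D(T_i^{l_i})$ with the relation, one gets a divisibility by $T_{ij}$ that contradicts local nilpotency, since the degree in $T_{ij}$ cannot drop.
\end{itemize}

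The main obstacle is the exceptional Type~II situation, where three blocks lack exponent~$1$. Here one must show that a surviving LND forces the two non-invariant blocks to be squares with some exponent equal to $2$, which is condition 3b). I would attack it by restricting to the plane curve (conic) spanned by $T_a^{l_a}$, $T_b^{l_b}$, $T_c^{l_c}$ and invoking the classification of $\BG_a$-actions on surfaces of the form $x^py^q=\ldots$ (Danielewski-type and Pham–Brieskorn surfaces). In particular I would use that $x^{k}+y^{m}+z^{n}$ is rigid unless two exponents equal $2$, up to the monomial substitution.
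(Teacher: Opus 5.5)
The paper gives no proof of this proposition; it quotes it from \cite[Theorem~4]{E-G-S}. Judged on its own, your proposal has concrete errors in the sufficiency constructions for cases 2) and 3b), and it contains no real argument for necessity.

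\textbf{Sufficiency.} In case 2) your derivation cannot be repaired by constants. In Type~I all $T_i^{l_i}$ differ by constants, so $D(T_i^{l_i})=D(T_a^{l_a})$ for every $i$. With $D(T_{ak})=0$ for all $k$ this common value is $0$. That forces every coefficient in front of your products to vanish, i.e.\ $D=0$. (By factorial closedness, for $\theta=0$ no block of a Type~I variety can lie in the kernel of a nonzero LND.)

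The exceptional block must be the one that is moved. For one $k$ put $D(T_{ak})=\prod_{m\neq a}\partial T_m^{l_m}/\partial T_{mj(m)}$. For $i\neq a$ put $D(T_{ij(i)})=\bigl(\partial T_a^{l_a}/\partial T_{ak}\bigr)\prod_{m\neq a,i}\partial T_m^{l_m}/\partial T_{mj(m)}$, and let $D=0$ on the other variables. Then all $D(T_i^{l_i})$ coincide, and $D$ is triangular with $T_{ak}$ first. In Type~I the constant term plays the role of the block frozen in the kernel in case 3a), which is why one exceptional block suffices there.

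Your recipe for 3b) fails in the same way. We have $\alpha u^2+\beta v^2=(\sqrt{\alpha}u+\sqrt{-\beta}v)(\sqrt{\alpha}u-\sqrt{-\beta}v)$, so requiring $D(\alpha u^2+\beta v^2)=0$ puts both factors into $\Ker D$. Hence $u$, $v$, the blocks $a,b$, and then every block lie in $\Ker D$. The correct LND keeps only one linear factor $\rho$ in the kernel and moves a variable of block $c$, with $D(T_{ck})$ divisible by $\rho$. This is the derivation $\delta_{+}$ or $\delta_{-}$ of Section~\ref{secbis}, lifted through the remaining blocks by Lemma~\ref{lprl}.

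\textbf{Necessity.} This is where the content of the theorem lies, and here you essentially restate the conclusion. The homogenization step is fine. But your two bullets are exactly what has to be proved. The only justification offered is a divisibility by $T_{ij}$ contradicting local nilpotency because ``the degree in $T_{ij}$ cannot drop''. That has no meaning in $R(A)$, which is not a polynomial ring. Note that this direction contains, for instance, the rigidity of the Pham--Brieskorn surface $x^2+y^3+z^5=0$, a nontrivial result to which your sketch offers no route.

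For 3b), an LND of $X(A)$ cannot be ``restricted'' to a surface. You would need an actual reduction, e.g.\ specializing variables in $\Ker D$ to nonzero constants and using homogeneity to show that the induced LND stays nonzero. Even then, the conditions in 3b) concern all exponents of a block, which a monomial substitution does not see. For example, $T_{01}^4T_{02}^6+T_{11}^2+T_{21}^3=0$ is rigid by the statement, yet the substitution $x=T_{01}^2T_{02}^3$ turns it into the non-rigid surface $x^2+y^2+z^3=0$.
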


\subsection{$m$-suspensions}\label{secsusp}

Let $X$ be an affine algebraic variety. Given a noninvertible regular function $f\in\KK[X]$, we can define a new affine variety 
$$Y=\mathrm{Susp}(X,f)=\VV\left(uv-f(x)\right)\subset \KK^2\times X$$
called a {\it suspension} over $X$.
In~\cite{KZ} infinitely transitivity of $\mathrm{Aut}(Y)$-action on the smooth locus of $Y$ is proved in case $X\cong \mathbb{K}^n$. LNDs on suspensions are investigated in \cite{AKZ}. A suspension over a flexible affine variety is flexible; see \cite[Theorem~0.2(3)]{AKZ}.

In~\cite{Ga} the following generalization of the consept of suspension is considered.  

\begin{de}
Let $f$ be a nonzero noninvertible regular function on the affine variety~$X$. We fix positive integers $m$ and $k_1,\ldots, k_m$. By \emph{$m$-suspension} over $X$ we mean a subvariety in $\KK^m\times X$ cut by $y_1^{k_1}\cdots y_m^{k_m}-f$, where $y_1,\ldots,y_m$ are coordinate functions on~$\KK^m$. We denote this variety by $\mathrm{Susp}(X,f,k_1,\ldots,k_m)$. 
\end{de}

One can easily check that an $m$-suspension over an irreducible variety is irreducible. $m$-suspensions were investigated in \cite{Ga, G, BG}. 
We need the following easy lemma, which combine ideas of \cite{Ar}, \cite{AKZ}, \cite{Ga} and \cite{G}, but seems that it have not be formulated explicitly.  

\begin{lem}\label{lprl}
    Suppose $X$ is an irreducible affine variety and $f\in\KK[X]$. Consider $Y=\Susp(X,f,1,k_2,\ldots, k_m)$. Then for each LND $\delta$ on $X$ the derivation $\overline{\delta}$ on $Y$ given by
    $$
    \overline{\delta}(y_1)=\delta(f),\qquad \overline{\delta}(y_i)=0\text{ for }i\geq 2,$$ 
    $$\overline{\delta}(h)=y_2^{k_2}\ldots y_m^{k_m}\delta(h)\text{ for }h\in\KK[X]\subseteq\KK[Y].
    $$
    is a correct defined LND of $\KK[Y]$. Moreover, $\Quot(\Ker\overline{\delta})=\Quot(\Ker\delta)(y_2,\ldots, y_m)$.
\end{lem}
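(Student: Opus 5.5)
We prove the three claims in turn: that $\overline{\delta}$ is a well-defined derivation, that it is locally nilpotent, and the description of $\Quot(\Ker\overline{\delta})$. Throughout, write $\KK[Y]=\KK[X][y_1,\ldots,y_m]/(y_1y_2^{k_2}\cdots y_m^{k_m}-f)$ and put $g=y_2^{k_2}\cdots y_m^{k_m}$.

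\emph{Well-definedness.} First define $\overline{\delta}$ on the polynomial ring $\KK[X][y_1,\ldots,y_m]$ by the stated formulas. A derivation of $\KK[X]$ extends uniquely to this polynomial ring once images of the $y_i$ are prescribed, and $g\delta$ is a derivation of $\KK[X]$, so this step is fine. Next we check that the ideal $(y_1g-f)$ is preserved:
$$\overline{\delta}(y_1g-f)=g\,\overline{\delta}(y_1)+y_1\overline{\delta}(g)-g\delta(f)=g\delta(f)+0-g\delta(f)=0.$$
Hence $\overline{\delta}$ descends to $\KK[Y]$.

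\emph{Local nilpotency.} The $y_i$ with $i\geq2$ lie in the kernel, so $\overline{\delta}=g\delta$ on $\KK[X][y_2,\ldots,y_m]$. Since $g$ is in the kernel, $\overline{\delta}^n(h)=g^n\delta^n(h)$ for $h\in\KK[X]$, which vanishes for large $n$. For $y_1$ we have $\overline{\delta}(y_1)=\delta(f)\in\KK[X]$, so $\overline{\delta}^{n+1}(y_1)=g^n\delta^{n+1}(f)$, which also vanishes for large $n$. The generators are therefore nilpotent, and by the Leibniz rule the set of elements annihilated by some power of $\overline{\delta}$ is a subalgebra, so $\overline{\delta}$ is locally nilpotent. (If $\delta=0$ the statement is trivial.)

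\emph{Kernel field.} We work in $\KK(Y)$. Since $y_1=f/g$, we get $\KK(Y)=\KK(X)(y_2,\ldots,y_m)$, with $y_2,\ldots,y_m$ algebraically independent over $\KK(X)$; this holds because $Y$ projects dominantly onto $X\times\KK^{m-1}$, the locus $g\neq0$ being isomorphic to $X\times(\KK^\times)^{m-1}$. On this field the extension of $\overline{\delta}$ equals $g\cdot\tilde\delta$, where $\tilde\delta$ is the extension of $\delta$ to $\KK(X)(y_2,\ldots,y_m)$ that kills the $y_i$. Indeed, both sides agree on the generators $\KK(X)$ and $y_2,\ldots,y_m$. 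This agreement is consistent on $y_1=f/g$, since $g\tilde\delta(f/g)=\delta(f)$. Because $g\neq0$, the kernels of $g\tilde\delta$ and $\tilde\delta$ in $\KK(Y)$ coincide. By~\cite[Corollary~1.29]{F}, the kernel of $\overline{\delta}$ on $\KK(Y)$ is $\Quot(\Ker\overline{\delta})$, and the kernel of $\delta$ on $\KK(X)$ is $\Quot(\Ker\delta)$. It therefore remains to show that the kernel of $\tilde\delta$ on $L(y_2,\ldots,y_m)$, with $L=\KK(X)$, equals $K(y_2,\ldots,y_m)$, where $K=\ker(\delta|_L)$.

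\emph{Main step.} The inclusion $\supseteq$ is clear. For $\subseteq$, we need that the kernel of a derivation extended trivially to a purely transcendental extension is generated by the original kernel together with the new variables. We apply~\cite[Corollary~1.29]{F} once more, now to $\KK[X][y_2,\ldots,y_m]$ with the LND $\delta$ acting coefficientwise. Its kernel is $(\Ker\delta)[y_2,\ldots,y_m]$, as one sees by comparing coefficients of monomials in the $y_i$. Hence the field kernel of $\tilde\delta$ is
$$\Quot\bigl((\Ker\delta)[y_2,\ldots,y_m]\bigr)=\Quot(\Ker\delta)(y_2,\ldots,y_m),$$
which completes the proof.
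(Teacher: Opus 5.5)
Your proof is correct, and for well-definedness and local nilpotency it matches the paper. The difference is in how you identify $\Quot(\Ker\overline{\delta})$.

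The paper fixes a local slice $r$ of $\delta$ and notes that $r$ is also a local slice of $\overline{\delta}$. It then computes the Dixmier map on generators. Writing $g=y_2^{k_2}\cdots y_m^{k_m}$ as you do, it finds $\pi_{\overline{\delta},r}(y_i)=y_i$ for $i\geq 2$, $\pi_{\overline{\delta},r}=\pi_{\delta,r}$ on $\KK[X]$, and $\pi_{\overline{\delta},r}(y_1)=\pi_{\delta,r}(f)/g$. Lemma~\ref{yadrofild} then gives $\Quot(\Ker\overline{\delta})\subseteq\Quot(\pi_{\delta,r}(\KK[X]))(y_2,\ldots,y_m)=\Quot(\Ker\delta)(y_2,\ldots,y_m)$.

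You avoid the Dixmier computation altogether. You use the birational identification $\KK(Y)=\KK(X)(y_2,\ldots,y_m)$ and observe that on this field $\overline{\delta}$ equals $g$ times the coefficientwise extension of $\delta$. You drop the nonzero factor $g$, which does not change a field kernel. Finally you apply \cite[Corollary~1.29]{F} to a different affine model, $\KK[X][y_2,\ldots,y_m]$, where the kernel is visibly $(\Ker\delta)[y_2,\ldots,y_m]$.

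What your route buys:
\begin{itemize}
\item It is more conceptual: the lemma reduces to the trivial product case $X\times\KK^{m-1}$, because the field kernel depends only on the derivation of $\KK(Y)$ up to a nonzero scalar.
\item It needs no local slice, so $\delta=0$ is covered uniformly.
\item It transfers directly to Lemma~\ref{vkerl}. There $\KK(Y)=\KK(X)(y_2,\ldots,y_m,z_2,\ldots,z_n,\rho)$, and $\delta_+$ is $2\rho\, y_2^{p_2}\cdots y_m^{p_m}z_2^{q_2}\cdots z_n^{q_n}$ times the trivial extension of $\delta$.
\end{itemize}

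What the paper's route buys is ring-level information. It gives explicit generators of $\pi_{\overline{\delta},r}(\KK[Y])$, hence a concrete overring of $\Ker\overline{\delta}$. This is in the same Dixmier-map style used throughout Section~\ref{kol}.
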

\begin{proof}
    It is easy to see that $\overline{\delta}(y_1y_2^{k_2}\ldots y_m^{k_m})=\overline{\delta}(f)$. Therefore, $\overline{\delta}$ is a correct defined derivation of $\KK[Y]$.
    Since $\overline{\delta}(y_i)=0$ for $i\geq 2$ and $\overline{\delta}(h)=y_2^{k_2}\ldots y_m^{k_m}\delta(h)$ for $h\in \KK[X]$, the derivation $\overline{\delta}$ is locally nilpotant on $\KK[X][y_2,\ldots, y_m]$. Then $\overline{\delta}(y_1)=\delta(f)\in\KK[X]$ provides locally nilpotence of $\overline{\delta}$.

    Now let $r$ be a local slice of $\delta$. Then $r$ is a local slice of $\overline{\delta}$. So, by Lemma~\ref{yadrofild}, $$\Quot(\Ker\overline{\delta})=\Quot(\pi_{\overline{\delta},r}(\KK[Y])).$$ 
    We have $\pi_{\overline{\delta},r}(y_i)=y_i$ for $i>1$. For $F\in\KK[X]$,
    $$\pi_{\overline{\delta},r}(F)=\sum_{i=0}^{\infty}\frac{(-1)^{i}\delta^i(F)y_2^{ik_2}\ldots y_m^{ik_m}r^i}{(y_2^{k_2}\ldots y_m^{k_m}\delta(r))^i}=
    \sum_{i=0}^{\infty}\frac{(-1)^{i}\delta^i(F)r^i}{(\delta(r))^i}=\pi_{\delta, r}(F).
    $$
    \begin{multline*}
    \pi_{\overline{\delta},r}(y_1)=y_1+\sum_{i=1}^{\infty}\frac{(-1)^{i}\delta^i(f)y_2^{(i-1)k_2}\ldots y_m^{(i-1)k_m}r^i}{(y_2^{k_2}\ldots y_m^{k_m}\delta(r))^i}=\\=y_1-\frac{f}{y_2^{k_2}\ldots y_m^{k_m}}+\sum_{i=0}^{\infty}\frac{(-1)^{i}\delta^i(f)y_2^{(i-1)k_2}\ldots y_m^{(i-1)k_m}r^i}{(y_2^{k_2}\ldots y_m^{k_m}\delta(r))^i}=\\=y_1-\frac{f}{y_2^{k_2}\ldots y_m^{k_m}}+
    \frac{\pi_{\delta, r}(f)}{y_2^{k_2}\ldots y_m^{k_m}}=\frac{\pi_{\delta, r}(f)}{y_2^{k_2}\ldots y_m^{k_m}}.
    \end{multline*}
    So, \begin{multline*}
    \Quot(\Ker\overline{\delta})=\Quot(\pi_{\overline{\delta},r}(\KK[Y]))\subseteq \Quot(\pi_{\delta, r}(\KK[X]))(y_2,\ldots, y_m)=\\
    =\Quot(\Ker\delta)(y_2,\ldots, y_m). 
    \end{multline*}
    The converse inclusion is obvious. 
\end{proof}

\begin{re}
    It may be $\Ker\overline{\delta}\supsetneq(\Ker\delta)[y_2,\ldots, y_m]$. If $f\in\Ker \delta$, then $y_1\in\Ker\overline{\delta}$.
\end{re}

The following lemma is a direct corollary of Lemma~\ref{lprl}.

\begin{lem}\label{lmlsunov}
    Suppose $X$ is an irreducible affine variety and $f\in\KK[X]$. Consider $Y=\Susp(X,f,1,k_2,\ldots, k_m)$. Then $\FML(Y)\subseteq \FML(X)(y_2,\ldots, y_m)$.
\end{lem}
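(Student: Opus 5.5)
The inclusion should follow directly from the definition of $\FML$ as an intersection, combined with Lemma~\ref{lprl}. The intersection defining $\FML(Y)$ runs over all LNDs of $\KK[Y]$. Restricting it to the LNDs of the special form $\overline{\delta}$ constructed in Lemma~\ref{lprl} can only make it larger. So the plan is to bound $\FML(Y)$ from above by the intersection over this subfamily, and then identify that intersection.

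First, by the definition of the field Makar-Limanov invariant and the equality $\Ker\overline{D}=\Quot(\Ker D)$ from~\cite[Corollary~1.29]{F},
$$
\FML(Y)=\bigcap_{D\in\LND(Y)}\Quot(\Ker D)\subseteq \bigcap_{\delta\in\LND(X)}\Quot(\Ker\overline{\delta}).
$$
This uses only the fact, given by Lemma~\ref{lprl}, that each $\overline{\delta}$ is a well-defined LND on $\KK[Y]$. Second, Lemma~\ref{lprl} also gives $\Quot(\Ker\overline{\delta})=\Quot(\Ker\delta)(y_2,\ldots,y_m)$ for every $\delta\in\LND(X)$. The right-hand side above therefore equals
$$
\bigcap_{\delta\in\LND(X)}\Quot(\Ker\delta)(y_2,\ldots,y_m).
$$

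The only point requiring care is the last step: showing that this intersection is contained in $\bigl(\bigcap_{\delta}\Quot(\Ker\delta)\bigr)(y_2,\ldots,y_m)=\FML(X)(y_2,\ldots,y_m)$. In other words, we must interchange the intersection with the adjunction of $y_2,\ldots,y_m$.

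For this step I would first observe that $y_2,\ldots,y_m$ are algebraically independent over $\KK(X)$ inside $\KK(Y)$. Indeed, $\KK(Y)=\KK(X)(y_2,\ldots,y_m)$, since $y_1=f/(y_2^{k_2}\cdots y_m^{k_m})$, and $\dim Y=\dim X+m-1$. Thus each field $\Quot(\Ker\delta)(y_2,\ldots,y_m)$ is a purely transcendental extension in independent variables of a subfield $L_\delta\subseteq\KK(X)$.

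An element $F$ of the intersection can be written as a reduced fraction $P/Q$ with $P,Q\in\KK(X)[y_2,\ldots,y_m]$, normalized so that some fixed monomial coefficient of $Q$ equals $1$. This representation is unique in $\KK(X)(y_2,\ldots,y_m)$. Since $F$ lies in $L_\delta(y_2,\ldots,y_m)$, it also has such a normalized reduced representation over $L_\delta$. By unique factorization in $\KK(X)[y_2,\ldots,y_m]$, the two representations coincide. Hence all coefficients of $P$ and $Q$ lie in $L_\delta$ for every $\delta$, and therefore in $\bigcap_\delta L_\delta=\FML(X)$. This gives $F\in\FML(X)(y_2,\ldots,y_m)$ and completes the argument.
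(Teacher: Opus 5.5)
Your argument is correct and follows the paper's route: the paper calls the lemma a direct corollary of Lemma~\ref{lprl}, meaning it restricts the intersection to the LNDs $\overline{\delta}$ and uses $\Quot(\Ker\overline{\delta})=\Quot(\Ker\delta)(y_2,\ldots,y_m)$; it leaves implicit the interchange of the intersection with the adjunction of $y_2,\ldots,y_m$, which you justify. In that justification, state that a fraction reduced over $L_\delta=\Quot(\Ker\delta)$ stays reduced over $\KK(X)$, since gcds in $L_\delta[y_2,\ldots,y_m]$ do not change under the extension $L_\delta\subseteq\KK(X)$ (for instance, having a common factor is a linear condition with coefficients in $L_\delta$); also normalize $Q$ by its leading coefficient in a fixed monomial order, because unique factorization in $\KK(X)[y_2,\ldots,y_m]$ alone does not make the two representations coincide.
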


If there are more then one unit among $k_1,\ldots, k_m$, then we can extend $\delta$ to $\overline{\delta}$ by some different ways. In particular, we are interested by the case of $k_1=\ldots=k_m=1$.
Then each $\delta\in\mathrm{LND}(\KK[X])$ gives $m$ LNDs $\overline{\delta}_1,\ldots, \overline{\delta}_m$ of $\KK[Y]$, where 
$$
\overline{\delta}_j(y_j)=\delta(f),\qquad \overline{\delta}_j(y_i)=0\text{ for }i\neq j,
$$
$$
\overline{\delta}_j(h)=y_1\ldots y_{j-1}y_{j+1}\ldots y_m\delta(h)\text{ for }h\in\KK[X]\subseteq\KK[Y].
$$

\begin{lem}\label{intker}
    Let $Y=\Susp(X,f,1,1,\ldots, 1)$. Then 
    $$
    \bigcap_{i=1}^m \Quot(\Ker\overline{\delta}_i)=\Quot(\Ker\delta)\subseteq\KK(X)\subseteq\KK(Y).
    $$
\end{lem}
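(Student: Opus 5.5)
The inclusion ``$\supseteq$'' is clear: $\Ker\delta\subseteq\Ker\overline{\delta}_j$ for every $j$, because $\overline{\delta}_j(h)=y_1\cdots\widehat{y_j}\cdots y_m\,\delta(h)=0$ whenever $\delta(h)=0$. Hence $\Quot(\Ker\delta)\subseteq\Quot(\Ker\overline{\delta}_j)$ for all $j$. The work lies in the reverse inclusion.

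For ``$\subseteq$'' I apply Lemma~\ref{lprl} to each $\overline{\delta}_j$, with the roles of the $y$'s permuted; this is legitimate since all $k_i=1$. It gives
$$\Quot(\Ker\overline{\delta}_j)=\Quot(\Ker\delta)(y_1,\ldots,\widehat{y_j},\ldots,y_m)=:K_j .$$
Put $L=\Quot(\Ker\delta)$. It remains to show $\bigcap_j K_j=L$ inside $\KK(Y)$. For this I first describe $\KK(Y)$. The relation $y_1\cdots y_m=f$ lets me write $y_m=f/(y_1\cdots y_{m-1})$. Moreover $\KK[Y]_{y_1\cdots y_{m-1}}=\KK[X][y_1^{\pm1},\ldots,y_{m-1}^{\pm1}]$, so $\KK(Y)=\KK(X)(y_1,\ldots,y_{m-1})$ with $y_1,\ldots,y_{m-1}$ algebraically independent over $\KK(X)$. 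Also $\KK(X)=L(r)$ for a local slice $r$, with $r$ transcendental over $L$ (Proposition~\ref{ssnf}(d),(e)). Hence $L\subset\KK(Y)$ is purely transcendental, and $r,y_1,\ldots,y_{m-1}$ is a transcendence basis.

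The key step is the case $m\ge 3$. There $K_m=L(y_1,\ldots,y_{m-1})$, and for $j<m$ the field $K_j$ is generated over $L$ by the $y_i$ with $i\ne j,m$ together with $y_m=f/\prod_{i<m}y_i$. Since $f\in\KK(X)=L(r)$, the element $f$ is transcendental over $L$ when $f\notin L$. In that case $K_j=L(y_i\ (i\neq j,m),\ f/y_j)$.

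I then intersect iteratively. The fields $K_m$ and $K_j$ are both subfields of the purely transcendental extension $L(r,y_1,\ldots,y_{m-1})$. In $K_m$ the variable $r$ does not occur, while in $K_j$ the variable $y_j$ appears only through the combination $f(r)/y_j$. An element of $K_m\cap K_j$ is then a rational function free of $r$ which, after the substitution $y_j=f/t$, becomes free of $y_j$. Specialising shows it must also be free of $y_j$. So $K_m\cap K_j=L(y_i: i\ne j,m)$, and intersecting over all $j<m$ leaves $L$.

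If $f\in L$, then $y_m\in K_j$ for $j<m$, and the naive argument fails. In that case I would instead intersect $K_1\cap\cdots\cap K_m$ symmetrically using the monomial structure: each $K_j$ is the subfield $L(\text{lattice }\Lambda_j)$ of the field of the torus $L(y_1,\ldots,y_{m-1})$, where $\Lambda_j$ is the sublattice of characters generated by $y_i$ for $i\neq j$. The intersection of these $L$-subfields corresponds to $\bigcap\Lambda_j$ after saturation. But the $\Lambda_j$ are all of full rank, so this approach fails, and that case would need separate treatment (presumably the statement implicitly uses $f\notin\Ker\delta$, or a different argument). The main obstacle is making rigorous the intersection claim $K_m\cap K_j=L(\ldots)$ via an algebraic independence and specialisation argument; I would do it by viewing elements as rational functions in $r$ over $L(y)$ and comparing degrees in $y_j$.
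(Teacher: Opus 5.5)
Your route is the paper's. Lemma~\ref{lprl} gives $K_j=\Quot(\Ker\overline{\delta}_j)=L(y_i : i\neq j)$ with $L=\Quot(\Ker\delta)$, and then one intersects; the paper's proof is exactly these two steps, with the intersection asserted without argument.

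\textbf{The case you left open is a counterexample, not a gap.} No different argument can close it: for $m\geq 2$ and $\delta(f)=0$ (equivalently $f\in L$) the lemma is false. In that case $y_j=f/\prod_{i\neq j}y_i\in K_j$ for every $j$, so all the $K_j$ coincide with $L(y_1,\ldots,y_{m-1})$, which has transcendence degree $m-1$ over $L$. For instance, take $X=\Spec\KK[a,b]$, $f=a$, $\delta=\partial/\partial b$ and $m=2$. Then $\KK[Y]=\KK[y_1,y_2,b]$, $\overline{\delta}_1=y_2\,\partial/\partial b$ and $\overline{\delta}_2=y_1\,\partial/\partial b$. Hence $\bigcap_j\Quot(\Ker\overline{\delta}_j)=\KK(y_1,y_2)\neq\KK(a)=\Quot(\Ker\delta)$. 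You should state this as a counterexample and record that the hypothesis $\delta(f)\neq 0$ is necessary. The paper's one-line proof uses it tacitly, and it holds where the lemma is applied: in Proposition~\ref{tpp} one has $\gcd(f,\xi(f))=1$ with $f$ non-invertible, which forces $\xi(f)\neq 0$.

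\textbf{Making the intersection step rigorous.} Under $\delta(f)\neq 0$, your ``specialising'' step, which you rightly flag as not yet rigorous, can be replaced by a short symmetric argument. It needs no special role for $y_m$ and no separate treatment of $m\geq 3$. First, $r,y_1,\ldots,y_{m-1}$ are algebraically independent over $L$ and $f\in L(r)\setminus L$. So $f$ is transcendental over $L(y_1,\ldots,y_{m-1})$, and hence so is $y_m=f/(y_1\cdots y_{m-1})$. Thus $y_1,\ldots,y_m$ are algebraically independent over $L$, and each $K_j$ is generated over $L$ by a subset of this single transcendence basis. Second, for subsets $S,T$ of an algebraically independent set one has $L(S)\cap L(T)=L(S\cap T)$. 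To see this, write $u=P/Q$ with $P,Q\in L[S]$, read $P=uQ$ as an identity of polynomials in the variables $S\setminus T$ over $L(T)$, and compare a nonzero coefficient of $Q$. Iterating over the sets $\{y_i: i\neq j\}$ gives $\bigcap_j K_j=L$ at once.
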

\begin{proof}
We have
 $$\bigcap_{i=1}^m \Quot(\Ker\overline{\delta}_i)=\bigcap_{i=1}^m\Quot(\Ker\delta)(y_1\ldots, y_{i-1},y_{i+1},\ldots, y_m)=\Quot(\Ker\delta).$$
\end{proof}

\subsection{Isolated irreducible semi-invariants}\label{secsemiinv}

Let an algebraic torus $T$ acts on an affine variety $X$. 
Recall that a regular function $f\in\KK[X]$  is called a semi-invariant with respect to $T$ if for all $t\in T$ we have $t\cdot f=\lambda f$ for some $\lambda=\lambda (t)\in\KK^\times$. A character $\lambda\colon T\rightarrow \KK^\times$ is called a weight of $f$. We call a semi-invariant $f$ {\it irreducible} if it is irreducible as an element of $\KK[X]$. It is easy to show that a $T$-semi-invariant~$f$ is irreducible if it is not invertible and cannot be decomposed into the product of two non-invertible $T$-semi-invariants. Let us denote the lattice of $T$-characters by~$M$.

\begin{de}
We call an irreducible $T$-semi-invariant $f$ of weight $\omega$ {\it isolated} if there
exists a linear function $\alpha$ on $M_\mathbb{Q} = M \otimes_{\mathbb{Z}} \mathbb{Q}$ such that
\begin{enumerate}
    \item $\alpha(\omega)>0$;
    \item if $\alpha(\omega')>0$ for the weight $\omega'$ of an irreducible $T$-semi-invariant $h$, then $h=\gamma f$,
where $\gamma$ is an invertible $T$-semi-invariant;
    \item if $\omega''$ is the weight of an invertible $T$-semi-invariant, then $\alpha(\omega'')= 0$.
\end{enumerate}
We say that $\alpha$ is an $f$-separating function.
\end{de}

\begin{lem}\cite[Lemma~3.2]{BG}
There exist finitely many isolated irreducible $T$-semi-invariants up to scaling
by elements of $\KK[X]^\times$.
\end{lem}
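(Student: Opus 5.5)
Fix an isolated irreducible semi-invariant $f$ of weight $\omega$ with separating function $\alpha$. The plan is to show that the weight determines $f$ up to units, and that only finitely many weights can occur for isolated semi-invariants.

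First, condition (2) applied to $h=f'$, where $f'$ is any irreducible semi-invariant with $\alpha(\omega')>0$, gives $f'=\gamma f$ with $\gamma$ an invertible semi-invariant. So within the open half-space $\{\alpha>0\}$, $f$ is, up to units, the \emph{only} irreducible semi-invariant. In particular, if $g$ is another isolated irreducible semi-invariant with separating function $\beta$ and $\alpha(\omega_g)>0$, then $g\sim f$. The task is thus to produce a finite set of weights such that every separating function must be positive on one of them.

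Second, I would use finite generation. The algebra $\KK[X]$ is finitely generated and $T$-graded, so it is generated by finitely many semi-invariants $s_1,\dots,s_N$. Each of these factors into irreducible semi-invariants; this uses that a semi-invariant that is not invertible and not irreducible splits into two non-invertible semi-invariants, together with a Noetherian argument on divisors to ensure the factoring terminates. This yields a finite list $p_1,\dots,p_K$ of irreducible semi-invariants, with weights $\mu_1,\dots,\mu_K$, such that every element of $\KK[X]$ is a polynomial in the $p_j$ and the invertible semi-invariants.

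Third, I would show that $f$ is associated to some $p_j$. Expand $f$ in homogeneous components. Since $f$ is homogeneous of weight $\omega$, it is a linear combination of monomials $\gamma\prod p_j^{e_j}$ of weight $\omega$, with $\gamma$ invertible. Applying $\alpha$, which vanishes on unit weights by (3), gives $\sum_j e_j\alpha(\mu_j)=\alpha(\omega)>0$ for each such monomial. Hence some $p_j$ appearing in it satisfies $\alpha(\mu_j)>0$, and then (2) gives $p_j\sim f$. Therefore every isolated irreducible semi-invariant is associated to one of $p_1,\dots,p_K$, which proves finiteness.

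The main obstacle is the second step: justifying that the generators factor into finitely many irreducible semi-invariants without assuming that $\KK[X]$ is a UFD. I expect to handle this by noting that a product of $n$ non-invertible semi-invariants dividing a fixed nonzero $s$ is bounded in length, since each factor cuts a nonempty divisor contained in $\{s=0\}$ and the number of irreducible components is finite. The third step itself does not need unique factorization.
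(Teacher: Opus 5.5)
The paper gives no proof of this lemma; it is quoted from \cite{BG}, so there is no argument in the text to compare against. Your proof is correct as a strategy. Fix one factorization of each semi-invariant generator into irreducible semi-invariants and a unit. Every monomial of weight $\omega$ in the expansion of $f$ then has $\sum_j e_j\alpha(\mu_j)=\alpha(\omega)>0$, since units contribute nothing by condition (3). So some factor $p_j$ has $\alpha(\mu_j)>0$, and condition (2) forces $p_j\sim f$. As you note, a single fixed factorization suffices precisely because (2) identifies \emph{any} factor with $\alpha>0$ with $f$; no uniqueness is needed. Your first step is never used and can be dropped.

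The one point to repair is your justification that the factorizations exist. Counting irreducible components of $\{s=0\}$ does not bound the number of factors, because several factors can vanish on the same component. For example, $x^5=x\cdot x\cdot x\cdot x\cdot x$ in $\KK[x]$ has five non-unit factors but only one component. A length bound would need multiplicities, for instance orders of vanishing along prime divisors of the normalization, where a non-unit of $\KK[X]$ remains a non-unit by integrality.

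You do not need a length bound, only existence, and that follows from the ascending chain condition on principal ideals in the Noetherian domain $\KK[X]$. Suppose $s$ had no factorization. Then $s=a_1b_1$ with both factors non-units, and one of them, say $a_1$, also has no factorization. Since $b_1$ is not a unit, $(s)\subsetneq(a_1)$. Iterating produces a strictly ascending chain of principal ideals, a contradiction.

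Finally, invoke the fact the paper remarks on: in a domain graded by the torsion-free lattice $M$, all factors and all units dividing a nonzero homogeneous element are homogeneous. This makes the $p_j$ irreducible semi-invariants and the unit factors invertible semi-invariants, so conditions (2) and (3) apply to them.
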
 

\begin{lem}\cite[Lemma~3.4]{BG}
Let $S$ be a set of $T$-semi-invariant generators of~$\KK[X]$.

(i) Assume that $f\in S$ is a non-invertible element and denote its weight by~$\omega$. Suppose that there exists a linear function $\alpha$ on $M_{\mathbb{Q}}$ such that $\alpha(\omega)>0$
 and $\alpha(\omega')\leq 0$
 for all weights $\omega'$
 of the other generators $s\in S$. Then $f$ is an irreducible isolated $T$-semi-invariant.

 (ii) If $S$ does not containes two elements $s$ and $\lambda s$ for $\lambda\in\KK[X]^\times$, then every isolated irreducible $T$-semi-invariant has the form $\gamma f$ for some $f$ satisfying (i) and 
$\gamma\in\KK[X]^\times$.
\end{lem}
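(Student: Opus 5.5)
For part (i) I would argue with weights of monomials in the generators. Every $T$-semi-invariant $h$ of weight $\omega'$ is a linear combination of monomials in $S$, all of weight $\omega'$, because $\KK[X]$ is graded by $M$ and the elements of $S$ are homogeneous. For such a monomial $\prod s^{a_s}$ we get $\alpha(\omega')=\sum a_s\alpha(\omega_s)\le a_f\alpha(\omega)$, since every generator other than $f$ contributes a nonpositive term. So if $\alpha(\omega')>0$, then every monomial contains $f$, and therefore $f\mid h$ with a semi-invariant quotient. This single observation should give everything in (i):
\begin{itemize}
\item \emph{Condition (3).} If $\gamma$ is an invertible semi-invariant with $\alpha(\omega_\gamma)\neq 0$, then $\gamma$ or $\gamma^{-1}$ has positive $\alpha$-value. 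That element is then divisible by $f$, which forces $f$ to be invertible, a contradiction.
\item \emph{Irreducibility.} Suppose $f=gh$ with $g,h$ semi-invariants. Their $\alpha$-values add up to $\alpha(\omega)>0$, so one of them, say $g$, has positive value. Then $g=fg'$, hence $g'h=1$ and $h$ is invertible. By the remark in the paper, irreducibility as a semi-invariant suffices.
\item \emph{Condition (2).} Let $h$ be an irreducible semi-invariant with $\alpha(\omega')>0$. Then $h=f\gamma$ with $\gamma$ a semi-invariant, and irreducibility of $h$ forces $\gamma$ to be invertible.
\end{itemize}

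For part (ii), let $g$ be an isolated irreducible semi-invariant with separating function $\alpha$. First, every generator $s\in S$ with $\alpha(\omega_s)>0$ is divisible by $g$. To see this, factor $s$ into irreducible semi-invariants; such a factorization exists because ascending chains of principal ideals stabilize in the noetherian ring $\KK[X]$. Invertible factors have $\alpha$-value $0$, so some irreducible factor has positive $\alpha$-value, and by condition (2) that factor is $\gamma g$.

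Next, write $g$ as a homogeneous polynomial in $S$. Since $\alpha(\omega_g)>0$, each of its monomials contains at least one generator with positive $\alpha$-value. Dividing by $g$ then yields an identity $1=\sum c_M M'$, where each $M'$ is a semi-invariant.

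The goal is to show that some generator $s$ satisfies $s=\gamma g$ with $\gamma\in\KK[X]^\times$, and that every other generator $s''$ has $\alpha(\omega_{s''})\le 0$. Once this holds, $s$ satisfies the hypothesis of (i) with the same $\alpha$ and $g=\gamma^{-1}s$. The second claim reduces to the first: if another generator $s''$ had positive $\alpha$-value, it would also be divisible by $g$. Repeating the divisibility argument then shows it is an associate of $s$ or a proper multiple of it, and the hypothesis that $S$ contains no associate pairs is exactly what excludes the first case.

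The main obstacle is extracting $s=\gamma g$ from $1=\sum c_M M'$. I would try a valuation argument along the prime divisor $\{g=0\}$. Each $M'$ is the product of the cofactors $s/g$ and of the remaining generators. If no cofactor were a unit and every monomial contained either two positive generators or some nonunit factor, all terms $M'$ would have to lie in one proper ideal, for instance the ideal of a suitable $T$-stable subvariety inside $\{g=0\}$. That would contradict the sum being equal to $1$. Making the choice of this subvariety precise is where I expect the real work to be.
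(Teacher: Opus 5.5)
Your proof of (i) is correct and complete. Every semi-invariant of positive $\alpha$-weight is a combination of monomials in $S$ that each contain $f$, so it is divisible by $f$. From this, condition (3), irreducibility and condition (2) follow exactly as you say.

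The gap is in (ii), and it is not only a matter of making the valuation argument precise. Both claims you aim for are false in general, and so is (ii) as literally stated.

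\emph{The reduction of the second claim.} You overlook that a generator which is a proper multiple of $g$ has positive $\alpha$-value, and the no-associates hypothesis does not exclude it. Take $X=\KK^2$ with the standard action of $(\KK^\times)^2$. Then $g=x$ is isolated with $\alpha=e_1^*$.
\begin{itemize}
\item For $S=\{x,y,xy\}$ you get $\alpha(\omega_{xy})=1>0$, so you would have to switch to another function such as $e_1^*-e_2^*$.
\item For $S=\{x,y,x^2\}$ no linear function works at all, since $\omega_{x^2}=2\omega_x$. So the conclusion of (ii) fails outright, although $S$ contains no associate pair.
\end{itemize}

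\emph{The step you flag as the main obstacle.} Extracting $s=\gamma g$ from $1=\sum c_M M'$ fails even for a minimal generating set. Let $\KK^\times$ act on $\KK^2$ with weights $1,-1$ on $x,y$, and take $S=\{x(1+xy),\,x(2+xy),\,y\}$.
\begin{itemize}
\item The only irreducible semi-invariants of positive weight are the $cx$, so $x$ is isolated with $\alpha=\mathrm{id}$.
\item $S$ is minimal. The first two elements only produce weights $\geq 0$. The two pairs containing $y$ have non-constant Jacobians $1+2xy$ and $2+2xy$, so they do not generate $\KK[x,y]$.
\item No element of $S$ is associate to $x$.
\item Your identity reads $1=(2+xy)-(1+xy)$, and both terms are non-units. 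So no valuation or $T$-stable-subvariety argument can produce a contradiction.
\end{itemize}

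What is missing is a hypothesis on $S$ that ties its elements to irreducible semi-invariants. Suppose every non-invertible element of $S$ is irreducible, as for $\{x,y,z,w\}$ in the $\mathrm{SL}_2$ example. Then (ii) follows in a few lines from the definition, with no valuation argument:
\begin{itemize}
\item By (3), each $s\in S$ with $\alpha(\omega_s)>0$ is a non-unit, hence irreducible, hence $s=\gamma g$ by (2).
\item The no-associates hypothesis leaves at most one such $s$.
\item There is at least one, because $g$ is a polynomial in $S$ and $\alpha(\omega_g)>0$.
\item This $s$ satisfies (i) with the same $\alpha$.
\end{itemize}
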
 

Let us denote the set of weights of isolated irreducible $T$-semi-invariants on the $T$-variety $X$ by $W(X,T)\subseteq M\cong \mathbb{Z}^n$. 

\begin{re}
Suppose we have two varieties $X_1$ and $X_2$ with $T_1$-action on $X_1$ and $T_2$-action on $X_2$, and there exist  isomophismes $\varphi\colon X_1\rightarrow X_2$ and $\psi\colon T_1\rightarrow T_2$ such that 
$\varphi(t\cdot x)=\psi(t)\cdot\varphi(x)$.
Let us identify $M_1=\mathfrak{X}(T_1)\cong \mathbb{Z}^n$ and $M_2=\mathfrak{X}(T_2)\cong \mathbb{Z}^n$ via some isomorphism.
Then there exists  such $\eta\in\mathrm{GL}_n(\mathbb{Z})$, that $$\eta(W(X_1,T_1))=W(X_2,T_2).$$ 
\end{re}

\begin{de}
    Let us call two subsets $A$ and $B$ in $\mathbb{Z}^n$ {\it equivalent} if there exists such $\eta\in\mathrm{GL}_n(\mathbb{Z})$, that $\eta(A)=B.$
\end{de}

This gives a tool for proving that two $T$-varieties are not $T$-equivariant isomorphic. In particular we obtain the following statement, which we use in Section~\ref{secconj}.

\begin{lem}\label{nesnct}
Let $T_1, T_2$ be two subtori in $\mathrm{Aut}(X)$. If the sets $W(X,T_1)$ and $W(X,T_2)$ are non-equivalent, then $T_1$ and $T_2$ are not conjugate in $\mathrm{Aut}(X)$.
\end{lem}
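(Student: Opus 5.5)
We argue by contraposition. Suppose $T_1$ and $T_2$ are conjugate in $\Aut(X)$, i.e. there is $\varphi\in\Aut(X)$ with $\varphi T_1\varphi^{-1}=T_2$. Then the map $\psi\colon T_1\to T_2$, $\psi(t)=\varphi t\varphi^{-1}$, is an isomorphism of algebraic tori, since conjugation by a regular automorphism preserves the algebraic group structure of the subtorus. By construction it satisfies $\varphi(t\cdot x)=\psi(t)\cdot\varphi(x)$ for all $t\in T_1$ and $x\in X$. We are then exactly in the setting of the preceding Remark, with $X_1=X_2=X$. That Remark gives $\eta\in\mathrm{GL}_n(\mathbb{Z})$ with $\eta(W(X,T_1))=W(X,T_2)$, so the two sets are equivalent, contradicting the hypothesis.

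It remains to justify the Remark. The pullback $\varphi^*\colon\KK[X]\to\KK[X]$ is a ring automorphism, so it preserves irreducibility and invertibility. It also sends $T_2$-semi-invariants of weight $\chi$ to $T_1$-semi-invariants of weight $\chi\circ\psi=\psi^*(\chi)$. The induced map $\psi^*\colon M_2\to M_1$ is a lattice isomorphism, hence an element of $\mathrm{GL}_n(\mathbb{Z})$ once both lattices are identified with $\mathbb{Z}^n$.

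Now let $f$ be isolated for $T_2$ with $f$-separating function $\alpha$. Then $\varphi^*f$ is an irreducible $T_1$-semi-invariant of weight $\psi^*(\omega)$. We check that $\alpha\circ(\psi^*)^{-1}$ is a separating function for it, condition by condition:
\begin{itemize}
\item[(1)] positivity on $\psi^*(\omega)$ holds because $\alpha(\omega)>0$;
\item[(2)] if $h$ is an irreducible $T_1$-semi-invariant of positive value, then $(\varphi^*)^{-1}h$ is an irreducible $T_2$-semi-invariant of positive value, so $(\varphi^*)^{-1}h=\gamma f$ with $\gamma$ an invertible semi-invariant, and applying $\varphi^*$ gives $h=(\varphi^*\gamma)\varphi^*f$;
\item[(3)] vanishing on weights of invertible semi-invariants transfers in the same way, since $\varphi^*$ matches the invertible semi-invariants of the two actions.
\end{itemize}
Hence $\psi^*(W(X,T_2))\subseteq W(X,T_1)$, and applying the same argument to $\varphi^{-1}$ gives equality.

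The only delicate point is that conjugation gives an isomorphism of tori compatible with the actions. This holds because $T_i$ are algebraic subgroups and conjugation by $\varphi$ is a morphism of group actions. The remaining steps are bookkeeping.
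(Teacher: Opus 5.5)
Your proposal is correct and matches the paper's route. The paper gives no separate proof: it derives the lemma directly from the preceding Remark, since a conjugating automorphism $\varphi$ with $\psi(t)=\varphi t\varphi^{-1}$ is exactly an equivariant isomorphism. Your transfer of the separating function via $\alpha\circ(\psi^*)^{-1}$, with conditions (1)--(3) checked and $\varphi^{-1}$ used for the reverse inclusion, supplies the verification of that Remark, which the paper leaves to the reader.
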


\begin{ex}
    Let us define two $1$-dimensional tori $T_1$ and $T_2$ in automorphism group of $$\mathrm{SL}_2=\mathrm{Spec}\left(\KK[x,y,z,w]/(xw-yz-1)\right).$$
    $$
    t_1\cdot (x,y,z,w)=(t_1x,y,z,t_1^{-1}w),\qquad
    t_2\cdot (x,y,z,w)=(t_2x,t_2y,t_2^{-1}z,t_2^{-1}w).
    $$
    We have $W(\mathrm{SL}_2,T_1)=\{1,-1\}$, $W(\mathrm{SL}_2,T_2)=\varnothing$. Therefore, $T_1$ and $T_2$ are not conjugate in $\Aut(\mathrm{SL}_2)$.
\end{ex}

\subsection{Factorial gradings}
\begin{de}
    An affine variety $X$ is said to be \textit{factorially $G$-graded} if its coordinate algebra $\KK[X]$ is equipped with a $G$-grading and every homogeneous non-zero non-unit element $f$ can be written as a product of homogeneously irreducible elements and this factorization is unique up to the order of the factors and multiplication by homogeneous units. An element $p$ is said to be {\it homogeneously irreducible} if it is not a unit and $p = a \cdot b$ for some homogeneous $a$ and $b$ implies at least one of them being a homogeneous unit.
\end{de}

Suppose that a variety $X$ admits a $G$-grading $\zeta$ (may be a trivial one) on $\KK[X]$ by an abelian group $G$ such that $\deg f=w$ for some $f\in \KK[X]$. Then, we can define a natural $(\mathbb{Z}^{m} \times G)/\langle(-k_1, \dots, -k_m, w)\rangle$-grading on $\KK[Y]$, where $Y=\Susp(X,f,k_1,\ldots, k_m)$. Indeed, consider the $\mathbb{Z}^m$-grading on $\KK^m$ with homogeneous coordinates $y_1,\ldots, y_m$ and the degrees $\deg y_j=e_j$, where $\{e_1,\ldots, e_m\}$ form a basis in $\mathbb{Z}^m$. Then $\deg(y_1^{k_1}\ldots y_m^{k_m})=(k_1,\ldots,k_m)$. Then we gather this grading $\eta$ on $\KK[y_1,\ldots, y_m]$ and $G$-grading $\zeta$ on $\KK[X]$ to obtain a $(\mathbb{Z}^{m} \times G)/\langle(-k_1, \dots, -k_m, w)\rangle$-grading on $\KK[Y]$ defined by 
$$
\deg y_j=(\deg_{\eta}y_j,0),\qquad \deg F=(0,\deg_{\zeta}F),\text{ for }F\in \KK[X]. 
$$
Since $\deg (y_1^{k_1}\ldots y_m^{k_m})=(k_1,\ldots,k_m, 0)$ and $\deg f=(0, \ldots, 0, w)$, we equate these degrees and get a $(\mathbb{Z}^{m} \times G)/\langle(-k_1, \dots, -k_m, w)\rangle$-grading on $\KK[Y]$.

\begin{lem}\label{facgrlem}
    Suppose $\KK[X]$ admits a factorial $G$-grading by an abelian group $G$ such that $\deg(f)=w$, where $f$ is a homogeneously irreducible element. Consider $Y=\Susp(X,f,k_1,\ldots, k_m)$.  Then the corresponding $(\mathbb{Z}^{m} \times G)/\langle(-k_1, \dots, -k_m, w)\rangle$-grading on $\KK[Y]$ is factorial. 
\end{lem}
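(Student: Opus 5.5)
Set $H=(\mathbb{Z}^m\times G)/\langle(-k_1,\dots,-k_m,w)\rangle$. The plan is to work with the localization $\KK[Y]_{y_1\cdots y_m}$, where the structure is simple. Because $y_1^{k_1}\cdots y_m^{k_m}=f$ holds in $\KK[Y]$, we get
$$\KK[Y]_{y_1\cdots y_m}\cong \KK[X]_f[y_1^{\pm1},\ldots,y_m^{\pm1}]/(y_1^{k_1}\cdots y_m^{k_m}-f).$$
In this ring every $y_j$ is a homogeneous unit. The homogeneous prime elements of $\KK[X]$ other than $f$ (up to homogeneous units) stay homogeneously prime there. So the proof has two parts. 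First, show that each $y_j$ is homogeneously prime in $\KK[Y]$. Second, apply a Nagata-type criterion for graded rings: if a graded domain is generated, up to localization, by homogeneous primes and its localization at them is factorially graded, then it is factorially graded.

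\textbf{Step 1: each $y_j$ is homogeneously prime.} Compute
$$\KK[Y]/(y_j)\cong \KK[X]/(f)\,[y_i : i\neq j].$$
It is enough that the product of two homogeneous elements not lying in $(y_j)$ stays outside $(y_j)$. Since $f$ is homogeneously prime in $\KK[X]$, the ring $\KK[X]/(f)$ is graded-integral: a product of nonzero homogeneous elements is nonzero. A polynomial ring over a graded-integral ring, with the $y_i$ homogeneous, is again graded-integral. This needs some care: we must check that homogeneity in the quotient grading by $H$ corresponds to multihomogeneity in the $y_i$ together with $G$-homogeneity of the coefficients. That holds because $y_j$ has been killed, so the relation generator only links degrees through $f$, which is now zero. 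This gives Step 1.

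\textbf{Step 2: the localization is factorially graded.} Since $f$ is a homogeneous unit in $\KK[X]_f$, the grading of $\KK[X]_f$ is factorial, and its homogeneous primes are those of $\KK[X]$ other than $f$. Adjoining the units $y_j^{\pm1}$ subject to the single relation $\prod y_j^{k_j}=f$ gives a ring graded by $H$. Every homogeneous element of it has the form $u\cdot a$, where $u$ is a Laurent monomial in the $y_j$ and $a\in\KK[X]_f$ is homogeneous. Hence homogeneous factorizations are governed by those in $\KK[X]_f$. We also need each homogeneous prime $p$ of $\KK[X]$ with $p\neq f$ to remain homogeneously prime. For this, view
$$\KK[Y]_{y}/(p)\cong(\KK[X]_f/(p))[y^{\pm1}]/(\textstyle\prod y_j^{k_j}-\bar f).$$
This is a graded twisted group ring over the graded-integral ring $\KK[X]_f/(p)$, in which $\bar f$ is a homogeneous unit. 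It is therefore graded-integral.

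\textbf{Step 3: Nagata descent.} Take a homogeneous nonzero nonunit $F\in\KK[Y]$. Pull out the maximal powers of the $y_j$ that divide $F$; this terminates because $\KK[Y]$ is Noetherian. What remains factors in the localization into homogeneous primes, and each of these can be chosen to lie in $\KK[Y]$ and not be divisible by any $y_j$. The standard Nagata argument, transcribed to the graded setting, shows these factors are homogeneously prime in $\KK[Y]$. Uniqueness of the factorization then follows from primality.

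\textbf{Main obstacle.} I expect the delicate point to be the graded bookkeeping in Steps 1 and 2, namely verifying graded-integrality when the grading group $H$ may have torsion. I would handle this by working with the finer $\mathbb{Z}^m\times G$-grading on $\KK[X][y_1,\ldots,y_m]$ and noting that the relation is homogeneous of degree zero in the quotient.
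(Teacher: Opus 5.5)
Your argument is correct, but it follows a different route from the paper's proof.

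\textbf{The paper's route.} The paper works entirely with the normal form
\[
F=\sum_s h_s\,y_1^{s_1}\cdots y_m^{s_m},
\]
where in every summand $s_i<k_i$ for at least one $i$. In other words, $\KK[Y]$ is a free $\KK[X]$-module on these monomials. A degree comparison in $H$ shows that every homogeneous element is a single term $h\,y_1^{a_1}\cdots y_m^{a_m}$. From this the homogeneously irreducible elements are listed explicitly: the $y_j$, and the homogeneous irreducibles of $\KK[X]$ not associated with $f$. Existence and uniqueness of factorizations are then reduced to $\KK[X]$ by rewriting $y^{s}=y^{a}f^{l}$.

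\textbf{Your route.} You prove that each $y_j$ is homogeneously prime via $\KK[Y]/(y_j)\cong(\KK[X]/(f))[y_i : i\neq j]$. You identify $\KK[Y]_{y_1\cdots y_m}$ as a twisted Laurent extension of $\KK[X]_f$, whose factoriality comes from that of $\KK[X]_f$. You then descend with a graded Nagata criterion.

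\textbf{Comparison.} Your route is more conceptual and yields primality, not just irreducibility, of the $y_j$ and of the homogeneous primes $p\not\sim f$ of $\KK[X]$ at once. It gives the same list of prime elements that the paper uses later, for instance in Lemma~\ref{vl1} and Proposition~\ref{tpp}. The paper's route is self-contained. It needs neither a graded Nagata lemma nor a Noetherian or Krull-intersection argument to stop extracting powers of the $y_j$.

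\textbf{Two points to make precise.}

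First, in Step~1 your stated reason is heuristic. The reason $H$-homogeneity in $\KK[Y]/(y_j)$ means ``monomial in the $y_i$ times a $G$-homogeneous coefficient'' is not that ``$f$ is now zero''. It is that the $j$-th coordinate of $(-k_1,\ldots,-k_m,w)$ is $-k_j\neq 0$. Hence the map $\mathbb{Z}^{m-1}\times G\to H$, restricted to monomials with $y_j$-exponent $0$, is injective. This is a simpler form of the paper's degree computation.

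Second, the descent needs $\KK[Y]\to\KK[Y]_{y_1\cdots y_m}$ to be injective, i.e.\ the $y_j$ must be non-zero-divisors, at least on homogeneous elements. The same is needed to cancel powers of the $y_j$ in the Nagata argument. You get this by assuming $\KK[Y]$ is a domain. It is cleaner to derive it directly from the freeness of $\KK[Y]$ over $\KK[X]$ that the normal form provides.
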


\begin{proof}
Any element $F \in \KK[Y]$ can be uniquely expressed in the following normal form:
$$F = \sum_{s} h_s y_1^{s_1}\ldots y_{m}^{s_m}, \quad h_s \in \KK[X],$$
where in every summand $s_i < k_i$ for at least one index $i$.
If $F$ is homogeneous, the above expression consists of exactly one summand. Indeed, suppose two distinct summands $h_a y_1^{a_1}\dots y_m^{a_m}$ and $h_b y_1^{b_1}\dots y_m^{b_m}$ have the same degree. This implies
$$(a_1, \dots, a_m, \deg_G h_a) - (b_1, \dots, b_m, \deg_G h_b) = c \cdot ( -k_1, \dots, -k_m, w)$$
for some integer $c$. Considering the $\mathbb{Z}^m$ components, we obtain $a_j - b_j = -c k_j$ for all $j$. If $c > 0$, then $b_j = a_j + c k_j \ge k_j$ for all $j$, which contradicts the normal form condition for the second summand. Symmetrically, $c < 0$ implies $a_j \ge k_j$ for all $j$, which is also impossible. Hence, $c = 0$, which yields $a_j = b_j$ for all $j$ and $\deg_G h_a = \deg_G h_b$. Thus, any homogeneous element is of the form $h y_1^{a_1}\dots y_m^{a_m}$ where $h \in \KK[X]$ is $G$-homogeneous.

We claim that the homogeneously irreducible elements in $\KK[Y]$ are exactly the following:
\begin{itemize}
\item $G$-irreducible homogeneous elements in $\KK[X]$ that are not associated with~$f$;
\item Elements of the form $\lambda y_j$ for $\lambda \in \KK^\times$.
\end{itemize}

First, suppose $y_i = a \cdot b$ for some homogeneous $a, b \in \KK[Y]$. Expressing $a$ and~$b$ in normal form, we have $a = y_1^{a_1}\ldots y_m^{a_m} p$ and $b = y_1^{b_1} \ldots y_m^{b_m} q$. This yields $y_i = y_1^{a_1 + b_1}\ldots y_m^{a_m+b_m} pq$. By reducing this product to normal form, we obtain $y_i = y_1^{c_1}\ldots y_m^{c_m} f^l pq$ for some integer $l \ge 0$. Due to the uniqueness of the normal form over $\KK[X]$, this requires $c_i = 1$, $c_j = 0$ for all $j \neq i$, and $f^l pq = 1$. Since~$f$ is not a unit, $l = 0$. Hence, $(a_1, \ldots, a_m) + (b_1, \ldots, b_m) = (c_1, \ldots, c_m)$. This implies that for each $j$, either $a_j = c_j$ and $b_j = 0$, or vice versa. Thus, every $y_i$ is homogeneously irreducible in $\KK[Y]$.

Next, suppose $p \in \KK[X]$ is $G$-irreducible, homogeneous, and $p \neq \lambda f$. Suppose $p = a \cdot b$ for some homogeneous $a, b \in \KK[Y]$. Let $a = y_1^{a_1}\ldots y_m^{a_m} p_1$ and $b = y_1^{b_1}\ldots y_m^{b_m} p_2$ in normal form. Since the degrees of $p$ and $a \cdot b$ are equal, we get:
$$(0, \dots, 0, \deg_G p) - ( a_1 + b_1, \dots, a_m + b_m, \deg_G(p_1 p_2)) = c \cdot (-k_1, \dots, -k_m, w)$$
for some integer $c$. Looking at the $\mathbb{Z}^m$ coordinates, we have $-(a_j + b_j) = -c k_j$, so $a_j + b_j = c k_j$. Since $a_j, b_j \ge 0$, we must have $c \ge 0$. This implies $p = f^c p_1 p_2$ in $\KK[X]$. Because $p$ is homogeneously irreducible in $\KK[X]$ and $p \neq \lambda f$, we must have $c = 0$, which forces $a_j = b_j = 0$ for all $j$. We thus obtain the decomposition $p = p_1 p_2$ in $\KK[X]$. Since $p$ is irreducible in $\KK[X]$, either $p_1$ or $p_2$ is a unit, proving that $p$ remains irreducible in $\KK[Y]$.
Finally, we prove unique factorization. Take a homogeneous element $F \in \KK[Y]$. Any factorization of $F$ into homogeneous irreducible elements takes the form:
$$F = \lambda \cdot y_1^{s_1}\ldots y_{m}^{s_m} \cdot p_1 \ldots p_r$$
where $p_i \in \KK[X]$ are irreducible. Putting $a_i = s_i - l k_i$, where $l$ is the maximal integer such that $a_i \ge 0$ for all $i$, we can convert this to the normal form:
$$F = \lambda \cdot y_1^{a_1} \ldots y_m^{a_m} \cdot f^l \cdot p_1 \ldots p_r$$

To show uniqueness, assume another representation in normal form:
$$\lambda \cdot y_1^{a_1} \ldots y_m^{a_m} \cdot f^{l_1}\cdot p_1 \ldots p_r = \mu \cdot y_1^{b_1} \ldots y_m^{b_m} \cdot f^{l_2}\cdot q_1 \ldots q_s$$
The uniqueness of the normal form over $\KK[X]$ implies that $a_j = b_j$ for all $j$ and
$$\lambda \cdot f^{l_1}\cdot p_1 \ldots p_r = \mu \cdot f^{l_2}\cdot q_1 \ldots q_s$$
in $\KK[X]$. Since $\KK[X]$ is factorially $G$-graded, we conclude $l_1 = l_2$ and the sets of homogeneous irreducibles $\{p_i\}$ and $\{q_j\}$ coincide up to permutation and multiplication by elements of $\KK^\times$. This finishes the proof.
\end{proof}

If $w=\deg f = 0\in G$, then 
$$(\mathbb{Z}^{m} \times G)/\langle(-k_1, \dots, -k_m, w)\rangle\cong \mathbb{Z}_d \times \mathbb{Z}^{m-1} \times G,$$ where $d = \gcd(k_1, \ldots, k_m)$. The following corollary is straightforward.
\begin{cor}
In conditions of Lemma~\ref{facgrlem} consider two $m$-suspensions $Y=\Susp(X,f,k_1,\ldots, k_m)$ and $Z=\Susp(X,f,s_1,\ldots, s_r)$. Put $d=\gcd(k_1,\ldots, k_m)$ and $l=\gcd(s_1,\ldots, s_r)$. Then $\KK[Y]$ is $(\mathbb{Z}^{m-1}\times \mathbb{Z}_d\times G)$-factorial if and only if $\KK[Z]$ is $(\mathbb{Z}^{r-1}\times \mathbb{Z}_l\times G)$-factorial.
\end{cor}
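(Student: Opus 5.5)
The plan is to find one condition that depends only on the pair $(X,f)$ and to show that graded factoriality of $\KK[Y]$ is equivalent to it. Since the condition does not involve $k_1,\ldots,k_m$, the same equivalence holds for $Z$ with $s_1,\ldots,s_r$, and the corollary follows. Throughout, $\deg f=w=0$, $f$ is a nonzero non-unit, and the grading group is identified with $\mathbb{Z}^{m-1}\times\mathbb{Z}_d\times G$ as in the remark before the statement. The condition I aim for is
$$(\ast)\qquad \KK[X]\ \text{is factorially } G\text{-graded and } f \text{ is homogeneously irreducible}.$$
Lemma~\ref{facgrlem} already gives $(\ast)\Rightarrow$ factoriality of $\KK[Y]$, so only the converse remains.

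Several parts of the proof of Lemma~\ref{facgrlem} do not use factoriality of $\KK[X]$, and I will reuse them.
\begin{enumerate}
\item The normal form $F=\sum h_s y^s$, and the fact that homogeneous elements of $\KK[Y]$ are exactly the monomials $h\,y_1^{a_1}\cdots y_m^{a_m}$ with $h$ homogeneous in $\KK[X]$.
\item The fact that each $y_j$ is homogeneously irreducible in $\KK[Y]$.
\item The fact that a homogeneously irreducible $q\in\KK[X]$ not associated with $f$ remains homogeneously irreducible in $\KK[Y]$. Indeed, $q=ab$ forces $q=f^c p_1p_2$, and $c>0$ would make $q$ associated with $f$.
\end{enumerate}
I will also use the following regrouping fact. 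Since $\deg f=0$, a homogeneous element of $\KK[X]$ has degree in $\{0\}\times G$. Hence whenever a product $\prod y^{a^{(i)}}h_i$ lies in $\KK[X]$, the total exponent vector is $c(k_1,\ldots,k_m)$, and the product equals $f^c\prod h_i$ in $\KK[X]$.

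Now assume $\KK[Y]$ is factorially graded.

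First, $f$ is homogeneously irreducible in $\KK[X]$. Suppose $f=f_1f_2$ with $f_1,f_2$ homogeneous non-units of $\KK[X]$. Factor $f_1$ and $f_2$ in $\KK[Y]$. This gives a factorization of $y_1^{k_1}\cdots y_m^{k_m}$, whose factors are $y_j$'s (by point~2) up to units. By uniqueness of factorization in $\KK[Y]$, the factors of $f_1$ are then also $y$-powers times units. The regrouping fact gives $f_1=\lambda f^{c}$ for some $\lambda\in\KK^\times$ and $c\geq 0$, and similarly for $f_2$. Since $f$ is not a unit, one of $f_1,f_2$ must be a unit, a contradiction.

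Second, existence of factorizations in $\KK[X]$. Take a homogeneous non-unit $p\in\KK[X]$ and factor it in $\KK[Y]$ into monomials $y^{a^{(i)}}h_i$. By the regrouping fact, $p=f^c\prod h_i$. Each $h_i$ is homogeneously irreducible in $\KK[X]$, since a splitting in $\KK[X]$ is a splitting in $\KK[Y]$. Together with the first step, this is a factorization of $p$ into homogeneously irreducible elements of $\KK[X]$.

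Third, uniqueness in $\KK[X]$. Take two such factorizations of $p$ in $\KK[X]$ and collect the factors associated with $f$ into $f^{l_1}$ and $f^{l_2}$. Replace each $f$ by $y_1^{k_1}\cdots y_m^{k_m}$. By point~3, the remaining factors are irreducible in $\KK[Y]$. Uniqueness in $\KK[Y]$ then shows that the $y$-exponents agree, so $l_1=l_2$, and that the remaining factors agree up to order and homogeneous units. Homogeneous units of $\KK[Y]$ are of the form $h y^0$ with $h$ a unit of $\KK[X]$, so the factorizations agree in $\KK[X]$.

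The main obstacle is this unit and associate bookkeeping. Specifically, I need that units of $\KK[Y]$ lie in $\KK[X]$. This holds because $y_j$ divides $f$ and $f$ is not a unit, so no $y_j$ is invertible, and then point~1 puts every homogeneous unit in $\KK[X]$. I also need that a factor associated with $f$ in $\KK[X]$ is recognized through its $y$-exponents. I would check both carefully against the normal-form argument of Lemma~\ref{facgrlem}. Once this is done, applying the same equivalence to $Z$ finishes the proof.
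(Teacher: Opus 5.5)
You do prove the statement, but by a longer route than needed. The part that goes beyond the statement also needs one restriction.

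\textbf{What the statement actually requires.} The hypothesis ``in conditions of Lemma~\ref{facgrlem}'' already contains your condition $(\ast)$: $\KK[X]$ is factorially $G$-graded and $f$ is homogeneously irreducible, with $w=\deg f=0$ implicit. Apply Lemma~\ref{facgrlem} to $Y$ and to $Z$. Combined with the isomorphism $(\mathbb{Z}^{m}\times G)/\langle(-k_1,\ldots,-k_m,0)\rangle\cong\mathbb{Z}^{m-1}\times\mathbb{Z}_d\times G$, this shows that $\KK[Y]$ and $\KK[Z]$ are both factorially graded. So both sides of the equivalence hold, and this is all the paper's ``straightforward'' amounts to. The converse you work on is therefore not needed.

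\textbf{What your converse adds.} Your converse says that graded factoriality of $\KK[Y]$ forces $(\ast)$. That is a genuinely stronger result: it would make the equivalence meaningful without assuming anything about $X$. Your argument for it is sound when $m\geq 2$ or $k_1\geq 2$.

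\textbf{Where the converse fails.} It breaks for the trivial suspension $m=1$, $k_1=1$. There $y_1=f$ already lies in $\KK[X]$, so $y_1$ is not in normal form. Your point~2, that $y_1$ is homogeneously irreducible, then fails unless $f$ is irreducible.

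The strong version of the corollary is in fact false in this case. Take $X=\KK^2$ with coordinates $a,b$, trivial grading, and $f=ab$.
\begin{itemize}
\item $\Susp(X,f,1)\cong\KK^2$ is factorial.
\item $\KK[a,b,y]/(y^2-ab)$ is not $\mathbb{Z}_2$-factorial: $y\cdot y=a\cdot b$ gives two factorizations into pairwise non-associated homogeneously irreducible elements.
\end{itemize}
So if you want the stronger statement, you must exclude the trivial suspension.

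\textbf{A faulty justification.} Your reason that no $y_j$ is a unit is backwards. ``$y_j$ divides $f$ and $f$ is not a unit'' does not prevent $y_j$ from being a unit, since a unit divides everything. The correct reason is the normal form. Suppose $y_jv=1$ with $v=q\,y_1^{\beta_1}\cdots y_m^{\beta_m}$ homogeneous. Reducing $y_jy_1^{\beta_1}\cdots y_m^{\beta_m}$ to normal form and comparing with $1$ gives $f^{l}q=1$ in $\KK[X]$ with $l\geq 1$. This contradicts that $f$ is not a unit. With this fix, homogeneous units of $\KK[Y]$ are units of $\KK[X]$, and the rest of your argument goes through in the non-degenerate case.
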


Suppose $\KK[X]$ admits a factorial $G$-grading by an abelian group $G$ such that $\deg(f)=w$, where $f$ is a nonzero homogeneous element. Consider $Y=X\times \KK^m$.  Introduce  $(\mathbb{Z}^{m} \times G)/\langle(-k_1, \dots, -k_m, w)\rangle$-grading on $\KK[Y]=\KK[X][z_1,\ldots,z_m]$ by
$\deg z_i=e_i$. 

\begin{lem}\label{sfar}
     The considered $(\mathbb{Z}^{m} \times G)/\langle(-k_1, \ldots, -k_m, w)\rangle$-grading on $\KK[Y]$ is factorial and $h=f+z_1^{k_1}\ldots z_m^{k_m}$ is a homogeneously irreducible element. 
\end{lem}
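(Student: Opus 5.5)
Write $H=(\mathbb{Z}^{m}\times G)/\langle(-k_1,\ldots,-k_m,w)\rangle$ for the grading group. The plan is to reduce to Lemma~\ref{facgrlem}. Note that $Y=X\times\KK^m$ is isomorphic to the $m$-suspension $Y'=\Susp(X\times\KK,\,t,\,k_1,\ldots,k_m)$ over $X\times\KK$, where $t$ is the coordinate on the extra factor $\KK$. Indeed, $\KK[Y']=\KK[X][t,y_1,\ldots,y_m]/(y_1^{k_1}\cdots y_m^{k_m}-t)$, and eliminating $t$ gives $\KK[X][y_1,\ldots,y_m]$. Better still, I would use the coordinate change $t\mapsto t-f$, so that $Y\cong\Susp(X\times\KK,\,s+f,\,k_1,\ldots,k_m)$ with $s=t-f$. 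Under $y_i\leftrightarrow z_i$ this identifies $y_1^{k_1}\cdots y_m^{k_m}$ with $s+f$. Hence the element $h=f+z_1^{k_1}\cdots z_m^{k_m}$ of the statement becomes $2f+s$, and we may simply rescale to the function $F=s+f$ on $X\times\KK$. More cleanly, I will regard $\KK[Y]$ directly as $\KK[X\times\KK]/(F'-\text{monomial})$ with $F'$ built from $f$ and the new coordinate. The precise normalization will be fixed so that the suspension function is homogeneous of degree $w$.

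The first step is to grade $\KK[X][s]$ by $G$, extending $\zeta$ with $\deg s=w$, so that $F=s+f$ is homogeneous of degree $w$. This grading is factorial: it is the polynomial ring over a factorially $G$-graded ring with a homogeneous variable. I would check this by the usual Gauss-type argument, namely that a homogeneous polynomial in $s$ splits as a homogeneous element of $\KK[X]$ times a power of $s$ times homogeneous factors. Alternatively, I would simply apply the degree map and note that homogeneous elements of $\KK[X][s]$ are of the form $\sum a_j s^j$ with $\deg a_j=\deg-jw$. I expect this step to be the main obstacle. When $w$ has infinite order, homogeneous elements are monomials $as^j$ and factoriality is immediate. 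When $w$ has finite order, one genuinely needs unique factorization of polynomials over the graded ring, so I would localize at homogeneous elements and use the graded-UFD version of Gauss's lemma.

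The second step is to observe that $F=s+f$ is homogeneously irreducible in $\KK[X][s]$. It has degree $1$ in $s$ with leading coefficient $1$, so any homogeneous factorization has one factor that is a homogeneous element of $\KK[X]$ dividing both $1$ and $f$, hence a unit.

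The third step applies Lemma~\ref{facgrlem} to $\Susp(X\times\KK,F,k_1,\ldots,k_m)\cong Y$, which yields that the $H$-grading is factorial. I would then check that under the isomorphism this grading coincides with the one in the statement, since $\deg z_i=e_i$ and $\deg$ on $\KK[X]$ is unchanged. Finally, $h$ is the image of the original $f+z_1^{k_1}\cdots z_m^{k_m}$, which corresponds to $2f+s$ and is homogeneous of degree $w$. It is homogeneously irreducible by the same argument as in the second step, carried over to $\KK[Y]$. Alternatively, I would use the description from the proof of Lemma~\ref{facgrlem} that homogeneous elements have the form $q\,z^{a}$ with $q\in\KK[X][s]$ homogeneous, and the fact that the homogeneous irreducibles are the $z_i$ and the irreducibles of $\KK[X][s]$ not associated with $F$. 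One checks that $2f+s$ is such an irreducible and is not associated with $s+f$.
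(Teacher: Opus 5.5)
Your route is sound and is essentially a cleaner packaging of the paper's argument. The identification $Y\cong\Susp(X\times\KK,s+f,k_1,\ldots,k_m)$ is an isomorphism of $(\mathbb{Z}^m\times G)/\langle(-k_1,\ldots,-k_m,w)\rangle$-graded algebras: with $\deg s=w$, the relation $y_1^{k_1}\cdots y_m^{k_m}-s-f$ is homogeneous, and $s\mapsto z_1^{k_1}\cdots z_m^{k_m}-f$ has degree $\overline{(0,w)}$. Lemma~\ref{facgrlem} therefore does the work that the paper does by hand, where it writes homogeneous elements as $z^a g(v)$ and says it suffices to treat $\KK[X][t]$. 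Your irreducibility argument for $h\leftrightarrow s+2f$ is more elementary than the paper's passage through $L[t]$: the element has degree one in $s$ with leading coefficient $1$, and it is not associated with $s+f$ because $f\neq 0$. Taking the suspension function to be $s$ itself, so that $h\leftrightarrow s+f$, would remove the muddled normalization in your first paragraph. In both proofs everything rests on the same fact, your Step~1: $\KK[X][s]$ is factorially $G$-graded with $\deg s=w$.

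In Step~1 you make a false claim. For $w$ of infinite order, the homogeneous elements of $\KK[X][s]$ are \emph{not} just monomials $as^j$. Since $f\neq 0$ is homogeneous of degree $w$, the monomials $f$ and $s$ have the same degree. Indeed $s+f$, which you use as a homogeneous element in Step~2, is not a monomial. The order of $w$ plays no role, so as written Step~1 is unproved in the infinite-order case. The graded Gauss-lemma argument you reserve for finite order must be carried out in general, and it is the real content of the lemma.

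To carry it out, let $L$ be the localization of $\KK[X]$ at nonzero homogeneous elements. Note that $L$ is not a field, so $L[s]$ is not literally Euclidean. However, $f$ is a unit in $L$, so $L[s]=L[u]$ with $u=s/f$ of degree $0$. Every nonzero homogeneous element of $L[s]$ is then a homogeneous unit of $L$ times an element of $L_0[u]$, where the degree-zero part $L_0$ is a field. Hence $L[s]$ is factorially graded.

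The content argument then descends to $\KK[X][s]$. It works because coefficients of homogeneous polynomials are homogeneous. It also uses the fact that homogeneously irreducible elements of a factorially graded domain are homogeneously prime.
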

\begin{proof}
Denote $v=z_1^{k_1}\ldots z_m^{k_m}$. It is easy to see that each homogeneous element has the form 
$
z_1^{a_1}\ldots z_m^{a_m}g(v),
$
where there exists $a_j<k_j$ and $g(t)$ is a polynomial in $\KK[X][t]$ with homogeneous coefficients. By degrees arguments $a_i$ is uniquely defined. Since $z_j$ is homogeneously irreducible, we need only to prove that the algebra $\KK[X][t]$ is factorially graded by $\mathbb{Z}\times G/\langle(-1,w)\rangle$. 

Consider the localization $L$ of $\KK[X]$ in all non-zero homogeneous elements. Then $L[t]$ is a Euclidean domain, and hence, it is factorial. Using this by standard arguments similar to Gauss's lemma one can prove that $\KK[X][t]$ is homogeneously factorial. 

Since $h(t)=f+t$ is irreducible in $L[t]$ and primitive, it is irreducible in $\KK[Y]$. 
    
\end{proof}

The $m$-suspension technique naturally extends on trinomial varieties. Indeed, each trinomial variety $Z$ can be contained in the sequence of varieties $Z_0=\KK^{n_0+n_1+\theta}$, $Z_1,\ldots, Z_{k-1}=Z$, where $Z_{j+1}=\Susp(Z_{j},f_j, l_{j1},\ldots, l_{j_n})$, where $f_j=\lambda_j T_0^{l_0}+\mu_jT_1^{l_1}$. By Lemma~\ref{sfar}, $\KK^{n_0+n_1+\theta}$ is factorially graded by 
$$\mathbb{Z}^{n_0+n_1}/\langle(-k_{01},\ldots,-k_{0n_0},k_{11},\ldots, k_{1n_1})\rangle,$$ 
and all $f_i$ are homogeneously irreducible. As a consequence of Lemma \ref{facgrlem}, we obtain an alternative proof of a well-known result due to Hausen and Wrobel; see \cite[Theorem~1.2]{H-W}, see also \cite[Theorem~1.4]{H-H}. Let $X$ be a trinomial variety. Define the finest grading $\eta$ for which all $T_{ij}$ and $S_i$ are homogeneous. This grading is obtained by equating of weights of all monomials $T_i^{l_i}$; see \cite[Construction~1.1]{H-W} for details.
\begin{cor}\label{comi}
       Let $X$ be a trinomial variety. Then the grading $\eta$ on $\KK[X]$ is factorial.
\end{cor}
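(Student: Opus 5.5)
We build $\KK[X]$ by successive suspensions and carry factoriality along the chain, using Lemma~\ref{sfar} once at the start and then Lemma~\ref{facgrlem} at every later step. Since $X\cong Y(A)\times\mathbb{A}^\theta$ (Remark~\ref{initr}) and the $S_k$ are free variables of degree equal to their own basis vectors, we first settle the case $\theta=0$. Adjoining free variables with new free degrees preserves factoriality: this is the case $m=1$, $k=0$ of the Gauss-lemma argument in Lemma~\ref{sfar}, or we simply include the $S_k$ in the initial affine space.

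\emph{Type I.} Set $T_0^{l_0}=1$ and start from $Z_1=\KK^{n_1}=\Spec\KK[T_{11},\ldots,T_{1n_1}]$, graded by $\mathbb{Z}^{n_1}$. This is a factorial grading, since a polynomial ring with its monomial grading is factorial. The first relation can be written $T_2^{l_2}=T_1^{l_1}+c_1$. Hence $\KK[Z_2]=\KK[Z_1][T_{2j}]/(T_2^{l_2}-f_1)$ with $f_1=T_1^{l_1}+c_1$. We do not want to assume a homogeneous degree for $f_1$ directly. Instead we view $Z_1\times\KK^{n_2}$ with the grading of Lemma~\ref{sfar}, where $k_j=l_{2j}$, $w=\deg T_1^{l_1}$ and $f=T_1^{l_1}$ (up to the constant, which we absorb by writing $T_0^{l_0}$ homogeneous of degree $0$). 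Lemma~\ref{sfar} gives factoriality of the quotient grading and homogeneous irreducibility of the relevant element.

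\emph{Inductive step.} At stage $j$ we have $Z_j$ factorially graded by $G_j$, which is $\mathbb{Z}^{n_1+\cdots+n_j}$ modulo the identifications of the weights of the monomials $T_1^{l_1},\ldots,T_j^{l_j}$ (and $T_0^{l_0}$). We also have $f_j=T_j^{l_j}+c_j$, homogeneous because all monomials $T_i^{l_i}$, $i\le j$, share a common degree. Then $Z_{j+1}=\Susp(Z_j,f_j,l_{j+1,1},\ldots,l_{j+1,n_{j+1}})$, and Lemma~\ref{facgrlem} makes the induced grading on $\KK[Z_{j+1}]$ factorial. That induced grading is exactly the one obtained by additionally equating $\deg T_{j+1}^{l_{j+1}}$ with $\deg f_j$. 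At the end, $G_r$ is by construction the group generated by the $\deg T_{ij}$ subject to equality of the weights of all $T_i^{l_i}$, namely the grading $\eta$. Type II is the same argument: start from $\KK^{n_0+n_1}$ graded by $\mathbb{Z}^{n_0+n_1}/\langle \deg T_0^{l_0}-\deg T_1^{l_1}\rangle$, and take $f_j=\lambda_jT_0^{l_0}+\mu_jT_1^{l_1}$, rewriting each $g_i$ so that it expresses $T_{i+2}^{l_{i+2}}$ as such a combination.

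\emph{Main obstacle.} Lemma~\ref{facgrlem} requires $f_j$ to be \emph{homogeneously irreducible} in $\KK[Z_j]$, not just homogeneous. For $j=1$ this follows from Lemma~\ref{sfar}, since $T_1^{l_1}+c$ (respectively $\lambda T_0^{l_0}+\mu T_1^{l_1}$) is $h=f+z^k$ there. For $j\ge2$, we use the classification of homogeneous irreducibles from the proof of Lemma~\ref{facgrlem}: they are the $T_{j,i}$ and the images of homogeneous irreducibles of $\KK[Z_{j-1}]$ not associated with $f_{j-1}$.

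\emph{Checking irreducibility of $f_j$ for $j\ge 2$.} Every homogeneous element has the form $y^a h$ with $h\in\KK[Z_{j-1}]$. So we must show that $f_j$, written as a two-term combination of $T_j^{l_j}=f_{j-1}$ and an earlier monomial, is not such a product. Concretely, in Type I we have $f_j=f_{j-1}+c'$ with $c'\ne0$, and in Type II the coefficient conditions (pairwise independent columns) play the same role. We then show that $f_j$ is a homogeneously irreducible element of $\KK[Z_{j-1}]$ not associated with $f_{j-1}$.

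I expect this last verification to be the delicate point. I would prove it by reducing to $Z_1$. There, $f_j$ becomes a binomial in $T_0^{l_0},T_1^{l_1}$ that is not proportional to $f_{j-1}$, so it is a different element $f+z^k$ to which Lemma~\ref{sfar}'s Gauss-lemma argument applies. Homogeneous irreducibility in each successive suspension then follows from the classification above, since $f_j$ remains non-associated to every $f_i$.
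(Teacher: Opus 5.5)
Your overall route is the paper's. You present $X$ as an iterated suspension over the base $\KK^{n_0+n_1+\theta}$, in which every $f_j$ is a binomial $\lambda_jT_0^{l_0}+\mu_jT_1^{l_1}$ of the base ring. Lemma~\ref{sfar} gives factoriality of the base and homogeneous irreducibility of all these binomials. Lemma~\ref{facgrlem} then carries this through the chain: the list of homogeneous irreducibles in its proof, together with pairwise non-proportionality of the $f_j$ and the fact that homogeneous units stay constants, keeps each later $f_j$ homogeneously irreducible. Your final paragraph is exactly this; the detour via $f_j=f_{j-1}+c'$ is unnecessary, and your Type II base is correct.

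The Type I base step, however, fails as written. You grade $\KK[T_{11},\ldots,T_{1n_1}]$ by $\mathbb{Z}^{n_1}$ and invoke Lemma~\ref{sfar} with $f=T_1^{l_1}$, $w=\deg T_1^{l_1}$ and $z=T_2$. That instance of the lemma only certifies the binomial $T_1^{l_1}+T_2^{l_2}$ in the polynomial ring $\KK[T_1,T_2]$, not the three-term relation $T_2^{l_2}-T_1^{l_1}-c_1$. This is not the input Lemma~\ref{facgrlem} needs, which is a factorial grading on $\KK[Z_1]$ in which $f_1$ is homogeneous and homogeneously irreducible. In fact $f_1=T_1^{l_1}+c_1$ is not homogeneous in the $\mathbb{Z}^{n_1}$-grading, nor in the grading Lemma~\ref{sfar} produces there, since $\deg T_1^{l_1}\neq 0$ in both. ``Absorbing the constant'' means passing to the coarser grading with $\deg T_1^{l_1}=0$, and factoriality does not automatically survive coarsening: coarsening a factorially graded non-UFD to the trivial grading destroys it.

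The repair is what the paper does with $n_0=0$, $T_0^{l_0}=1$. Apply Lemma~\ref{sfar} to $\Spec\KK$ with the trivial grading, $f=c$ a nonzero constant (so $w=0$), $z=T_1$ and $k=l_1$. This makes $\KK[T_{11},\ldots,T_{1n_1}]$ factorially graded by $\mathbb{Z}^{n_1}/\langle l_1\rangle$, with every $T_1^{l_1}+c$, $c\neq0$, homogeneously irreducible. Since the $a_i$ are pairwise distinct, the elements $f_j=T_1^{l_1}+(a_1-a_{j+1})$ are pairwise non-associated. With this base your inductive step goes through and ends at the grading $\eta$.
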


\section{Kernels of LNDs}\label{kol}
In this section we elaborate a technique to compute kernels of LNDs in some particular situation. The main idea is as follows. Suppose we are given by an irreducible affine variety $X$. Assume that $X$ is given by a system of equations such that a variable $x$ occurs in equations only as part of the product $xy^2$. Then we consider an LND on $\KK[X][u]$ of the form $D=y\frac{\partial}{\partial u}+\delta$, where $\delta$ is an LND on $X$. Then in some rather wide situation $\Ker D$ is finitely generated and $\Spec(\Ker D)$ is given by the same equations as $X$, in which the product $xy^2$ is replaced by $xy$.

It is known that a kernel of an LND $D$ may be not finitely generated. In case when we know that $\Ker D$ is finitely generated, one can use van den Essen Kernel Algorithm to explicitly compute generators of the kernel, see~\cite[Section~1.4]{Ess}. Moreover, if this algorithm finishes working, the kernel is finitely generated.  Let us give a description of this algorithm.

\begin{alg}
    Suppose $B=\KK[X]=\KK[b_1,\ldots, b_n]\cong\KK[x_1,\ldots, x_n]/(h_1,\ldots, h_l)$. Let $D$ be an LND on $X$. Fix a local slice $r$ of $D$ and $p=D(r)\in\Ker D$. 

1) Let $c_1=\pi_r(b_1),\ldots, c_n=\pi_r(b_n)\in(\Ker D)_p$. We can take such nonnegative integers $d_1,\ldots, d_n$ that $a_i=p^{d_i}c_i\in \Ker D$. Define $A_0=\KK[a_1,\ldots, a_n]$. 

2) For each $i\geq 1$ having $A_{i-1}$ we can compute $A_i=\{g\in B\mid gp\in A_{i-1}\}$. If $A_i=A_{i-1}$, then $\Ker D$ is finitely generated and $\Ker D=A_{i}$. 

\end{alg}

\begin{re}
    In a practical situation, it is advantageous to choose $d_i$ as small as possible. However, the optimal choice of these parameters only affects the number of algorithm steps.
\end{re}

So, if we would like to check whether $\Ker D=A_0$, we should check whether $A_1=A_0$. 
Let us formulate more explicitly what one should check to prove $\Ker D=A_0$.

\begin{alg}
1) Let us add new variables $v_1,\ldots, v_n$ and fix lexicographic order with $$x_1>\ldots>x_n>v_1>\ldots>v_n$$ 
on monomials in $\KK[x_1,\ldots, x_n,v_1,\ldots, v_n]$.

2) Let $a_j=g_j(b_1,\ldots, b_n)$, and $p=f(b_1,\ldots, b_n)$, where $g_j,f\in\KK[x_1,\ldots, x_n]$. Find a Gr\"obner basis $\mathfrak{F}$ of the ideal 
$$J=(v_1-g_1,\ldots, v_n-g_n, h_1,\ldots, h_l, f)\subseteq\KK[x_1,\ldots, x_n,v_1,\ldots, v_n],$$
where $g_j$, $h_i$, and $f$ are polynomials in $x_1,\ldots, x_n$.

3) Let $\{Q_1,\ldots, Q_k\}=\mathfrak{F}\cap \KK[v_1,\ldots, v_n]$. Then $\Ker D=A_0$ if and only if $$Q_i(a_1,\ldots, a_n)p^{-1}\in \KK[a_1,\ldots, a_n]\text{ for all } 1\leq i\leq k.$$
\end{alg}

We would like to apply this algorithm to compute kernels of some particular LNDs. 

Let us fix some settings which we use further.

\begin{set}[$*$]
Let $C=\KK[x,y,z_1,\ldots, z_k]/(h_1,\ldots, h_l)$ be a domain, where for each $1\leq j\leq l$ 
$$
h_j(x,y,z_1,\ldots,z_k)=\overline{h}_j(xy^2,y,z_1,\ldots,z_k).
$$ 
We denote $Z=\mathrm{Spec}\,C$.
Suppose $\delta$ is an LND of $C$ such that $y\nmid \delta(x)=g(y,z_1,\ldots, z_k)$, $\delta(y)=0$, $y^2\mid \delta(z_j)\in\KK[y,z_1,\ldots, z_k]$ for all $1\leq j\leq l$. Let
$B=C[u]$, $X=\mathrm{Spec}\,B$ and $D=y\frac{\partial}{\partial u}+\delta$. We mean that $D$ is a derivation such that for $c\in C$ we have $D(c)=\delta(c)$ and $D(u)=y$. It is clear that $D$ is an LND. 
\end{set}

In these settings $u$ is a local slice for $D$. And we can compute
$\pi_u(u)=0$, $\pi_u(y)=y$, $\widetilde{z_j}=\pi_u(z_j)=z_j+yw_j$ for some $w_j\in B$,
$$\pi_u(x)=x-\frac{gu}{y}+\frac{\sum_{i=1}^k\frac{\partial g}{\partial z_i}\delta(z_i)u^2}{2! y^2}-\ldots=\frac{1}{y}(gu+yw)=\frac{1}{y}\widetilde{x},\qquad w\in B.
$$
So, we can take $A_0=\KK[\widetilde{z_1}, \ldots, \widetilde{z_k},\widetilde{x},y].$
Now we have
\begin{multline*}
J=(v_1-\widetilde{z_1},\ldots, v_k-\widetilde{z_k}, v_{k+1}-\widetilde{x}, v_{k+2}-y, h_1,\ldots, h_l, y)\subseteq\\
\subseteq\KK[x,y,z_1,\ldots,z_k, v_1,\ldots, v_{k+2}].
\end{multline*}
Since $y\in J$ and $y\mid(\widetilde{z_j}-z_j)$, we can replace $\widetilde{z_j}$ by $z_j$. Analogically, we can replace $\widetilde{x}$ by $gu$. So,
$$
J=(v_1-z_1,\ldots, v_k-z_k, v_{k+1}-gu, v_{k+2}, h_1,\ldots, h_l, y).
$$
Let us recall that $h_j(x,y,z_1,\ldots, z_k)=\overline{h_j}(xy^2,y,z_1,\ldots, z_k)$. Denote $\widehat{h_j}(z_1,\ldots, z_k)=\overline{h_j}(0,0,z_1,\ldots, z_k)$. Also denote $\widehat{g}(z_1,\ldots, z_k)=g(0,z_1,\ldots, z_k)$. Then 
$$
J=(v_1-z_1,\ldots, v_k-z_k, v_{k+1}-\widehat{g}(v_1,\ldots, v_k)u, v_{k+2}, \widehat{h_1}(v_1,\ldots, v_k),\ldots, \widehat{h_l}(v_1,\ldots, v_k), y).
$$

\begin{lem}\label{pl1}
    In settings $(*)$ let us define 
    $$
    I=(v_{k+1}-\widehat{g}(v_1,\ldots, v_k)u, \widehat{h_1}(v_1,\ldots, v_k),\ldots, \widehat{h_l}(v_1,\ldots, v_k))\subseteq \KK[v_1,\ldots, v_{k+1},u].
    $$
   If
   $ I\cap \KK[v_1,\ldots, v_{k+1}]=(\widehat{h_1}(v_1,\ldots, v_k),\ldots, \widehat{h_l}(v_1,\ldots, v_k)),$
    then 
    $$\Ker D=A_0=\KK[\widetilde{z_1}, \ldots, \widetilde{z_k},\widetilde{x},y].$$
    
\end{lem}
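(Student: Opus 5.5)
We verify the criterion of the second algorithm: $\Ker D=A_0$ holds if and only if every element $Q$ of $J\cap\KK[v_1,\ldots,v_{k+2}]$ satisfies $Q(a)p^{-1}\in A_0$, where $a=(\widetilde{z_1},\ldots,\widetilde{z_k},\widetilde{x},y)$ and $p=D(u)=y$. Generators of $J\cap\KK[v]$ suffice: if $Q=\sum c_iQ_i$ with $c_i\in\KK[v]$ and each $Q_i(a)/y\in A_0$, then $Q(a)/y\in A_0$. The plan is therefore to describe the elimination ideal $J\cap\KK[v_1,\ldots,v_{k+2}]$ explicitly using the simplified presentation of $J$ derived just before the lemma, and then to check the divisibility condition on a generating set.

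First we compute the elimination ideal. In the last form of $J$ the generators $v_j-z_j$ ($j\le k$) and $y$ allow us to eliminate $z_1,\ldots,z_k$ and $y$. The relations $h_i$ become $\widehat{h_i}(v_1,\ldots,v_k)$, since $y=0$ kills $xy^2$ and $y$. After this reduction $x$ no longer occurs, so it can be dropped as a free variable (it contributes nothing to the elimination). What remains is the ideal $I+(v_{k+2})$ in $\KK[v_1,\ldots,v_{k+2},u]$.

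One subtlety must be handled carefully. The ambient ring of $J$ as written contains $u$, because $B=C[u]$, so the variables are really $x,y,z_j,u,v_1,\ldots,v_{k+2}$. In $J$ the polynomial $\widetilde{x}$ has been replaced by $gu$ modulo $y$, and this requires $g(y,z)\equiv \widehat g(z)$ modulo $y$, which is immediate. Also $\widetilde{z_j}\equiv z_j$ modulo $y$ because $w_j$ lies in $B$; this is where the hypothesis $y^2\mid\delta(z_j)$ is used, so that $\pi_u(z_j)-z_j$ is divisible by $y$ in $B$. The key step is then the isomorphism
\[
\KK[x,y,z,u,v]/J\;\cong\;\KK[x]\otimes \KK[v_1,\ldots,v_{k+1},u]/I,
\]
with $v_{k+2}=0$. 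From it,
\[
J\cap\KK[v]=\bigl(I\cap\KK[v_1,\ldots,v_{k+1}]\bigr)+(v_{k+2}).
\]
By the hypothesis of the lemma, this equals $(\widehat{h_1},\ldots,\widehat{h_l},v_{k+2})$. This step is routine, but it needs care because we eliminate variables through substitutions rather than through a Gröbner computation. We would argue via the ring isomorphism above rather than through Gröbner bases; the obstacle is mainly bookkeeping.

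Next we verify the divisibility condition. For $v_{k+2}$ we have $v_{k+2}(a)=y$, and $y/y=1\in A_0$. For $\widehat{h_i}$ we need $\widehat{h_i}(\widetilde{z_1},\ldots,\widetilde{z_k})/y\in A_0$. The idea is to use the relation $h_i=0$ in $B$ transported by $\pi_u$. Since $\pi_u$ is a homomorphism, $0=\pi_u(h_i)=\overline{h_i}(\pi_u(x)y^2,y,\widetilde z)=\overline{h_i}(\widetilde{x}y,y,\widetilde z)$. Expanding $\overline{h_i}$ in its first two arguments gives
\[
\widehat{h_i}(\widetilde z)=-\bigl(\overline{h_i}(\widetilde xy,y,\widetilde z)-\overline{h_i}(0,0,\widetilde z)\bigr),
\]
and the right-hand side is $y$ times a polynomial in $\widetilde x,y,\widetilde z$. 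Hence $\widehat{h_i}(\widetilde z)/y\in A_0$. This trick, applying the Dixmier map to the defining relations and using that $x$ enters only through $xy^2$, is exactly where the hypothesis on the shape of the $h_j$ enters, and it is the conceptual heart of the proof.

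Conclusion: all generators of the elimination ideal satisfy the condition of the algorithm, so $A_1=A_0$ and $\Ker D=A_0$.
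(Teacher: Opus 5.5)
Your proposal is correct and follows the paper's proof. As the paper does, you check van den Essen's criterion by showing $J\cap\KK[v_1,\ldots,v_{k+2}]=(\widehat{h_1},\ldots,\widehat{h_l},v_{k+2})$, and then use $\pi_u(h_j)=0$ to get $\widehat{h_j}(\widetilde{z_1},\ldots,\widetilde{z_k})\in yA_0$. The only difference is how you obtain the elimination ideal: you use the quotient isomorphism $\KK[x,y,z,u,v]/J\cong\KK[x]\otimes_{\KK}\bigl(\KK[v_1,\ldots,v_{k+1},u]/I\bigr)$, whereas the paper uses a Buchberger coprime-leading-terms argument to produce the Gr\"obner basis $\mathfrak{G}\cup\{v_i-z_i,v_{k+2},y\}$ of $J$. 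Your route is slightly more elementary and avoids the Gr\"obner bookkeeping, but yields the same identity.
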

\begin{proof}
    Let us show that if 
    $I\cap \KK[v_1,\ldots, v_{k+1}]=(\widehat{h_1}(v_1,\ldots, v_k),\ldots, \widehat{h_l}(v_1,\ldots, v_k))$,
    then
    $J\cap \KK[v_1,\ldots, v_{k+2}]=(\widehat{h_1}(v_1,\ldots, v_k),\ldots, \widehat{h_l}(v_1,\ldots, v_k), v_{k+2}).$
    To compute generators of the latest ideal, we should compute a Gr\"obner basis $\mathfrak{F}$ for $J$ with respect to the lexicographic order with
    $$
    x>y>z_1>\ldots>z_k>u>v_1>\ldots>v_{k+2},
    $$
    and then remove all elements of $\mathfrak{F}$ depending on $x,y,z_1,\ldots, z_k,u$. To compute $\mathfrak{F}$ one can use Buchberger's algorithm. But it is well-known fact that if leading terms of $F$ and $G$ are coprime, then $s$-polynomial of the pair $(F,G)$ need not be considered in Buchberger's algorithm. Since the leading term
    $\mathrm{LT}(v_i-z_i)=-z_i$ and 
    $$\mathrm{LT}(v_{k+1}-\widehat{g}(v_1,\ldots, v_k)u),\mathrm{LT}(\widehat{h_l}(v_1,\ldots, v_k))\in \KK[v_1,\ldots, v_{k-1},u],$$ 
    we have $\mathfrak{F}=\mathfrak{G}\cup \{v_1-z_1,\ldots, v_{k}-z_k,v_{k+2},y\}$, where $\mathfrak{G}$ is a Gr\"obner basis for $I$.
    If 
    $$I\cap \KK[v_1,\ldots, v_{k+1}]=(\widehat{h_1}(v_1,\ldots, v_k),\ldots, \widehat{h_l}(v_1,\ldots, v_k)),$$ we have 
    $$J\cap \KK[v_1,\ldots, v_{k+2}]=(\widehat{h_1}(v_1,\ldots, v_k),\ldots, \widehat{h_l}(v_1,\ldots, v_k), v_{k+2}).$$ 

    Now 
    \begin{multline*}    
    \widehat{h_j}(\widetilde{z_1},\ldots, \widetilde{z_k})=\overline{h_j}(0,0,\widetilde{z_1},\ldots, \widetilde{z_k})=\overline{h_j}(\widetilde{x}y,y,\widetilde{z_1},\ldots, \widetilde{z_k})+yQ_i(\widetilde{x},y,\widetilde{z_1},\ldots, \widetilde{z_k})=\\
    =\overline{h_j}(\pi_u(xy^2),y,\pi_u(z_1),\ldots, \pi_u(z_k))+yQ_i(\widetilde{x},y,\widetilde{z_1},\ldots, \widetilde{z_k})=\\
    =\pi_u(\overline{h_j}(xy^2,y,z_1,\ldots, z_k))+yQ_i(\widetilde{x},y,\widetilde{z_1},\ldots, \widetilde{z_k})=\pi_u(0)+yQ_i(\widetilde{x},y,\widetilde{z_1},\ldots, \widetilde{z_k})=\\=yQ_i(\widetilde{x},y,\widetilde{z_1},\ldots, \widetilde{z_k}).
    \end{multline*}
    Since $v_{k+2}$ gives $y=y\cdot 1$, by the above algorithm $\Ker D=A_0$.
\end{proof}

Let us denote $\widetilde{h_j}(x,y,z_1,\ldots, z_k)=\overline{h_j}(xy,z_1,\ldots, z_k)=h_j\left(\frac{x}{y},y,z_1,\ldots, z_k\right)$. Then $\pi_u(h_j(x,y,z_1,\ldots, z_k))=\widetilde{h_j}(\widetilde{x},y,\widetilde{z_1},
,\ldots,\widetilde{z_k})$. Denote also 
$$
\widetilde{C}=\KK[x,y,z_1,\ldots,z_k]/(\widetilde{h_1}(x,y,z_1,\ldots, z_k),\ldots,\widetilde{h_l}(x,y,z_1,\ldots, z_k)).
$$

\begin{de}
In conditions $(*)$ suppose 
\begin{itemize}
    \item $I\cap \KK[v_1,\ldots, v_{k+1}]=(\widehat{h_1}(v_1,\ldots, v_k),\ldots, \widehat{h_l}(v_1,\ldots, v_k))$
    \item $y\in \widetilde{C}$ is not a zero devisor.
\end{itemize}
Then we say that we are in conditions $(**)$. 
\end{de}

By Lemma~\ref{pl1}, in conditions $(**)$ we have $\Ker D=A_0=\KK[\widetilde{z_1}, \ldots, \widetilde{z_k},\widetilde{x},y]$. It would be useful to find not only generators of $\Ker D$, but also the relations. 

\begin{lem}\label{pl2}
    In conditions of $(**)$, we have
    $$
    \Ker~D=\KK\left[\widetilde{x}, y,\widetilde{z_1},\ldots,\widetilde{z_k}\right]/\left(\widetilde{h_i}\left(\widetilde{x},y,\widetilde{z_1},\ldots,\widetilde{z_k}\right)\mid 1\leq i\leq l\right).
    $$
\end{lem}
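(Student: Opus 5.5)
We need to show that the surjection $\Phi\colon \KK[X_0,Y,Z_1,\ldots,Z_k]\to \Ker D$ sending $X_0\mapsto\widetilde{x}$, $Y\mapsto y$, $Z_j\mapsto \widetilde{z_j}$ has kernel exactly $J'=(\widetilde{h_1},\ldots,\widetilde{h_l})$. Surjectivity is Lemma~\ref{pl1}, since $\Ker D=A_0$ under $(**)$. The plan has two inclusions, and we identify $\KK[X_0,Y,Z]/J'$ with $\widetilde{C}$.

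\emph{Relations hold.} First, $J'\subseteq\Ker\Phi$. We have $\pi_u(xy^2)=\pi_u(x)y^2=\widetilde{x}y$ and $\pi_u(y)=y$. Since $\pi_u$ is a homomorphism (Proposition~\ref{ssnf}), this gives
$$\widetilde{h_j}(\widetilde{x},y,\widetilde{z_1},\ldots,\widetilde{z_k})=\overline{h_j}(\widetilde{x}y,y,\widetilde{z_1},\ldots,\widetilde{z_k})=\pi_u(h_j)=0.$$
So $\Phi$ induces a surjection $\overline{\Phi}\colon\widetilde{C}\to\Ker D$.

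\emph{Injectivity via localization at $y$.} Since $y$ is not a zero divisor in $\widetilde{C}$, the ring $\widetilde{C}$ embeds into $\widetilde{C}_y$. It therefore suffices to show that $\overline{\Phi}_y\colon\widetilde{C}_y\to(\Ker D)_y$ is injective. For this, construct an inverse.

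In $\widetilde{C}_y$, the substitution $x'=x/y$ turns the relations into $\overline{h_j}(x'y^2,y,z)=h_j(x',y,z)$. Hence $\widetilde{C}_y\cong C_y$, with $x\mapsto x'y$.

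On the other side, $u$ is a local slice with $D(u)=y$. By Proposition~\ref{ssnf}(d), $B_y=(\Ker D)_y[u]$, with $u$ transcendental over $(\Ker D)_y$. Therefore $(\Ker D)_y\cong B_y/(u)=C_y$. Concretely, the map is $\pi_u$ extended to $B_y$, which kills $u$ and is the identity on $(\Ker D)_y$.

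Now compose. The map $\widetilde{C}_y\to(\Ker D)_y\to C_y$ sends $x\mapsto\widetilde{x}\mapsto\widetilde{x}\bmod u$. Since $\widetilde{x}=y\pi_u(x)$ and $\pi_u(x)\equiv x \pmod u$, this image is $yx$; similarly $\widetilde{z_j}\mapsto z_j$ and $y\mapsto y$. This composition is precisely the isomorphism $\widetilde{C}_y\cong C_y$ described above. Hence $\overline{\Phi}_y$ is injective, and so is $\overline{\Phi}$.

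\emph{Main obstacle.} The delicate step is the identification $(\Ker D)_y\cong C_y$ via reduction modulo $u$. One must justify that the composite $(\Ker D)_y\to B_y\to B_y/uB_y$ is an isomorphism, which follows from $B_y=(\Ker D)_y[u]$ as a polynomial ring. One must also check that $B_y/uB_y=C_y$, which holds because $B=C[u]$. The remaining verifications are routine.
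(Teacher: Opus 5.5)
Your proof is correct and is essentially the paper's argument. The paper also uses that $\pi_u$ embeds $C$ into $(\Ker D)_y$ (via $\Ker\pi_u=uB$, which is your identification $(\Ker D)_y\cong B_y/uB_y=C_y$ read in the other direction). It then observes that after inverting $y$ the relations of $C$ become the $\widetilde{h_i}$ under $x\mapsto\widetilde{x}/y$, and uses that $y$ is a non-zero-divisor in $\widetilde{C}$ to pass from $y^nf\in(\widetilde{h_1},\ldots,\widetilde{h_l})$ to $f\in(\widetilde{h_1},\ldots,\widetilde{h_l})$. Your chain $\widetilde{C}\hookrightarrow\widetilde{C}_y\cong C_y\cong(\Ker D)_y$ is a cleaner packaging of that denominator-clearing step, and you also verify explicitly the inclusion the paper calls obvious.
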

\begin{proof}
By Proposition~\ref{ssnf}(c), we have $\Ker \pi_u=uB_y\cap B=uB$. So, $\pi_u$ defines an embedding $\pi_u|_C\colon C\rightarrow (\Ker D)_y$. Hence, 
$$
\mathrm{Im}\, \pi_u|_C=\KK\left[\frac{\widetilde{x}}{y}, y,\widetilde{z_1},\ldots,\widetilde{z_k}\right]/\left(\overline{h_i}\left(\widetilde{x}y,y,\widetilde{z_1},\ldots,\widetilde{z_k}\right)\mid 1\leq i\leq l\right).
$$
We know that, 
$$\Ker D=\mathrm{Im}\,\pi_u\cap B=\KK\left[\widetilde{x}, y,\widetilde{z_1},\ldots,\widetilde{z_k}\right]/\mathcal{P}$$
for some ideal $\mathcal{P}\subseteq \KK\left[\widetilde{x}, y,\widetilde{z_1},\ldots,\widetilde{z_k}\right]$. We need to show that 
$$\mathcal{P}=\left(\widetilde{h_i}\left(\widetilde{x},y,\widetilde{z_1},\ldots,\widetilde{z_k}\right)\mid 1\leq i\leq l\right).$$
Take $f\in \mathcal{P}$. Then $f=\sum p_i\left(\frac{\widetilde{x}}{y}, y,\widetilde{z_1},\ldots,\widetilde{z_k}\right) \widetilde{h_i}(\widetilde{x},y,\widetilde{z_1},\ldots,\widetilde{z_k})$. Therefore, for some positive integer $n$, we obtain
$fy^n=\sum q_i\left(\widetilde{x}, y,\widetilde{z_1},\ldots,\widetilde{z_k}\right) \widetilde{h_i}(\widetilde{x},y,\widetilde{z_1},\ldots,\widetilde{z_k}).$
That is $fy^n=0$ in $$\KK[\widetilde{x},y,\widetilde{z_1},\ldots,\widetilde{z_k}]/(\widetilde{h_1}(\widetilde{x},y,\widetilde{z_1},\ldots, \widetilde{z_k}),\ldots,\widetilde{h_l}(\widetilde{x},y,\widetilde{z_1},\ldots, \widetilde{z_k}))\cong\widetilde{C}.$$
Since $y$ is not zero divisor in $\widetilde{C}$, we obtain $f\in \left(\widetilde{h_i}\left(\widetilde{x},y,\widetilde{z_1},\ldots,\widetilde{z_k}\right)\mid 1\leq i\leq l\right)$. Thus,
$$\mathcal{P}\subseteq\left(\widetilde{h_i}\left(\widetilde{x},y,\widetilde{z_1},\ldots,\widetilde{z_k}\right)\mid 1\leq i\leq l\right).$$
The inverse inclusion is obvious.
\end{proof}
\begin{re}
    In conditions ($**$) we have $\widetilde{C}\cong\Ker D$. Hence, $\widetilde{C}$ is a domain.  
\end{re}
\begin{de}
    We say that conditions $(*')$ hold if conditions $(*)$ hold and $\widetilde{C}$ is a domain.
\end{de}

In many examples considered in this paper, the following conditions are satisfied. 

\begin{lem}\label{pl5}
    In conditions $(*')$ suppose that the ideal $(\widehat{g},\widehat{h_1},\ldots, \widehat{h_l})$ in $\KK[z_1,\ldots,z_k]$ is not proper. Then we are in conditions $(**)$.
\end{lem}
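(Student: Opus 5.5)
The statement asks us to verify the two defining requirements of $(**)$. The plan is to check the elimination condition on $I$ by a direct structural argument about the quotient ring, which avoids any Gr\"obner basis computation, and then to get the non-zero-divisor condition from the fact that $\widetilde{C}$ is a domain.

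\emph{Step 1: setup.} Put $H=(\widehat{h_1},\ldots,\widehat{h_l})\subseteq\KK[v_1,\ldots,v_k]$ and $R=\KK[v_1,\ldots,v_k]/H$. By hypothesis $(\widehat{g},\widehat{h_1},\ldots,\widehat{h_l})$ is the unit ideal, so there are polynomials $a,b_1,\ldots,b_l$ with
$$a\widehat{g}+\sum_j b_j\widehat{h_j}=1.$$
Hence the image of $\widehat{g}$ in $R$ is a unit.

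\emph{Step 2: the elimination ideal.} We have
$$\KK[v_1,\ldots,v_{k+1},u]/I\cong R[v_{k+1},u]/(v_{k+1}-\widehat{g}u).$$
Consider the natural map $\varphi\colon R[v_{k+1}]\to R[v_{k+1},u]/(v_{k+1}-\widehat{g}u)$. Since $\widehat{g}$ is invertible in $R$, the assignment $u\mapsto \widehat{g}^{-1}v_{k+1}$ defines an inverse of $\varphi$. Thus $\varphi$ is an isomorphism, and in particular it is injective. The kernel of the composite $\KK[v_1,\ldots,v_{k+1}]\to\KK[v_1,\ldots,v_{k+1},u]/I$ is exactly $I\cap\KK[v_1,\ldots,v_{k+1}]$. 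By the injectivity of $\varphi$, this kernel equals the kernel of $\KK[v_1,\ldots,v_{k+1}]\to R[v_{k+1}]$, which is the extension of $H$. This is precisely the first requirement of $(**)$.

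\emph{Step 3: $y$ is not a zero divisor in $\widetilde{C}$.} Since $\widetilde{C}$ is a domain by $(*')$, it suffices to show that $y\neq 0$ in $\widetilde{C}$. For this, use the computation preceding Lemma~\ref{pl2}: $\pi_u(h_j)=\widetilde{h_j}(\widetilde{x},y,\widetilde{z_1},\ldots,\widetilde{z_k})$, and $\pi_u(h_j)=\pi_u(0)=0$. So the elements $\widetilde{x},y,\widetilde{z_1},\ldots,\widetilde{z_k}\in B$ satisfy all the relations $\widetilde{h_j}$. This gives a homomorphism $\widetilde{C}\to B$ sending $y$ to $y$. Since $y\neq0$ in the domain $B=C[u]$ (indeed $\delta(x)=g$ is not divisible by $y$, so $y$ is not zero in $C$), $y$ cannot be zero in $\widetilde{C}$.

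Both requirements hold, so we are in conditions $(**)$. The only step needing care is the last one, namely making sure that $y$ does not vanish in $\widetilde{C}$; the comparison map $\widetilde{C}\to B$ handles it.
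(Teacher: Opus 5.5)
Your proof is correct and uses the same idea as the paper. The paper writes $1=\sum_j P_j\widehat{h_j}-Q\widehat{g}$, rewrites $I=(\widehat{h_1},\ldots,\widehat{h_l},\,u+Qv_{k+1})$, and reads off the elimination ideal from a Gr\"obner basis whose extra generator has leading term $u$; this is your isomorphism $u\mapsto\widehat{g}^{-1}v_{k+1}$ in Gr\"obner-basis language, since $-Q$ inverts $\widehat{g}$ modulo $(\widehat{h_1},\ldots,\widehat{h_l})$. Your Step~3, which shows $y\neq 0$ in the domain $\widetilde{C}$ via the homomorphism $\widetilde{C}\to B$ induced by $\pi_u$, checks the second requirement of $(**)$; the paper's proof leaves this implicit.
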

\begin{proof}
    Since $1\in (\widehat{g},\widehat{h_1},\ldots, \widehat{h_l})$, there exist such polynomials $P_1,\ldots, P_l, Q\in \KK[z_1,\ldots,z_k]$, that $1=P_1\widehat{h_1}+\ldots+P_l\widehat{h_l}-Q\widehat{g}$.
    The element 
\begin{multline*} 
\sum_{j=1}^l P_j(v_1,\ldots, v_k)\widehat{h_j}(v_1,\ldots, v_k)u+Q(v_1,\ldots, v_k)(v_{k+1}-\widehat{g}(v_1,\ldots, v_k)u)=\\
=Q(v_1,\ldots, v_k)v_{k+1}+u
\end{multline*}
belongs to $I$. Moreover, 
    $$v_{k+1}-\widehat{g}(v_1,\ldots, v_k)u=v_{k+1}\sum_{j=1}^l P_j(v_1,\ldots, v_k)\widehat{h_j}-
    \widehat{g}(v_1,\ldots, v_k)(Q(v_1,\ldots, v_k)v_{k+1}+u).$$
    This implies $I=(\widehat{h_1}(v_1,\ldots, v_k),\ldots, \widehat{h_l}(v_1,\ldots, v_k), Q(v_1,\ldots, v_k)v_{k+1}+u)$. 

    Applying Buchberger's algorithm we obtain a Gr\"obner basis~$\mathfrak{G}$ of $I$ with respect to the  lexicographic order with
    $$
    u>v_1>\ldots>v_{k+2}.
    $$
    Then~$\mathfrak{G}=\{Q(v_1,\ldots, v_k)v_{k+1}+u\}\cap \mathfrak{G}'$, where $\mathfrak{G}'$ is a Gr\"obner basis of $$(\widehat{h_1}(v_1,\ldots, v_k),\ldots, \widehat{h_l}(v_1,\ldots, v_k))\subseteq \KK[v_1,\ldots, v_{k}].$$
    So, $$I\cap \KK[v_1,\ldots,v_{k+1}]=(\widehat{h_1}(v_1,\ldots, v_k),\ldots, \widehat{h_l}(v_1,\ldots, v_k)).$$
\end{proof}

\begin{ex}\label{pe1}
    Let $Z$ be a Danielewski surface 
    $$Z=\{xy^n=f(z)\}, \qquad n\geq 2, f\text{ has not multiple roots}.$$ 
    We can define $\delta\colon (x,y,z)\to (f'(z),0,y^n)$. It is easy to see that conditions $(*)$ are satisfied. Since $f$ has not multiple roots, we have $\gcd (f(v_1),f'(v_1))=1$.
    Therefore, by Lemmas~\ref{pl5} and~\ref{pl2} we have
    $$
    \Ker D=\KK[\widetilde{x},y,\widetilde{z}]/(\widetilde{x}y^{n-1}-f(\widetilde{z})).
    $$
    Here $\widetilde{z}=z-uy^n$ and $\widetilde{x}=yx-uf'(z)+\frac{1}{2}u^2f''(z)y^{n-1}-\frac{1}{6}u^3f'''(z)y^{2n-2}+\ldots$
\end{ex}

\begin{re}
    The condition that $f$ does not have multiple roots is indeed necessary. For example, if $Z=\{xy^2=z^2\}$, we have
    $
    I=(v_2-2v_1u, v_1^2). 
    $
    Let us compute the s-polynomial 
    $$
    s(v_2-2v_1u,v_1^2)=v_1(v_2-2v_1u)+2uv_1^2=v_1v_2.
    $$
    Therefore, $I\cap \KK[v_1,v_2]\neq (v_1^2)$. So, we can not apply Lemma~\ref{pl1}. By algorithm we obtain that $\frac{\widetilde{x}\widetilde{z}}{y}\in\Ker D$, so $\Ker D\supsetneq \KK[\widetilde{x},y,\widetilde{z}]$. 
\end{re}

The following proposition is a generalization of Example~\ref{pe1}.
\begin{prop}\label{p1p}
    Let $V$ be an irreducible affine variety with 
$$\KK[V]=\KK[z_1,\ldots, z_{k}]/(h_1(z_1,\ldots, z_{k}),\ldots, h_{l}(z_1,\ldots, z_{k})).$$ Suppose $\xi$ is an LND of $\KK[V]$ and $f\in \KK[V]$ such that $\xi(f)\neq 0$. Suppose that there exist $p$ and $q$ in $\KK[V]$ such that $pf+q\xi(f)=1$. Denote 
$$
Z=\Susp(V,f,1,k_1,\ldots, k_m),\qquad k_1\geq 2.
$$
I.e.
$
\KK[Z]=\KK[x,y_1,\ldots, y_m,z_1,\ldots, z_k]/(h_1,\ldots, h_{l+1})
$, where $$h_{l+1}=xy_1^{k_1}\ldots y_m^{k_m}-f.
$$
Denote $D=y_1\frac{\partial}{\partial u}+\delta$, where $\delta=\overline{\xi}$; see Lemma~\ref{lprl}. Then we have
$$
\Ker D=\KK[\widetilde{x},y_1,\ldots, y_m, \widetilde{z_1},\ldots, \widetilde{z_k}]/H,$$
where
$$
H=(h_1(\widetilde{z_1},\ldots,\widetilde{z_k}),\ldots, h_{l}(\widetilde{z_1},\ldots,\widetilde{z_k}), \widetilde{x}y_1^{k_1-1}\ldots y_m^{k_m}-f(\widetilde{z_1},\ldots,\widetilde{z_k})).
$$
\end{prop}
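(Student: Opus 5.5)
The plan is to reduce the statement to settings $(*)$ with $x$ and $y=y_1$, and then apply Lemmas~\ref{pl5}, \ref{pl1} and~\ref{pl2}.

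First I check $(*)$. Rename the $z_j$ together with $y_2,\ldots,y_m$ as the "$z$-variables". Since $k_1\geq 2$, the relation is
$$
h_{l+1}=(xy_1^2)\,y_1^{k_1-2}y_2^{k_2}\cdots y_m^{k_m}-f,
$$
so $x$ enters only through $xy_1^2$; the relations $h_1,\ldots,h_l$ do not involve $x$ or $y_1$ at all. Lemma~\ref{lprl} is applied with the variable $x$ playing the role of the exponent-one variable. It gives
$$
\delta(x)=\xi(f),\qquad \delta(y_i)=0,\qquad \delta(z_j)=y_1^{k_1}\cdots y_m^{k_m}\xi(z_j).
$$
Hence $y_1^2\mid\delta(z_j)$. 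Moreover $g=\xi(f)$ is a polynomial in the $z_j$, and it is not divisible by $y_1$: modulo $y_1$ the ring is $\KK[V][y_2,\ldots,y_m]$ with $f=0$, and there $pf+q\xi(f)=1$ shows that $\xi(f)$ is a unit, in particular nonzero.

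Next I check $(*')$. The algebra $\widetilde{C}$ is $\KK[V][x,y_1,\ldots,y_m]/(xy_1^{k_1-1}y_2^{k_2}\cdots y_m^{k_m}-f)$, which is $\Susp(V,f,1,k_1-1,k_2,\ldots,k_m)$. This is irreducible by the remark after the definition of $m$-suspensions, and reduced because the defining polynomial is linear in $x$ with $V$ integral. So $\widetilde{C}$ is a domain.

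Then I apply Lemma~\ref{pl5}. Setting $x=y=0$, with $y=y_1$, in the relations gives
$$
\widehat{h}_{l+1}=-f \quad(\text{as a polynomial in } z),\qquad \widehat{h}_j=h_j \ (1\leq j\leq l),\qquad \widehat g=\xi(f).
$$
Lifting $pf+q\xi(f)=1$ to $\KK[z_1,\ldots,z_k]$ shows that $1$ lies in the ideal generated by $h_1,\ldots,h_l,f,\xi(f)$. So the ideal $(\widehat g,\widehat h_1,\ldots,\widehat h_{l+1})$ is not proper, and Lemma~\ref{pl5} yields $(**)$.

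Finally, Lemma~\ref{pl2} gives the presentation of $\Ker D$ by the generators $\widetilde{x},y_1,\widetilde{y_i},\widetilde{z_j}$ with relations $\widetilde{h}_j$. It remains to identify these. Since $\delta(y_i)=0$, we have $\widetilde{y_i}=\pi_u(y_i)=y_i$. The relation $\widetilde{h}_j$ for $j\leq l$ is $h_j(\widetilde{z})$, because $h_j$ is independent of $x$ and $y$. Replacing $xy_1^2$ by $xy_1$ turns $h_{l+1}$ into $\widetilde{x}y_1^{k_1-1}y_2^{k_2}\cdots y_m^{k_m}-f(\widetilde z)$. This is exactly $H$.

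The only delicate point is bookkeeping. The $y_i$ with $i\geq 2$ must be treated as $z$-variables. Their hatted versions are not zero, so $\widehat h_{l+1}$ must be computed correctly; if $k_1>2$ the relevant monomial still vanishes at $y_1=0$. The factor $y_1$ must not be divided away incorrectly.
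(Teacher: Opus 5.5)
Your proposal is correct and follows the paper's proof: set $y=y_1$, treat $y_2,\ldots,y_m$ as additional $z$-variables, check $(*')$, observe that $(\widehat g,\widehat{h_1},\ldots,\widehat{h_{l+1}})=(\xi(f),h_1,\ldots,h_l,f)$ contains $1$, and conclude via Lemmas~\ref{pl5}, \ref{pl1} and~\ref{pl2}. The one slip is your justification that $\widetilde{C}$ is reduced, since being linear in $x$ over a domain is not enough in general (for instance $s^2x-s^2t$ over $\KK[s,t]$); instead apply Lemma~\ref{vspomlem1}, which directly gives that $\widetilde{C}$ is a domain because the monomial $y_1^{k_1-1}y_2^{k_2}\cdots y_m^{k_m}$ is a non-zero-divisor on $\KK[V][y_1,\ldots,y_m]/(f)$.
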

\begin{proof}
    Conditions $(*')$ are satisfied. In notation of the settings $(*)$ we have $y=y_1$, $z_{k+1}=y_2,\ldots, z_{k+m-1}=y_m$. We obtain $\delta(x)=\xi(f)$. Therefore,
    $$1\in (\xi(f), \widehat{h_1},\ldots,\widehat{h_{l}}, f)=
    (\widehat{g}, \widehat{h_1},\ldots,\widehat{h_{l+1}}).
    $$
    By Lemma~\ref{pl5}, conditions $(**)$ are satisfied.  
Therefore, by Lemmas~\ref{pl1} and~\ref{pl2} we have
$$
\Ker D=\KK[\widetilde{x},y_1,\ldots, y_m, \widetilde{z_1},\ldots, \widetilde{z_k}]/H.$$
\end{proof}

\begin{cor}\label{yadco}
    In conditions of Proposition~\ref{p1p}, we have
    $$
    \Spec{\Ker D}=\Susp(V,f,1,k_1-1,k_2,\ldots,k_m).
    $$
\end{cor}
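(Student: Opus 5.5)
The plan is to read Corollary~\ref{yadco} off Proposition~\ref{p1p}, which already gives an explicit presentation
$$
\Ker D=\KK[\widetilde{x},y_1,\ldots, y_m, \widetilde{z_1},\ldots, \widetilde{z_k}]/H .
$$
All that remains is to identify the right-hand side with the coordinate algebra of $\Susp(V,f,1,k_1-1,k_2,\ldots,k_m)$.

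By the definition of an $m$-suspension, the coordinate algebra of $\Susp(V,f,1,k_1-1,k_2,\ldots,k_m)$ is
$$
\KK[V][x,y_1,\ldots,y_m]/(xy_1^{k_1-1}y_2^{k_2}\cdots y_m^{k_m}-f).
$$
Since $\KK[V]=\KK[z_1,\ldots,z_k]/(h_1,\ldots,h_l)$, this equals
$$
\KK[x,y_1,\ldots,y_m,z_1,\ldots,z_k]/(h_1,\ldots,h_l,\;xy_1^{k_1-1}y_2^{k_2}\cdots y_m^{k_m}-f(z_1,\ldots,z_k)),
$$
where $f(z_1,\ldots,z_k)$ denotes a fixed polynomial representative of $f$. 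Because $k_1\geq 2$, the exponent $k_1-1$ is a positive integer, so this really is an $m$-suspension in the sense of the definition. The exponent of $x$ is $1$, matching the first entry of the index tuple.

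Now I will rename the generators. Under the substitution $\widetilde{x}\mapsto x$, $y_i\mapsto y_i$, $\widetilde{z_j}\mapsto z_j$, the ideal $H$ from Proposition~\ref{p1p} is sent exactly onto the ideal displayed above. The first $l$ generators of $H$ are $h_i(\widetilde{z_1},\ldots,\widetilde{z_k})$, and the last one is $\widetilde{x}y_1^{k_1-1}y_2^{k_2}\cdots y_m^{k_m}-f(\widetilde{z_1},\ldots,\widetilde{z_k})$. Hence the two quotient algebras are isomorphic as $\KK$-algebras, and taking $\Spec$ gives the claimed isomorphism.

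The only point needing care is that Proposition~\ref{p1p} is stated as an isomorphism of abstract algebras, namely that the generators $\widetilde{x},y_1,\ldots,\widetilde{z_k}$ of $\Ker D$ satisfy exactly the relations in $H$ and no others. This is precisely what Lemma~\ref{pl2} establishes under conditions $(**)$, and those conditions were verified in the proof of Proposition~\ref{p1p}. So no real obstacle remains. It is worth remarking that the variety is irreducible, since $\Ker D$ is a subring of the domain $B$ and so $\Spec\Ker D$ is integral; this agrees with the general fact that $m$-suspensions over irreducible varieties are irreducible.
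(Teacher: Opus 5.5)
Your proof is correct and matches the paper, which states this corollary without a separate proof as a direct rereading of Proposition~\ref{p1p}. As you do, it reads the last generator of $H$, $\widetilde{x}y_1^{k_1-1}y_2^{k_2}\cdots y_m^{k_m}-f(\widetilde{z_1},\ldots,\widetilde{z_k})$, over $\KK[V]$ as the defining equation of $\Susp(V,f,1,k_1-1,k_2,\ldots,k_m)$.
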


\begin{ex}\label{pe2}
Let $Z$ be an $m$-suspension $Z=\Susp(\KK,f(z),1,k_1,\ldots, k_m)$, where $k_1\geq 2$, and $f$ has no multiple roots. I.e. 
$
Z=\{xy_1^{k_1}\ldots y_m^{k_m}=f(z)\}.
$
Applying Proposition~\ref{p1p} for $V=\KK$ and $\xi=\frac{\partial}{\partial z}$, we obtain 
    $$
    \Ker D=\KK[\widetilde{x},y_1,\ldots, y_m,\widetilde{z}]/(\widetilde{x}y_1^{k_1-1}y_2^{k_2}\ldots y_m^{k_m}-f(\widetilde{z})).
    $$
\end{ex}

\begin{ex}
    Let $V=\KK[a,b,c,d]/(a(c^3+d^5)^2-b^2-1)$. Consider $$\xi\colon (a,b,c,d)\rightarrow (2b,(c^3+d^5)^2,0,0)\in \LND(V).$$ 
    Denote $Z=\Susp(V,a,1,2)$. I.e.
    \begin{multline*}
    \KK[Z]=\KK[x,y,a,b,c,d]/(xy^2-a,a(c^3+d^5)^2-b^2-1)=\\
    =\KK[x,y,b,c,d]/(xy^2(c^3+d^5)^2-b^2-1).
    \end{multline*}
    If we take $p=(c^3+d^5)^2$, $q=-b$, then $pa+q\xi(a)=1$. As usually we take $X=Z\times\KK$, $\delta=\overline{\xi}\colon (x,y,b,c,d)\rightarrow (2b,0,(c^3+d^5)^2y^2,0,0)$ and $D=y\frac{\partial}{\partial u}+\delta$ Hence, by Proposition~\ref{p1p}, 
    $$
    \Ker D=\KK[\widetilde{x},y,\widetilde{b},c,d]/(\widetilde{x}y(c^3+d^5)^2-\widetilde{b}^2-1).
    $$
\end{ex}

\begin{cor}\label{pnc}
Let $W$ be an irreducible affine variety with 
$$\KK[W]=\KK[z_1,\ldots, z_{k-1}]/(h_1(z_1,\ldots, z_{k-1}),\ldots, h_{l-1}(z_1,\ldots, z_{k-1})).$$
Let $V=W\times \KK$ and $z_{k}$ be the additional coordinate. Consider $f=\alpha z_k^d+1\in\KK[V]$, where $\alpha\in\KK[W]$. Now denote
$
Z=\Susp(V,f,1,k_1,\ldots, k_m), k_1\geq 2.
$
I.e.
$$
\KK[Z]=\KK[x,y_1,\ldots, y_m,z_1,\ldots, z_k]/(h_1,\ldots, h_l), \text{ where }h_l=xy_1^{k_1}\ldots y_m^{k_m}-f.
$$
Define $\delta$ by $\delta(x)=d\alpha z_k^{d-1}$, $\delta(z_k)=y_1^{k_1}\ldots y_m^{k_m}$, $\delta(y_i)=\delta(z_j)=0, j<k$.
Then for $D=y_1\frac{\partial}{\partial u}+\delta$ we have
$$
\Ker D=\KK[\widetilde{x},y_1,\ldots, y_m, z_1,\ldots, z_{k-1},\widetilde{z_k}]/(h_1,\ldots, h_{l-1}, \widetilde{x}y_1^{k_1-1}\ldots y_m^{k_m}-\alpha\widetilde{z_k}^d-1).
$$
\end{cor}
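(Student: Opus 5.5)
The corollary is a direct specialization of Proposition~\ref{p1p}. Take $V=W\times\KK$, so that $\KK[V]=\KK[W][z_k]$ is presented by $h_1,\ldots,h_{l-1}$ in the variables $z_1,\ldots,z_k$. Let $\xi=\frac{\partial}{\partial z_k}$, acting trivially on $\KK[W]$. This is an LND of $\KK[V]$, and $\xi(f)=d\alpha z_k^{d-1}$.

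First I check that $\delta$ coincides with $\overline{\xi}$ from Lemma~\ref{lprl}, with $y_1$ playing the role of the exponent-one variable $x$. Indeed, $\overline{\xi}(x)=\xi(f)=d\alpha z_k^{d-1}$, $\overline{\xi}(y_i)=0$, $\overline{\xi}(z_k)=y_1^{k_1}\cdots y_m^{k_m}\xi(z_k)=y_1^{k_1}\cdots y_m^{k_m}$, and $\overline{\xi}(z_j)=0$ for $j<k$. This is exactly the given $\delta$.

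Next I verify the Bézout hypothesis $pf+q\xi(f)=1$. Put $p=1$ and $q=-\frac{1}{d}z_k$. Then
$$pf+q\xi(f)=\alpha z_k^d+1-\tfrac1d z_k\cdot d\alpha z_k^{d-1}=1.$$
We also need $\xi(f)\neq0$. This holds when $\alpha\neq0$ and $d\geq1$, which I take to be implicit; if $\alpha=0$ or $d=0$, then $f$ is constant and the construction degenerates.

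Proposition~\ref{p1p} then yields
$$\Ker D=\KK[\widetilde{x},y_1,\ldots,y_m,\widetilde{z_1},\ldots,\widetilde{z_k}]\big/\bigl(h_j(\widetilde{z_1},\ldots,\widetilde{z_{k-1}}),\ \widetilde{x}y_1^{k_1-1}\cdots y_m^{k_m}-f(\widetilde{z_1},\ldots,\widetilde{z_k})\bigr).$$
It remains to identify $\widetilde{z_j}$ with $z_j$ for $j<k$. Since $D(z_j)=0$ for $j<k$, the Dixmier map gives $\pi_u(z_j)=z_j$. The element $\alpha$ lies in $\KK[W]$, so it is also fixed by $\pi_u$, and hence $f(\widetilde{z_1},\ldots,\widetilde{z_k})=\alpha\widetilde{z_k}^d+1$. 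The relations $h_j(\widetilde{z})$ become the original $h_j$, and this gives the stated presentation.

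No step is a genuine obstacle. The only points needing care are the compatibility of $\delta$ with $\overline{\xi}$ and the nondegeneracy assumption $\alpha\neq0$.
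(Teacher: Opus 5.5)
Your proof is correct and matches the paper's: both apply Proposition~\ref{p1p} with $\xi=\frac{\partial}{\partial z_k}$ after exhibiting the B\'ezout relation $pf+q\xi(f)=1$. Your normalization $p=1$, $q=-\frac{1}{d}z_k$ is slightly cleaner than the paper's $p=d$, $q=-z_k$, which literally gives $pf+q\xi(f)=d$ (harmless, since $d$ is invertible in characteristic zero). Your explicit checks that $\delta=\overline{\xi}$, and that $\pi_u$ fixes $z_1,\ldots,z_{k-1}$ and $\alpha$, supply details the paper leaves implicit.
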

\begin{proof}
 Let us take $p=d$ and $q=-z_k$. Denote $\xi=\frac{\partial}{\partial z_k}\in\mathrm{LND}(\KK[V])$. Then we have
 $$pf+q\xi(f)=d(\alpha z_k^d+1)+(-z_k)d\alpha z_k^{d-1}=1.$$
 Therefore, we are under the conditions of Proposition~\ref{p1p}. This proposition implies the assertion.
\end{proof}

Now we would like to introduce one more class of varieties, which we call {\it Multi Danielewski varieties} or {\it $n$-Danielewski varieties}. This class generalizes $m$-suspensions with $k_1=1$,  Double Danielewski surfaces, see~\cite{GS}, and Danielewski varieties, see \cite{Du} and \cite{Ga}. 
\begin{de}\label{multid}
    Let $Z$ be an irreducible variety. Let us cionsider  $(n+m)$-dimensional affine space with coordinates $x_1,\ldots, x_n, y_1,\ldots, y_m$. Also, we fix polymonials $f_1,\ldots, f_n$, where $f_j\in\KK[Z][x_1,\ldots, x_{j-1}, y_1,\ldots, y_m]$ and an $n\times m$-matrix $K=(k_{ij})$, $k_{ij}\in\mathbb{Z}_{\geq 0}$.
    Now we consider the variety $Y\subseteq \KK^{n+m}\times Z$ given by 
    \begin{equation}\label{nm}
    \begin{cases}
       x_1y_1^{k_{11}}\ldots y_{m}^{k_{1m}}=f_1;\\
       x_2y_1^{k_{21}}\ldots y_{m}^{k_{2m}}=f_2;\\
       \vdots\\
       x_ny_1^{k_{n1}}\ldots y_{m}^{k_{nm}}=f_n.
    \end{cases}
    \end{equation}
    The variety $Y$ we call an {\it $n$-Danielewski variety}. Let us use the following notation $Y=\mathrm{Dan}_n(Z,K,f_1,\ldots,f_n)$. 

    Let us consider $f_j$ as a polynomial in $x_{j-1}$ with coefficients in $$\KK[x_1,\ldots, x_{j-2}, y_1,\ldots, y_m].$$ We will assume that all $f_j$ are monic as polynomials in $x_{j-1}$.
\end{de}

We can include $Y$ on the following chain of varieties 
$$Y_0=\KK^m\times Z, Y_1,\ldots, Y_n=Y,$$ 
where $Y_i$ is given in $\KK^{i+m}\times Z$ by the equations from the first to the $i$-th in the system (\ref{nm}).

Let $Z$ be an irreducible affine variety with 
$$\KK[Z]=\KK[z_1,\ldots, z_{k}]/(h_1(z_1,\ldots, z_{k}),\ldots, h_{l}(z_1,\ldots, z_{k})).$$ 
We denote 
$$h_{l+j}=x_jy_1^{k_{j1}}\ldots y_m^{k_{jm}}-f_j(x_1,\ldots, x_{j-1}, y_1,\ldots, y_m,z_1,\ldots,z_k),\qquad 1\leq j\leq n-1.$$

\begin{lem}\label{snl318}
    The algebra
    $$\KK[Y]=\KK[z_1,\ldots,z_k,x_1,\ldots,x_n,y_1,\ldots,y_m]/(h_1,\ldots,h_{l+n})
    $$
    is a domain.
\end{lem}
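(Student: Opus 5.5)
I would prove this by induction along the chain $Y_0=\KK^m\times Z, Y_1,\ldots, Y_n=Y$, where $\KK[Y_i]=\KK[Y_{i-1}][x_i]/(x_iM_i-f_i)$ and $M_i=y_1^{k_{i1}}\cdots y_m^{k_{im}}$. (I read the range of $j$ in the definition of $h_{l+j}$ as $1\leq j\leq n$, so that all $n$ equations of (\ref{nm}) are imposed.) The base case is clear: $\KK[Y_0]=\KK[Z][y_1,\ldots,y_m]$ is a domain because $Z$ is irreducible.

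For the inductive step, write $A=\KK[Y_{i-1}]$, assumed to be a domain, and $R=A[x_i]/(x_iM_i-f_i)$. The plan has two parts.
\begin{enumerate}
\item After inverting $y_1,\ldots,y_m$, the monomial $M_i$ becomes a unit, so $x_i=f_iM_i^{-1}$ and $R_{y_1\cdots y_m}\cong A_{y_1\cdots y_m}$. This is a localization of a domain, hence a domain.
\item It then suffices to show that each $y_j$ is a non-zero-divisor in $R$. In that case $R$ embeds into $R_{y_1\cdots y_m}$ and is therefore a domain.
\end{enumerate}

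To handle $y_j$, I use the standard regular-sequence criterion. Since $A$ is a domain, $y_j$ is regular on $A[x_i]$. The element $t=x_iM_i-f_i$ is also regular on $A[x_i]$, because it is nonzero: it is either of degree one in $x_i$, or equal to $-f_i\neq 0$ when $M_i=1$. Hence $y_j$ is regular modulo $t$ if and only if $t$ is regular modulo $y_j$, that is, in $(A/y_jA)[x_i]$. There are two cases.
\begin{enumerate}
\item If $k_{ij}=0$, then $\bar t=x_i\bar M_i-\bar f_i$. Its image remains regular in $(A/y_jA)[x_i]$ provided $\bar M_i$ is a non-zero-divisor in $A/y_jA$, which holds because the $y$'s form a regular sequence on $\KK[Y_0]$. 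I would strengthen the induction hypothesis to carry this: any monomial in the $y_s$ with $s\neq j$ is regular modulo $y_j$ in $\KK[Y_{i-1}]$.
\item If $k_{ij}>0$, then $\bar t=-\bar f_i$, so I need $\bar f_i$ to be regular in $A/y_jA$. This is where the monicity of $f_i$ in $x_{i-1}$ enters.
\end{enumerate}

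The main obstacle is the case $k_{ij}>0$: proving that $\bar f_i$ is regular in $A/y_jA$. I would strengthen the inductive statement to describe $\KK[Y_{i-1}]/(y_j)$ explicitly, again by cases.
\begin{enumerate}
\item If $k_{i-1,j}>0$, then
$$\KK[Y_{i-1}]/(y_j)\cong \bigl(\KK[Y_{i-2}]/(y_j,f_{i-1})\bigr)[x_{i-1}],$$
a polynomial ring in the free variable $x_{i-1}$. A polynomial that is monic in a free variable is a non-zero-divisor over any coefficient ring, which settles this subcase.
\item If $k_{i-1,j}=0$, then $\KK[Y_{i-1}]/(y_j)$ is obtained from $\KK[Y_{i-2}]/(y_j)$ by adjoining $x_{i-1}$ subject to $x_{i-1}\bar M_{i-1}=\bar f_{i-1}$. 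Here I would localize at the $y_s$ with $s\neq j$, using the strengthened hypothesis that these are regular modulo $y_j$. In that localization $x_{i-1}$ becomes expressible in terms of the earlier variables, and I would then check that $\bar f_i$, being monic in $x_{i-1}$, remains regular. This check is the delicate step.
\end{enumerate}
Once it is in place, the induction closes and gives that $\KK[Y]$ is a domain.
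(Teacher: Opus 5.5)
Your mechanism is the paper's: invert the monomial so that $R$ becomes a localization of $A$, reduce to non-zero-divisor statements, and swap a regular pair (this is what Lemmas~\ref{vspomlem1} and~\ref{vspomlem2} do). You then finish with the fact that $\KK[Y_{i-1}]/(y_j)\cong\bigl(\KK[Y_{i-2}]/(y_j,f_{i-1})\bigr)[x_{i-1}]$ is a polynomial ring in which the monic $f_i$ is regular. For $k_{ij}>0$, $k_{i-1,j}>0$ this is exactly the paper's proof.

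\textbf{The gap.} The subcase $k_{ij}>0$, $k_{i-1,j}=0$ is left as ``the delicate step'', and the sketched method cannot settle it. After inverting the $y_s$, $s\neq j$, one has $x_{i-1}=\bar f_{i-1}/\bar M_{i-1}$. So $x_{i-1}$ is no longer free, and monicity of $f_i$ gives no control. Moreover, the map to this localization is injective only if those $y_s$ are regular modulo $y_j$, which is your unproved strengthened hypothesis.

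In fact the claim you need there is false in general. Take $m=n=2$, $K=\begin{pmatrix}0&1\\1&0\end{pmatrix}$, $f_1=y_1$, $f_2=x_1$. Then $\bar f_2=x_1$ kills $y_2$ in $\KK[Y_1]/(y_1)\cong\KK[Z][y_2,x_1]/(x_1y_2)$. Substituting $y_1=x_1y_2$ gives $\KK[Y]\cong\KK[Z][x_1,x_2,y_2]/\bigl(x_1(x_2y_2-1)\bigr)$, which is not a domain.

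You also do not address the step $i=1$, where no monicity is available. It needs $f_1|_{y_j=0}\neq 0$; for instance $x_1y_1=y_1$ gives a non-domain.

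The paper's proof has the same blind spot. Its identification of $\KK[Y_j]/(y_i)$ with a polynomial ring in $x_j$ requires $k_{ji}>0$. What the argument really proves is the lemma under two tacit assumptions:
\begin{itemize}
\item every $y_j$ occurring in row $i\geq 2$ of $K$ also occurs in row $i-1$ (e.g.\ all $k_{ij}>0$, as in the paper's example);
\item $f_1$ is regular modulo each $y_j$ with $k_{1j}>0$.
\end{itemize}
Under these assumptions, and with the simplification below, your first subcase is the whole proof.

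\textbf{The branch $k_{ij}=0$} should be dropped rather than repaired. It suffices that $M_i$, not $y_1\cdots y_m$, be a non-zero-divisor in $R$, because $R_{M_i}\cong A_{M_i}$ already. So only the $y_j$ with $k_{ij}>0$ matter. The strengthened hypothesis you introduce for that branch (monomials in the $y_s$, $s\neq j$, are regular modulo $y_j$) fails even when the lemma holds. Take $m=n=2$, both rows of $K$ equal to $(1,0)$, $f_1=y_2$ and $f_2=x_1$. Then $\KK[Y_2]\cong\KK[Z][y_1,x_2]$ is a domain. But $y_2=x_1y_1\equiv 0$ modulo $y_1$ in $\KK[Y_1]$. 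Your criterion for $y_2$ would need $y_1$ to be regular in $\KK[Y_1]/(y_2)\cong\KK[Z][y_1,x_1]/(x_1y_1)$, which it is not.
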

\begin{proof}
    We would like to prove Lemma~\ref{snl318} by induction on $n$. When $n=0$, $\KK[Y]=\KK[Z]$ is a domain by the condition of the lemma.

    Now let us suppose that we have proved that 
$$R=\KK[Y_j]=\KK[z_1,\ldots,z_k,x_1,\ldots,x_j,y_1,\ldots,y_m]/(h_1,\ldots,h_{l+j})$$ is a domain.
    Let us use the same technique as in~\cite{GS}.

    The following lemma is \cite[Lemma~2.4(2)]{DGO}, see also \cite[Lemma~3.2]{GS}.
    \begin{lem}\label{vspomlem1}
    Let $R$ be an integral domain and $a,b\in R\setminus\{0\}$. If $b$ is not a zero-divisor on $R/(a)$, then the ring $R[t]/(bt-a)$ is an integral domain. 
    \end{lem}
    Also let us recall \cite[Lemma~3.2]{GS}.
    \begin{lem}\label{vspomlem2}
        Let $R$ be an integral domain and $a, b\in R\setminus\{0\}$. If $a$ is not a zero-divisor on $R/(b)$, then $b^n$ is not a zero-divisor on $R/(a)$ for any integer $n\geq 1$.
    \end{lem}
    Suppose $k_{j+1,i}\neq 0$. Then we put  $a=f_{j+1}\in\KK[Y_j]$, $b=y_i$. We have 
    $$R/(b)\cong \left(\KK[Y_{j-1}]/(y_i, f_j)\right)[x_j].$$ Since $f_{j+1}$ is monic polynomial in $x_j$, it is not a zero-divizor on $R/(b)$. By Lemma~\ref{vspomlem2}, $y_j$ is not a zero-divisor on $\KK[Y_j]/(f_{j+1})$. Since it holds for all $i$ such that $k_{j+1,i}\neq 0$, we have $y_1^{k_{j+1,1}}\ldots y_m^{k_{j+1,m}}$ is not a zero-divisor on $\KK[Y_j]/(f_{j+1})$. Now we put $a=f_{j+1}$, $b=y_1^{k_{j+1,1}}\ldots y_m^{k_{j+1,m}}$, $t=x_{j+1}$ and apply Lemma~\ref{vspomlem1}. We obtain 
    $$
    \KK[Y_j][x_j]/(y_1^{k_{j+1,1}}\ldots y_m^{k_{j+1,m}}x_{j+1}-f_{j+1})
    $$
    is a domain.
\end{proof}

Let $\xi$ be an LND on $Z$. We define $\delta_0\in\LND(\KK[Y_0])$ by $\delta_0(y_j)=0$ and $\delta_0(F)=\xi(F)$ for $F\in\KK[Z]$. Then we can define an LND $\delta_i$, $1\leq i\leq n$ of $Y_i$ by
$$
\delta_i(x_i)=\delta_{i-1}(f_i), \qquad \delta_i(F)=y_1^{k_{i1}}\ldots y_{m}^{k_{im}}\delta_{i-1}(F)\text{ for }F\in \KK[Y_{i-1}].
$$

For any $F\in\KK[Z][x_1,\ldots, x_n, y_1,\ldots, y_m]$ we denote by $\widehat{F}$ the polynomial $$F|_{y_1=0}\in \KK[Z][x_1,\ldots, x_n, y_2,\ldots, y_m].$$

\begin{prop}\label{P323}
Suppose $k_{n1}\geq 2$, 
and the ideal  
$$\mathfrak{J}=\left(\widehat{\xi(f_1)}\prod_{j=1}^{n-1}\widehat{\frac{\partial f_{j+1}}{\partial x_j}}, \widehat{f_1},\ldots, \widehat{f_{n}}\right)\subseteq \KK[Z][x_1,\ldots,x_{n-1},y_2,\ldots,y_m]$$ is not proper. 

Then for the LND $D=y_1\frac{\partial}{\partial u}+\delta_n$ on $X=Y_n\times\KK$ we have
$$
\Ker D=\KK[\widetilde{x_1},\ldots, \widetilde{x_n},y_1,\ldots, y_m, \widetilde{z_1},\ldots, \widetilde{z_k}]/H,
$$
where the ideal $H$ is generated by $$h_i(\widetilde{z_1},\ldots,\widetilde{z_k}),\qquad 1\leq i\leq l,$$ 
$$\widetilde{x_j}y_1^{k_{j1}}\ldots y_m^{k_{jm}}-f_j(\widetilde{x_1},\ldots, \widetilde{x}_{j-1}, y_1,\ldots, y_m,\widetilde{z_1},\ldots,\widetilde{z_k}),\qquad 1\leq j\leq n-1,$$ 
and
$$
\widetilde{x_n}y_1^{k_{n1}-1}y_2^{k_{n2}}\ldots y_m^{k_{nm}}-f_n(\widetilde{x_1},\ldots, \widetilde{x_{n-1}}, y_1,\ldots, y_m,\widetilde{z_1},\ldots,\widetilde{z_k}).
$$
\end{prop}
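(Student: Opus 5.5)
The plan is to place the situation into settings $(*)$ and then invoke Lemma~\ref{pl5}, Lemma~\ref{pl1} and Lemma~\ref{pl2}, in the same way as in the proof of Proposition~\ref{p1p}.

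\textbf{Matching the settings.} Take $C=\KK[Y_n]$ and $y=y_1$. Let $x=x_n$. The remaining variables $z_1,\ldots,z_k,x_1,\ldots,x_{n-1},y_2,\ldots,y_m$ play the role of the $z$'s of $(*)$. The only relation that involves $x_n$ is $h_{l+n}=x_ny_1^{k_{n1}}\cdots y_m^{k_{nm}}-f_n$. Since $k_{n1}\ge 2$, this relation is a polynomial in $x_ny_1^2$, $y_1$ and the other variables. The other relations do not involve $x_n$ at all.

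Next we check the conditions on $\delta_n$.
\begin{itemize}
\item $\delta_n(y_1)=0$.
\item $\delta_n(x_n)=\delta_{n-1}(f_n)$, which does not depend on $x_n$.
\item Every other generator $F$ satisfies $\delta_n(F)=y_1^{k_{n1}}\cdots\delta_{n-1}(F)$. This is divisible by $y_1^2$, and it does not depend on $x_n$.
\end{itemize}
The algebra $C$ is a domain by Lemma~\ref{snl318}. The algebra $\widetilde C$ is obtained by lowering $k_{n1}$ by one, so it is again an $n$-Danielewski variety with monic $f_j$. Hence it is a domain by Lemma~\ref{snl318} as well, so $(*')$ holds. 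The remaining condition $y_1\nmid\delta(x)$ will follow from the unit-ideal computation below, since that computation shows $\widehat g$ is not identically zero modulo the $\widehat h$'s.

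\textbf{Reducing to Lemma~\ref{pl5}.} It then remains to show that $(\widehat g,\widehat{h}_1,\ldots,\widehat{h}_{l+n})=(1)$, where hats mean setting $y_1=0$. These elements live in the polynomial ring over the remaining variables. We use the relations $\widehat{h}_{l+j}$ for $j<n$. Each of them is $-\widehat{f_j}$ whenever $k_{j1}>0$, and otherwise it is $x_j(\cdots)-\widehat{f_j}$. Together with $\widehat{f_n}$ these relations give the $\widehat f$'s in $\mathfrak J$.

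The key computation is $\widehat g=\widehat{\delta_{n-1}(f_n)}$, computed by the chain rule: $\delta_{n-1}(f_n)=\sum_j \frac{\partial f_n}{\partial x_j}\delta_{n-1}(x_j)+\ldots$. Recursively, $\delta_{n-1}(x_{n-1})=\delta_{n-2}(f_{n-1})$. Every term in which a derivative hits a $z$ or an earlier $x_j$ directly gets multiplied by a power of $y$'s. When $k_{\cdot 1}>0$ those terms vanish modulo $y_1$. When some $k_{j1}=0$, I would need to handle them modulo $(\widehat f_j)$; this is the delicate point. The surviving term should be $\widehat{\xi(f_1)}\prod_j\widehat{\partial f_{j+1}/\partial x_j}$, possibly up to terms that lie in the ideal of the $\widehat h$'s. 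Hence $\mathfrak J=(1)$ implies that the required ideal is the unit ideal.

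\textbf{Conclusion and main obstacle.} Lemma~\ref{pl5} then gives $(**)$. Lemmas~\ref{pl1} and~\ref{pl2} give the kernel as $\widetilde C$ in the variables with tildes, with the last exponent lowered by one, which is exactly the ideal $H$ of the statement. The main obstacle is the chain-rule reduction: making sure that, modulo $y_1$ and the hatted relations, $\widehat g$ reduces to the product in $\mathfrak J$. I would prove this by induction on $n$, tracking $\delta_i(x_i)$ modulo $(y_1,\widehat h)$. If only containment, rather than equality, can be obtained, that still suffices, since the implication goes from $1\in\mathfrak J$ to $1\in(\widehat g,\widehat h)$.
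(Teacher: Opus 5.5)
Your route is the paper's. You put $D$ into settings $(*)$ with $x=x_n$ and $y=y_1$, derive $1\in(\widehat g,\widehat{h}_1,\ldots,\widehat{h}_{l+n})$ from $1\in\mathfrak J$, and finish with Lemmas~\ref{pl5}, \ref{pl1} and \ref{pl2}. Your use of Lemma~\ref{snl318} to get $(*')$ for $\widetilde C$ fills a step the paper leaves implicit.

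When $k_{j1}\ge 1$ for every $j<n$, your sketch already is a proof, and it is exactly the paper's argument. In that case $\widehat{h}_{l+j}=-\widehat f_j$. Moreover, in $\delta_{i-1}(f_i)$ every term other than $\frac{\partial f_i}{\partial x_{i-1}}\,\delta_{i-2}(f_{i-1})$ carries the factor $y_1^{k_{i-1,1}}$. Induction then gives $\widehat g=\widehat{\xi(f_1)}\prod_j\widehat{\partial f_{j+1}/\partial x_j}$ exactly.

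The case you single out as delicate, $k_{j1}=0$ for some $j<n$, is a genuine gap, and it cannot be closed. There $\widehat{h}_{l+j}=x_jy_2^{k_{j2}}\cdots y_m^{k_{jm}}-\widehat f_j$, so $\widehat f_j$ does not lie in the ideal and there is nothing to reduce modulo. The chain-rule terms passing through $z$ survive, and the implication $\mathfrak J=(1)\Rightarrow(\widehat g,\widehat h)=(1)$ is false.

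For a counterexample, take the following data.
\begin{itemize}
\item $Z=\KK$ with coordinate $z$, $\xi=\partial/\partial z$, and $n=m=2$.
\item $k_{11}=0$, $k_{12}=1$, $k_{21}=2$, $k_{22}=0$.
\item $f_1=z$ and $f_2=x_1^2+2z^2x_1-1-2zy_2+z^4-z^2y_2^2$, which is monic in $x_1$.
\end{itemize}
Then $2x_1+2z^2$, $z$ and $f_2$ lie in $\mathfrak J$, so $\mathfrak J=(1)$. Eliminating $z=x_1y_2$ gives $Y_2\cong\{x_2y_1^2=P\}$ with $P=(1+x_1y_2^2)^2(x_1^2-1)$. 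In these coordinates $D(u)=y_1$, $D(x_1)=y_1^2$ and $D(x_2)=\partial P/\partial x_1$. The ideal $(\widehat g,\widehat h)$ becomes $(\partial P/\partial x_1,P)\subseteq(1+x_1y_2^2)$, which is proper, so $D$ has no slice by Corollary~\ref{2pc2}.

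The conclusion of the proposition fails as well. Put $s=\widetilde{x_1}=x_1-y_1u$; then $\widetilde{x_2}=P(s,y_2)/y_1$. Consider $\Phi=\pi_u\bigl((1+x_1y_2^2)(x_1^2-1)x_2\bigr)=(1+sy_2^2)(s^2-1)\widetilde{x_2}/y_1$.
\begin{itemize}
\item $\Phi$ lies in $\KK[X]$: since $1+x_1y_2^2$ divides $\partial P/\partial x_1$, the first $D$-image of $(1+x_1y_2^2)(x_1^2-1)x_2$ is divisible by $P=x_2y_1^2$, so every term of the Dixmier series is regular. Hence $\Phi\in\Ker D$.
\item $\Phi$ does not lie in $\KK[s,y_1,y_2,\widetilde{x_2}]$, the algebra generated by the elements in the statement. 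In that algebra every coefficient of $y_1^{-2}$ is divisible by $P^2$, whereas the $y_1^{-2}$-coefficient of $\Phi$ is $P^2/(1+sy_2^2)$.
\end{itemize}

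The paper's proof makes the same tacit assumption: its identities $\widehat f_j=-\widehat{h}_{l+j}$ and the product formula for $\widehat g$ both need $k_{j1}\ge1$. The right repair is therefore an extra hypothesis $k_{j1}\ge 1$ for all $j<n$, or replacing $\mathfrak J$ by $(\widehat g,\widehat{h}_1,\ldots,\widehat{h}_{l+n})$ itself. A finer chain-rule computation will not help.
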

\begin{proof}
    In settings $(*)$ we define $\delta=\delta_n$, $y=y_1$, $x=x_n$,
    $$z_{k+1}=y_2,\ldots, z_{k+m-1}=y_m, z_{k+m}=x_1,\ldots, z_{k+m+n-2}=x_{n-1}.$$

    Since $f_i$ is a monic polynomial in $x_{i-1}$, we have $y\nmid \frac{\partial f_i}{\partial x_{i-1}}$. It is easy to prove by induction that $y\nmid \delta_i(x_i)$. Since $k_{n1}\geq 2$, we have $y^2\mid \delta_n(x_j)$ for all $j<n$ and $y^2\mid\delta_n(z_i)$. So, all conditions $(*)$ are satisfied. 

    Let us denote $h_{l+j}=x_jy_1^{k_{j1}}\ldots y_m^{k_{jm}}-f_j$ and $\widehat{f_j}=f_j\mid_{y_1=0}=-\widehat{h_{l+j}}$.

 It is easy to see that $\widehat{g}=\delta_n(x_n)=\widehat{\xi(f_1)}\prod_{j=1}^{n-1}\widehat{\frac{\partial f_{j+1}}{\partial x_j}}$. Therefore, the ideal $(\widehat{g}, \widehat{h_1},\ldots, \widehat{h_{l+n}})$ is not proper in $\KK[z_1,\ldots, z_k,x_1,\ldots, x_n, y_1,\ldots, y_m]$. By Lemmas~\ref{pl5}, \ref{pl1}, and \ref{pl2}, we obtain the goal. 
\end{proof}
\begin{re}\label{rmdv}
    Let us denote the following ideals 
    $$J_1=(\widehat{\xi(f_1)}, \widehat{f_1})\subseteq \KK[Z][y_2,\ldots, y_m];$$
    $$J_i=\left(\widehat{f_1},\ldots,\widehat{f_i}, \widehat{\frac{\partial f_i}{\partial x_{i-1}}}\right)\subseteq \KK[Z][x_1\ldots, x_{i-1},y_2,\ldots, y_m], \qquad 2\leq i\leq n-1.$$
    If all the ideals $J_1,\ldots, J_{n-1}$ are not proper, then the ideal $\mathfrak{J}$ from Proposition~\ref{P323} is not proper. 
\end{re}

Let us give some sufficient conditions for $(**)$ other than Lemma~\ref{pl5}.

\begin{lem}\label{pl3}
    Suppose that for $g=\delta(x)$ the following condition holds: $y\mid gw$ for some $w\in C$ implies $y\mid w$. Then in settings $(*')$ we have  
    $$\Ker D=A_0=\KK[\widetilde{z_1}, \ldots, \widetilde{z_k},\widetilde{x},y]/\left(\overline{h_i}\left(\widetilde{x}y,y,\widetilde{z_1},\ldots,\widetilde{z_k}\right)\mid 1\leq i\leq l\right).$$
\end{lem}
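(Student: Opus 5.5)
The plan is to run van den Essen's algorithm directly and show that it stops at the first step, i.e.\ $A_1=A_0$ with $p=D(u)=y$. Once that is done, $\Ker D=A_0=\KK[\widetilde{z_1},\ldots,\widetilde{z_k},\widetilde{x},y]$. The relations then follow exactly as in the proof of Lemma~\ref{pl2}: since $\overline{h_i}(\widetilde{x}y,y,\widetilde{z_1},\ldots,\widetilde{z_k})=\widetilde{h_i}(\widetilde{x},y,\widetilde{z_1},\ldots,\widetilde{z_k})$, the only input needed there is that $y$ is not a zero divisor in $\widetilde{C}$. This holds automatically in $(*')$, because $\widetilde{C}$ is a domain and $y\neq 0$ in it. So the whole task is the equality $A_1=A_0$, which I would prove directly rather than through the Gr\"obner condition of Lemma~\ref{pl1}.

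Take $P\in\KK[X,Y,Z_1,\ldots,Z_k]$ and suppose $F=P(\widetilde{x},y,\widetilde{z_1},\ldots,\widetilde{z_k})\in yB$. The goal is $F\in yA_0$. Write $P(X,Y,Z)=\sum_j P_j(Z)X^j+Y\,R(X,Y,Z)$. The second part already lies in $yA_0$, so it suffices to prove $P_j(\widetilde{z_1},\ldots,\widetilde{z_k})\in yA_0$ for every $j$.

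First reduce modulo $y$. We have $B/yB=(C/yC)[u]$, and by the formulas after Settings~$(*)$, $\widetilde{z_i}\equiv z_i$ and $\widetilde{x}\equiv gu \pmod{yB}$. Hence $\sum_j P_j(z)g^ju^j=0$ in $(C/yC)[u]$, so $P_j(z)g^j\in yC$ for each $j$. Applying the hypothesis on $g$ $j$ times peels off the factors of $g$ and gives $P_j(z)\in yC$.

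Next lift this to the polynomial ring. Since $h_i=\overline{h_i}(xy^2,y,z)$, the ideal $(h_1,\ldots,h_l,y)\subseteq\KK[x,y,z]$ equals $(\widehat{h_1}(z),\ldots,\widehat{h_l}(z),y)$. Its intersection with $\KK[z]$ is therefore $(\widehat{h_1},\ldots,\widehat{h_l})$, so $P_j(Z)=\sum_i a_{ij}(Z)\widehat{h_i}(Z)$ in $\KK[Z]$. Finally, the computation in the proof of Lemma~\ref{pl1} shows $\widehat{h_i}(\widetilde{z_1},\ldots,\widetilde{z_k})=y\,Q_i(\widetilde{x},y,\widetilde{z_1},\ldots,\widetilde{z_k})$ with $Q_i(\widetilde{x},y,\widetilde{z})\in A_0$; that computation uses only $\pi_u(h_i)=0$. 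Substituting $Z=\widetilde{z}$ gives $P_j(\widetilde{z})\in yA_0$.

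The main point needing care is the step from $P_j(z)\in yC$, a statement in $C$, to $P_j(\widetilde z)\in yA_0$. This is exactly where the special shape $h_i=\overline{h_i}(xy^2,y,z)$ enters: it kills all $x$-dependence modulo $y$. The rest is bookkeeping with the Dixmier map.
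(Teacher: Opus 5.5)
Your proof is correct, and it takes a more direct route than the paper.

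The paper verifies the elimination-ideal criterion of Lemma~\ref{pl1}, namely $I\cap\KK[v_1,\ldots,v_{k+1}]=(\widehat{h_1},\ldots,\widehat{h_l})$. It does this by tracking through Buchberger's algorithm which shapes of polynomials can occur ($v_{k+1}a(v)+b(v)u$). The hypothesis on $g$ is used only to exclude a Gr\"obner basis element of the form $v_{k+1}a(v)$ with $a\notin(\widehat{h_1},\ldots,\widehat{h_l})$. The paper then quotes Lemmas~\ref{pl1} and~\ref{pl2}.

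You instead check the stopping criterion of van den Essen's algorithm, $A_0\cap yB=yA_0$, by hand. You compare coefficients of $u$ in $B/yB=(C/yC)[u]$ and peel off $g^j$ using the hypothesis. You then lift via $(h_1,\ldots,h_l,y)=(\widehat{h_1},\ldots,\widehat{h_l},y)$ and the identity $\widehat{h_i}(\widetilde{z_1},\ldots,\widetilde{z_k})\in yA_0$ taken from the proof of Lemma~\ref{pl1}.

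The ingredients are the same, but your version has some advantages:
\begin{itemize}
\item It needs no monomial order.
\item It treats all powers $\widetilde{x}^j$ at once. The paper's argument relies on the claim that only elements linear in $v_{k+1}$ and $u$ arise during Buchberger's algorithm.
\item It makes transparent that the hypothesis says exactly that $g$ is a non-zero-divisor on $C/yC\cong\bigl(\KK[z_1,\ldots,z_k]/(\widehat{h_1},\ldots,\widehat{h_l})\bigr)[x]$.
\end{itemize}
The paper's route, in turn, funnels everything through the single criterion of Lemma~\ref{pl1}. That criterion is shared with Lemmas~\ref{pl5} and~\ref{pl4} and is phrased in an algorithmically checkable form.

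One small point: your claim that $y\neq 0$ in $\widetilde C$ deserves a line. The surjection $\widetilde C\to A_0$, $x\mapsto\widetilde{x}$, $z_i\mapsto\widetilde{z_i}$, exists because $\widetilde{h_i}(\widetilde x,y,\widetilde z)=\pi_u(h_i)=0$, and it sends $y$ to $y=D(u)\neq 0$.
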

\begin{proof}
    Let us consider the ideal 
    $$
    I=(v_{k+1}-g(v_1,\ldots, v_k)u, \widehat{h_1}(v_1,\ldots, v_k),\ldots, \widehat{h_l}(v_1,\ldots, v_k))\subseteq \KK[v_1,\ldots, v_{k+1},u]
    $$
    from Lemma~\ref{pl1}. Assume that $$I\cap\KK[v_1,\ldots, v_{k+1}]\neq (\widehat{h_1}(v_1,\ldots, v_k),\ldots, \widehat{h_l}(v_1,\ldots, v_k)).$$ Let us apply Buchberger algorithm to compute a Gr\"obner basis of $I$. When we compute the s-plynomial of two polynomials $v_{k+1}a(v_1,\ldots, v_k)+b(v_1,\ldots, v_k)u$ and $c(v_1,\ldots, v_k)$, we obtain a polynomial of the form $v_{k+1}a(v_1,\ldots, v_k)+b_1(v_1,\ldots, v_k)u$. When we compute the s-plynomial of two polynomials 
    $$v_{k+1}a_1(v_1,\ldots, v_k)+b_1(v_1,\ldots, v_k)u$$ and $$v_{k+1}a_2(v_1,\ldots, v_k)+b_2(v_1,\ldots, v_k)u,$$ we obtain a polynomial of the form 
    $v_{k+1}a_3(v_1,\ldots, v_k)+b_3(v_1,\ldots, v_k)u$. If we apply a reduction of a polynomial $$v_{k+1}a(v_1,\ldots, v_k)+b(v_1,\ldots, v_k)u$$
    by a plynomial $c(v_1,\ldots, v_k)$ or 
    $v_{k+1}a_1(v_1,\ldots, v_k)+b_1(v_1,\ldots, v_k)u$, we obtain a polynomial of the form $v_{k+1}a(v_1,\ldots, v_k)+b(v_1,\ldots, v_k)u$. Therefore, all elements of the obtained Gr\"obner basis that are not in $(\widehat{h_1}(v_1,\ldots, v_k),\ldots, \widehat{h_l}(v_1,\ldots, v_k))$ have the form $v_{k+1}a(v_1,\ldots, v_k)+b_1(v_1,\ldots, v_k)u$. 

    If we have an element of this Gr\"obner basis contained in
    $$\KK[v_1,\ldots, v_{k+1}]\setminus (\widehat{h_1}(v_1,\ldots, v_k),\ldots, \widehat{h_l}(v_1,\ldots, v_k)),$$ 
    then it has the form $F=v_{k+1}a(v_1,\ldots, v_k)$.

    Then, according to the algorithm, we are interested in $$F(\widetilde{z_1},\ldots, \widetilde{z_k}, \widetilde{x})=\widetilde{x}a(\widetilde{z_1},\ldots, \widetilde{z_k})=gua(z_1,\ldots, z_k)+yS,$$ which is divisible by $y$. Therefore, $y\mid ga(z_1,\ldots, z_k)$. But, since the condition of the lemma, this implies that $y$ divides $a(z_1,\ldots, z_k)$ in $C$. That is $a(z_1,\ldots, z_k)\in J$, i.e. $a(v_1\ldots, v_{k})\in I$. Hence, $a(v_1\ldots, v_{k})\in (\widehat{h_1}(v_1,\ldots, v_k),\ldots, \widehat{h_l}(v_1,\ldots, v_k))$. This implies $$F\in(\widehat{h_1}(v_1,\ldots, v_k),\ldots, \widehat{h_l}(v_1,\ldots, v_k)),$$ which contradicts the assumption. This shows that we do not have elements of the Gr\"obner basis of $I$ in 
    $$\KK[v_1,\ldots, v_{k+1}]\setminus (\widehat{h_1}(v_1,\ldots, v_k),\ldots, \widehat{h_l}(v_1,\ldots, v_k)).$$ 
    So,
    $$I\cap\KK[v_1,\ldots, v_{k+1}]= (\widehat{h_1}(v_1,\ldots, v_k),\ldots, \widehat{h_l}(v_1,\ldots, v_k)).$$ 
    By Lemmas~\ref{pl1} and~\ref{pl2} we obtain the assertion.
\end{proof}

\begin{re}\label{pr2}
    The conditions of Lemma~\ref{pl3} are satisfied, for example, if $C$ is a UFD. But this is not a necessary condition. 
    
    Suppose a $G$-grading on $C$ is fixed, where $G$ is an abelian group. Suppose also that $C$ is known to be $G$-factorial. Let $y$ and $g$ be $G$-homogeneous elements such that $\gcd(y,g)=1$. Then $y\mid gw$ implies $y\mid w$. 
\end{re}

\begin{ex}\label{prtrnkr}
    Let $Z=\{xy^2=z_1^2+z_2^3\}$. By \cite[Theorem~1.2(iv)]{H-W} this variety is factorial and $y$ is an irreducible element. Therefore by Lemma~\ref{pl3} and Remark~\ref{pr2} for 
    $$D\colon C[u]\rightarrow C[u],\qquad D(x,y,z_1,z_2,u)=(2z_1,0,y^2,0,y)$$ 
    we have 
    $$
    \Ker D=\KK[\widetilde{x},y,\widetilde{z_1},z_2]/(\widetilde{x}y-\widetilde{z_1}^2-z_2^3).
    $$
    Here $\widetilde{x}=xy-2z_1u+yu^2$, $\widetilde{z_1}=z_1-uy$.
\end{ex}

\begin{ex}
    Let $Z=\{xy^2=z_1^3+z_2^3\}$. Then $C=\KK[Z]$ admits the following $G=\mathbb{Z}\times \mathbb{Z}_3$-grading given by $\deg x=(2,0)$, $\deg y=(-1,0)$, $\deg z_1=(0,1)$, $\deg z_2=(0,0)$. By \cite[Theorem~1.2(ii, iii)]{H-W}, see also Corollary~\ref{comi}, this variety is $G$-graded and $y$ is a $G$-irreducible element. Therefore by Lemma~\ref{pl3} and Remark~\ref{pr2} for $D\colon C[u]\rightarrow C[u]$, $D(x,y,z_1,z_2,u)=(3z_1^2,0,y^2,0,y)$ we have 
    $$
    \Ker D=\KK[\widetilde{x},y,\widetilde{z_1},z_2]/(\widetilde{x}y-\widetilde{z_1}^3-z_2^3).
    $$
    Here $\widetilde{x}=xy-3z_1^2u+3z_1u^2y-u^3y^2$, $\widetilde{z_1}=z_1-uy$.
\end{ex}

\begin{de}\label{nd316}
    Suppose $Z_0=\Susp(V,f,1,k_1,\ldots, k_m)$, $k_1\geq 2$ and $\xi$ is an LND on $V$ such that $\xi(f)\neq 0$. Consider $\delta_0=\overline{\xi}$. Let us define 
    $$Z=\{xy_1^{k_1}\ldots y_m^{k_m}+y_1F=f\}\subseteq \KK^{m+1}\times V,$$ 
    where $F\in \Ker\!\xi\,[y_1,\ldots,y_m]$. 
    The variety $Z$ we call a {\it modification} of $Z_0$.
\end{de}

Note that we can define an LND $\delta$ on $Z$ by the same formulas as $\delta_0$, i.e.
$$\delta(x)=\xi(f),\qquad \delta(y_i)=0, \qquad \delta(g)=y_1^{k_1}\ldots y_m^{k_m}\xi (g) \text{ for } g\in\KK[V].$$ Since $y_1F\in\Ker\!\xi\,[y_1,\ldots, y_m]\subseteq \Ker \delta$, these formulas give a correct LND on $\KK[Z]$.

\begin{lem}\label{pl4}
    Suppose for the LND $\delta_0$ on $Z_0$  the conditions $(**)$ are satisfied. Then for the LND $\delta$ on $Z$  these conditions are also satisfied.
\end{lem}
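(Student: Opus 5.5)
Conditions $(**)$ consist of three parts: the settings $(*)$ (including that $C$ is a domain), the ideal-intersection condition on $I$, and the requirement that $y$ is not a zero divisor in $\widetilde{C}$. In the notation of settings $(*)$ we put $y=y_1$, and we treat $y_2,\ldots,y_m$ together with the generators of $\KK[V]$ as the variables $z_j$. The equation of $Z$ is $h=xy_1^{k_1}\cdots y_m^{k_m}+y_1F-f$, while that of $Z_0$ is $h^0=xy_1^{k_1}\cdots y_m^{k_m}-f$; the equations of $V$ coincide. The plan is to show that each ingredient of $(**)$ for $Z$ reduces to the corresponding ingredient for $Z_0$, since the extra term $y_1F$ vanishes whenever we set $y_1=0$.

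First we check settings $(*)$ for $Z$. The form $\overline{h}(xy_1^2,y_1,\ldots)$ still holds: $x$ enters only through $xy_1^{k_1}\cdots$ with $k_1\ge2$, and $y_1F$ involves only $y_1$ and the other variables. The derivation $\delta$ has the same values on generators as $\delta_0$. Thus $g=\delta(x)=\xi(f)$ is unchanged, so $y\nmid g$ holds as for $Z_0$, and $y^2\mid\delta(z_j)$ holds because $k_1\ge2$. Well-definedness of $\delta$ is already noted in the text.

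For the ideal condition, the key observation is $\widehat h=h|_{y_1=0}=-f|_{y_1=0}=\widehat{h^0}$. The remaining $\widehat{h_i}$ are identical, and $\widehat g$ is identical. Hence the ideal $I$ of Lemma~\ref{pl1} is literally the same for $Z$ and $Z_0$, and the first condition of $(**)$ transfers verbatim.

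The main obstacle is the remaining two conditions: that $\KK[Z]$ is a domain and that $y_1$ is a non-zero-divisor in $\widetilde C$. Here $\widetilde C=\KK[V][x,y_1,\ldots,y_m]/(xy_1^{k_1-1}y_2^{k_2}\cdots y_m^{k_m}+y_1F-f)$, and the corresponding ring for $Z_0$ is $\widetilde C_0$. Both statements can be reduced to a comparison modulo $y_1$. We have $\widetilde C/(y_1)\cong \widetilde C_0/(y_1)$, because both equal $\KK[V][x,y_2,\ldots,y_m]/(\pm f|_{y_1=0})$ when $k_1\ge2$. We also have $\KK[Z]/(y_1)\cong\KK[Z_0]/(y_1)$.

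To conclude I would use Lemmas~\ref{vspomlem1} and~\ref{vspomlem2} with $R=\KK[V][y_1,\ldots,y_m]$, $b=y_1^{k_1-1}y_2^{k_2}\cdots y_m^{k_m}$ and $a=f-y_1F$. The hypothesis to verify is that $a$ is a non-zero-divisor modulo each $y_i$. Modulo $y_1$ this reduces to $f|_{y_1=0}=f$ being a nonzero element of the domain $\KK[V][y_2,\ldots,y_m]$. Modulo $y_i$ for $i\ge2$ it is trickier, since $f-y_1F|_{y_i=0}$ must be nonzero; this holds unless $f\in(y_1)$, which is impossible because $f\in\KK[V]$ is nonzero. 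Given that hypothesis, Lemma~\ref{vspomlem2} shows that $b$ is a non-zero-divisor on $R/(a)$, and Lemma~\ref{vspomlem1} then gives that $\widetilde C\cong R[x]/(bx-a)$ is a domain (and likewise $\KK[Z]$). Being a domain, $\widetilde C$ has $y_1$ as a non-zero-divisor, provided $y_1\neq0$ in it. That holds since $\widetilde C/(y_1)\ne0$, and more directly because $y_1$ is transcendental given the domain structure. This completes $(**)$ for $\delta$.
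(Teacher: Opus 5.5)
Your proposal is correct and follows the paper's proof. The paper simply notes that $(*')$ holds for both $Z_0$ and $Z$ and that the ideals $I$ coincide because $y_1F$ vanishes at $y_1=0$; your use of Lemmas~\ref{vspomlem1} and~\ref{vspomlem2} just fills in the domain property that the paper calls easy. One small fix: $\widetilde C/(y_1)\neq 0$ only shows that $y_1$ is not a unit, so to get $y_1\neq 0$ in $\widetilde C$ argue instead that $R\to R[x]/(bx-a)$ is injective (compare $x$-degrees, using $b\neq 0$ in the domain $R$).
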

\begin{proof}
        It is easy to see that conditions $(*')$ are satisfied for both varieties. By definition, ideals $I$ for $\delta$ and $\delta_0$ coincide. 
\end{proof}






\begin{ex}
    Let $Z=\{xy_1^{k_1}\ldots y_m^{k_m}+y_1F(y_1,\ldots, y_m)=f(z)\}$, where $k_1\geq 2$ and $f$ has no multiple roots. This variety is obtained by modification from the variety from Example~\ref{pe2}. Therefore, for the same 
    $\delta\colon(x,y_1,\ldots, y_m,z)\rightarrow (f'(z), 0,\ldots, 0, y_1^{k_1}\ldots y_m^{k_m})$, the conditions $(**)$ are satisfied. So, for $D=y_1\frac{\partial}{\partial u}+\delta$ we have
    $$
    \Ker D=\KK[\widetilde{x}, y_1,\ldots, y_m,\widetilde{z}]/(\widetilde{x}y_1^{k_1-1}y_2^{k_2}\ldots y_m^{k_m}+y_1F(y_1,\ldots, y_m)-f(\widetilde{z})).
    $$
\end{ex}

\begin{ex}\label{KKRC}
Let us consider the Koras-Russel cubic $Z=\{xy^2+y=z^2+w^3\}$. It is obtained by a modification of the following $2$-suspension over the affine plane $Z_0=\{xy^2=z^2+w^3\}$, which is a trinomial hypersurface. By Example~\ref{prtrnkr} for $\delta_0\colon(x,y,z,w)\rightarrow (2z,0,y^2,0)\in \LND(Z_0)$ conditions $(**)$ are satisfied. So, for  $D_0=y\frac{\partial}{\partial u}+\delta_0\in \LND(Z_0\times\KK)$ we have 
$$
\Ker D_0=\KK[\widetilde{x}, y,\widetilde{z}, w]/(\widetilde{x}y-\widetilde{z}^2-w^3).
$$
By Lemma~\ref{pl4} for $\delta\colon(x,y,z,w)\rightarrow (2z,0,y^2,0)\in \LND(Z)$ conditions $(**)$ are satisfied. Putting
 $D=y\frac{\partial}{\partial u}+\delta\in \LND(Z\times\KK)$ 
we obtain
$$
\Ker D=\KK[\widetilde{x}, y,\widetilde{z}, w]/(\widetilde{x}y+y-\widetilde{z}^2-w^3)=
\KK[\overline{x}, y,\widetilde{z}, w]/(\overline{x}y-\widetilde{z}^2-w^3),
$$
where $\overline{x}=\widetilde{x}+1$.
\end{ex}

\section{Constructing counter-examples for the generalized Zariski cancellation problem}\label{chetvert}

Let us remind the statements of the Zariski cancellation problem and the generalized Zariski cancellation problem.

\begin{prob}[Zariski cancellation problem]
    Let $X$ be an irreducible affine algebraic variety. Does $X\times \KK\cong \KK^{n+1}$ imply
    $X\cong \KK^n$?
\end{prob}

\begin{prob}[generalized Zariski cancellation problem]
    Let $X$ and $Y$ be irreducible affine algebraic varieties. Does $X\times \KK\cong Y\times \KK$ imply
    $X\cong Y$?
\end{prob}

The Zariski cancellation problem is still open in case $\mathrm{char}\,\KK=0$, while there exists some counter-examples for the generalized Zariski cancellation problem. The first and the most known counter-example give Danielewski surfaces introduced in~\cite{Da}. In \cite{Du} Du\-bou\-loz construct counter-examples in orbitrary dimension $\geq 2$. Also it worth to mention that in positive characteristic there is a counter-example for the Zariski cancellation problem, see~\cite{Gu}.

A variety $X$ is {\it cancellative} if $X\times \KK\cong Y\times \KK$ imply $X\cong Y$ for any $Y$, i.e. if $X$ is not a counter-example to the generalized Zariski cancellation problem.

It seems to be a popular idea to use the Slice Theorem to prove that Danielewski surfaces give counter-examples for the generalized Zariski cancellation problem, see \cite[Section~10.1.1]{F}. To prove that $X\times \KK\cong Y\times \KK$ one needs to find an LND $D$ of $\KK[X\times\KK]$ with a slice such that $\Ker D\cong\KK[Y]$. We consider LNDs of the form $y\frac{\partial}{\partial u}+\delta$ as in the previous section and use the technique elaborated in the previous section to compute kernels of such LNDs. 

\begin{prop}\label{2pp1}
    In settings $(*)$, let $\sigma=F(y,z_1,\ldots, z_k)\in\Ker D$ for some polynomial $F\in\KK[y,z_1,\ldots, z_k]$. Then $\sigma\in\mathrm{pl}(D)$ if and only if
$$F\in(y,\widehat{g},\widehat{h_1},\ldots,\widehat{h_l})\subseteq \KK[y,z_1,\ldots, z_k].$$
\end{prop}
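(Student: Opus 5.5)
The plan is to read $\mathrm{pl}(D)=D(B)\cap\Ker D$, the plinth ideal of $D$, and to prove the two directions separately. Both directions reduce modulo $y$. The basic facts used are $D(u)=y$, $D(x)=g$, $D(y)=0$ and $y^2\mid D(z_j)$. We also use that reduction modulo $y$ gives $C/yC\cong\KK[x,z_1,\ldots,z_k]/(\widehat{h_1},\ldots,\widehat{h_l})$, because $h_j(x,0,z)=\overline{h}_j(0,0,z)=\widehat{h_j}(z)$.

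\emph{Necessity.} Write $b\in B=C[u]$ as a polynomial in the generators $x,y,z_1,\ldots,z_k,u$. By the chain rule,
$$D(b)=\frac{\partial b}{\partial u}\,y+\frac{\partial b}{\partial x}\,g+\sum_j\frac{\partial b}{\partial z_j}\,\delta(z_j).$$
Hence $D(B)\subseteq (y,g)B$. If $\sigma=F(y,z)=D(b)$, then reducing modulo $y$ places $F(0,z)$ in the ideal generated by $\widehat g(z)$ in
$$B/yB\cong\KK[x,z_1,\ldots,z_k,u]/(\widehat{h_1},\ldots,\widehat{h_l}).$$
Neither $\widehat g$ nor the $\widehat{h_j}$ involve $x$ or $u$, so setting $x=u=0$ gives $F(0,z)\in(\widehat g,\widehat{h_1},\ldots,\widehat{h_l})$ in $\KK[z_1,\ldots,z_k]$. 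Therefore $F\in(y,\widehat g,\widehat{h_1},\ldots,\widehat{h_l})$.

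\emph{Sufficiency.} Here I would work inside the ideal $\mathrm{pl}(D)$ of $A=\Ker D$ and use three observations.
\begin{itemize}
\item[(a)] $yA\subseteq\mathrm{pl}(D)$, since $D(ua)=ya$ for every $a\in A$.
\item[(b)] $yB\cap A=yA$, because $B$ is a domain and $D(y)=0$.
\item[(c)] $z_j-\widetilde{z_j}\in yB$.
\end{itemize}
Let $\sigma=F(y,z)\in A$. By (c), $F(y,z)-F(0,\widetilde z)\in yB$. The element $F(0,\widetilde z)$ lies in $A$, so the difference lies in $yB\cap A=yA$ by (b), and hence in $\mathrm{pl}(D)$ by (a). It therefore suffices to show $F(0,\widetilde z)\in\mathrm{pl}(D)$, and for this it is enough to show that $\widehat{h_j}(\widetilde z)$ and $\widehat g(\widetilde z)$ lie in $\mathrm{pl}(D)$, since $\mathrm{pl}(D)$ is an ideal of $A$. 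For the $\widehat{h_j}$, the computation in the proof of Lemma~\ref{pl1} gives $\widehat{h_j}(\widetilde z)=y\,Q_j$ with $Q_j\in A_y$. This element lies in $yB\cap A=yA$, which is contained in $\mathrm{pl}(D)$.

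The main obstacle is showing $\widehat g(\widetilde z)\in\mathrm{pl}(D)$. The natural candidate preimage is $x$: we have $D(x)=g$, and $g(y,z)-\widehat g(\widetilde z)\in yB$. However, $yB\not\subseteq D(B)$ in general, so the error term must be removed by a correction argument. I would first try to build $\beta=x+u\,c_1+u^2c_2+\cdots$ recursively, solving $D(\beta)=\widehat g(\widetilde z)$ degree by degree in $u$. At each step the defect is divisible by $y$, so it can be absorbed by the $y\partial/\partial u$ part of $D$. The $\delta$-part lowers the $D$-degree, and since $y^2\mid\delta(z_j)$ and $\delta(x)=g$, this should let the recursion terminate by local nilpotency. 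If this fails, the fallback is to use the Dixmier map. We have $y\,\pi_u(x)=\widetilde x$, and expanding $\pi_u(x)$ shows $\widetilde x\equiv -\widehat g(z)\,u \pmod{yB}$. This should let one express $\widehat g(\widetilde z)\cdot y$, and then $\widehat g(\widetilde z)$ itself, as a $D$-image, exploiting that the local slice $u$ satisfies $D(u)=y$.
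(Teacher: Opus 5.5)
\textbf{Gap in the sufficiency direction.} Your necessity argument is correct and matches the paper's: $D(B)$ lies in the ideal generated by the $D$-images of the generators, which is $(y,g)$; then reduce modulo $y$.

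In sufficiency, observations (a)--(c) and the reduction to showing $\widehat{g}(\widetilde{z}),\widehat{h_j}(\widetilde{z})\in\mathrm{pl}(D)$ are correct. One small fix: to put $\widehat{h_j}(\widetilde{z})$ into $yB$ you need $Q_j\in B$, not merely $Q_j\in A_y$. The simplest route is $\widehat{h_j}(\widetilde{z})\equiv\widehat{h_j}(z)\equiv h_j=0 \pmod{yB}$.

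The real problem is the step you yourself call the main obstacle, $\widehat{g}(\widetilde{z})\in\mathrm{pl}(D)$. This is the entire substance of the sufficiency direction, and you do not prove it. The recursion $\beta=x+uc_1+u^2c_2+\cdots$ is only asserted to terminate. At each stage the defect lies in $yB$ but not in $yA$, so absorbing it through $u$ produces a new term $u\,D(\cdot)$. Nothing in your sketch controls the needed divisibility by $y$ or guarantees termination. The fallback names the right ingredients but stops at ``should let one express''. As written, the proof is incomplete.

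\textbf{The missing step.} The idea you need is a single computation, and it is exactly the paper's trick. By (c) and your congruence $\widetilde{x}\equiv-\widehat{g}(z)u \pmod{yB}$, we get $u\,\widehat{g}(\widetilde{z})+\widetilde{x}=yr$ for some $r\in B$. Apply $D$. Both $\widetilde{x}$ and $\widehat{g}(\widetilde{z})$ lie in $A$, and $D(u)=y$, so
\[
y\,\widehat{g}(\widetilde{z})=D\bigl(u\,\widehat{g}(\widetilde{z})+\widetilde{x}\bigr)=D(yr)=y\,D(r).
\]
Since $B$ is a domain, $\widehat{g}(\widetilde{z})=D(r)$. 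The point is that you never need $yB\subseteq D(B)$. You only need $D(yB)=yD(B)$, together with a kernel element, here $\widetilde{x}$, that cancels the $u$-term modulo $y$.

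The same trick also makes your reductions (a)--(c) unnecessary. The paper applies it directly to $\sigma$. Write $F=Sy+P\widehat{g}+\sum_j Q_j\widehat{h_j}$. The $Sy$ and $\widehat{h_j}$ terms vanish modulo $yB$, so $u\sigma+\pi_u(P)\,\widetilde{x}\in yB$. Applying $D$ gives $y\sigma\in yD(B)$, hence $\sigma\in D(B)$.
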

\begin{proof}
Suppose $\sigma\in\mathrm{pl}(D)$. Then 
    $$\sigma\in (D(u),D(x),D(y),D(z_1),\ldots, D(z_k))\subseteq \KK[X].$$  But $D(u)=y$, $D(y)=0$, and $y\mid D(z_i)$. Therefore, $$(D(u),D(x),D(y),D(z_1),\ldots, D(z_k))=(y, g(z_1,\ldots, z_k))\subseteq \KK[X].$$
    The condition $\sigma\in (y, g(z_1,\ldots, z_k))\subseteq \KK[X]$ is equivalent to 
    $$
    F\in (y, g,h_1,\ldots, h_l)=(y, \widehat{g},\widehat{h_1},\ldots, \widehat{h_l})\subseteq \KK[x,y,z_1,\ldots, z_k].
    $$
    Since $F\in\KK[y,z_1,\ldots,z_k]$, we have $F\in (y, \widehat{g},\widehat{h_1},\ldots, \widehat{h_l})\subseteq \KK[y,z_1,\ldots, z_k].$

    Conversely, suppose $F\in (y,\widehat{g},\widehat{h_1},\ldots,\widehat{h_l})\subseteq \KK[y,z_1,\ldots, z_k]$. Then there exists $S,P,Q_1,\ldots, Q_l\in \KK[y,z_1,\ldots, z_k]$ such that $F=Sy+P\widehat{g}+Q_1\widehat{h_1}+\ldots+Q_l\widehat{h_l}$.
We have $\widetilde{x}=ug+ya$, $\widetilde{z_i}=z_i+yb_i$. Hence,
$\widetilde{P}=\pi_u(P)=P+yc$, $\widetilde{Q_i}=\pi_u(Q_i)=Q_i+yd_i$.
Therefore,
\begin{multline*}
y\sigma=D(u\sigma)=D(u(Sy+P\widehat{g}+Q_1\widehat{h_1}+\ldots+Q_l\widehat{h_l}))=\\
=D(Syu+Pu\widehat{g}+u\widehat{h_1}Q_1+\ldots+uQ_l\widehat{h_l})=\\
=D\left(\widetilde{P}\widetilde{x}+yr+u\sum_{i=1}^l h_iQ_i\right)=D(yr)=yD(r)
\end{multline*}
for some $r\in\KK[X]$.
Therefore, $\sigma=D(r)$.
\end{proof}

Let us state two straightforward corollaries of Proposition~\ref{2pp1}.

\begin{cor}\label{2pc1}
 In settings $(*)$, let $\sigma=f(z_1,\ldots, z_k)\in\Ker D$ for some polynomial $f\in\KK[z_1,\ldots, z_k]$. Then $\sigma\in\mathrm{pl}(D)$ if and only if
$f\in(\widehat{g},\widehat{h_1},\ldots,\widehat{h_l})\subseteq \KK[z_1,\ldots, z_k]$.
\end{cor}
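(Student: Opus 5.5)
This follows from Proposition~\ref{2pp1} applied to the polynomial $F=f$, viewed as an element of $\KK[y,z_1,\ldots,z_k]$ that does not depend on $y$. The proposition gives $\sigma\in\mathrm{pl}(D)$ if and only if $f\in(y,\widehat{g},\widehat{h_1},\ldots,\widehat{h_l})\subseteq\KK[y,z_1,\ldots,z_k]$. So it remains to show, for polynomials free of $y$, that membership in this ideal of $\KK[y,z_1,\ldots,z_k]$ is equivalent to membership in $(\widehat{g},\widehat{h_1},\ldots,\widehat{h_l})\subseteq\KK[z_1,\ldots,z_k]$.

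One direction is immediate. If $f=P\widehat{g}+\sum_i Q_i\widehat{h_i}$ with $P,Q_i\in\KK[z_1,\ldots,z_k]$, the same expression exhibits $f$ as an element of the larger ideal.

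For the converse, write $f=Sy+P\widehat{g}+\sum_i Q_i\widehat{h_i}$ with $S,P,Q_i\in\KK[y,z_1,\ldots,z_k]$. Apply the $\KK[z_1,\ldots,z_k]$-algebra homomorphism $\KK[y,z_1,\ldots,z_k]\to\KK[z_1,\ldots,z_k]$ given by $y\mapsto 0$.

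\begin{itemize}
\item By construction, $\widehat{g}=g(0,z_1,\ldots,z_k)$ and $\widehat{h_i}=\overline{h_i}(0,0,z_1,\ldots,z_k)$ are already polynomials in $z_1,\ldots,z_k$, so they are fixed by this map.
\item The polynomial $f$ is also fixed, since it does not involve $y$.
\item The term $Sy$ is sent to $0$.
\end{itemize}

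This yields $f=P(0,z)\widehat{g}+\sum_i Q_i(0,z)\widehat{h_i}$, which is the required membership in $(\widehat{g},\widehat{h_1},\ldots,\widehat{h_l})\subseteq\KK[z_1,\ldots,z_k]$.

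The only point to check is that $\widehat{g}$ and the $\widehat{h_i}$ really lie in $\KK[z_1,\ldots,z_k]$, so that they are unaffected by setting $y=0$. This holds by their definitions in settings $(*)$. There is no substantive obstacle beyond invoking Proposition~\ref{2pp1}.
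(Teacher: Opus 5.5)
Your proof is correct and follows the paper's route. The paper records this statement as a straightforward corollary of Proposition~\ref{2pp1}, applied with $F=f$ independent of $y$. Your specialization $y\mapsto 0$ is exactly the unstated step showing that, for $y$-free polynomials, membership in $(y,\widehat{g},\widehat{h_1},\ldots,\widehat{h_l})\subseteq\KK[y,z_1,\ldots,z_k]$ is equivalent to membership in $(\widehat{g},\widehat{h_1},\ldots,\widehat{h_l})\subseteq\KK[z_1,\ldots,z_k]$.
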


\begin{cor}\label{2pc2}
    In settings $(*)$, the LND $D$ admits a slice if and only if the ideal $(\widehat{g},\widehat{h_1},\ldots,\widehat{h_l})\subseteq \KK[z_1,\ldots, z_k]$ is not proper, i.e. $(\widehat{g},\widehat{h_1},\ldots,\widehat{h_l})= \KK[z_1,\ldots, z_k]$ .
\end{cor}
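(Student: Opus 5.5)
Corollary~\ref{2pc2} follows from Proposition~\ref{2pp1} once we apply it to $\sigma=1$. Recall that $D$ has a slice exactly when $1\in\mathrm{pl}(D)=\Ker D\cap D(\KK[X])$. Indeed, if $D(s)=1$ then $1\in\mathrm{pl}(D)$; conversely $1\in\mathrm{pl}(D)$ means there is $s$ with $D(s)=1$. The constant $1$ lies in $\Ker D$ and can be written as $F(y,z_1,\ldots,z_k)$ with $F=1$, so Proposition~\ref{2pp1} applies. (Equivalently, use Corollary~\ref{2pc1} with $f=1\in\KK[z_1,\ldots,z_k]$.) It gives that $D$ has a slice if and only if $1\in(\widehat{g},\widehat{h_1},\ldots,\widehat{h_l})\subseteq\KK[z_1,\ldots,z_k]$, that is, if and only if this ideal is the whole ring.

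If we instead use Proposition~\ref{2pp1} with $F=1\in\KK[y,z_1,\ldots,z_k]$, we obtain the condition $1\in(y,\widehat{g},\widehat{h_1},\ldots,\widehat{h_l})\subseteq\KK[y,z_1,\ldots,z_k]$, which has an extra generator $y$. To match the statement we must remove it. Note that $\widehat{g}$ and the $\widehat{h_j}$ do not involve $y$. Specializing $y=0$ is the ring map $\KK[y,z_1,\ldots,z_k]\to\KK[z_1,\ldots,z_k]$, and it fixes these generators. Hence $1\in(y,\widehat{g},\widehat{h_j})$ forces $1\in(\widehat{g},\widehat{h_j})$ in $\KK[z_1,\ldots,z_k]$. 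The reverse implication is trivial.

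The only delicate point is the identification of ``admits a slice'' with ``$1\in\mathrm{pl}(D)$''. This holds because $\mathrm{pl}(D)$ is by definition the plinth ideal $\Ker D\cap D(B)$. The remaining steps are formal.
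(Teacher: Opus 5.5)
Your proof is correct and takes the same route as the paper, which presents Corollary~\ref{2pc2} as a straightforward consequence of Proposition~\ref{2pp1} (through Corollary~\ref{2pc1}) applied to $\sigma=1$, using that $D$ has a slice if and only if $1\in\mathrm{pl}(D)$. Your specialization $y=0$ to remove the generator $y$ is exactly the step that takes Proposition~\ref{2pp1} to Corollary~\ref{2pc1}.
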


Remain that in settings $(*)$  denote $\widetilde{h_i}=\overline{h}_j(xy,y,z_1,\ldots,z_k)$, $1\leq i\leq l$. Let us put
$$\KK[Y]=\widetilde{C}=\KK[x,y,z_1,\ldots, z_k]/(\widetilde{h_1},\ldots, \widetilde{h_l}).$$ 
Applying Lemma~\ref{pl5}, Corollary~\ref{2pc2}, Lemma~\ref{pl1}, and Lemma~\ref{pl2}, we obtain the following theorem.
\begin{theor}\label{slt}
    In settings $(*')$, if the ideal $(\widehat{g},\widehat{h_1},\ldots,\widehat{h_l})\subseteq \KK[z_1,\ldots, z_k]$ is not proper, then $Z\times\KK\cong Y\times\KK$.
\end{theor}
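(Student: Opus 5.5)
The plan is to realize $Z\times\KK$ as $Y\times\KK$ through the Slice Theorem applied to $D=y\frac{\partial}{\partial u}+\delta$ on $B=C[u]=\KK[Z\times\KK]$. The proof has three steps: show that $D$ has a slice, identify $\Ker D$ with $\widetilde{C}=\KK[Y]$, and then conclude via the Slice Theorem.

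First, by Corollary~\ref{2pc2}, non-properness of $(\widehat{g},\widehat{h_1},\ldots,\widehat{h_l})$ is exactly the criterion for $D$ to admit a slice $s\in B$ with $D(s)=1$. Concretely, write $1=\sum P_j\widehat{h_j}-Q\widehat{g}$. Then Proposition~\ref{2pp1} applied to $\sigma=1$ produces $r$ with $D(r)=1$, so $\sigma=1$ lies in the plinth ideal $\mathrm{pl}(D)$. Consequently the Slice Theorem gives $B=(\Ker D)[s]$, and $s$ is algebraically independent over $\Ker D$ because $D(s)=1\neq 0$. Hence $Z\times\KK\cong \Spec(\Ker D)\times\KK$.

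Second, we identify $\Ker D$. Settings $(*')$ hold by hypothesis, so Lemma~\ref{pl5} places us in conditions $(**)$. Lemma~\ref{pl1} then gives $\Ker D=\KK[\widetilde{z_1},\ldots,\widetilde{z_k},\widetilde{x},y]$. Lemma~\ref{pl2}, which uses that $y$ is not a zero divisor in $\widetilde{C}$ (automatic because $\widetilde{C}$ is a domain by $(*')$ and $y\neq 0$ there), shows that the relations among these generators are exactly the $\widetilde{h_i}(\widetilde{x},y,\widetilde{z_1},\ldots,\widetilde{z_k})$. Therefore $\Ker D\cong\widetilde{C}=\KK[Y]$, and combining with the first step yields $Z\times\KK\cong Y\times\KK$.

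The only step requiring care is the check that $y$ is nonzero in $\widetilde{C}$, which is needed for the non-zero-divisor hypothesis behind Lemma~\ref{pl2}. I would handle it directly: if $y=0$ in $\widetilde{C}$, then every $\widehat{h_j}$ would vanish after setting $y=0$, which is incompatible with $\Ker D$ containing $y\neq 0$ via $\pi_u$. Everything else is a direct assembly of the cited lemmas.
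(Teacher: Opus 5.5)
Your proposal is correct and follows the paper's argument exactly: Corollary~\ref{2pc2} gives a slice, Lemma~\ref{pl5} puts you in $(**)$, Lemmas~\ref{pl1} and~\ref{pl2} identify $\Ker D$ with $\widetilde{C}=\KK[Y]$, and the Slice Theorem then gives $Z\times\KK\cong Y\times\KK$. The paper leaves the fact $y\neq 0$ in $\widetilde{C}$ implicit, and your remark about the $\widehat{h_j}$ vanishing does not establish it; the clean argument is that $x\mapsto\widetilde{x}$, $y\mapsto y$, $z_i\mapsto\widetilde{z_i}$ defines a homomorphism $\widetilde{C}\to\Ker D$ (since $\widetilde{h_j}(\widetilde{x},y,\widetilde{z_1},\ldots,\widetilde{z_k})=\pi_u(h_j)=0$), and this homomorphism sends $y$ to $y\neq 0$.
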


Now we give some examples of varieties $Z$ and $Y$ such that $Z\times \KK\cong Y\times \KK$. Later we prove for some pairs $Z$ and $Y$ that they are not isomophic. 

\begin{cor}\label{nt0}
    Let $V$ be an irreducible affine variety and
let $\xi$ be an LND of $\KK[V]$. Fix $f\in \KK[V]$ a nonzero noninvertible function. Suppose that there exist $p$ and $q$ in $\KK[V]$ such that $pf+q\xi(f)=1$. Denote 
$$
Z=\Susp(V,f,1,k_1,\ldots, k_m),\qquad k_i\in\mathbb{Z}_{>0}.
$$
and $U$ is an $(m+1)$-suspension
$
U=\Susp(V,f,1,1,\ldots, 1).
$
Then $Z\times\KK\cong U\times \KK$.
\end{cor}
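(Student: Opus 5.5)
Plan: reduce the exponents one unit at a time, with Theorem~\ref{slt} (in the form of Proposition~\ref{p1p} and Corollary~\ref{yadco}) as the step that lowers an exponent, and chain the resulting isomorphisms of cylinders.

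The key step is the following. Let $Z'=\Susp(V,f,1,s_1,\ldots,s_m)$ with some $s_j\geq 2$; after reordering the $y_i$, which is harmless, we may take $s_1\geq 2$. Then
$$Z'\times\KK\cong\Susp(V,f,1,s_1-1,s_2,\ldots,s_m)\times\KK.$$
To prove it, we use the hypothesis $pf+q\xi(f)=1$, which also forces $\xi(f)\neq 0$. Thus Proposition~\ref{p1p} applies to $Z'$ with $\delta=\overline{\xi}$ and $D=y_1\frac{\partial}{\partial u}+\delta$ on $\KK[Z'][u]$. By Corollary~\ref{yadco}, $\Spec\Ker D\cong \Susp(V,f,1,s_1-1,s_2,\ldots,s_m)$.

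It remains to see that $D$ has a slice. Here we are in settings $(*)$ with $\widehat{g}=\widehat{\xi(f)}$, and the generators $\widehat{h_j}$ include the relations of $V$ and $-f$ (restricted to $y_1=0$). The identity $pf+q\xi(f)=1$ therefore shows that the ideal $(\widehat{g},\widehat{h_1},\ldots,\widehat{h_{l+1}})$ is the whole ring; this is exactly what the proof of Proposition~\ref{p1p} establishes. So Corollary~\ref{2pc2} gives a slice $s$. The Slice Theorem then yields $\KK[Z'][u]=(\Ker D)[s]$, which is the claimed isomorphism. Alternatively, Theorem~\ref{slt} gives the same conclusion directly, since $(*')$ holds: $\widetilde{C}\cong\Ker D$ is a domain.

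Iteration: start with $(k_1,\ldots,k_m)$ and apply the key step repeatedly, each time to some exponent that is still $\geq2$. Each application lowers $\sum k_i$ by one, and the hypotheses on $V$, $f$, $\xi$, $p$, $q$ do not depend on the exponents, so they persist at every stage. After $\sum(k_i-1)$ steps every exponent equals $1$, giving the variety $U$. Composing the isomorphisms of cylinders gives $Z\times\KK\cong U\times\KK$. If all $k_i=1$ from the start, then $Z=U$ and there is nothing to prove.

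The main obstacle is checking that Proposition~\ref{p1p} really applies at each intermediate stage. One point is that the $y_1$ in that proposition must be the variable with exponent $\geq2$, while the suspension variable $x$ keeps exponent $1$; relabelling the $y_i$ settles this. The other point is that condition $(*)$ requires $y_1\nmid\delta(x)=\xi(f)$ in $\KK[Z']$. This follows because $\xi(f)$ restricted to $y_1=0$ generates the unit ideal together with $f$, so $\xi(f)$ cannot lie in $(y_1)$. Both points are routine, and the rest is bookkeeping.
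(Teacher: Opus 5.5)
Your proposal is correct and is essentially the paper's proof. The paper likewise lowers one exponent $k_1\geq 2$ at a time by invoking Theorem~\ref{slt}, which packages Proposition~\ref{p1p}, Corollary~\ref{2pc2} and the Slice Theorem exactly as you do, with $pf+q\xi(f)=1$ making $(\widehat{g},\widehat{h_1},\ldots,\widehat{h_{l+1}})$ the unit ideal; your explicit checks that $\xi(f)\neq 0$ and $y_1\nmid\xi(f)$ (both forced by the non-invertibility of $f$) are details the paper leaves implicit.
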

\begin{proof}
    Suppose $k_1\geq 2$. Obviously, it is sufficient to prove that $Z\times \KK\cong Y\times \KK$, where $Y=\Susp(V,f,1,k_1-1,\ldots, k_m)$. 

Consider
 $\delta=\overline{\xi}\in\LND(Z)$; see Lemma~\ref{lprl}. Then conditions $(*')$ are satisfied for $x=y_1$, $y=y_2$; and
    we have $g=\xi(f)$, $\widehat{h_1}=f$. Then Theorem~\ref{slt} implies $Z\times\KK\cong Y\times\KK$.
\end{proof}

\begin{cor}\label{nc0}
Suppose $W$ is an irreducible affine variety. Consider 
$$f=\alpha z^n+1\in\KK[W\times \KK],$$ where $\alpha\in\KK[W]$ and $z$ is the coordinate on $\KK$. Then 
    $$
    \Susp(W\times \KK,f,1,k_1,\ldots, k_m)\times \KK\cong \Susp(W\times \KK,f,1,1,\ldots, 1)\times \KK
    $$
    for any $k_j\in\mathbb{Z}_{>0}$.
\end{cor}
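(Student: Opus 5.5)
This is a direct application of Corollary~\ref{nt0}, with $V=W\times\KK$ and $\xi=\frac{\partial}{\partial z}$, the LND differentiating along the coordinate $z$ on the $\KK$ factor. The Bezout-type identity $pf+q\xi(f)=1$ is exactly the one already used in the proof of Corollary~\ref{pnc}. There we have $\xi(f)=n\alpha z^{n-1}$, and we take $p=1$, $q=-\frac{1}{n}z$. Then
$$pf+q\xi(f)=\alpha z^n+1-\tfrac{1}{n}z\cdot n\alpha z^{n-1}=1.$$
This computation is the whole algebraic content of the argument.

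Before invoking Corollary~\ref{nt0}, I will check its remaining hypotheses.

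\emph{Irreducibility of $V$.} The variety $V=W\times\KK$ is irreducible because $W$ is.

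\emph{The derivation $\xi$.} $\xi$ is locally nilpotent on $\KK[V]=\KK[W][z]$ and is zero on $\KK[W]$.

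\emph{$f$ is nonzero and noninvertible.} $f$ is nonzero because its constant term in $z$ is $1$. For noninvertibility, if $\alpha\neq 0$ then $f$ has positive $z$-degree, and so it is not a unit in $\KK[W][z]$, since $\KK[W]$ is a domain.

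\emph{Degenerate cases.} If $\alpha=0$, or if $n=0$ gives a constant $f$, then $f$ is a unit. Both sides are then suspensions over an invertible function and have to be handled separately. I expect this to be the only real obstacle, though it is a minor one.

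In the case $f=1$, the equation $y_0y_1^{k_1}\cdots y_m^{k_m}=1$ forces every $y_j$ to be invertible. Each side is then $V\times(\KK^\times)^m$: we solve for $y_0$, and the remaining $y_j$ range over $\KK^\times$. Hence the two varieties are already isomorphic, and so are their cylinders. More generally, if $f\in\KK^\times$, the same argument applies after rescaling $y_0$. So I would either treat this case in one line or note that the statement implicitly assumes $\alpha\neq0$, $n\geq1$.

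In the main case, Corollary~\ref{nt0} gives directly
$$\Susp(W\times\KK,f,1,k_1,\ldots,k_m)\times\KK\cong\Susp(W\times\KK,f,1,1,\ldots,1)\times\KK.$$
Behind that corollary is an induction that lowers each $k_j$ one at a time using Theorem~\ref{slt}. Alternatively, the same step can be seen through Corollary~\ref{pnc} and the Slice Theorem, where the ideal $(\widehat g,\widehat h)$ contains $1$. The identity above guarantees the slice exists at every stage of this induction, because $f$, $\xi$ and the pair $p,q$ do not change as the exponents $k_j$ are lowered.
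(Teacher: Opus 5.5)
Your proof is the paper's argument: apply Corollary~\ref{nt0} with $V=W\times\KK$ and $\xi=\frac{\partial}{\partial z}$, using a B\'ezout identity $pf+q\xi(f)=1$. Your choice $p=1$, $q=-\frac{1}{n}z$ is the correct one; the paper's quoted $p=n$, $q=-z$ (taken from the proof of Corollary~\ref{pnc}) actually gives $pf+q\xi(f)=n$, which is harmless only after dividing by $n$ in characteristic zero. Your degenerate-case discussion is unnecessary, since $m$-suspensions are only defined for noninvertible $f$, so $\alpha\neq0$ and $n\geq1$ are implicit.
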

\begin{proof}
  It was shown in the proof of Corollary~\ref{pnc} that for $f=\alpha z^n+1$ the condition $pf+q\xi(f)=1$ is satisfied for $p=n$ and $q=-z$.
\end{proof}

The following statement is proved in another way in~\cite{Du}.

\begin{cor}\label{nc1}
    Consider $f(z)\in\KK[z]$ without multiple roots. Then 
    $$
    \Susp(\KK,f(z),1,k_1,\ldots, k_m)\times \KK\cong \Susp(\KK,f(z),1,1,\ldots, 1)\times \KK
    $$
    for any $k_j\in\mathbb{Z}_{>0}$.
\end{cor}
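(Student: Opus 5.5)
This is a direct specialization of Corollary~\ref{nt0}. We take $V=\KK$ with coordinate $z$, so $\KK[V]=\KK[z]$, and we take $\xi=\frac{d}{dz}$, which is locally nilpotent on $\KK[z]$. The function $f(z)$ must be nonzero and noninvertible. If $f$ were a nonzero constant, the suspensions would not be defined as in Section~\ref{secsusp}, so we assume $\deg f\geq 1$; this is implicit in the statement.

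The only hypothesis of Corollary~\ref{nt0} left to check is the existence of $p,q\in\KK[z]$ with $pf+q\xi(f)=pf+qf'=1$. Since $f$ has no multiple roots over the algebraically closed field $\KK$, the polynomials $f$ and $f'$ have no common root, so $\gcd(f,f')=1$. Because $\KK[z]$ is a principal ideal domain, the B\'ezout identity gives the required $p$ and $q$. The same observation already appeared in Example~\ref{pe1}.

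Corollary~\ref{nt0} then yields
$$\Susp(\KK,f,1,k_1,\ldots,k_m)\times\KK\cong\Susp(\KK,f,1,1,\ldots,1)\times\KK$$
for all positive integers $k_j$.

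There is no real obstacle here. The only subtle point is the inductive reduction inside Corollary~\ref{nt0}, which lowers each $k_i\geq 2$ one step at a time and uses that the B\'ezout hypothesis does not depend on the $k_i$. That reduction is already handled in that corollary, since the hypothesis concerns only $V$, $\xi$ and $f$.
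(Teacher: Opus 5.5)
Your proposal is correct and matches the paper's proof: take $\xi=\frac{\partial}{\partial z}$ on $V=\KK$, use $\gcd(f,f')=1$ to get $p,q$ with $pf+qf'=1$, and apply Corollary~\ref{nt0}. Your remark that $\deg f\geq 1$ is needed, so that $f$ is noninvertible, is a harmless clarification that the paper leaves implicit.
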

\begin{proof}
We can consider $\xi=\frac{\partial}{\partial z}$. Since $f(z)$ has no multiple roots, there exist $p$ and~$q$ in $\KK[z]$ such that $pf+q\xi(f)=pf+qf'=1$. Then the statement follows from Corollary~\ref{nt0}.
\end{proof}

\begin{ex}\label{e1}
 Let $Z_n=\Susp(\KK,f(z),1,n)$, $n\geq 2$, and $f$ has no multiple roots. So $Z$ is the Danielewski surface $\{xy^n=f(z)\}$. By Corollary~\ref{nc1}
 we have 
 $Z_n\times \KK\cong Z_{1}\times \KK$.
\end{ex}

\begin{ex}\label{e2}
Let us consider the trinomial hypersurface $Z$ of Type I given in $\KK^{n_1+n_2}$ by the equation
$$
T_{11}T_{12}^{l_{12}}\ldots T_{1n_1}^{l_{1n_1}}=T_2^{l_2}+1.
$$
We have $Z=\Susp(\KK^{n_2},T_2^{l_2}+1,1,l_{21},\ldots, l_{1n_1})$. Note that $f=T_2^{l_2}+1$ has the form $\alpha T_{21}^{l_{21}}+1$, where $\alpha=T_{22}^{l_{22}}\ldots T_{2n_2}^{l_{2n_2}}\in\Ker\frac{\partial}{\partial T_{21}}$. Therefore, by Corollary~\ref{nc0}
$$
\{T_{11}T_{12}^{l_{12}}\ldots T_{1n_1}^{l_{1n_1}}=T_2^{l_2}+1\}\times \KK\cong \{T_{11}T_{12}\ldots T_{1n_1}=T_2^{l_2}+1\}\times \KK.
$$
\end{ex}

Let $V$ be an irreducible affine variety and $\xi$ be an LND on it. Consider $Z_0=\Susp(V,f,1,k_1,\ldots, k_m)$. We can consider $\delta_0=\overline{\xi}\in\LND(Z_0)$. Now let
 $Z$ be a modification of $Z_0$, see Definition~\ref{nd316}, i.e. 
 $$Z=\{xy_1^{k_1}\ldots y_m^{k_m}+y_1F=f\}\subseteq \KK^{m+1}\times V,$$
 where $F\in\Ker\xi[y_1,\ldots,y_m]$. We obtain $\delta\in LND(Z)$. Denote $D=y_1\frac{\partial}{\partial u}+\delta\in\LND(Z\times \KK)$. 

\begin{lem}\label{stmodl}
    Suppose the LND $D_0=y\frac{\partial}{\partial u}+\overline{\xi}$ on $Z_0\times \KK$ has a slice. Then the LND $D$ also admits a slice. And $Z\times \KK\cong Y\times\KK$, where  
    $$Y=\{xy_1^{k_1-1}\ldots y_m^{k_m}+y_1F=f\}\subseteq \KK^{m+1}\times V.$$
\end{lem}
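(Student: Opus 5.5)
The plan is to reduce everything to the slice criterion of Corollary~\ref{2pc2}, and then to Theorem~\ref{slt}, by comparing the data $(\widehat{g},\widehat{h_1},\ldots,\widehat{h_l})$ for $Z_0$ and for $Z$. We use the settings $(*)$ with $x$ the distinguished variable and $y=y_1$ (we need $k_1\geq 2$, as in Definition~\ref{nd316}). Everything else, namely the remaining $y_i$ and the generators of $\KK[V]$, plays the role of the $z_j$.

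For $Z_0$ the defining relations are those of $V$ together with $xy_1^{k_1}\cdots y_m^{k_m}-f$. For $Z$ the last relation is replaced by $xy_1^{k_1}\cdots y_m^{k_m}+y_1F-f$. This new relation still has the form $\overline{h}(xy_1^2,y_1,\ldots)$, so settings $(*)$ hold for $\delta$ on $Z$: we have $\delta(x)=\xi(f)$ not divisible by $y_1$, $\delta(y_i)=0$, and $y_1^2\mid \delta(z_j)$.

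Setting $y_1=0$ kills the term $y_1F$. Hence $\widehat{h}_{l+1}=-\widehat f$ is the same for $Z$ and $Z_0$, and the relations of $V$ are unchanged. Also $\widehat g=\widehat{\xi(f)}$ is the same for both, since the formulas defining $\delta$ and $\delta_0$ coincide. This is exactly the observation behind Lemma~\ref{pl4}.

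Since $D_0$ has a slice, Corollary~\ref{2pc2} applied to $Z_0$ shows that the ideal $(\widehat g,\widehat{h_1},\ldots,\widehat{h}_{l+1})$ is not proper. The same ideal arises for $Z$, so Corollary~\ref{2pc2} applied to $Z$ shows that $D$ has a slice.

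To obtain $Z\times\KK\cong Y\times\KK$ we use Theorem~\ref{slt}, which requires settings $(*')$ for $Z$. This means checking that $\KK[Z]$ and $\widetilde{C}$ are domains. Replacing $xy_1^2$ by $xy_1$ in the relations of $Z$ gives exactly $xy_1^{k_1-1}y_2^{k_2}\cdots y_m^{k_m}+y_1F-f$ together with the relations of $V$, so $\widetilde C=\KK[Y]$.

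I expect this integrality check to be the main obstacle. I would argue as in Lemma~\ref{snl318}, via Lemmas~\ref{vspomlem1} and~\ref{vspomlem2}, with $R=\KK[V][y_1,\ldots,y_m]$, $b=y_1^{k_1}\cdots y_m^{k_m}$ (respectively $y_1^{k_1-1}y_2^{k_2}\cdots y_m^{k_m}$) and $a=f-y_1F$. It suffices that $a$ is not a zero divisor on $R/(y_i)$ for each $i$ appearing in $b$. Modulo $y_1$ we have $a\equiv f$, which is nonzero in $\KK[V][y_2,\ldots,y_m]$. Modulo $y_i$ with $i\geq 2$, $a$ reduces to $f-y_1F|_{y_i=0}$, which is nonzero because $\xi(f)\neq0$ while $\xi$ kills $F$, so $f$ is not in $\Ker\xi[y]$. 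In both cases $a$ is a nonzero element of a domain, hence a non-zero-divisor.

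Alternatively, the proof of Lemma~\ref{pl4} simply asserts $(*')$ for both varieties, and I would defer to it if it is accepted there.

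Once $(*')$ holds and the ideal is non-proper, Lemma~\ref{pl5} gives $(**)$. Then Lemmas~\ref{pl1} and~\ref{pl2} give $\Ker D\cong\widetilde C=\KK[Y]$. Since $D$ has a slice $s$, the Slice Theorem yields $\KK[Z][u]=(\Ker D)[s]\cong\KK[Y][s]$, that is, $Z\times\KK\cong Y\times\KK$.
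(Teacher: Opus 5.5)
Your proposal is correct and follows essentially the paper's route: Corollary~\ref{2pc2} for the slice, then Theorem~\ref{slt} for $Z\times\KK\cong Y\times\KK$. For the first claim the paper argues that $\mathrm{Im}\,D=\mathrm{Im}\,D_0$ because the images of the generators coincide, whereas you apply Corollary~\ref{2pc2} to both $Z_0$ and $Z$ and note that the ideal $(\widehat g,\widehat{h_1},\ldots)$ is the same for both; this is the more careful justification, and you also verify the domain condition in $(*')$, which the paper leaves implicit.
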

\begin{proof}
Note that images $\mathrm{Im}\,D$ and $\mathrm{Im}\,D_0$ are contained in $\KK[V][y_1,\ldots,y_m]$. And the images of all the generators for $D$ and $D_0$ coincide.  Therefore, $\mathrm{Im}\,D=\mathrm{Im}\,D_0$. This proves the first part.   

By Corollary~\ref{2pc2}, since $D$ admits a slice, the ideal $(\widehat{g}, \widehat{h})=(\xi(f),f)$ is not proper in $\KK[V][y_1,\ldots, y_m]$. Therefore, by Theorem~\ref{slt} we obtain $Z\times \KK\cong Y\times\KK$.
\end{proof}
\begin{ex}
    Let $Z=\{xy^2+y=z^2-1\}$, $Y=\{xy+y=z^2-1\}\cong \{xy=z^2-1\}$. Then $Z\times\KK\cong Y\times \KK$. 
\end{ex}

\begin{ex}\label{emdv}
    Let $X$ be a Multi Danielewski variety $\Dan(Z,K,f_1,\ldots, f_n)$, see Definition~\ref{multid}. In notations of Proposition~\ref{P323} if $k_{n1}\geq 2$ and the ideal  
$$\mathfrak{J}=\left(\widehat{\xi(f_1)}\prod_{j=1}^{n-1}\widehat{\frac{\partial f_{j+1}}{\partial x_j}}, \widehat{f_1},\ldots, \widehat{f_{n}}\right)\subseteq \KK[Z][x_1,\ldots,x_{n-1},y_2,\ldots,y_m]$$ is not proper, then by Theorem~\ref{slt}, $X\times\KK\cong Y\times \KK$, where $Y=\Dan(Z,P,f_1,\ldots, f_n)$, where
$$
\begin{cases}
    p_{ij}=k_{ij}, \text{ for } (i,j)\neq (n,1);\\
    p_{n1}=k_{n1}-1.
\end{cases}
$$
\end{ex}
Let us give an explicit example of application of Example~\ref{emdv}.
\begin{ex}
Let $z$ be the coordinate on the line $\KK$. Then
   \begin{multline*}
   \Dan_3\left(\KK,\begin{pmatrix}1&2\\3&4\\5&6\end{pmatrix},z^2-1, x_1^2-1, x_2^2-z^2x_1^2\right)\cong
    \\
    \cong \Dan_3\left(\KK,\begin{pmatrix}1&2\\3&4\\5&5\end{pmatrix},z^2-1, x_1^2-1, x_2^2-z^2x_1^2\right).
\end{multline*}
Indeed, for $\xi=\frac{\partial}{\partial z}$, we have 
$$\mathfrak{J}=(8zx_1x_2,z^2-1,x_1^2-1,x_2^2-z^2x_1^2)=\KK[z,x_1,x_2,x_3,y_1,y_2].$$
\end{ex}

\begin{re}
    The case of Double Danielewski surfaces, i.e. 2-Danielewski varieties $\Dan\left(\KK,\begin{pmatrix}d\\e\end{pmatrix}, f_1,f_2\right)$, was investigated in  \cite{GS}. In our notations, in this paper $\xi=\frac{\partial}{\partial z}$, where $z$ is the coordinate on $\KK$.  It was proven that $$ X\times\KK\cong \Dan\left(\KK,\begin{pmatrix}d\\e-1\end{pmatrix}, f_1,f_2\right)\times \KK,$$ 
    if $e\geq 2$ and ideals $(\widehat{\xi(f_1)},\widehat{f_1})\subseteq \KK[z]$ and $(\widehat{f_1},\widehat{f_2},\widehat{\frac{\partial f_2}{\partial x_1}})\subseteq \KK[x_1,z]$ are not proper, see \cite[Theorem~3.14]{GS}. As we have noticed in Remark~\ref{rmdv}, these conditions are sufficient to conditions $e\geq 2$ and $\mathfrak{J}$ is not proper. So, the result of \cite[Theorem~3.14]{GS} coincides with the result of Example~\ref{emdv} in the particular case of Double Danielewski surfaces. 
\end{re}

Now let us proceed with showing for some pairs of varieties $X$ and $Y$ such that $X\times \KK\cong Y\times\KK$ by the above arguments, that $X\ncong Y$. Such pairs are counter-examples for generalized Zariski cancellation problem. We start with some known examples. 
\begin{ex}
The first and the most popular non-cancellative varieties are Danielewski surfaces. Let $Z_n=\{xy^n=f(z)\}$, where $f$ has no multiple roots. Then it is known, see Example~\ref{e1}, that $Z_n\times \KK\cong Z_1\times \KK$. To show that $Z_n$ is a  non-cancellative variety, it is sufficient to prove that $Z_1\ncong Z_2$. It was mentioned in the original work due to Danielewski~\cite{D}, and was proved in~\cite{Fi}. See also~\cite{ML01} for a proof based on describing Makar-Limanov invariant of these varieties. 
\end{ex}

\begin{ex}\label{e4}
In \cite{Du} the varieties of the form 
$$
\{xy_1^{k_1}\ldots y_m^{k_m}=P(z)\},
$$
where $\deg P\geq 2$ and $P$ has $\deg P$ distinct roots, were considered. They are called {\it Danielewski varieties}. In \cite{Du}, two Danielewski varieties
$
X=\{xy_1^{k_1}\ldots y_m^{k_m}=P(z)\}
$
and
$
X'=\{xy_1^{k'_1}\ldots y_m^{k'_m}=P(z)\}
$
are shown to be non-isomorphic if $\{k_1,\ldots, k_m\}\in \mathbb{Z}_{>1}^m$ and $\{k'_1,\ldots, k'_m\}\in \mathbb{Z}_{\geq 1}^m\setminus \mathbb{Z}_{>1}^m$, but $X\times \KK\cong X'\times \KK$. This gives couner-examples for generalized Zariski cencellation problem in arbitrary dimension. The isomorphism $X\times \KK\cong X'\times \KK$ follows also from Corollary~\ref{nc1}.

Note that results of \cite{Ga} imply that if $ \{k_1,\ldots, k_m\}\neq\{k_1',\ldots, k_m'\}$, then $X$ and~$X'$ are not isomorphic. Indeed, let $\varphi\colon \KK[X]\rightarrow \KK[X']$ be an automorphism. From~\cite{Ga} follows that the $m$-dimensional torus~$T$ acting by multiplication of $x$, $y_1,\ldots y_m$ by characters is a maximal torus in automorphism group $\Aut(X)$. From the explicit description of $\Aut(X)$ obtained in~\cite{Ga} it follows that all maximal tori of $\Aut(X)$ are conjugate to this one by an exponents of LNDs. Therefore, restrictions to $\ML(X)$ of all maximal tori in $\Aut(X)$ coincide. Hence, all functions $y_i$ are semi-invariant with respect to any torus action on the variety $X$. Since these are all such functions in $\ML(X)$ up to multiplication by constants, $\varphi^*$ takes each $y_i\in\KK[X]$ to $\lambda_iy_j\in\KK[X']$. But conjugation by $\varphi$ should move $T$-action to $T'$-action. This is possible only if $\varphi$ takes $y_i$ to $\lambda_iy_j$ with $k_i=k_j'$. 
\end{ex}
\begin{ex}\label{e6}
In \cite[Example~2.7]{DF} it is proved that the varieties
$$
X=\{xy^2=zw^2+1\}
$$
and
$$
Y=\{xy^2=zw^2+yw+1\}
$$
are not isomorphic. Note that $X=\Susp(\KK^2,zw^2+1,1,2)$. We can take $Z_0=X$ and $\delta_0=\overline{\frac{\partial}{\partial z}}\in\LND(Z_0)$. I.e. $\delta_0\colon(x,y,z,w)\rightarrow (w^2,0,y^2,0)$. By Corollary~\ref{nc0}, $X\times \KK\cong\{xy=zw^2+1\}\times\KK$. In particular, $D_0=y\frac{\partial}{\partial u}+\delta_0$ admits a slice. Since $yw\in\Ker\delta_0$, $Y$ is a modification of $Z_0$. By Lemma~\ref{stmodl}, we obtain 
$$Y\times\KK\cong \{xy=zw^2+yw+1\}\times\KK=\{(x-w)y=zw^2+1\}\times\KK\cong \{xy=zw^2+1\}\times\KK.$$
Thus $X\times\KK\cong Y\times \KK$.
\end{ex}

\begin{ex}\label{examtri}
As we have shown in Example~\ref{e2}, 
    $$
\{T_{11}T_{12}^{l_{12}}\ldots T_{1n_1}^{l_{1n_1}}=T_2^{l_2}+1\}\times \KK\cong \{T_{11}T_{12}\ldots T_{1n_1}=T_2^{l_2}+1\}\times \KK.
$$
Note that by~\cite{G}, see also \cite{E-G-S}, all non-rigid trinomial hypersurfaces of type I have the form 
$$
X=\{T_{11}T_{12}^{l_{12}}\ldots T_{1n_1}^{l_{1n_1}}=T_2^{l_2}+1\}. 
$$
We can assume that $n_1\geq 2$, otherwise $X\cong \KK^{n_1+n_2-1}$.
So to prove that a non-rigid trinomial hypersurface of type I is non-cancellative, one should prove that among varieties 
$$
\{T_{11}T_{12}^{l_{12}}\ldots T_{1n_1}^{l_{1n_1}}=T_2^{l_2}+1\}. 
$$
with fixed $n_1$ and $n_2$ there are two non-isomorphic. We can do it in assumption that all $l_{2j}\geq 2$. In~\cite{G} it is proved that 
$$
\ML(\{T_{11}\ldots T_{1n_1}=T_{2}^{l_2}+1\})=\KK,
$$
and if $l_{2j}, k_i\geq 2$ for all $i,j$, then 
$$
\ML(\{T_{11}T_{21}^{k_2}\ldots T_{1n_1}^{k_{n_1}}=T_{2}^{l_2}+1\})=\KK[T_{21},\ldots, T_{1n_1}].
$$
This implies that these varieties are non-isomorphic. 
\end{ex}

It is a natural question whether trinomial hypersurfaces of the form 
$$
X=\{T_{11}T_{12}^{l_{12}}\ldots T_{1n_1}^{l_{1n_1}}+T_{21}T_{22}^{l_{22}}\ldots T_{2n_2}^{l_{2n_2}}+1=0\} 
$$
are cancellative? It is close to the question, when two hypersurfaces of this form are isomorphic. This is an open question. A particular result was obtained in \cite{DF}. Let us denote $X_{pq}=\{xy^p+zw^q=1\}$. It is proved in \cite{DF} that if $p+q=p'+q'$, then $X_{pq}\cong X_{p'q'}$.

\begin{ex}\label{muldanvarneis}
     In Example~\ref{emdv} it it shown that cylinders over n-Danielewski varieties $\Dan(Z,K,f_1,\ldots, f_n)$ and $\Dan(Z,P,f_1,\ldots, f_n)$ are isomorphic, where  
     $$
\begin{cases}
    p_{ij}=k_{ij}, \text{ for } (i,j)\neq (n,1);\\
    p_{n1}=k_{n1}-1.
\end{cases}
$$
and $\mathfrak{J}$ is not proper. In~\cite{GS} isomorphism classes of Double Danielewski surfaces are described. In some very slight assumptions on coefficients $d$ and $e$ it is proved that 
$$
\Dan\left(\KK,\begin{pmatrix}d\\e\end{pmatrix}, f_1,f_2\right)\ncong \Dan\left(\KK,\begin{pmatrix}d\\e-1\end{pmatrix}, f_1,f_2\right).
$$
This proves that Double Danielewski surfaces are non-cancellative. In a forthcoming paper P. Evdokimova describes isomorphism classes of Multi Danielewski varieties. This result also states that in some very slight assumptions on coefficients of $K$ holds
$$
\Dan(Z,K,f_1,\ldots, f_n)\ncong\Dan(Z,P,f_1,\ldots, f_n).
$$
So, Multi Danielewski varieties are also non-cancellative.
\end{ex}

\section{Non-conjugate maximal tori}\label{secconj}

It is proved in~\cite{D} that the variety $X=\{xy^2=z^2-1\}\times \KK$ has infinitely many non-conjugate in $\Aut(X)$ two-dimensional tori. It is not difficult to prove that $X$ is not toric. Therefore, $\Aut(X)$ containes infinitely many non-conjugate maximal tori. One more known example of a variety with infinite number of non-conjugate maximal tori is $X=\{xy=zw-1\}\times \KK$; see~~\cite{D} and~\cite[Example~8.14]{KrZ}. We are going to prove these facts using technique of isolated irreducible semi-invariants, see Section~\ref{secsemiinv}, and to extend this statement to a wide class of varieties.

\begin{prop}\label{pncj}
    Suppose we have two varieties $X_1$ and $X_2$ with the action of tori $T_1$ on $X_1$ and $T_2$ on $X_2$.  Let $X_1\times \KK\cong X_2\times \KK=Y$. We can consider tori $\widehat{T_i}\cong T_i\times \KK^\times$, $i=1,2$ acting on $X_i\times \KK$ by $(t_i,t)\cdot (x_i,z)=(t_ix_i, tz)$, where $t_i\in T_i$, $t\in \KK^\times$, $x_i\in X_i$, $z\in \KK$.  Assume that $\dim T_1=\dim T_2=n$ and subsets $W(X_1,T_1)$ and $W(X_2,T_2)$ are not equivalent. Then $\widehat{T_1}$ and $\widehat{T_2}$ are not conjugate in $\Aut(Y)$.
\end{prop}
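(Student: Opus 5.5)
The plan is to reduce to Lemma~\ref{nesnct}. It suffices to show that $W(X_i\times\KK,\widehat{T_i})$ is determined by $W(X_i,T_i)$ in a controlled way. Concretely, we aim to show that, after identifying the character lattice of $\widehat{T_i}$ with $M_i\oplus\mathbb{Z}$,
$$W(X_i\times\KK,\widehat{T_i})=\{(\omega,0)\mid \omega\in W(X_i,T_i)\}\cup\{(0,1)\}.$$
If this holds, then an equivalence $\eta\in\mathrm{GL}_{n+1}(\mathbb{Z})$ between the two sets for $i=1,2$ must be recovered back to an equivalence of $W(X_1,T_1)$ and $W(X_2,T_2)$, which contradicts the hypothesis. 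The transport of tori through the isomorphism $X_1\times\KK\cong Y\cong X_2\times\KK$ is handled as in the Remark preceding Lemma~\ref{nesnct}: conjugate tori give equivalent weight sets.

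First I would describe the semi-invariants of $\widehat{T_i}$ on $\KK[X_i][z]$. Every semi-invariant has the form $hz^a$ with $h$ a $T_i$-semi-invariant, since weights of distinct powers of $z$ differ in the last coordinate. The invertible ones are $\gamma$ with $\gamma\in\KK[X_i]^\times$ semi-invariant; these have weights $(\omega'',0)$. The irreducible ones are $\lambda z$ and $h$, where $h$ is an irreducible $T_i$-semi-invariant of $X_i$.

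Next I would check isolation.
\begin{itemize}
\item For $z$, the separating function is $\alpha(\omega,c)=c$. It vanishes on all $(\omega',0)$, and it is positive only on the weight of $z$ among irreducibles.
\item For $f$ isolated in $X_i$ with separating function $\alpha$, I extend it by $\widehat\alpha(\omega,c)=\alpha(\omega)-Nc$ with $N$ large (or any $N>0$). Then $\widehat\alpha$ is negative on $z$ and agrees with $\alpha$ on the other irreducibles, so $f$ stays isolated.
\item Conversely, if $h$ is isolated in $X_i\times\KK$ with function $\beta$, then $\beta$ restricted to $M_i$ witnesses isolation of $h$ in $X_i$.
\end{itemize}
This gives the displayed formula.

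The main obstacle is the final combinatorial step. A $\mathrm{GL}_{n+1}(\mathbb{Z})$-map $\eta$ sending one augmented set to the other need not preserve the sublattice $M\oplus 0$, nor send $(0,1)$ to $(0,1)$. My approach there is as follows.
\begin{itemize}
\item Note that $(0,1)$ is the unique element of the set lying outside the span of the others plus... In general this may fail. So instead I would exploit the genuine structure: the weights of invertible semi-invariants and the grading both lie in $M\oplus0$.
\item If necessary, I would strengthen the invariant to the pair (set of weights, subgroup of invertible weights), or use the extra point to pin down $\eta$.
\end{itemize}
If the naive argument fails, I expect the intended use is that $\eta$ maps the set $W(X_1,T_1)\times\{0\}\cup\{e\}$ onto the corresponding set for $X_2$ and preserves cardinality. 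Under mild hypotheses, one then derives equivalence of the $W(X_i,T_i)$ after quotienting by the image of $e$. This is the step I would scrutinize most.
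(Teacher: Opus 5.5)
Your reduction to Lemma~\ref{nesnct} is fine, and so is your computation $W(X_i\times\KK,\widehat{T_i})=(W(X_i,T_i)\times 0)\cup\{e\}$ with $e=(0,\dots,0,1)$; it is more careful than the paper, which only asserts this formula. Both you and the paper tacitly assume that invertible semi-invariants have trivial weight; otherwise every $\gamma z$ with $\gamma\in\KK[X_i]^\times$ contributes a weight $(\omega'',1)$, so your list of irreducibles should read $\gamma z$ rather than $\lambda z$.

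The genuine gap is the step you leave open. You must show that if $\eta\in\mathrm{GL}_{n+1}(\mathbb{Z})$ maps $\widehat S_1=(S_1\times 0)\cup\{e\}$ onto $\widehat S_2=(S_2\times 0)\cup\{e\}$, then $S_1$ and $S_2$ are equivalent. Your fallbacks do not supply this. The hypothesis is only that $W(X_1,T_1)$ and $W(X_2,T_2)$ are non-equivalent, so there is no stronger invariant to switch to. Quotienting by $\mathbb{Z}e$ works only when $\eta(e)=e$, and $\eta$ may instead exchange $e$ with a primitive independent weight of $S_1$. The paper handles this step by splitting a weight system into its independent weights, recorded with their divisibilities $k(\omega)$, and the remainder $S_0$, and noting that $e$ is one more primitive independent weight. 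Be aware that the criterion stated there fails in the direction the paper's deduction relies on: $\{e_1,e_2\}$ and $\{e_1,e_1+2e_2\}$ have identical data ($S_0=\varnothing$, two primitive independent weights) but are not $\mathrm{GL}_2(\mathbb{Z})$-equivalent. So this step needs a direct argument and cannot simply be quoted.

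Here is one. Suppose first that $\eta(e)=e$. Then $\eta=\begin{pmatrix}A&0\\ c^{T}&1\end{pmatrix}$ with $A\in\mathrm{GL}_n(\mathbb{Z})$. Since $\eta(s,0)=(As,c^{T}s)$ must lie in $S_2\times 0$, this forces $A(S_1)=S_2$; this is exactly your quotient idea.

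Otherwise $\eta(e)=(\omega,0)$ and $\eta(\omega',0)=e$ for some $\omega\in S_2$ and $\omega'\in S_1$. Put $V=\mathrm{span}_{\mathbb{Q}}(S_1\setminus\{\omega'\})$.
\begin{enumerate}
\item The class of $e$ is primitive in the free group $\mathbb{Z}^{n+1}/\bigl(\mathbb{Z}^{n+1}\cap\mathrm{span}_{\mathbb{Q}}(\widehat S_2\setminus\{e\})\bigr)$, since that saturated sublattice lies in $\mathbb{Z}^n\times 0$.
\item The map $\eta^{-1}$ carries $\widehat S_2\setminus\{e\}$ onto $((S_1\setminus\{\omega'\})\times 0)\cup\{e\}$, whose $\mathbb{Q}$-span meets $\mathbb{Z}^{n+1}$ in $((\mathbb{Z}^n\cap V)\times 0)\oplus\mathbb{Z}e$. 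Hence the class of $\omega'$ is primitive in $\mathbb{Z}^n/(\mathbb{Z}^n\cap V)$.
\item It follows that $\mathbb{Z}^n=(\mathbb{Z}^n\cap V)\oplus\mathbb{Z}\omega'\oplus C$ for some $C$.
\item Let $\sigma\in\mathrm{GL}_{n+1}(\mathbb{Z})$ be the identity on $((\mathbb{Z}^n\cap V)\oplus C)\times 0$ and swap $(\omega',0)$ with $e$. It preserves $\widehat S_1$.
\end{enumerate}
Then $\eta\sigma$ maps $\widehat S_1$ onto $\widehat S_2$ and fixes $e$, so the first case shows that $S_1$ and $S_2$ are equivalent. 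This completes your argument.
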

\begin{proof}
    Let us consider a system of weights $S\subseteq M=\mathfrak{X}(T)\cong\mathbb{Z}^n$. We say that a weight $\omega\in S$ is  {\it independent} if $\omega\notin \mathrm{span}(S\setminus \omega)$. Then each finite system of weights has the form $S=S_0\cup \{\omega_1,\ldots, \omega_d\}$, where $\omega_j$ are independent weights of $S$, and the system $S_0$ does not contain any independent weights. For each nonzero weight $\omega$ we can define a positive integer $k(\omega)$ such that $\omega=k(\omega)\gamma$, where $\gamma$ is a primitive vector of $M$.  It is easy to see that two systems $S$ and $S'$ are equivalent if and only if system $S_0$ is equivalent to $S'_0$ and there is a bijection $\varphi\colon \{\omega_1,\ldots, \omega_d\}\rightarrow \{\omega'_1,\ldots, \omega'_{d'}\}$ such that $k(\omega_j)=k(\varphi(\omega_j))$.  

    We have $W(Y,\widehat{T_i})=(W(X_i,T_i),0)\cup (0,\ldots,0,1)\subseteq \mathbb{Z}^{n+1}$. That is, system $W(Y,\widehat{T_i})$ can be obtained from $W(X_i,T_i)$ by adding one primitive independent weight. Since subsets $W(X_1,T_1)$ and $W(X_2,T_2)$ are not equivalent, subsets $W(Y,\widehat{T}_1)$ and $W(Y,\widehat{T}_2)$ are not equivalent. By Lemma~\ref{nesnct}, tori $\widehat{T_1}$ and $\widehat{T_2}$ are not conjugate in $\Aut(Y)$.
\end{proof}

\begin{ex}
    Danielewski surface $Z_n={xy^n=z^2-1}$ admits a natural $T_n\cong\KK^\times$ action given by $t\cdot x=t^n x, t\cdot y=t^{-1}y, t\cdot z=z$. Then $W(Z_n, T_n)=\{-1,n\}\subseteq \mathbb{Z}$. Since  all the cylinders $Z_n\times \KK$ are isomorphic and all the subsets $\{-1,n\}\subseteq \mathbb{Z}$ are not equivalent, by Proposition~\ref{pncj} all tori $\widehat{T_i}$ for $i\in \mathbb{Z}_{>0}$ are not conjugate in $\Aut(Z_i\times\KK)$.
\end{ex}

Using Corollary~\ref{nc0} we can prove the following generalization of this example.
\begin{theor}\label{teoremanonconj}
    Suppose $W$ is an irreducible affine variety. Consider 
$$f=\alpha z^k+1\in\KK[W\times \KK],$$ where $\alpha\in\KK[W]$ and $z$ is the coordinate on $\KK$. 
Assume there exists a torus $T\subseteq \Aut(W\times \KK)$ such that $f$ is a $T$-invariant. Assume also that $T$ can not be included in a bigger torus $\widehat{T}\subseteq \Aut(W\times \KK)$ such that $f$ is $\widehat{T}$-semi-invariant with a nonzero weight. 
Then 
    $$
    X=\Susp(W\times \KK,f,1,k_1,\ldots, k_m)\times \KK, \text{ where }k_j\in\mathbb{Z}_{>0},
    $$
    admits infinitely many non-conjugate maximal tori.
\end{theor}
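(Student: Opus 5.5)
We follow the Danielewski example. For $n\geq 1$ put $Z_n=\Susp(W\times\KK,f,1,n,k_2,\ldots,k_m)$; more simply, fix any $m$ and take the family $Z_n=\Susp(W\times\KK,f,1,n)$ with $n\in\mathbb{Z}_{>0}$. By Corollary~\ref{nc0}, every $Z_n\times\KK$ is isomorphic to $\Susp(W\times\KK,f,1,1)\times\KK$, which in turn is isomorphic to $X$. All the $Z_n\times \KK$ are therefore isomorphic to one variety $Y\cong X$. Writing $\KK[Z_n]=\KK[W\times\KK][x,y]/(xy^n-f)$, we let the torus $T_n=T\times\KK^\times$ act on $Z_n$: $T$ acts on $\KK[W\times\KK]$ and trivially on $x,y$ (this is well defined because $f$ is $T$-invariant), and $s\in\KK^\times$ acts by $x\mapsto s^nx$, $y\mapsto s^{-1}y$. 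We then take $\widehat{T_n}=T_n\times\KK^\times$ acting on $Z_n\times\KK$.

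First we show that the sets $W(Z_n,T_n)$ are pairwise non-equivalent for different $n$, so that Proposition~\ref{pncj} makes the $\widehat{T_n}$ pairwise non-conjugate. Take generators of $\KK[Z_n]$ consisting of $x$, $y$ and $T$-semi-invariant generators of $\KK[W\times\KK]$. Along the last coordinate $e$ of $M=\mathfrak{X}(T)\times\mathbb{Z}$, the weight of $x$ is $ne$, the weight of $y$ is $-e$, and all other generators have weight $0$. Lemma~\ref{BG}-3.4(i), applied with $\alpha=\pm e^*$, then shows that $x$ and $y$ are isolated irreducible semi-invariants, with weights $(0,n)$ and $(0,-1)$. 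This is where the main obstacle lies: we need to control the remaining isolated semi-invariants, namely those coming from $W(W\times\KK,T)$ and those of mixed type. We expect to pin down the set as $W(Z_n,T_n)=\{(0,n),(0,-1)\}\cup S$, where $S\subseteq \mathfrak{X}(T)\times\{0\}$ does not depend on $n$. Granting this, we compare invariants. In the independent-weight decomposition of Proposition~\ref{pncj}, either the pair $(0,n),(0,-1)$ forms a rank-one block whose lattice data involve $n$ (for instance, the index of the sublattice generated by $W(Z_n,T_n)$ changes with $n$), or, if $S$ spans, we use $\gcd$-type invariants. In either case the orbit of the set under $\mathrm{GL}$ determines $n$, at least on an infinite subsequence of $n$, and this suffices for the theorem. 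Failing a clean invariant, we would restrict to $n$ large compared with all $k(\omega)$, $\omega\in S$, so that $k$-values separate the sets.

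Second, maximality. Suppose $\widehat{T_n}$ lies in a larger torus $T'$ inside $\Aut(Y)$. Since $\widehat{T_n}$ commutes with $T'$, the torus $T'$ permutes the $\widehat{T_n}$-semi-invariants of each fixed weight and normalizes the isotypic components. The weight spaces of $x$, $y$ and the cylinder coordinate $u$ should be generated, as modules over the invariants, by these elements, so each of $x$, $y$, $u$ becomes a $T'$-semi-invariant up to an invariant factor. From this we would argue that $T'$ preserves the invariant subalgebra $\KK[W\times\KK]^{\KK^\times}$ and induces a torus on $W\times\KK$ that contains $T$ and under which $f=xy^n$ is semi-invariant. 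If $T'$ acts on $xy^n$ with a character nontrivial on the new part, the hypothesis on $T$ is contradicted. If the character is trivial, the extra dimension acts trivially on $f$ and must act on $W\times \KK$ compatibly with $T$; a dimension count then forces $\dim T'=\dim\widehat{T_n}$. This step is also delicate, since we must show that the induced action on $W\times\KK$ is faithful modulo the known $\KK^\times$-factors, and we will make this careful using the explicit semi-invariant description from the first step.
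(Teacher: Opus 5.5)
Your proposal has two genuine gaps: the maximality step is false as stated, and the non-equivalence step is left unproven.

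\textbf{Maximality.} You conclude that $\widehat{T_n}$ itself is maximal, and under the stated hypotheses this is false. The hypothesis only forbids enlarging $T$ to a torus under which $f$ has a \emph{nonzero} weight. It does not make $T$ maximal among tori fixing $f$. Concretely, take $W=\KK$ with coordinate $a$, $\alpha=a$, $f=az^k+1$ and $T=\{1\}$. The hypothesis holds: the fibre $f^{-1}(1)=\{az^k=0\}$ is reducible while every other fibre $\{az^k=c-1\}$ is irreducible, so no torus can rescale $f$ by a nontrivial character. Yet $(a,z)\mapsto(s^{-k}a,sz)$ fixes $f$, extends to $Z_n\times\KK$ by acting trivially on $x,y,u$, and together with the $2$-dimensional $\widehat{T_n}$ spans a $3$-dimensional torus. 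So no ``dimension count'' can give $\dim T'=\dim\widehat{T_n}$.

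Your mechanism can be rescued. First replace $T$ by a maximal torus among those containing $T$ and fixing $f$; this still satisfies the hypothesis. Then make the $T'$-semi-invariance of $x,y,u$ precise via isolatedness.

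The paper avoids the issue. It takes an \emph{arbitrary} maximal torus $\mathbb{T}\supseteq S\times T$. It notes that $x,y_1,\ldots,y_m,u$ remain isolated irreducible $\mathbb{T}$-semi-invariants and that $\KK[W\times\KK]=\KK[X]^S$ is $\mathbb{T}$-stable. It then uses the hypothesis only to conclude that $f=xy_1^{k_1}\cdots y_m^{k_m}$ is $\mathbb{T}$-invariant. Hence $\omega(x)+\sum_j k_j\omega(y_j)=0$ is, up to proportionality, the unique linear relation among these weights.

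\textbf{Non-equivalence.} Your first step stops at ``we expect \ldots{} granting this'', and the invariants you suggest do not work. The lattice generated by $W(Z_n,T_n)$ contains $(0,-1)$, so it does not see $n$. Comparing $n$ with the $k(\omega)$, $\omega\in S$, presupposes that $S$ is independent of $n$, which you have not shown.

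The missing idea is that the rest of $W$ is irrelevant. $W$ is finite by \cite[Lemma~3.2]{BG}, and a $\mathrm{GL}_n(\mathbb{Z})$-equivalence carries linear relations among its elements to linear relations. The relation $\omega(x)=-n\,\omega(y)$ must therefore occur among the finitely many proportionality ratios in $W$. So a single equivalence class can contain $W(Z_n,T_n)$ for only finitely many $n$.

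The paper applies this reasoning to the primitive relation vector $(1,k_1,\ldots,k_m)$ inside an arbitrary maximal torus. Finitely many conjugacy classes of maximal tori would then allow only finitely many tuples $(k_1,\ldots,k_m)$. But Corollary~\ref{nc0} shows all tuples give the same $X$, a contradiction. This pigeonhole argument avoids both computing $W$ and proving that any explicit torus is maximal.

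\textbf{Smaller slip.} For $m\geq 2$, the variety $\Susp(W\times\KK,f,1,1)\times\KK$ has the wrong dimension to be isomorphic to $X$. Keep $Z_n=\Susp(W\times\KK,f,1,n,k_2,\ldots,k_m)$ with the full $(m+1)$-dimensional torus.
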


\begin{proof}
    By definition,  
    $$
    \KK[X]=\KK[W\times \KK][x,y_1,\ldots, y_m,u]/(xy_1^{k_1}\ldots y_m^{k_m}-f)
    $$ 
    admits the following action of $(m+1)$-dimensional torus $S$:
    $$
    (t_1,\ldots, t_m, t_{m+1})\cdot y_i=t_i y_i,\qquad (t_1,\ldots, t_m, t_{m+1})\cdot x=t_1^{-k_1}\ldots t_m^{-k_m}x,
    $$
    $$
    (t_1,\ldots, t_m, t_{m+1})\cdot u=t_{m+1}u,
    $$
    $$ 
    (t_1,\ldots, t_m, t_{m+1})\cdot h=h, \text{ for all } h\in \KK[W\times \KK].
    $$
    We obtain $S\times T\subseteq \Aut(X)$.

    Let us prove that $u$, $x$ and $y_i$ for all $i$ are isolated irreducible $S$-semi-invariants. Indeed, let $\alpha_i(a_1,\ldots, a_{m+1})=a_i$. Then $\alpha_i$ is a $y_i$-separating function for $1\leq i\leq d$, $\alpha_{m+1}$ is a $u$-separating function, and $-\alpha_1$ is an $x$-separating function. Let $\mathbb{T}$ be a maximal torus containing $S\times T$. Then $x, y_1,\ldots, y_m,u$ are isolated irreducible $\mathbb{T}$-semi-invariants. Therefore, $f$ is a $\mathbb{T}$-semi-invariant. 

    Since $\KK[X]^S=\KK[W\times\KK]$, the subalgebra $\KK[W\times\KK]$ is $\mathbb{T}$-invariant. Therefore, $\mathbb{T}$-action induces an action of a torus $\widehat{T}\supseteq T$ on $W\times\KK$. By conditions of the theorem, $f$ is $\widehat{T}$-invariant, and hence, $f$ is $\mathbb{T}$-invariant. So, we have 
    $$
    \omega(x)+k_1\omega(y_1)+\ldots+k_m\omega(y_m)=0,
    $$
    where $\omega(h)$ denotes the $\mathbb{T}$-weight of a semi-invariant function $h$. Up to proportionality, this is the unique equation in $\omega(x), \omega(y_1),\ldots, \omega(y_m)$. Therefore, for various tuples $k_1,\ldots, k_m$ the sets $\{\omega(x), \omega(y_1),\ldots, \omega(y_m)\}\subseteq M=\mathfrak{X}(\mathbb{T})$ are not equivalent.

Assume that the number of conjugation classes of maximal tori on $X$ is finite. For each conjugation class, let us consider all sets consisting of weights of $m+1$ isolated irreducible semi-invariants. These sets are defined up to application an element of $\mathrm{GL}_n(\mathbb{Z})$, but the number of non-equivalent such sets is also finite. 
    
   By Corollary~\ref{nc0}, $X$ does not depend on $k_1,\ldots, k_m$. Therefore,  sets 
   $$\{\omega(x), \omega(y_1), \ldots, \omega(y_m)\}\subseteq M$$ 
   for various tuples $k_1,\ldots, k_m$ give infinite number of non-equivalent sets of weights of $m+1$ isolated irreducible semi-invariants. So, we obtain a contradiction. Thus, the number of conjugation classes of maximal tori on $X$ is infinite. 
\end{proof}
\begin{cor}\label{ssylkan}
    Let $Z$ be the trinomial hypersurface of Type {\rm I} given in $\KK^{n_1+n_2}$, $n_1\geq 2$, by the equation
$$
T_{11}T_{12}^{l_{12}}\ldots T_{1n_1}^{l_{1n_1}}=T_2^{l_2}+1.
$$
We assume that if $n_2=1$, then $l_{21}\geq 2$.
Denote $X=Z\times\KK$. Then there exists infinitely many non-conjugate maximal tori of dimension $n_1+n_2-1=\dim X-1$ in $\Aut(X)$.
\end{cor}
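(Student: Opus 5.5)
We apply Theorem~\ref{teoremanonconj} with $W=\KK^{n_2-1}$ (coordinates $T_{22},\ldots,T_{2n_2}$), $z=T_{21}$, $\alpha=T_{22}^{l_{22}}\cdots T_{2n_2}^{l_{2n_2}}$, $k=l_{21}$, $m=n_1-1$, $x=T_{11}$, $y_i=T_{1,i+1}$ and $k_i=l_{1,i+1}$. Then $f=T_2^{l_2}+1$ and $Z=\Susp(W\times\KK,f,1,l_{12},\ldots,l_{1n_1})$, exactly as in Example~\ref{e2}. Note that $f$ is nonconstant: if $n_2=1$ this uses the assumption $l_{21}\ge 2$ (and $l_{21}=1$ would give $Z\cong\KK^{n_1}$), and if $n_2\geq 2$ it holds since $\alpha\neq 0$.

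For the torus $T$ we take the maximal subtorus of the diagonal torus $(\KK^\times)^{n_2}$ acting on $\KK^{n_2}$ that preserves the monomial $T_2^{l_2}$. This is the kernel of the character $(t_{21},\ldots,t_{2n_2})\mapsto\prod t_{2j}^{l_{2j}}$, i.e. its connected component, of dimension $n_2-1$. Thus $f$ is $T$-invariant. We must then check the second hypothesis: no larger torus $\widehat T\subseteq\Aut(\KK^{n_2})$ contains $T$ and makes $f$ a semi-invariant of nonzero weight. This is immediate, because $f$ has the nonzero constant term $1$. If $\widehat t\cdot f=\lambda(\widehat t)f$, then evaluating at the $\widehat T$-fixed points shows the weight must be trivial. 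More cleanly, $f-1=T_2^{l_2}$ and the constants are invariant, so $\lambda f=\widehat t\cdot f$ forces $\lambda=1$ unless $T_2^{l_2}$ and $1$ transform compatibly, which is impossible for a nonconstant $f$ as $\lambda(f-1)+\lambda=\widehat t(f-1)+1$ gives $\widehat t(T_2^{l_2})=\lambda T_2^{l_2}+(\lambda-1)$. The rigorous version: a torus acting on an affine variety and a semi-invariant $f$ with $f-c$ also semi-invariant... the simplest route is to compare with $\widehat t$ fixed points in the closure of orbits. I expect this small point to be the only thing needing care. If it is awkward, an alternative is to argue directly that $1\in\KK[W\times\KK]$ lies in the weight-$0$ component while $f$ is homogeneous, so $f$ has weight $0$.

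Theorem~\ref{teoremanonconj} then yields infinitely many non-conjugate maximal tori in $\Aut(X)$. Their dimension is counted from the proof of that theorem: each is a maximal torus containing $S\times T$, where $S$ has dimension $m+1=n_1$ and $T$ has dimension $n_2-1$. This gives total dimension $n_1+n_2-1=\dim X-1$. We must also check that no such torus is bigger. Indeed, $X$ is not toric: $Z$ is a non-toric hypersurface with complexity-one torus action. Alternatively, the Cox-ring and trinomial description (Corollary~\ref{comi}) gives the maximal torus dimension of a trinomial variety as $n+\theta-r$, which here equals $n_1+n_2-2$ for $Z$, and adding the factor $\KK$ gives $n_1+n_2-1$. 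Finally, varying $(l_{12},\ldots,l_{1n_1})$ does not change $X$, by Example~\ref{e2}. Hence the tori produced for distinct tuples all live in the same $\Aut(X)$, and we conclude.
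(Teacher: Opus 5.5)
Your overall plan is the paper's. You apply Theorem~\ref{teoremanonconj} with $W=\KK^{n_2-1}$, $\alpha=T_{22}^{l_{22}}\cdots T_{2n_2}^{l_{2n_2}}$ and $T$ the $(n_2-1)$-dimensional stabilizer of $T_2^{l_2}$ in the diagonal torus, and read off the dimension from $S\times T$ plus non-toricity. However, the one hypothesis that genuinely needs proof is not established: that no torus $\widehat T\supseteq T$ in $\Aut(\KK^{n_2})$ makes $f$ a semi-invariant of nonzero weight. The justifications you offer for it are incorrect.

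\emph{Nonzero constant term} is not an invariant notion. $\widehat T$ need not act diagonally in the coordinates $T_{2j}$, so $T_2^{l_2}$ and $1$ are not a priori $\widehat T$-homogeneous pieces of $f$; your ``alternative'' silently assumes exactly this. Likewise, evaluating at a $\widehat T$-fixed point only helps if that point lies off $\{f=0\}$.

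The excluded case shows the reasoning must fail. For $n_2=1$, $l_{21}=1$, the function $f=T_{21}+1$ has constant term $1$. Yet it is a semi-invariant of weight $t$ for the torus $t\cdot T_{21}=tT_{21}+t-1$, whose unique fixed point $-1$ lies on $\{f=0\}$. Here $X\cong\KK^{n_1+1}$, and by Bia\l{}ynicki-Birula no $(\dim X-1)$-dimensional torus is maximal, so the conclusion is false. Your argument never uses $l_{21}\ge 2$ at this step. You attribute that assumption to non-constancy of $f$, which holds for $l_{21}=1$ as well.

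The missing idea, as in the paper, splits into two cases.

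For $n_2\ge 2$, the coordinates $T_{21},\ldots,T_{2n_2}$ are isolated irreducible $T$-semi-invariants (Section~\ref{secsemiinv}). Indeed, a linear function on $\mathbb{Q}^{n_2}$ that vanishes on $(l_{21},\ldots,l_{2n_2})$, is positive on $e_j$ and is negative on the other $e_i$ descends to $\mathfrak{X}(T)_\mathbb{Q}$ and separates $T_{2j}$. Since $\widehat T$ commutes with $T$, it sends $T_{2j}$ to an irreducible $T$-semi-invariant of the same weight, which by isolation is a scalar multiple of $T_{2j}$. Hence $T_2^{l_2}$ and $1$ are both $\widehat T$-semi-invariants. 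If their sum $f$ is a semi-invariant, the two weights coincide, so $f$ is $\widehat T$-invariant. Your fixed-point idea can be completed the same way: the $T$-fixed locus is the origin, so $\widehat T$ fixes $0$, where $f(0)=1\neq 0$.

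For $n_2=1$, $T$ is trivial and you must use $l_{21}\ge 2$. A one-dimensional torus in $\Aut(\KK)$ acts by $z\mapsto t^d(z-c)+c$, and its semi-invariants are the scalar multiples of $(z-c)^j$. Since $T_{21}^{l_{21}}+1$ with $l_{21}\ge 2$ has distinct roots, it is not of this form.

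A minor point: Corollary~\ref{comi} concerns factoriality of the grading and says nothing about the dimension of maximal tori, so it does not support your alternative dimension count. The non-toricity argument is the one the paper uses.
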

\begin{proof}
    We have $Z=\Susp(\KK^{n_2},T_2^{l_2}+1,1,l_{21},\ldots, l_{1n_1})$. Let $W=\KK^{n_2-1}$ with coordinates $T_{22},\ldots, T_{2n_2}$. So, $W\times \KK=\KK^{n_2}$ with coordinates $T_{21},T_{22},\ldots, T_{2n_2}$. Then $f=T_2^{l_2}+1$ has the form $\alpha T_{21}^{l_{21}}+1$, where $\alpha=T_{22}^{l_{22}}\ldots T_{2n_2}^{l_{2n_2}}\in\KK[W]$, see also Example~\ref{e2}.
    Let us consider the action of $n_2$-dimensional torus on $W\times \KK$ by 
    $$(t_1,\ldots, t_{n_2})\cdot (T_{21},\ldots, T_{2n_2})=(t_1T_{21},\ldots, t_{n_2}T_{2n_2}).$$ 
    Denote by $T$ the neutral component of the stabilizer of $f$. Now let us consider two cases.

    {\it Case 1. $n_2=1$.} We have $T=\{1\}$ and $f=T_{21}^{l_{21}}+1$. Since $l_{21}\geq 2$, there is no nontrivial algebraic action of a one-dimensional torus on $\KK[T_{21}]$ such that $f$ is a semi-invariant.

    {\it Case 2. $n_2\geq 2$.} The elements $T_{21},\ldots, T_{2n_2}$ are isolated  irreducible $T$-semi-invariants. Therefore, if $T$ is included in $\widehat{T}$, then each $T_{2j}$ is $\widehat{T}$-semi-invariant. Therefore, $f=T_2^{l_2}+1$ is a sum of two $\widehat{T}$-semi-invariants. If $f$ is semi-invariant, then $\widehat{T}$-weights of $T_2^{l_2}$ and $1$ are coincide. That is $f$ is $\widehat{T}$-invariant.  
    
    In both cases we have proved that $f$ is not a $\widehat{T}$-semi-invariant of non-zero weight.
    Therefore, by Theorem~\ref{teoremanonconj} there are infinitely many non-conjugate tori in $\Aut(X)$. Since by construction each such torus contains the $(n-1)$-dimensional torus $S\times T$, and they are not toric, all these tori are $(n-1)$-dimensional. 
\end{proof}

Note that in the proof of Theorem~\ref{teoremanonconj} we used the condition $f=\alpha z^k+1$ only when we applied Corollary~\ref{nc0}. So, if we use~Corollary~\ref{nc1}, we obtain the same statement in the case of
Danielewski varieties. 

\begin{cor}
    Let $Z$ be a Danielewski variety, i.e. 
    $Z=\Susp(\KK, P(z), 1, k_1,\ldots, k_m)$, where $\deg P\geq 2$ and $P$ has distinct roots, see Example~\ref{e4}. Let $X=Z\times\KK$. Then there exist infinitely many non-conjugate $m$-dimensional tori in $\Aut(X)$. 
\end{cor}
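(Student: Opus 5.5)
The plan is to use Lemma~\ref{nesnct} directly on $m$-dimensional tori, with Corollary~\ref{nc1} supplying a single variety $X$ that carries infinitely many of them. The mechanism is the same as in the proof of Theorem~\ref{teoremanonconj}; the difference is that we never pass to maximal tori and never use the factor acting on $u$.

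Write
$$\KK[Z\times\KK]=\KK[x,y_1,\ldots,y_m,z,u]/(xy_1^{k_1}\cdots y_m^{k_m}-P(z)).$$
For a tuple $k=(k_1,\ldots,k_m)\in\mathbb{Z}_{>0}^m$, let $S_k\cong(\KK^\times)^m$ act by
$$t\cdot y_i=t_iy_i,\qquad t\cdot x=t_1^{-k_1}\cdots t_m^{-k_m}x,\qquad t\cdot z=z,\qquad t\cdot u=u.$$
By Corollary~\ref{nc1}, all cylinders $\Susp(\KK,P,1,k_1,\ldots,k_m)\times\KK$ are isomorphic to one fixed variety $X$, for instance the one with all $k_i=1$. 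Transporting each $S_k$ along the corresponding isomorphism gives a family of $m$-dimensional subtori $S_k\subseteq\Aut(X)$, indexed by $k$. It remains to show that infinitely many of them are pairwise non-conjugate.

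\textbf{Step 1: compute $W(X,S_k)$.} The generators $x,y_1,\ldots,y_m,z,u$ are $S_k$-semi-invariants with weights $-\sum k_ie_i,\ e_1,\ldots,e_m,\ 0,\ 0$. Since $z$ and $u$ have weight $0$ and $x$ and $y_j$ are not invertible, part (i) of \cite[Lemma~3.4]{BG} applies:
\begin{itemize}
\item the coordinate function $\alpha_i$ (with $\alpha_i(a)=a_i$) separates $y_i$, because every other generator has $\alpha_i$-value $\le 0$;
\item $-\alpha_1$ separates $x$, because every other generator has $(-\alpha_1)$-value $\le 0$.
\end{itemize}
The generator set contains no two elements that differ by a unit factor, since the only units are constants. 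Part (ii) of the same lemma therefore shows that there are no further isolated semi-invariants. Hence
$$W(X,S_k)=\Bigl\{e_1,\ldots,e_m,\ -\sum_i k_ie_i\Bigr\}\subseteq\mathbb{Z}^m.$$

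\textbf{Step 2: extract an equivalence invariant.} The set $W(X,S_k)$ consists of $m+1$ vectors spanning $\mathbb{Z}^m$. Up to sign, they satisfy a unique primitive linear relation, namely
$$\omega(x)+\sum_i k_i\,\omega(y_i)=0.$$
An element of $\mathrm{GL}_m(\mathbb{Z})$ carrying one such set onto another maps the relation of the first set to the relation of the second. It therefore preserves the multiset of absolute values of the coefficients, $\{1,k_1,\ldots,k_m\}$. Consequently, if the multisets $\{k_1,\ldots,k_m\}$ and $\{k'_1,\ldots,k'_m\}$ differ, then $W(X,S_k)$ and $W(X,S_{k'})$ are not equivalent. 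By Lemma~\ref{nesnct}, and because the isomorphisms identify each $S_k$-action with a subtorus of the single group $\Aut(X)$, the tori $S_k$ and $S_{k'}$ are not conjugate in $\Aut(X)$. Letting, say, $k_1$ range over $\mathbb{Z}_{>0}$ with the other $k_i=1$ produces infinitely many non-conjugate $m$-dimensional tori.

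\textbf{Expected obstacle.} The delicate point is bookkeeping rather than any new idea. Each torus $S_k$ must be moved into $\Aut(X)$ through its own isomorphism from Corollary~\ref{nc1}. One then has to check that the set $W$ is an invariant of the pair (variety, torus) up to equivariant isomorphism, so that computing it on the model $\Susp(\KK,P,1,k)\times\KK$ is legitimate; this is the content of the remark preceding Lemma~\ref{nesnct}. Finally, the trivial weights of $z$ and $u$ must not create extra isolated semi-invariants. Condition (2) in the definition of an isolated semi-invariant handles this, since any irreducible semi-invariant of weight $0$ has $\alpha$-value $0$ and so is never forced to be associate to $x$ or to a $y_i$.
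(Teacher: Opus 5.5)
Your argument is correct for the statement as literally written. It takes a different route from the paper's and proves a weaker fact.

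\textbf{The paper's route.} The paper deduces the corollary from Theorem~\ref{teoremanonconj}, with Corollary~\ref{nc1} in place of Corollary~\ref{nc0}. It starts from the $(m+1)$-dimensional torus $S$ that also scales $u$ and passes to a \emph{maximal} torus $\mathbb{T}\supseteq S$. The only thing it then verifies is that no torus acting on $\KK$ makes $P(z)$ a semi-invariant of nonzero weight. This is exactly where $\deg P\geq 2$ and the distinct roots are used. It forces $P$ to be $\mathbb{T}$-invariant, so the weights of $x,y_1,\ldots,y_m$ still satisfy the single relation with coefficients $(1,k_1,\ldots,k_m)$. A finiteness argument over conjugacy classes then yields infinitely many non-conjugate maximal tori.

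\textbf{What your route buys.} You never enlarge the torus. This lets you compute $W(X,S_k)$ exactly via \cite[Lemma~3.4]{BG} and finish directly with Lemma~\ref{nesnct} and your primitive-relation invariant, with no finiteness argument.

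\textbf{What it costs.} Your $S_k$ are not maximal: each sits inside $S_k\times\KK^\times$, where the extra factor scales $u$. Non-conjugacy of non-maximal tori is a cheap phenomenon; it already occurs in $\Aut(\KK^2)$ for $\KK^\times$ acting linearly with weights $(1,k)$. This shows up in the fact that you never use $\deg P\geq 2$: for $\deg P=1$ your proof runs verbatim on $X\cong\KK^{m+2}$. Note also that the paper's tori contain the $(m+1)$-dimensional $S$, so the ``$m$'' in the statement fits your tori rather than the paper's.

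\textbf{How to recover maximal tori with your method.} Include the factor acting on $u$. By Lemma~\ref{lemum}, $S_k\times\KK^\times$ is maximal in $\Aut(X)$ as soon as $S_k$ is maximal in $\Aut(Z_k)$. That maximality follows from the description of $\Aut(Z_k)$ in \cite{Ga}, and this is where $\deg P\geq 2$ enters. Your exact computation then gives $W=\{e_1,\ldots,e_m,-\sum_i k_ie_i,e_{m+1}\}$. The same relation invariant, now with coefficients $(1,k_1,\ldots,k_m,0)$, distinguishes the conjugacy classes.
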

\begin{proof}
   The only thing we need to check is that the trivial torus $T=\{1\}$ cannot be included in a bigger torus $\widehat{T}\subseteq\Aut(\KK)$ such that $f=P(z)$ is $\widehat{T}$-semi-invariant with a nonzero weight. This is true since $\deg P\geq 2$ and $P(z)$ has distinct roots. 
\end{proof}


\section{Maximal tori of different dimensions in automorphism group}
\label{secvar}

In this section we give some examples of irreducible affine algebraic varieties that have maximal tori of different dimensions in automorphism group. The technique is the same as in the previous section. All these varieties are cylinders over some counter-examples to the generalized Zariski cancellation problem. If $$X=Y\times \KK\cong Z\times \KK$$ and varieties $Y$ and $Z$ have   maximal tori of different dimensions, then we obtain two tori of different dimension in $\Aut(X)$. The following lemma states that these tori are maximal in $\Aut(X)$. 

\begin{lem}\label{lemum}
    Let $T\subseteq \Aut(Y)$ is a maximal torus. Then $T\times \KK^\times\subseteq \Aut(Y\times\KK)$ acting by 
    $$
    (t,s)\cdot (y,a)=(t\cdot y, sa), \qquad y\in Y, a\in \KK, t\in T, s\in\KK^\times
    $$
    is a maximal torus.
\end{lem}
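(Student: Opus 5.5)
We argue by contradiction: assume $T\times\KK^\times$ is properly contained in a torus $\mathbb{T}\subseteq\Aut(Y\times\KK)$. Since $\mathbb{T}$ is commutative and contains $T\times\KK^\times$, every element of $\mathbb{T}$ commutes with the one-dimensional torus $S=\{1\}\times\KK^\times$ acting by $s\cdot(y,a)=(y,sa)$. The idea is to show that $\mathbb{T}$ preserves the decomposition $\KK[Y\times\KK]=\KK[Y][u]$, where $u$ is the coordinate on $\KK$. Then $\mathbb{T}$ induces a torus action on $Y$ containing $T$, and maximality of $T$ finishes the argument.

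\emph{Step 1: $\mathbb{T}$ preserves $\KK[Y]$ and the line $\KK u$.} The $S$-action gives a $\mathbb{Z}_{\ge0}$-grading of $\KK[Y][u]$ by $u$-degree. Its homogeneous components are $\KK[Y]u^j$, and the component of weight zero is exactly $\KK[Y]=\KK[Y\times\KK]^S$. Because $\mathbb{T}$ commutes with $S$, it preserves every $S$-weight space. In particular it preserves $\KK[Y]$ (weight $0$) and $\KK[Y]u$ (weight $1$). The space $\KK[Y]u$ is a free $\KK[Y]$-module of rank one, and $\mathbb{T}$ acts on it semilinearly with respect to its action on $\KK[Y]$. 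Hence $t\cdot u=c_t u$ with $c_t\in\KK[Y]$, and applying $t^{-1}$ shows that $c_t$ is a unit of $\KK[Y]$.

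\emph{Step 2: reduce to a torus on $Y$.} Restricting $\mathbb{T}$ to $\KK[Y]$ gives an algebraic homomorphism $\rho\colon\mathbb{T}\to\Aut(Y)$. Its image $\rho(\mathbb{T})$ is a torus containing $\rho(T\times\KK^\times)=T$. By maximality of $T$ we get $\rho(\mathbb{T})=T$, and the remaining task is to show that $\mathbb{T}=T\times\KK^\times$. Any $t\in\ker\rho$ fixes $\KK[Y]$ pointwise and sends $u\mapsto c_t u$. Such automorphisms form the group $\KK[Y]^\times$, acting by rescaling $u$.

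\emph{Step 3: handle the unit cocycle.} I expect this to be the main obstacle, because $\KK[Y]$ may contain nonconstant units. Assume first that $\mathbb{T}$ is connected. Then $t\mapsto c_t\in\KK[Y]^\times$ is a regular map on a torus, and modulo constants $\KK[Y]^\times/\KK^\times$ is a finitely generated free abelian group. The map $\mathbb{T}\to\KK[Y]^\times/\KK^\times$ is therefore constant, so after normalizing at $t=1$ every $c_t$ is a nonzero constant. Consequently $\ker\rho\subseteq S$, and since $S\subseteq\ker\rho$ we get $\ker\rho=S$. From $\rho(\mathbb{T})=T$ and $\ker\rho=S$ it follows that $\dim\mathbb{T}=\dim T+1$. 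A connected torus containing the torus $T\times\KK^\times$ of the same dimension coincides with it, contradicting the assumption that the containment is proper. The rigidity of units along a connected algebraic family is standard, since $\KK[\mathbb{T}\times Y]^\times=\KK[\mathbb{T}]^\times\KK[Y]^\times$. This proves that $T\times\KK^\times$ is a maximal torus.
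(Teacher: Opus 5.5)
Your proof is correct and follows the same route as the paper: commuting with the $\KK^\times$-factor forces $\mathbb{T}$ to preserve $\KK[Y]=\KK[Y\times\KK]^{\KK^\times}$ and to rescale $u$ by a unit of $\KK[Y]$, and the induced torus on $Y$ contains $T$ and hence equals $T$, which gives $\mathbb{T}=T\times\KK^\times$. Your Step 3 proves that the unit $c_t$ is constant, using $\KK[\mathbb{T}\times Y]^\times=\KK[\mathbb{T}]^\times\KK[Y]^\times$, and then closes with a dimension count; this makes explicit a point the paper passes over when it simply calls $u$ a $\mathbb{T}$-semi-invariant, and your phrase ``assume first that $\mathbb{T}$ is connected'' is harmless because tori are connected by definition.
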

\begin{proof}
    Suppose $T\times \KK^\times$ can be included in a bigger torus $\mathbb{T}$. Then $\mathbb{T}$ is contained in the centalizer of $ \KK^\times$. If $u$ is the coordinate on $\KK$, then $s\cdot u=su$, hence $\KK[Y\times\KK]^{\KK^\times}= \KK[Y]$ is $\mathbb{T}$-invariant. Therefore restriction of $\mathbb{T}$ to $\KK[Y]$ gives a homomorphism $\mathbb{T}\rightarrow \Aut(\KK[Y])$. The image is a torus $\widehat{T}$. We have $T\hookrightarrow \mathbb{T}\rightarrow \widehat{T}$. This composition is a injection from $T$ to $\widehat{T}$. Since $T$ is a maximal torus of $\Aut(Y)$, we have $\widehat{T}=T$. 

    Now, let $t\in \mathbb{T}$. Then $t\cdot u\in u\KK[Y]$. Since $u$ is an irreducible element, we have $t\cdot u=\lambda u$, where $\lambda\in \KK[Y]^\times$. Therefore, $\mathbb{T}|_{\KK[Y]}=T$ and $u$ is a $\mathbb{T}$ semi-invariant. Hence, $\mathbb{T}=T\times \KK^\times$.
\end{proof}

As examples of varieties $Y$ and $Z$ we use varieties from Lemma~\ref{stmodl}. One of the ways to prove that a torus is maximal is to use the following lemma.

\begin{lem}\label{ldd}
Suppose $\Lambda\cong\KK^\times$ is an algebraic subgroup in $\Aut(X)$. Let us consider the corresponding $\mathbb{Z}$-grading of $\KK[X]$. Suppose $S$ is a set of homogeneous generators of $\KK[X]$ such that for a prime number $p$ and $f\in S$ we have $p\nmid \deg f$ and $p$ divides degrees of all the other elements of $S$. Then $f$ is an irreducible element of $\KK[X]$ and for each torus $\mathbb{T}\supseteq \Lambda$ we have $t\cdot f=\lambda f$, $t\in \mathbb{T}$, $\lambda\in \KK[X]^\times$.     
\end{lem}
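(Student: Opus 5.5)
The argument uses the $\mathbb{Z}$-grading of $\KK[X]$ defined by $\Lambda$ together with the reduction of degrees modulo $p$. Let $\mathbb{T}\supseteq\Lambda$ be any torus. Since $\mathbb{T}$ is commutative and contains $\Lambda$, its action preserves the $\Lambda$-grading. Hence we may refine to a $\mathfrak{X}(\mathbb{T})$-grading, with a restriction map $\rho\colon\mathfrak{X}(\mathbb{T})\to\mathbb{Z}=\mathfrak{X}(\Lambda)$ such that $\rho(\text{$\mathbb{T}$-weight})=\Lambda$-degree. Every $\mathbb{T}$-semi-invariant is $\Lambda$-homogeneous.

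\textbf{Irreducibility of $f$.} Write $\deg f=d$ with $p\nmid d$. Every element of $S\setminus\{f\}$ has degree in $p\mathbb{Z}$. Take a $\Lambda$-homogeneous element $h$ of degree $\equiv d \pmod p$, and express it as a polynomial in the generators from $S$. Each monomial has degree $\equiv(\text{exponent of }f)\cdot d\pmod p$. Hence every monomial of $h$ of degree $\not\equiv 0\pmod p$ involves $f$, and so homogeneous components of degree $\not\equiv 0\pmod p$ lie in $f\KK[X]$.

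Now suppose $f=ab$. Decompose $a=\sum a_i$ and $b=\sum b_j$ into homogeneous components. Since $\KK[X]$ is a domain, the highest components satisfy $a_{\max}b_{\max}=f_{\text{top part}}$, and similarly for the lowest. Because $f$ is homogeneous, we get $a_{\max}b_{\max}=a_{\min}b_{\min}=f$, which forces $a$ and $b$ to be homogeneous. Their degrees sum to $d\not\equiv0\pmod p$, so one of them, say $a$, has degree $\not\equiv0$. By the above, $a=fc$, so $f=fcb$ and $cb=1$. Thus $b$ is a unit, and $f$ is irreducible.

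\textbf{Semi-invariance of $f$ under $\mathbb{T}$.} Decompose $f=\sum_\chi f_\chi$ into $\mathbb{T}$-weight components. All $f_\chi$ have $\Lambda$-degree $d\not\equiv0\pmod p$, so each $f_\chi\in f\KK[X]$. Write $f_\chi=f c_\chi$. Then $\sum c_\chi=1$. To conclude I would argue as follows. The subspace $f\KK[X]$ is the $(d\bmod p)$-part intersected appropriately; more cleanly, the ideal generated by elements of degree $\not\equiv 0\pmod p$ is $\mathbb{T}$-stable, and by the above it coincides with $f\KK[X]$, since it is generated by $f$. Hence $t\cdot f\in f\KK[X]$ for every $t\in\mathbb{T}$. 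Applying the same to $t^{-1}$ gives $f\in (t\cdot f)\KK[X]$. Therefore $t\cdot f=\lambda_t f$ with $\lambda_t\in\KK[X]^\times$.

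The main point to verify carefully is this last step: the ideal $\mathcal I$ generated by all homogeneous elements of degree $\not\equiv0\pmod p$ is $\mathbb{T}$-stable, because $\mathbb{T}$ preserves the $\Lambda$-grading. It also equals $(f)$: one inclusion comes from the monomial argument, and the other holds because $f\in\mathcal I$. With this established, the unit statement follows immediately.
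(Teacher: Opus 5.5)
Your proof is correct and is essentially the paper's argument: factors of the homogeneous element $f$ are homogeneous, the factor whose degree is prime to $p$ is divisible by $f$, and since $\mathbb{T}$ commutes with $\Lambda$, the element $t\cdot f$ is homogeneous of degree $\deg f$, so $f\mid t\cdot f$. The differences are small. You get the reverse divisibility from $t^{-1}$, whereas the paper uses irreducibility of $t\cdot f$, and your detour through $\mathfrak{X}(\mathbb{T})$-components and the ideal $\mathcal I$ is not needed. Also, like the paper, you never check that $f$ is a non-unit, which irreducibility requires and which can fail (e.g.\ $\KK[t,t^{-1}]$ with $S=\{t,t^{-p}\}$); this is harmless in the paper's application, where $\KK[X]^\times=\KK^\times$.
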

\begin{proof}
If $f=gh$, then $g$ and $h$ are homogeneous and the degree of one of them is not divisible by $p$. If $p\nmid \deg g$, then $f\mid g$. So, $f$ is irreducible.

    Since $\Lambda\subseteq \mathbb{T}$, action of $t\in \mathbb{T}$ preserves the degrees of elements. Therefore, $t\cdot f$ is a homogeneous element of degree $\deg f$. Since degrees of all elements of $S$ except~$f$ are divisible by $p$, we obtain $f\mid t\cdot f$. Since $f$ and $t\cdot f$ are irreducible, we have $t\cdot f=\lambda f$, $\lambda\in\KK[X]^\times$. 
\end{proof}

\begin{prop}
    Let us consider the following varieties:
$$
Z=\{xy^4+y^2z^3+z^3w^5+1=0\}, \qquad Y=\{xy^2+z^3w^5+1=0\}. 
$$
Then

(i) $X=Z\times\KK\cong Y\times \KK$;

(ii) There exist maximal tori of dimensions $2$ and $3$ in $\Aut(X)$.  
\end{prop}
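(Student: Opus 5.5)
For (i) we apply Lemma~\ref{stmodl} twice, with $V=\KK^2$ (coordinates $z,w$), $\xi=\frac{\partial}{\partial w}$, $m=1$, $y_1=y$ and $f=-(z^3w^5+1)$. Write
$$
Z=\{xy^4+y\cdot(yz^3)=f\},
$$
so $Z$ is a modification of $Z_0=\Susp(\KK^2,f,1,4)$ with $F=yz^3\in\Ker\xi\,[y]$. The function $f$ has the form $\alpha w^5+1$ up to sign, with $\alpha=-z^3\in\Ker\xi$. As in the proof of Corollary~\ref{pnc}, there are $p,q$ with $pf+q\xi(f)=1$ (take $p=5$, $q=-w$ after fixing signs). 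Hence $(\xi(f),f)$ is not proper, and by Corollary~\ref{2pc2} the LND $D_0=y\frac{\partial}{\partial u}+\overline{\xi}$ has a slice. Lemma~\ref{stmodl} then gives
$$
Z\times\KK\cong\{xy^3+y^2z^3+z^3w^5+1=0\}\times\KK .
$$
The same argument with $k_1=3$ and the same $F$ lowers the exponent once more, to $\{xy^2+y^2z^3+z^3w^5+1=0\}$. The substitution $x'=x+z^3$ identifies this variety with $Y$, which proves (i).

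For (ii), write $X=Z\times\KK\cong Y\times\KK$. By Lemma~\ref{lemum} it suffices to exhibit a maximal torus of dimension $1$ in $\Aut(Z)$ and one of dimension $2$ in $\Aut(Y)$.

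On $Z$, a torus acting diagonally on $x,y,z,w$ must make the monomials $xy^4$, $y^2z^3$, $z^3w^5$ all of weight $0$, since they share the weight of the constant $1$. This forces the weights to be proportional to $(-60,15,-10,6)$, which gives a $1$-torus $\Lambda$. To show $\Lambda$ is maximal, apply Lemma~\ref{ldd} three times with $S=\{x,y,z,w\}$:
\begin{itemize}
\item $p=5$ isolates $w$, since $6$ is not divisible by $5$ while $-60,15,-10$ are;
\item $p=3$ isolates $z$;
\item $p=2$ isolates $y$.
\end{itemize}
So for any torus $\mathbb{T}\supseteq\Lambda$ the functions $y,z,w$ are $\mathbb{T}$-semi-invariant up to units. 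We then need $\KK[Z]^\times=\KK^\times$; I would prove this by localizing at $y$, where $\KK[Z]_y=\KK[y^{\pm1},z,w]$, so units are $\lambda y^k$. Since $y$ is not invertible on $Z$ (it vanishes where $z^3w^5=-1$), only $k=0$ can occur. From the relation, $x$ is then also $\mathbb{T}$-semi-invariant. Homogeneity of the relation forces the same three linear equations on the weights, so $\dim\mathbb{T}=1$.

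On $Y$, the torus $(s,t)\colon(x,y,z,w)\mapsto(s^{-2}x,sy,t^5z,t^{-3}w)$ is $2$-dimensional. Take its one-parameter subgroup with degrees $x=-30$, $y=15$, $z=10$, $w=-6$. Lemma~\ref{ldd} with $p=5$, $3$, $2$ makes $w$, $z$, $y$ semi-invariant for any larger torus, and the same units argument as above applies. The relation $xy^2+z^3w^5+1=0$ then imposes two independent linear conditions on four weights, so any containing torus has dimension at most $2$. Hence this torus is maximal.

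I expect the main obstacle to be the control of units, so that ``semi-invariant up to units'' becomes honest semi-invariance, together with checking that Lemma~\ref{stmodl} applies verbatim at both steps.
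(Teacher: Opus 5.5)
Your overall route is the paper's: two applications of Lemma~\ref{stmodl} followed by $x\mapsto x+z^3$ for (i), and Lemma~\ref{lemum} together with Lemma~\ref{ldd} for $p=2,3,5$ for (ii). Part (i) is fine. Your choice $\xi=\frac{\partial}{\partial w}$ is in fact the one for which $F=yz^3$ lies in $\Ker\xi\,[y]$, as Definition~\ref{nd316} requires. The paper's $\delta_0$ moves $z$, so $z^3\notin\Ker\delta_0$ there. Your proof that $\KK[Z]^\times=\KK^\times$ via $\KK[Z]_y=\KK[y^{\pm1},z,w]$ is correct and supplies a detail the paper only asserts.

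The gap is the sentence ``From the relation, $x$ is then also $\mathbb{T}$-semi-invariant'', and the corresponding step on $Y$. Suppose $t\cdot y=\nu y$, $t\cdot z=\mu z$, $t\cdot w=\lambda w$ with $\nu,\mu,\lambda\in\KK^\times$. The relation then only tells you that, in $\KK[Z]_y$,
\[
t\cdot x=-\nu^{-4}y^{-4}\bigl(\nu^{2}\mu^{3}y^{2}z^{3}+\mu^{3}\lambda^{5}z^{3}w^{5}+1\bigr).
\]
This is a multiple of $x$ if and only if $\nu^2\mu^3=\mu^3\lambda^5=1$. Nothing you wrote rules out a $\mathbb{T}$ on which $(\nu,\mu,\lambda)$ vary independently. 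Your ``homogeneity of the relation'' presupposes exactly these two equalities, so it cannot be used to derive them.

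The missing input is that $t\cdot x$ must lie in $\KK[Z]$, not merely in $\KK[Z]_y$. This, rather than the units issue you singled out, is the real content of the paper's argument. It is settled by reducing modulo $y$, using $\KK[Z]/(y)\cong\KK[x,z,w]/(z^3w^5+1)$:
\begin{enumerate}
\item Reducing $\nu^4y^4\,(t\cdot x)+\nu^2\mu^3y^2z^3+\mu^3\lambda^5z^3w^5+1=0$ modulo $y$ forces $\mu^3\lambda^5=1$.
\item Subtracting the defining relation and cancelling $y^2$ (you are in a domain) gives $y^2\bigl(\nu^4\,(t\cdot x)-x\bigr)+(\nu^2\mu^3-1)z^3=0$.
\item Reducing this modulo $y$ forces $\nu^2\mu^3=1$, and hence $t\cdot x=\nu^{-4}x$.
\end{enumerate}
On $Y$, a single reduction modulo $y$ gives $\mu^3\lambda^5=1$ and then $t\cdot x=\nu^{-2}x$. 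Only after this does your count of linear conditions on the weights bound $\dim\mathbb{T}$ by $1$ on $Z$ and by $2$ on $Y$.
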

\begin{proof}
(i) Let us consider $Z_0=\{xy^4+z^3w^5+1\}$. By Example~\ref{e2}, 
$$Z_0\times\KK\cong \{xy+z^3w^5+1=0\}\times\KK,$$
and the LND $\delta_0\colon (x,y,z,w)\mapsto (3z^2w^5,0,y^4,0)$ has a slice. 
Then by Lemma~\ref{stmodl} 
$Z\times \KK\cong Z'\times\KK$, where 
$$
Z'=\{xy^3+y^2z^3+z^3w^5+1=0\}.
$$
Analogically $Z'\times \KK\cong Z''\times \KK$, where
$$
Z''=\{xy^2+y^2z^3+z^3w^5+1=0\}.
$$
But 
$$
Z''=\{(x+z^3)y^2+z^3w^5+1=0\}\cong \{xy^2+z^3w^5+1=0\}=Y.
$$

(ii)
We have the following $\KK^\times$-action on $Z$:
$$
t\cdot (x,y,z,w)=(t^{-60}x,t^{15}y, t^{-10}z, t^{6}w).
$$
Let us prove that this one-dimensional torus is maximal in $\Aut(Z)$. Suppose $\KK^\times\subseteq \mathbb{T}$. By Lemma~\ref{ldd}, applying to $p=2, 3$ and $5$, we have $t\cdot w=\lambda w$, $t\cdot z=\mu z$, $t\cdot y=\nu y$, where $\lambda,\mu,\nu\in\KK[Z]^\times=\KK^\times$. Let $t\cdot x=f(x,y,z,w)$ for a polynomial $f$. Since this action should respects the relation, we have 
$$
f\nu^4 y^4+\nu^2\mu^3y^2z^3+\mu^3\lambda^5 z^3w^5+1
$$
is divisible by $g=xy^4+y^2z^3+z^3w^5+1$ in $\KK[x,y,z,w]$. Let us put 
$$f\nu^4 y^4+\nu^2\mu^3y^2z^3+\mu^3\lambda^5 z^3w^5+1=sg.$$ Then
\begin{equation}\label{ssf}
(f\nu^4-x) y^4+(\nu^2\mu^3-1)y^2z^3+(\mu^3\lambda^5 -1)z^3w^5=(s-1)g. 
\end{equation}
Substituting $y=0$ into (\ref{ssf}) we obtain $$(\mu^3\lambda^5 -1)z^3w^5=(s(x,0,z,w)-1)(z^3w^5+1).$$
This implies $\mu^3\lambda^5=1$. Thus, (\ref{ssf}) takes the form
\begin{equation*}
(f\nu^4-x) y^4+(\nu^2\mu^3-1)y^2z^3=(s-1)g.
\end{equation*}
Therefore, $y^2 \mid (s-1)$ and 
\begin{equation}\label{ssff}
(f\nu^4-x) y^2+(\nu^2\mu^3-1)z^3=\frac{(s-1)}{y^2}g=lg.
\end{equation}
Substituting $y=0$ into (\ref{ssff}) we obtain
$$
(\nu^2\mu^3-1)z^3=l(x,0,z,w)(z^3w^5+1). 
$$
So, we conclude $\nu^2\mu^3=1$. We obtain 
\begin{equation}\label{ssfff}
(f\nu^4-x) y^2=lg.
\end{equation}
Therefore, $g\mid (f\nu^4-x)$. That is $f\nu^4=x$ in $\KK[Z]$. 

We have shown that $x,y,z$ and $w$ are $\mathbb{T}$-semi-invariants with weights $\nu^{-4}, \nu, \mu$ and $\lambda$ such that $\mu^3\lambda^5=1$ and $\nu^2\mu^3=1$. These conditions imply that there exists such $t\in \KK^\times$ that $\lambda=t^6$, $\mu=t^{-10}$, $\nu=t^{15}$, $\nu^{-4}=t^{-60}$. Thus, $\mathbb{T}$ is a maximal one-dimensional torus in $\Aut(Z)$. 
By Lemma~\ref{lemum} there exists a maximal two-dimensional torus in $\Aut(X)$. 

The variety $Y$ admits the two-dimensional maximal torus in $\Aut(Y)$ acting by 
$$
(t_1,t_2)\cdot (x,y,z,w)=(t_1^2x, t_1^{-1}y, t_2^5z, t_2^{-3}w). 
$$
By Lemma~\ref{lemum} there exists a maximal three-dimensional torus in $\Aut(X)$. 
\end{proof}

\begin{theor}\label{raztor}
    Let $Z$ be a Danielewski variety of the form
    $$
    Z=\Susp(\KK,f,1,k_1,\ldots, k_m),
    $$
    where $k_i\in \mathbb{Z}_{>0}$, $\deg f\geq 2$, and $f$ has distinct roots. Denote $X=Z\times \KK$. Then for each $1\leq n\leq m$ there exists a maximal torus in $\Aut(X)$ of dimension $n$.
\end{theor}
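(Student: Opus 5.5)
By Corollary~\ref{nc1}, the cylinder $X=Z\times\KK$ is isomorphic to $Z'\times\KK$ for every Danielewski variety $Z'=\Susp(\KK,f,1,k_1',\ldots,k_m')$ with arbitrary positive exponents $k_j'$. The plan is therefore to choose, for each $1\leq n\leq m$, a tuple $(k_1',\ldots,k_m')$ such that $\Aut(Z')$ contains a maximal torus of dimension $n-1$. Lemma~\ref{lemum} then turns it into a maximal torus of dimension $n$ in $\Aut(Z'\times\KK)\cong\Aut(X)$.

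\textbf{Choice of exponents.} Since $f$ has at least two distinct roots, no nontrivial torus acts on $\KK[z]$ making $f$ semi-invariant. Hence every torus acting on $Z'$ scales $x,y_1,\ldots,y_m$ and fixes $z$, subject to the single relation $\omega(x)+\sum k_j'\omega(y_j)=0$. This gives the natural $m$-dimensional torus, so we must destroy symmetry. For dimension $n-1$, I would take a different presentation of $Z'$: replace the monomial $y_1^{k_1'}\cdots y_m^{k_m'}$ by one in which $m-n+1$ of the $y$'s are coupled by additive terms, in the spirit of the modification of Lemma~\ref{stmodl} and the two-step reduction in the preceding proposition. A concrete choice is
$$
Z'=\Bigl\{x\,y_1^{a_1}\cdots y_m^{a_m}+y_1^{b}G(y_1,\ldots,y_{m-n+1})=f(z)\Bigr\},
$$
with $G$ a sum of coprime-exponent monomials, chosen so that, as in the proposition, one can absorb the extra term after decreasing exponents. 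The cylinder is then still isomorphic to the cylinder over a genuine Danielewski variety with some exponent tuple. Since all exponent tuples give isomorphic cylinders by Corollary~\ref{nc1}, the modified $Z'$ satisfies $Z'\times\KK\cong X$. The weights $\omega(y_1),\ldots,\omega(y_{m-n+1})$ are then forced by several independent relations, using prime exponents $2,3,5,\ldots$ as in the example, so that the natural torus preserving all monomials has dimension exactly $n-1$.

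\textbf{Maximality.} This is the main obstacle. Given a torus $\mathbb{T}$ containing the natural $(n-1)$-torus, I would first use Lemma~\ref{ldd}, with suitable primes dividing all degrees but one, to show that each $y_j$ and $z$ is a $\mathbb{T}$-semi-invariant. Next, $f(z)$ having distinct roots forces $z$ to be invariant. Then I would repeat the divisibility argument of the preceding proposition: write $t\cdot x$ as a polynomial, substitute $y_1=0$ repeatedly, and peel off powers of $y_1$. This yields the multiplicative relations among the weights and shows that $x$ is semi-invariant, so $\mathbb{T}$ coincides with the $(n-1)$-torus.

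The case $n=m$ needs no modification. Take $Z'=Z$ itself, whose natural torus of dimension $m-1$ is maximal because $z$ cannot be scaled. The case $n=1$ corresponds to a rigidly coupled $Z'$ whose maximal torus is trivial. Here I expect to need separate care, since Lemma~\ref{lemum} must be applied with $T=\{1\}$.
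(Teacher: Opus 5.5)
Your overall strategy is the paper's: pass to a modified Danielewski variety with the same cylinder and a maximal torus of dimension $n-1$, then apply Lemma~\ref{lemum}. However, the two substantive steps do not go through as proposed.

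\textbf{Maximality.} Lemma~\ref{ldd} can only single out a generator whose degree, with respect to some one-parameter subgroup $\Lambda$ of the known torus, is not divisible by $p$.
\begin{itemize}
\item Since $f$ is not a monomial, $z$ has weight $0$ under every torus acting diagonally on the coordinates. The same holds for the $y_j$ you ``freeze'': with $G$ depending only on $y_1,\ldots,y_{m-n+1}$, getting a torus of dimension $n-1$ requires $m-n+1$ independent relations on these $m-n+1$ weights, so they are all $0$. As $p\mid 0$, Lemma~\ref{ldd} says nothing about these generators, and your step ``$z$ is a $\mathbb{T}$-semi-invariant'' is unproved.
\item In the proposition on $\{xy^4+y^2z^3+z^3w^5+1=0\}$ the method worked only because there $f=z^3w^5+1$ lives on $\KK^2$ and every coordinate gets a nonzero weight.
\item For $n=1$ the known torus is trivial, so there is no $\Lambda$ at all. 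The statement ``$\Aut(Z')$ contains no nontrivial torus'' is exactly what must be proved, and ``separate care'' does not supply it.
\end{itemize}
The paper does not argue by hand here. It invokes the structure theorem \cite[Theorem~7.15]{Ga}, $\Aut(Y)\cong\mathbb{G}\rightthreetimes\mathbb{U}(\delta)$ for $Y=\{xy_1^{k_1}\cdots y_m^{k_m}=P(y,z)\}$, so every maximal torus is conjugate to $\mathbb{G}^0$, whose dimension is read off from the weight conditions. Some such global control of all torus actions on $Z'$ is the missing ingredient.

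\textbf{Construction and bookkeeping.} With modification term $y_1^bG(y_1,\ldots,y_{m-n+1})$, Lemma~\ref{stmodl} can lower only the exponents of variables dividing that term. The final absorption $x\mapsto x+\phi(y)$ needs the monomial multiplying $x$ to divide $y_1^bG$. Since $y_{m-n+2},\ldots,y_m$ occur in that monomial with positive exponent, you never reach a genuine Danielewski variety, so $Z'\times\KK\cong X$ is not established. The modification terms must involve all $y_j$, and this changes the dimension count.

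The paper's choice does both jobs. Take $M_j=y_1\cdots y_j\,y_{j+1}^2\cdots y_m^2$ and $Y_i=\{xM_i+\sum_{j>i}M_j=f(z)\}$.
\begin{itemize}
\item Lemma~\ref{stmodl} lowers the exponent of $y_{i+1}$, turning $M_i$ into $M_{i+1}$.
\item Replacing $x+1$ by $x$ then gives $Y_{i+1}$, so all $Y_i\times\KK$ are isomorphic to $X$.
\item The $m-i$ conditions $\omega(M_j)=0$ for $j>i$ leave an $i$-dimensional torus.
\end{itemize}

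\textbf{The case $n=m$.} This is off by one. $Z$ itself carries an $m$-dimensional torus: by your own count there is one relation among $m+1$ weights. Lemma~\ref{lemum} then gives a maximal torus of dimension $m+1$ in $\Aut(X)$, not $m$. Dimension $m$ requires a modified variety such as $Y_{m-1}=\{xy_1\cdots y_{m-1}y_m^2+y_1\cdots y_m=f(z)\}$.
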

\begin{proof}
    By Corollary~\ref{nc1}, $X\cong \Susp(\KK,f,1,\ldots, 1)\times \KK$. Consider monomials $$M_j=\prod\limits_{s=1}^j y_s\cdot \prod\limits_{s=j+1}^m y_s^{2} ,\text{ where }0\leq j\leq m-1.$$
    Denote by $f_i$ the polynomial
    $$f_i=xM_i+\sum\limits_{j=i+1}^{m} M_j-f(z)\in\KK[x,y_1,\ldots, y_m,z],\text{ where }0\leq i\leq m,$$ and let 
    $Y_i$ be the zero locus of $f_i$. We put $X_i=Y_i\times \KK$. Then 
    $$
    Y_{m}=\{xy_1\ldots  y_m=f(z)\}=\Susp(\KK,f,1,\ldots,1, 1).
    $$
    So, by Corollary~\ref{nc1} we have $X_{m}\cong X$. Each $Y_i$ is a modification of $$\Susp(\KK,f,1,\ldots,1,2,\ldots,2).$$ Therefore, by Lemma~\ref{stmodl} we have
    \begin{multline*}
    X_i=Y_i\times\KK=\left\{xM_i+\sum\limits_{j=i+1}^{m} M_j=f(z)\right\}\times \KK\cong\\
    \cong \left\{xM_{i+1}+\sum\limits_{j=i+1}^{m} M_j=f(z)\right\}\times \KK=\left\{(x+1)M_{i+1}+\sum\limits_{j=i+2}^{m} M_j=f(z)\right\}\times \KK\cong\\
    \left\{xM_{i+1}+\sum\limits_{j=i+2}^{m} M_j=f(z)\right\}\times \KK=X_{i+1}
    \end{multline*}
    Thus, all $X_i$ are isomorphic to $X$.          

    Now, let us prove that in $\Aut(Y_i)$ there exists a maximal torus of dimension~$i$. This means that in $\Aut(X)$ there exists a maximal torus of dimension~$i+1$.
    We can assume that $f(z)=z^d+c_{d-2}z^{d-2}+\ldots+c_0=z^{a}f(z^b)$, where $a\in \{0,1\}$ and $b$ is the maximum possible value. The variety $Y_i$ is a Danielewski variety in sence of~\cite{Ga}, i.e. variety of the form 
    $$
    Y=\{xy_1^{k_1}\ldots y_m^{k_m}=P(y_1,\ldots, y_m, z)\}, 
    $$
    where
    $$
    P(y_1,\ldots, y_m, z)=z^d+s_{d-1}(y_1,\ldots, y_m)z^{d-1}+\ldots+s_{0}(y_1,\ldots, y_m),\qquad d\geq 2.
    $$
    In \cite[Theorem~7.15]{Ga} automorphism groups of such varieties are described. Now we recall this description. Denote by $\delta$ the LND of $\KK[Y]$ given by $\delta(x)=\frac{\partial P}{\partial z}$, $\delta(z)=y_1^{k_1}\ldots y_m^{k_m}$, $\delta(y_j)=0$. Then $\mathbb{U}(\partial)=\{\exp (f\partial)\mid f\in \mathrm{Ker}\,\partial\}$ is a commutative subgroup of $\Aut(Y)$. Since $x=\frac{P(y_1,\ldots, y_m, z)}{y_1^{k_1}\ldots y_m^{k_m}}$ in $\KK[Y]$, we have an embedding $\phi$ of $\KK[Y]$ in the field $\KK(y_1,\ldots, y_m, z)$.   Consider the $\mathrm{S}_m$-action on $\KK(y_1,...,y_m,z)$ by permutations of coordinates $y_1,\ldots, y_m$. By $S(Y) \subseteq \mathrm{S}_m$ we denote the stabilizer of the monomial $y_1^{k_1}\ldots y_m^{k_m}$. That is $S(Y)$ permutes only variables with coinciding $k_i$. Consider the action of the semidirect product 
    $G= S(Y) \rightthreetimes (\KK^\times)^{m+1}$ on $\KK(y_1,...,y_m,z)$ given by
$$
(\sigma,t_1,\ldots,t_{m+1})\cdot (y_1,\ldots,y_m,z) = (t_1y_{\sigma(1)},...,t_my_{\sigma(m)},t_{m+1}z).
$$
Let $\mathbb{G}\subseteq G$ be the stabilizer
of $\phi(\KK[Y])$. Then the group $\mathbb{G}$ is called {\it the canonical group} of $Y$, and we have a $\mathbb{G}$-action on $Y$. By \cite[Theorem~7.15]{Ga}, $\Aut(Y)\cong\mathbb{G}\rightthreetimes \mathbb{U}(\delta)$. 
It is easy to see that $\mathbb{G}$ is a finite extension of the torus $\mathbb{G}^0$. Then $\mathbb{G}^0$ is a maximal torus in $\Aut(Y)$, moreover, each  maximal torus in $\Aut(Y)$ is conjugate to $\mathbb{G}^0$. 

It is easy to see that in case $Y=Y_i$ we have $\mathbb{G}=\mathrm{S}_{i}\times (\KK^\times )^{i}\times \mathcal{C}_b$, where $\mathcal{C}_b=(\{w\in\KK\mid w^b=1\},\cdot)$ is the cyclic group of order $b$. The group $\mathbb{G}$ acts on $Y_i$ by 
\begin{multline*}
(\sigma, t_1,\ldots, t_i, \varepsilon)\cdot (x,y_1,  \ldots, y_m,z)=\\
=(t_1^{-1}\ldots t_i^{-1}\varepsilon^{-a}x,t_1 y_{\sigma(1)},\ldots, t_iy_{\sigma(i)},\varepsilon^ay_{i+1} \ldots, y_m,\varepsilon z).
\end{multline*}
So, the dimension of the unique up to conjugation maximal torus in $\Aut(Y_i)$ is~$i$. Since $i\in \{0,1,\ldots, m\}$, we obtain maximal tori of dimensions $1,2,\ldots, m$ in $\Aut(X)$. 
\end{proof}

\section{Bhatwadekar’s technique for $m$-suspensions of special type}
\label{SB}

Let $D$ be an LND of a $\mathbb{K}$-domain $B=\KK[X]$. Denote by $A$ the kernel of $D$. If $D$ has a slice $s$, then $B=A[s]$ and each LND $\delta$ on $A$ can be naturally lifted to an LND of $B$.
We introduce a technique which allows to extend an LND $E\in \mathrm{LND}(A)$ satisfying some conditions to an LND $\widetilde{E}\in \mathrm{LND}(B)$. We call it Bhatwadekar’s technique.

Suppose for some non-zero $h\in \Ker E$ there exists $v\in B$ such that $D(v)=h$, i.e. $h\in \mathrm{pl}(D)$. Then $B_h=A_h[v]$. Hence, we can extend $E$ to an LND $\widehat{E}\in\LND(B_h)$ 
via $\widehat{E}(v)=0$. 
Since $B$ is finitely generated, we can take such a positive integer $N$ that $h^N\widehat{E}$ takes each generator of $B$ to $B$. 
Therefore, the restriction of $h^N\widehat{E}$ to $B$ gives an LND $\widetilde{E}\in\LND(B)$.

\begin{re}
     Geometrically, since $h\in \mathrm{pl}(D)$, the principle open subset $X_h=\mathrm{Spec}\,B_h$ is a cylinder $Y\times \KK$, where $Y=\mathrm{Spec}\, A_h$. The LND $E$ gives a structure of a cylinder on a principle open subset $Y_f$ for some $f$. So, the principle open subset $X_{h^kf}$ have a structure of $\KK^2$-cylinder. Here we take $k$ such that $h^kf$ is regular on $X$. In particular, it has a structure of $\KK$-cylinder extending the structure of $\KK$-cylinder on the base $Y_f$ given by $E$. So, we can extend $E$ to an LND on $X_{h^kf}$. This structure of a cylinder on a principle open subset gives an LND on $X$.
\end{re}

The advantage of this technique is that if $a\in A$ is not in the kernel of $E$, then it is not in the kernel of $\widetilde{E}$, so it is not included in $\ML(X)$. More precisely, we get the following statement.

\begin{prop}\label{mlprop}
    Let $D$ be an LND of a $\mathbb{K}$-domain $B=\KK[X]$ and $A=\Ker D$. Then $$\ML(X)\subseteq \bigcap_{E\in \LND(A),\  \mathrm{Ker} E\cap \mathrm{pl}(D)\neq \{0\}}\Ker E.$$
\end{prop}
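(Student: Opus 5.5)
The plan is to show that every $b\in\ML(X)$ lies in $\Ker E$ for each admissible $E$. Fix $E\in\LND(A)$ and a nonzero $h\in\Ker E\cap\mathrm{pl}(D)$. By the construction preceding the statement, this produces an LND $\widetilde{E}$ of $B$ that restricts to a nonzero constant multiple of $h^NE$ on $A$. Since $\ML(X)\subseteq\Ker D\cap\Ker\widetilde{E}$, it is enough to prove
$$\Ker D\cap\Ker\widetilde{E}\subseteq\Ker E.$$

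First I make the construction rigorous. Choose $v\in B$ with $D(v)=h$. Then $v/h$ is a slice of the extension of $D$ to $B_h$, whose kernel is $A_h$. By the Slice Theorem (the corollary of Proposition~\ref{ssnf}), $B_h=A_h[v]$, and $v$ is transcendental over $A_h$ by Proposition~\ref{ssnf}(e).

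Next I extend $E$ to $A_h$. Since $E(h)=0$, the formula $E(a/h^k)=E(a)/h^k$ is well defined, and this extension is still locally nilpotent. I then set $\widehat{E}(v)=0$, which gives a derivation of the polynomial ring $A_h[v]$. It is locally nilpotent because it acts on coefficients only.

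Now let $b_1,\dots,b_s$ be generators of $B$. Each $\widehat{E}(b_i)\in B_h$, so there is an $N$ with $h^N\widehat{E}(b_i)\in B$ for all $i$. Since $h\in\Ker\widehat{E}$, the derivation $h^N\widehat{E}$ is again locally nilpotent. It maps $B$ into $B$, so its restriction $\widetilde{E}$ lies in $\LND(B)$.

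On $A$ we have $\widetilde{E}=h^NE$. Take $a\in\ML(X)$. Then $a\in\Ker D=A$ and $\widetilde{E}(a)=0$, so $h^NE(a)=0$. Since $B$ is a domain and $h\neq0$, this gives $E(a)=0$. Intersecting over all admissible $E$ yields the claimed inclusion.

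The only real obstacle is the well-definedness and local nilpotency of $\widehat{E}$ on $B_h$. This rests entirely on the decomposition $B_h=A_h[v]$ with $v$ algebraically independent over $A_h$, which is exactly what the Slice Theorem gives after localizing at $h$.
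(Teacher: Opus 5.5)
Your proof is correct and follows the paper's route. You build $\widetilde{E}\in\LND(B)$ from $E$ via $B_h=A_h[v]$ (Bhatwadekar's technique), then conclude from $\ML(X)\subseteq\Ker D\cap\Ker\widetilde{E}=\Ker E$, using $\widetilde{E}|_A=h^NE$ and that $B$ is a domain; you also spell out details the paper leaves to the preceding construction (well-definedness and local nilpotency of $\widehat{E}$, and the choice of $N$).
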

\begin{proof}
    By definition, $\ML(X)\subseteq \Ker D\cap \Ker \widetilde{E}=\Ker E$. 
\end{proof}
\begin{ex}
    Note that the condition $\Ker E\cap \mathrm{pl}(D)\neq \{0\}$ in Proposition~\ref{mlprop} can not be omitted. Indeed, let $X=\{xy^2=z^2-1\}$. It is well-known that $\ML(X)=\KK[y]$. Consider $D\in\LND(X)$ given by $(x,y,z)\rightarrow (2z,0,y^2)$. Then $A=\Ker D=\KK[y]$. We put $E=\frac{\partial}{\partial y}\in\LND(A)$. Then $\Ker E=\KK$. So, we obtain $\ML(X)=\KK[y]\not\subset\KK=\Ker E$. This do not contradict to Proposition~\ref{mlprop} because $\mathrm{pl}(D)=(y^2)$ and $\Ker E\cap \mathrm{pl}(D)=\{0\}$.
\end{ex}

We would like to apply this idea to the particular case where $X$ is a cylinder over an $m$-suspension of some special form. Firstly, we state two corollaries of Proposition~\ref{2pp1} and Corollary~\ref{2pc1} which are similar to Corollaries~\ref{nt0} and~\ref{nc0}.
Let $V$ be an irreducible affine variety and
let $\xi$ be an LND of $\KK[V]$. Fix $f\in \KK[V]$ such that $\xi(f)\neq 0$. 
Denote 
$$
Z=\Susp(V,f,1,k_1,\ldots, k_m)=\{xy_1^{k_1}\ldots y_m^{k_m}=f\}\subseteq \mathbb{A}^{m+1}\times V,\qquad k_i\in\mathbb{Z}_{>0}.
$$
Consider $\delta=\overline{\xi}\in \mathrm{LND}(Z)$; see Lemma~\ref{lprl}. Put $x=y_1, y=y_2$, and $D=y\frac{\partial}{\partial u}+\delta\in \mathrm{LND}(Z\times \KK)$.

\begin{cor}\label{nt0n}
 Suppose that $pf+q\xi(f)=h$ for some $p$, $q$ in $\KK[V]$, and $h$ in $\Ker\xi$. Then $h\in \mathrm{pl}(D)$. 
\end{cor}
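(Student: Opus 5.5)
Since $h\in\Ker\xi\subseteq\KK[V]$ is regular on $V$, it has the form $F(y,z_1,\ldots,z_k)$ needed in Corollary~\ref{2pc1} (or Proposition~\ref{2pp1}). So I would reduce the claim to checking that $h$ lies in $\Ker D$ and in the ideal $(\widehat{g},\widehat{h_1},\ldots,\widehat{h_l})$.

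The plan is to place $Z\times\KK$ in settings $(*)$ exactly as in the proof of Corollary~\ref{nt0}. The role of ``$x$'' is played by the coordinate $x$ of the suspension, and the role of ``$y$'' by $y_1$. I will rename as the paper does, with $k_1\geq 2$; if $k_1=1$ I would simply use $D=y_1\frac{\partial}{\partial u}+\delta$ directly, since the plinth argument in Proposition~\ref{2pp1} never uses $y^2\mid\delta(z_j)$ except to get $y\mid D(z_j)$, and $y_1\mid\delta(h)$ holds for all $h\in\KK[V]$ anyway. The remaining coordinates are $y_2,\ldots,y_m$ together with generators $z_1,\ldots,z_k$ of $\KK[V]$. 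The defining relations are those of $V$ together with $xy_1^{k_1}\cdots y_m^{k_m}-f$. Setting $y_1=0$ gives
\[
\widehat{h_{l+1}}=-f,\qquad \widehat{g}=\delta(x)=\xi(f),
\]
while the relations of $V$ are unchanged under $\widehat{\ }$.

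Next I check that $h\in\Ker D$. We have $D(h)=\delta(h)=y_1^{k_1}\cdots y_m^{k_m}\xi(h)=0$ because $h\in\Ker\xi$. Then $h=pf+q\xi(f)=-p\,\widehat{h_{l+1}}+q\,\widehat{g}$, computed in $\KK[V]$, i.e.\ modulo $\widehat{h_1},\ldots,\widehat{h_l}$. Hence $h\in(\widehat{g},\widehat{h_1},\ldots,\widehat{h_{l+1}})$, and Corollary~\ref{2pc1} gives $h\in\mathrm{pl}(D)$.

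The only delicate point is that Corollary~\ref{2pc1} is stated in settings $(*)$, which require $y\nmid g$ and $y^2\mid\delta(z_j)$. I would not rely on these hypotheses. Instead I would run the converse half of the proof of Proposition~\ref{2pp1} by hand. Write $h=Pg+\sum_i Q_ih_i+S y_1$ with $P,Q_i$ lifted to $\KK[V][y_2,\ldots]$; here $S=0$ is possible since $\widehat{\ }$ did nothing to polynomials in $\KK[V]$. Then compute
\[
y_1h=D(uh)=D\bigl(\widetilde{P}\,\widetilde{x}+y_1r\bigr)=y_1D(r),
\]
using that $\widetilde{x}=\pi_u(x)\cdot y_1$ and $\pi_u(P)$ lie in $\Ker D$. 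Cancelling $y_1$ gives $h=D(r)$. The only verification I expect to need care is that $\widetilde{x}=ug+y_1a$, which follows from the Dixmier expansion because $y_1\mid\delta(z_j)$.
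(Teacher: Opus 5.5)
For $k_1\geq 2$ your argument is the paper's: settings $(*)$ hold with $y=y_1$, $\widehat g=\xi(f)$, $\widehat{h_{l+1}}=-f$, $D(h)=0$, and $h=pf+q\xi(f)\in(\widehat g,\widehat{h_1},\ldots,\widehat{h_{l+1}})$, so Corollary~\ref{2pc1} finishes. The hypothesis $y_1\nmid g$ that worried you is automatic, since $\KK[Z]/(y_1)\cong\KK[V][x,y_2,\ldots,y_m]/(f)$ and $f\nmid\xi(f)$ when $\xi(f)\neq 0$. So there is no need to rerun Proposition~\ref{2pp1} by hand.

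The genuine problem is your extension to $k_1=1$, together with the claim that the hand-run only needs $y_1\mid\delta(z_j)$. Put $M=y_1^{k_1}\cdots y_m^{k_m}$. Then $D^i(x)=M^{i-1}\xi^i(f)$ and $D^i(z_j)=M^i\xi^i(z_j)$, hence
\begin{gather*}
y_1\pi_u(x)=xy_1-u\,\xi(f)+\sum_{i\geq 2}\frac{(-1)^i}{i!}\Bigl(\frac{M}{y_1}\Bigr)^{i-1}\xi^i(f)u^i,\\
\pi_u(z_j)=z_j+\sum_{i\geq 1}\frac{(-1)^i}{i!}\Bigl(\frac{M}{y_1}\Bigr)^{i}\xi^i(z_j)u^i .
\end{gather*}
Your identity $D(uh)=D(\widetilde P\widetilde x+y_1r)$ rests on the congruences $\widetilde x\equiv -ug$ and $\pi_u(P)\equiv P \pmod{y_1}$. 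These require $y_1\mid M/y_1$, i.e.\ $k_1\geq 2$.

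This cannot be patched, because the statement is false for $k_1=1$. Take $V=\KK$, $\xi=\frac{\partial}{\partial z}$, $f=z^2-1$, $m=1$, $k_1=1$. Then $Z=\{xy=z^2-1\}$ and $h=1=-f+\frac{z}{2}\,\xi(f)\in\Ker\xi$. In $B_y=\KK[y^{\pm1},w,u]$ with $w=z-u$ we have $D=y\frac{\partial}{\partial u}$. Hence any $s$ with $D(s)=1$ has the form $u/y+\sum_j c_j(w)y^j$. Inside $\KK[y^{\pm1},z,u]$ we have $B=\KK[y,z,u]\oplus\bigoplus_{i\geq1}(z^2-1)^iy^{-i}\KK[z,u]$. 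So the $y^{-1}$-coefficient $u+c_{-1}(z-u)$ must vanish at $z=1$ and at $z=-1$. This forces both $c_{-1}(t)=t-1$ and $c_{-1}(t)=t+1$, a contradiction, so $1\notin\mathrm{pl}(D)$.

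The corollary has to be read with $k_1\geq 2$, which is exactly what the paper's appeal to $(*)$ tacitly assumes. You should drop the $k_1=1$ case.
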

\begin{proof}
    Conditions $(*)$ are satisfied. We have $g=\xi(f)$, $\widehat{h_1}=f$. The condition $h\in \Ker\xi$ implies $h\in \Ker D$.   So, by Corollary~\ref{2pc1} we obtain $h\in \mathrm{pl}(D)$.
\end{proof}

\begin{cor}\label{l7}
    Let $r$ be a local slice of $\xi$. If we take $f=\alpha r^k+\beta$, where $\alpha,\beta\in\Ker\xi$, $\beta\neq 0$, then $h=\beta\delta(r)\in \mathrm{pl}(D)$. 
\end{cor}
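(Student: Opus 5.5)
The plan is to reduce the statement to Corollary~\ref{nt0n}. That means exhibiting $p,q\in\KK[V]$ with $pf+q\xi(f)=\beta\,\xi(r)$, where we read $\delta(r)$ as $\xi(r)$: for $r\in\KK[V]$ one has $\delta(r)=y_1^{k_1}\cdots y_m^{k_m}\xi(r)$ by Lemma~\ref{lprl}, and we expect the intended element to be the $\Ker\xi$-element $\beta\xi(r)$. Put $c=\xi(r)$. Since $r$ is a local slice, $c\neq 0$ and $c\in\Ker\xi$. Since $\alpha,\beta\in\Ker\xi$, we get $h=\beta c\in\Ker\xi$, so the hypothesis $h\in\Ker\xi$ of Corollary~\ref{nt0n} holds. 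We also have $\xi(f)=k\alpha r^{k-1}c\neq 0$, assuming $\alpha\neq 0$ and $k\ge 1$, so the standing assumption $\xi(f)\neq0$ is satisfied.

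The key identity is the one already used in Corollary~\ref{pnc}, now multiplied by $c$. Take $p=kc$ and $q=-r$. Then
$$pf+q\xi(f)=kc(\alpha r^k+\beta)-r\cdot k\alpha r^{k-1}c=k\beta c.$$
Dividing by $k$, which is allowed since $\mathrm{char}\,\KK=0$, gives $p=c$ and $q=-r/k$, and these satisfy $pf+q\xi(f)=\beta c=h$. Corollary~\ref{nt0n} then yields $h\in\mathrm{pl}(D)$.

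If the statement really means $\delta(r)=y_1^{k_1}\cdots y_m^{k_m}\xi(r)$, we finish with one more observation. By Lemma~\ref{lprl} each $y_i$ lies in $\Ker\delta$ and hence in $\Ker D$, and $\mathrm{pl}(D)$ is an ideal of $\Ker D$. Multiplying $\beta\xi(r)$ by the monomial in the $y_i$ therefore keeps it in $\mathrm{pl}(D)$. There is one indexing point to watch: the paper relabels $x=y_1$, $y=y_2$, so the monomial must be read in the original suspension variables.

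The only real obstacle is this bookkeeping: making sure that the element $h$ lies in $\Ker\xi$, as Corollary~\ref{nt0n} requires, and that the chosen $p$ and $q$ lie in $\KK[V]$. Both hold because $c$, $\alpha$ and $\beta$ are $\xi$-constants.
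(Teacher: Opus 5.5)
Your proposal is correct and is essentially the paper's proof. The paper takes $p=\delta(r)$ and $q=-\frac{1}{k}r$, where $\delta(r)$ is to be read as $\xi(r)$, as its later use $h=\beta\xi(r)$ in Proposition~\ref{tpp} confirms; it then checks $pf+q\xi(f)=\beta\xi(r)$ and invokes Corollary~\ref{nt0n}, exactly as you do. Your additional remark that $\mathrm{pl}(D)$ is an ideal of $\Ker D$ containing the $y_i$ correctly covers the literal reading $\delta(r)=y_1^{k_1}\cdots y_m^{k_m}\xi(r)$ as well.
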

\begin{proof}
    Take $p=\delta(r)$, $q=-\frac{1}{k}r$. Let's do a straightforward check. 
    $$
    pf+q\delta(f)=\delta(r)(\alpha r^k+\beta)- \frac{1}{k}r\alpha k r^{k-1}\delta(r)=\delta(r)\beta=h.
    $$
    The conditions of Corollary~\ref{nt0n} are satisfied.
\end{proof}

A straightforward consequence of the previous assertion is the following statement. 
\begin{cor}\label{nc0two}
Suppose $W$ is an irreducible affine variety. Consider 
$$f=\alpha z^n+\beta\in\KK[W\times \KK],$$ where $\alpha,\beta\in\KK[W]$, $\beta\neq 0$, and $z$ is the coordinate on $\KK$. 
Denote $$
    Z=\Susp(W\times \KK,f,1,k_1,\ldots, k_m)
    $$
    and consider $\xi=\frac{\partial}{\partial z}\in \LND(W\times \KK)$. Then for $D=\frac{\partial}{\partial u}+\overline{\xi}\in LND(W\times \KK)$ we have $\beta \in \mathrm{pl}(D)$.
\end{cor}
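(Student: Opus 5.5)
The plan is to derive this as a special case of Corollary~\ref{l7}, using $V=W\times\KK$, $\xi=\frac{\partial}{\partial z}$ and the local slice $r=z$. One point about notation first. In Corollary~\ref{nc0two} the derivation is written $D=\frac{\partial}{\partial u}+\overline{\xi}$, but I take it to mean the derivation $D=y\frac{\partial}{\partial u}+\overline{\xi}$ on $Z\times\KK$ fixed before Corollary~\ref{nt0n}, with $x=y_1$ and $y=y_2$ playing the roles of the distinguished variables. If instead the coefficient of $\frac{\partial}{\partial u}$ were $1$, then $u$ would be a slice, $\mathrm{pl}(D)$ would be everything, and the claim would be trivial.

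\textbf{Step 1: the data fit Corollary~\ref{l7}.} Since $\xi=\frac{\partial}{\partial z}$ and $\xi^2(z)=0\neq\xi(z)=1$, the coordinate $z$ is a local slice of $\xi$ (in fact a slice). The functions $\alpha,\beta\in\KK[W]$ do not depend on $z$, so $\alpha,\beta\in\Ker\xi$. By hypothesis $\beta\neq0$. Hence $f=\alpha z^n+\beta$ has exactly the form $\alpha r^k+\beta$ required in Corollary~\ref{l7}, with $r=z$ and $k=n$.

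\textbf{Step 2: compute the element Corollary~\ref{l7} produces.} That corollary gives $\beta\,\delta(r)\in\mathrm{pl}(D)$. I will read $\delta(r)$ as $\xi(r)$: this is what the check $p=\xi(r)$, $q=-\frac{1}{k}r$ actually uses, because the identity $pf+q\xi(f)=h$ of Corollary~\ref{nt0n} lives in $\KK[V]$. With $r=z$ we get $\xi(z)=1$, so the element is $h=\beta$.

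\textbf{Step 3: verify directly via Corollary~\ref{nt0n}.} To make the argument self-contained, take $p=1$ and $q=-\frac{1}{n}z$. Then
$$pf+q\xi(f)=\alpha z^n+\beta-\tfrac{1}{n}z\cdot n\alpha z^{n-1}=\beta .$$
Also $\beta\in\Ker\xi$, and $\beta$ is nonzero. Corollary~\ref{nt0n} therefore yields $\beta\in\mathrm{pl}(D)$.

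\textbf{Main obstacle.} The mathematics is short; the only real difficulty is the notational mismatch in the statement, namely the missing factor $y$ in $D$ and the $\delta(r)$ versus $\xi(r)$ in Corollary~\ref{l7}. I resolve it as described above.
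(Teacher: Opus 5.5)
Your proposal is correct and follows the paper's own argument. The paper derives the statement as an immediate consequence of Corollary~\ref{l7} with $r=z$, so that $\xi(r)=1$ and $h=\beta$; your Step~3 just unwinds that corollary through Corollary~\ref{nt0n} with $p=1$, $q=-\frac{1}{n}z$.

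There is one caveat, inherited from the paper, that you should state explicitly when you fix the notation. Corollary~\ref{nt0n} relies on settings $(*)$, which require the exponent of the variable $y$ in $D=y\frac{\partial}{\partial u}+\overline{\xi}$ to be at least $2$. This hypothesis cannot be dropped. Take $W$ a point, $f=z^2-1$ and $Z=\{xy=z^2-1\}$, so the exponent is $1$ and $\beta=-1$. Then $\Ker D=\KK[y,z-u]$. A slice would make $\KK[Z][u]$ a polynomial ring, contradicting the fact that $\KK[Z]$ is not factorial. Hence $\beta\notin\mathrm{pl}(D)$ in this case.
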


\begin{re}\label{replpl}
    Proposition~\ref{2pp1} implies that Corollaries~\ref{nt0n},~\ref{l7},~and~\ref{nc0two} remains true we replace the suspension $Z$ by its modification; see Definition~\ref{nd316}.
\end{re}

\begin{ex}\label{ee11}
Let $Z=\{xy^2=z^2+w^3\}=\Susp(\KK^2,z^2+w^3, 1,2)$. Here $z$ and $w$ are coordinate functions on $V=\KK^2=W\times \KK$, where $W\cong \KK$. We can consider $\xi=\frac{\partial}{\partial z}\in\LND(\KK[\KK^2])$.
If we put $D=x\frac{\partial}{\partial u}+\xi\in\LND(Z\times \KK)$ as above, then 
 $f=z^2+w^3$, i.e. $\alpha=1, \beta=w^3$. By Corollary~\ref{nc0two} we obtain  $w^3\in\mathrm{pl}(D)$.

By Example~\ref{prtrnkr} we have 
$$\Ker D= \KK[\widetilde{x},y,\widetilde{z},w]/(\widetilde{x}y-\widetilde{z}^2-w^3).$$
Consider $E\in\LND(\Ker D)$ given by 
$$
E(\widetilde{x})=0, \qquad E(y)=2\widetilde{z}, \qquad E(\widetilde{z})=\widetilde{x}, \qquad E(w)=0.
$$
Now we have 
$w^3\in \Ker E\cap\mathrm{pl}(D)$. So, by Proposition~\ref{mlprop} we obtain 
$$\ML(Z\times \KK)\subseteq \Ker E=\KK[\widetilde{x},w].$$

Considering $E'\colon (\widetilde{x},y,\widetilde{z},w)\rightarrow (2\widetilde{z},0,y,0)$, one can prove that 
$$\ML(Z\times \KK)\subseteq \Ker E'=\KK[y,w].$$
Therefore, $\ML(Z\times \KK)\subseteq\KK[w]$. 

Analogically one can show that $\ML(Z\times \KK)\subseteq\KK[z]$.
Therefore, 
$\ML(Z\times \KK)=\KK$.
\end{ex}

Of course, in the previous example we have used the fact that $Z$ is 2-suspension with $k_1=1,k_2=2$. However, in the next section we extend the technique to the case of $m$-suspensions with $k_1=1$.

Now we would like to apply the technique above to computing Makar-Limanov invariant of modifications of $m$-suspensions. The following example give the result of~\cite{D}. It was shown to authors (with another technique for describing the kernel of $D$) by N.~Gupta and P.~Ghosh. 

\begin{ex}\label{ee22}
    Let $Z$ be the Koras-Russell threefold $\{x+x^2y+z^2+w^3=0\}$. It is isomorphic to 
    $\{y+xy^2=z^2+w^3\}$. Let us proceed with the variety of the second form. 
    Following Example~\ref{KKRC} we say that $Z$ is a modification of the 2-suspension $xy^2=z^2+w^3$. It was shown in Example~\ref{KKRC} that for $\delta\colon(x,y,z,w)\rightarrow (2z,0,y^2,0)\in \LND(Z)$ and $D=y\frac{\partial}{\partial u}+\delta\in \LND(Z\times\KK)$ we obtain 
$$
\Ker D=\KK[\widetilde{x}, y,\widetilde{z}, w]/(\widetilde{x}y-\widetilde{z}^2-w^3).
$$
Remark~\ref{replpl} provides $w^3\in\mathrm{pl}(D)$. Therefore, by the same way as in the previous example we obtain $\ML(Z\times \KK)\subseteq\KK[\widetilde{x},w]\cap\KK[y,w]=\KK[w]$. Analogously, $\ML(Z\times \KK)\subseteq \KK[z]$. Therefore, $\ML(Z\times \KK)=\KK$.
\end{ex}

\section{Iterative application of generalized Bhatwadekar’s technique}\label{IAGBT}

Let $D$ be an LND of a domain $B$ and $A=\Ker D$. The Bhatwadekar’s technique described in Section~\ref{SB} allows to construct an LND on $B$ having an LND of $A$, satisfying some conditions. In this section we would like to introduce a similar construction, which we call generalized Bhatwadekar’s technique, but starting from LND $E$ of the algebra of polynomials $A[z]$ over $A$.

Let $v$ be a local slice of $D$. So, $D(v)=h\neq 0\in A$. Then $B_h=A_h[v]$, where $v$ is transcendental over $A_h$. Suppose $E$ is an LND of $A[z]$ such that $E(h)=0$. Then $E$ can be extended to an LND $\widehat{E}$ of $A_h[z]\cong B_h$. Therefore, $h^N\widehat{E}$ takes $B$ to $B$. The restriction $\widetilde{E}$ of $h^N\widehat{E}$ to $B$ is an LND of $B$. Moreover, $\widetilde{E}|_{A}=h^NE|_{A}$.

\begin{re}\label{rekerco}
Taking into account $A[z]\subseteq A_h[z]\cong B_h\subseteq \mathrm{Quot}(B)$ we can assume that $A[z]$ is a subring of $\mathrm{Quot}(B)$. (This embedding is not unique since the isomorphism $A_h[z]\cong B_h$ is not unique. To obtain an explicit embedding one should fix $v\in B$ such that $E(v)=h$.) Then $E$ can be extended to a derivation of $\mathrm{Quot}(B)$. The kernel of this extension coincides with the kernel of the extension of $\widetilde{E}$ to a derivation of $\mathrm{Quot}(B)$.
\end{re}

\begin{re}
If we consider $E=\frac{\partial}{\partial z}$, then $E|_A=0$ and $\widetilde{E}$ is equivalent to $D$. This do not give us a new LND. So, to obtain nontrivial LND one should use an LND $E$ which is not zero on $A$. 
\end{re}

\begin{re}
     Geometrically, since $h\in \mathrm{pl}(D)$, the principle open subset $X_h=\mathrm{Spec}\,B_h$ is a cylinder $Y\times \KK$, where $Y=\mathrm{Spec}\, A_h$. The LND $E$ gives another structure of a cylinder on $Y\times \KK$. This structure of a cylinder on a principle open subset gives an LND on $X$.
\end{re}

\begin{re}
The Bhatwadekar’s technique described in Section~\ref{SB} is a particular case of the generalized Bhatwadekar’s technique when $E(z)=0$ and hence, $E$ is obtained from an LND of $A$. Geometrically, this means that the new structure of a cylinder on $Y\times \KK$ is given by a structure of a cylinder on the base $Y$. 
\end{re}

The generalized Bhatwadekar’s technique can be applied iteratively. Let us describe this construction. We start with an LND $D_1$ of $B$, with the kernel $A_1$. And we have $h_1=D_1(v_1)\in A_1$. Then we consider an LND $D_2$ of $A_1[z_1]$ with the kernel $A_2$ and $h_2=D_2(v_2)\in A_2$ and so on. Each $D_i$ for $i\geq 2$ is an LND on $A_{i-1}[z_{i-1}]$ with $h_i=D_i(v_i)\in A_i=\Ker D_i$. Suppose we have an LND $E$ of $A_{n}[z_n]$ with $E(h_n)=0$. Then we can obtain by the generalized Bhatwadekar’s technique an LND $E_1=\widetilde{E}$ of $A_{n-1}[z_{n-1}]$. If $E_1(h_{n-1})=0$, by the generalized Bhatwadekar’s technique  we obtain 
$E_2=\widetilde{E_1}$ of $A_{n-2}[z_{n-2}]$ and so on. Finaly, if for each $i$ we have $E_{i}(h_{n-i})=0$, we obtain an LND $E_{n}$ of $B$. 

The simplest case of application of this technique is the case when $h_1=\ldots=h_n=h$. Then $B_h=(A_1)_h[v_1]=\ldots=(A_n)_h[v_n]$. In this case $E_n=h^N\widehat{E}$ for some positive integer~$N$, where $\widehat{E}$ is the extension of $E\in\LND(A_n[v_n])$ to $(A_n)_h[v_n]=B_h$. 
If we have fixed $D_1,\ldots, D_n$ as above such that $h=D_i(v_i)\in A_i$, we say that we are in conditions ($\#$).

\begin{prop}\label{fmlcap}
In conditions ($\#$) we have 
$$\FML(X)\subseteq \bigcap_{E\in \LND(A_n[v_n]),\  E(h)=0} \Quot(\Ker E).$$
\end{prop}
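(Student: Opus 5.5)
The plan is to show that each field on the right-hand side contains $\FML(X)$ by producing, for every admissible $E$, an LND $\widetilde{E}$ of $B=\KK[X]$ with $\Quot(\Ker\widetilde{E})=\Quot(\Ker E)$ inside $\KK(X)=\Quot(B)$. Once this is done, $\FML(X)\subseteq\Quot(\Ker\widetilde{E})=\Quot(\Ker E)$ for every such $E$, and intersecting over all $E$ gives the claim.

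To build $\widetilde{E}$, fix $E\in\LND(A_n[v_n])$ with $E(h)=0$. In conditions ($\#$) we have the chain of identifications
$$B_h=(A_1)_h[v_1]=\ldots=(A_n)_h[v_n].$$
Each equality comes from Proposition~\ref{ssnf}(d) applied to $D_i$ with the slice-type element $v_i$, where $D_i(v_i)=h$. Because $v_n$ is transcendental over $(A_n)_h$ (Proposition~\ref{ssnf}(e)), the polynomial ring $A_n[v_n]$ embeds in $(A_n)_h[v_n]=B_h\subseteq\Quot(B)$. Since $E(h)=0$, $E$ extends uniquely to a derivation $\widehat{E}$ of $(A_n)_h[v_n]=B_h$ by the rule $\widehat{E}(a/h^k)=E(a)/h^k$. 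This extension is locally nilpotent, because every element is $h^{-k}$ times an element of $A_n[v_n]$ and $h$ lies in the kernel.

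Since $B$ is finitely generated, we choose $N$ so large that $h^N\widehat{E}$ maps each generator of $B$ into $B$. Then $\widetilde{E}=h^N\widehat{E}|_B$ is a derivation of $B$, and it is locally nilpotent because $h\in\Ker\widehat{E}$. This is exactly the iterative generalized Bhatwadekar construction collapsed to a single step, which is legitimate here since all $h_i$ coincide.

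It remains to compare kernels in $\Quot(B)$, and this is the only step needing care. By \cite[Corollary~1.29]{F}, $\Quot(\Ker\widetilde{E})$ equals the kernel of the extension of $\widetilde{E}$ to $\Quot(B)$. That kernel equals the kernel of the extension of $\widehat{E}$, since the two derivations differ by the nonzero factor $h^N$, which is itself a constant for them. Applying \cite[Corollary~1.29]{F} again, now to $E$ on the domain $A_n[v_n]$, whose quotient field is $\Quot(B)$ because $B_h=(A_n)_h[v_n]$, this kernel is $\Quot(\Ker E)$. This is the point of Remark~\ref{rekerco}, and it yields $\Quot(\Ker\widetilde{E})=\Quot(\Ker E)$. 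Finally, $\FML(X)\subseteq\Quot(\Ker\widetilde{E})$ holds by the definition of $\FML$, which completes the argument.
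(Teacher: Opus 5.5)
Your argument is correct and is essentially the paper's. The paper runs along the chain $E\mapsto E_1\mapsto\dots\mapsto E_n$, applying Remark~\ref{rekerco} at each step to get $\Quot(\Ker E)=\Quot(\Ker E_n)$, and then uses that $E_n\in\LND(B)$. You instead go directly to the collapsed form $E_n=h^N\widehat{E}$, which the paper notes holds in conditions ($\#$), and you check the kernel equality yourself via Freudenburg's Corollary~1.29; that check is exactly the content of Remark~\ref{rekerco}.
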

\begin{proof}
    By Remark~\ref{rekerco} we have $\Quot(\Ker E)=\Quot(\Ker E_1)=\ldots=\Quot(\Ker E_n)$. Since $E_n$ is an LND on $B$, we have $\FML(X)\subseteq \Quot(\Ker E_n)$.
\end{proof}

\begin{prop}\label{tpp}
 Let $\xi$ be a nonzero LND on a variety $V$. Suppose $r$ is a local slice of $\xi$. Let $f=\alpha r^k+\beta$, where $\alpha,\beta\in \Ker\xi$, $\beta\neq 0$, $k\in \mathbb{Z}_{>0}$. Suppose that there exists a factorial $G$-grading on $\KK[V]$ for some abelian group $G$, such that $f\in\KK[V]$ is homogeneously irreducible, $\xi$ is homogeneous, and $\gcd(f,\xi(f))=1$. 
 Denote $$X=\Susp(V,f,1,k_1,\ldots,k_m)\times \KK.$$ Then 
 $\FML(X)\subseteq\Quot(\Ker\xi)\subseteq \KK(V).$
\end{prop}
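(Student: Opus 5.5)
We reduce to the case $k_1=1$, all other $k_i$ arbitrary, and combine the kernel computation of Section~\ref{kol} with the iterated generalized Bhatwadekar technique of Proposition~\ref{fmlcap}.

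\emph{Reduction and target.} Write $Z_{k}=\Susp(V,f,1,k_1,\ldots,k_m)$ and $X=Z_k\times\KK$. The pair $(p,q)=(\xi(r),-\tfrac1k r)$ gives $pf+q\xi(f)=\beta\xi(r)=:h\in\Ker\xi$ (Corollary~\ref{l7}). If some $k_i\ge 2$, put $D_1=y_i\frac{\partial}{\partial u}+\overline{\xi}$ on $X$. Conditions $(*)$ hold, and since $\gcd(f,\xi(f))=1$ in a factorial grading we are in the situation of Lemma~\ref{pl3} and Remark~\ref{pr2}: $y_i$ and $g=\xi(f)$ are homogeneous, and $y_i$ is homogeneously irreducible by Lemma~\ref{facgrlem}. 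Hence $A_1=\Ker D_1\cong\KK[\Susp(V,f,1,\ldots,k_i-1,\ldots)]$, with coordinates $\widetilde{x},y_j,\widetilde{V}$. By Corollary~\ref{nt0n} we also have $h\in\mathrm{pl}(D_1)$, say $D_1(v_1)=h$.

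\emph{Iteration.} Write $A_1[z_1]$ for the cylinder over the new suspension. It has the same form, with exponent $k_i$ lowered by one, and the grading hypotheses are preserved by Lemma~\ref{facgrlem}. We therefore iterate, with $D_2=y_{i}\frac{\partial}{\partial z_1}+\overline{\xi}$ and so on, until all exponents equal $1$, i.e. $A_n[v_n]\cong\KK[U\times\KK]$ with $U=\Susp(V,f,1,1,\ldots,1)$. We must check that each $h_j$ can be taken equal to the \emph{same} element $h$, so that we are in conditions $(\#)$. The image of $h$ under each kernel isomorphism is again $\beta\xi(r)$ written in tilde coordinates, and by Corollary~\ref{l7} it lies in the plinth ideal at every step. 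The compatibility of these identifications (tilde versions of $V$-coordinates) is the main bookkeeping obstacle. We first try to handle it by noting that $\pi_u$ restricted to $\KK[V][y]$ is an isomorphism onto its image, fixes $\Ker\xi$ elementwise, and carries $\xi(r)$ to its tilde version.

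\emph{Final step.} On $U\times\KK$ (coordinate $v_n$) we use the LNDs $E_j=\overline{\xi}_j$ of Lemma~\ref{intker}, extended trivially on $v_n$, for $j=0,\ldots,m$. Each satisfies $E_j(h)=0$, because $h\in\Ker\xi$ and $E_j$ multiplies $\xi$ by monomials in the $y$'s. Proposition~\ref{fmlcap} gives $\FML(X)\subseteq\bigcap_j\Quot(\Ker E_j)$. By Lemma~\ref{lprl}, $\Quot(\Ker E_j)=\Quot(\Ker\xi)(\text{the other }y\text{'s},v_n)$, and intersecting over $j$ as in Lemma~\ref{intker} yields $\Quot(\Ker\xi)(v_n)$.

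To remove $v_n$ we also use $E=\frac{\partial}{\partial v_n}$. Its kernel field is $\KK(U)$, and intersecting gives $\Quot(\Ker\xi)\subseteq\KK(V)$. Here we must again be careful: "$\Quot(\Ker\xi)$" means the copy inside $\KK(X)$ obtained through the identifications. Through the chain of Remark~\ref{rekerco} it corresponds to $\Quot(\Ker\xi)\subseteq\KK(V)\subseteq\KK(X)$, since $\pi$ fixes $\Ker\xi$ elementwise and $\Quot(\pi(B))=\Quot(A)$ by Lemma~\ref{yadrofild}. If all $k_i=1$ from the start, we skip the iteration and apply Lemma~\ref{intker} together with the cylinder LND $\frac{\partial}{\partial u}$ directly.
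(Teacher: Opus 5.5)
Your proposal is correct and follows the paper's proof. Like the paper, you lower one exponent at a time with $D_j=y\frac{\partial}{\partial z_{j-1}}+\overline{\xi}$, computing each kernel via Lemma~\ref{pl3} and Remark~\ref{pr2} with the same element $h=\beta\xi(r)$ in every plinth ideal (Corollary~\ref{l7}), and you finish with Proposition~\ref{fmlcap} applied to the trivially extended $\overline{\xi}_j$ and $\frac{\partial}{\partial v_n}$ together with Lemma~\ref{intker}. Your observation that every Dixmier map fixes $\Ker\xi$ (hence $h$ and $\Quot(\Ker\xi)$) elementwise makes explicit why the successive identifications are harmless, a point the paper leaves implicit.
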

\begin{proof}
We have
 $$
    \KK[X]=\KK[V][y_1,\ldots, y_m, x, u]/(xy_1^{k_1}\ldots y_m^{k_m}-f).
    $$
    Since $k>0$, the function $f$ is not invertible in $\KK[V]$. Therefore, $y_1$ is not invertible in $\KK[X]$. 
    By Lemma~\ref{facgrlem}, $\KK[X]$ is factorially $(\mathbb{Z}^{m}\times G)/\langle-k_1,\ldots,-k_m,w\rangle$-graded, where $w$ is the weight of $f$. 
   Consider the LND $D_1=y_1\frac{\partial}{\partial u}+\overline{\xi}$ of $\KK[X]$. Then $g=D_1(x)=\xi(f)$. Since $\gcd(f,\xi(f))=1$, we have $\gcd(y_1,\xi(f))=1$. By Lemma~\ref{pl3} and Remark~\ref{pr2}, we have
    $$
    A_1=\Ker D_1\cong
    \KK[V][x,y_1,\ldots, y_m]/(xy_1^{k_1-1}\ldots y_m^{k_m}-f).
    $$
     By Corollary~\ref{l7}, there exists $v_1\in\KK[X]$ such that  $D_1(v_1)=h=\beta\xi(r)\neq 0$.

    
       Similarly, we have an LND $D_2=y_1\frac{\partial}{\partial z_1}+\overline{\xi}$ of $A_1[z_1]$ with $D_2(v_2)=h=\beta\xi(r)$ and 
    $$
    \Ker D_2\cong\KK[V][y_1,\ldots, y_m, x]/(xy_1^{k_1-2}\ldots y_m^{k_m}-f).
    $$
    We proceed by the same way and obtain $D_1,\ldots, D_n$, where $n=k_1+\ldots+k_m-m$. Here $D_i$ is an LND of $A_{i-1}[z_{i-1}]$, $D_i(v_i)=h$ and 
    $$
    A_{n}=\Ker D_{n}\cong \KK[V][y_1,\ldots, y_m, x]/(xy_1\ldots y_m-f). 
    $$
    So, $B_h=A_1[v_1]_h=\ldots=A_{n}[v_{n}]_h$ and we are in conditions ($\#$). 

    We see that $A_n=\Susp(V,f,1\ldots,1)$. Let us consider the LNDs $\overline{\xi_i}$ of $A_n$; see Section~\ref{secsusp}. We can extend them to LNDs $E^{(i)}$ of $A_n[z_n]$ by $E^{(i)}(z_n)=0$, $E^{(i)}|_{A_n}=\overline{\xi_i}$. Then $E^{(i)}(h)=0$. By  Proposition~\ref{fmlcap} we have
    $$
    \FML(X)\subseteq \bigcap \Quot(\Ker E^{(i)})\cap\Quot\left(\Ker \frac{\partial}{\partial z_n}\right)=\Quot(\Ker\xi);
    $$
    see Lemma~\ref{intker}.
\end{proof}

\begin{ex}
 Let $Z$ be a Danielewski surface $x^ny=z^k+1$. Then $$Z=\Susp(\KK,z^k+1, 1, n).$$ Denote $X=Z\times\KK$.
 Consider $G=\mathbb{Z}/2\mathbb{Z}$-grading on $\KK[z]$, where $\deg z=1\in\mathbb{Z}/2\mathbb{Z}$.
 The variety $\KK$ is $G$-factorial, $z^2-1=0$ ia a homogeneously irreducible element of degree zero, $\deg\frac{\partial}{\partial z}=1$, $\gcd(z^2-1,z)=1$. Since $z$ is a local slice for $\xi=\frac{\partial}{\partial z}$, we have $\FML(X)\subseteq \Quot(\Ker\xi)=\KK$. So, $\FML(X)=\KK$.

 Of cause, this fact easily follows from 
 $$
 \{x^ny=f(z)\}\times\KK\cong 
 \{xy=f(z)\}\times\KK.
 $$
\end{ex}

\begin{re}
    Using Corollary~\ref{nt0n} instead Corollary~\ref{l7}, we can replace the condition $f=\alpha r^k+b$ in Proposition~\ref{tpp} by $h=af+b\delta(f)r\neq 0\in\Ker\delta $. 
\end{re}

\begin{cor}\label{jal}
    Let $Z$ be a trinomial hypersurface of the form
    $$
    Z=\{xy_1^{k_1}\ldots y_m^{k_m}=z_1^{b_1}\ldots z_p^{b_p}+w_1^{c_1}\ldots w_q^{c_q}\},
    $$
    where $p\geq 1$, $q\geq 0$. Then the field Makar-Limanov invariant of the cylinder $X=Z\times \KK$ is trivial, i.e. $\FML(X)=\KK$.
\end{cor}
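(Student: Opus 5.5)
We will deduce the statement from Proposition~\ref{tpp}, applied several times with different choices of the LND $\xi$, and then intersect the resulting bounds on $\FML(X)$. Put $V=\KK^{p+q}$ with coordinates $z_1,\ldots,z_p,w_1,\ldots,w_q$ and $f=z_1^{b_1}\cdots z_p^{b_p}+w_1^{c_1}\cdots w_q^{c_q}$, so that $Z=\Susp(V,f,1,k_1,\ldots,k_m)$. If $q=0$, the second monomial is $1$. For $\KK[V]$ we use the finest grading in which all $z_i,w_j$ are homogeneous and the two monomials of $f$ have equal degree, namely $G=\mathbb{Z}^{p+q}/\langle(b,-c)\rangle$. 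By Lemma~\ref{sfar}, or by Corollary~\ref{comi} viewing $V$ as a trivial trinomial piece, this grading is factorial and $f$ is homogeneously irreducible.

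For each $i\in\{1,\ldots,p\}$ we take $\xi_i=\frac{\partial}{\partial z_i}$. It is homogeneous, $r=z_i$ is a slice, and $f=\alpha z_i^{b_i}+\beta$ with $\alpha=\prod_{s\neq i}z_s^{b_s}\in\Ker\xi_i$ and $\beta=w^c\in\Ker\xi_i$, $\beta\neq0$. The remaining hypothesis is $\gcd(f,\xi_i(f))=1$. Here $\xi_i(f)=b_iz_i^{b_i-1}\prod_{s\neq i}z_s^{b_s}$ is a product of the homogeneously irreducible elements $z_s$, and none of these is associated with $f$ (for instance because $f$ is not a monomial, or by degree comparison in $G$). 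Hence the gcd is $1$, and Proposition~\ref{tpp} gives
$$\FML(X)\subseteq \Quot(\Ker\xi_i)=\KK(z_1,\ldots,\widehat{z_i},\ldots,z_p,w_1,\ldots,w_q).$$
When $q\geq1$ we apply the same argument symmetrically to $\frac{\partial}{\partial w_j}$, with $\alpha=\prod_{s\neq j}w_s^{c_s}$ and $\beta=z^b$, which gives $\FML(X)\subseteq\KK(z,w_1,\ldots,\widehat{w_j},\ldots,w_q)$.

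Intersecting these subfields of $\KK(V)\subseteq\KK(X)$ yields $\KK$. This holds because intersections of subfields of a purely transcendental field obtained by omitting different coordinates are again such fields: $\KK(S_1)\cap\KK(S_2)=\KK(S_1\cap S_2)$ for subsets of a transcendence basis. The case $p+q=1$ is covered as well, since there a single application already gives $\Quot(\KK)=\KK$.

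The main point to verify is the applicability of Proposition~\ref{tpp}, specifically the factoriality of the grading together with the homogeneous irreducibility of $f$ and the gcd condition. This needs care in degenerate cases, such as some $b_i$ or $c_j$ being $0$, or $q=0$ so that $f=z^b+1$ and the grading group contains torsion. In those cases we would fall back on Lemma~\ref{sfar} with the trivial $G$ on $\KK^{p-1}$ and the extra variable $z_i$, or simply argue directly that $z^b+1$ is irreducible and coprime to monomials in the UFD $\KK[V]$.
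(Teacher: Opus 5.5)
Your proposal is correct and is essentially the paper's proof. Both use $V=\KK^{p+q}$ with the grading by $\mathbb{Z}^{p+q}/\langle(b,-c)\rangle$, use Lemma~\ref{sfar} for factoriality and homogeneous irreducibility of $f$, apply Proposition~\ref{tpp} to each $\frac{\partial}{\partial z_i}$ and $\frac{\partial}{\partial w_j}$, and intersect the resulting fields.

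The hedging in your last paragraph is unnecessary. All exponents are positive, and Lemma~\ref{sfar} with $X$ a point and trivial $G$ already covers $q=0$. Your ungraded fallback would in fact fail: for example, $z_1^{2}+1$ is reducible in $\KK[z_1]$. Only homogeneous irreducibility is needed, and that is what Lemma~\ref{sfar} gives.
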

\begin{proof}
    Let us put $V=\KK^{p+q}$. Then $$X=\Susp(V,z_1^{b_1}\ldots z_p^{b_p}+w_1^{c_1}\ldots w_q^{c_q},1,k_1,\ldots,k_m)\times \KK.$$ 
    We consider the finest grading on $\KK[V]$ such that all the generators and $f$ are homogeneous. That is $\mathbb{Z}^{p+q}/\langle (b_1,\ldots,b_p,-c_1,\ldots, -c_q)\rangle$-grading. 
    It is easy to check that for each LND $\xi=\frac{\partial}{\partial z_i}$ or $\xi=\frac{\partial}{\partial w_j}$ all conditions of Proposition~\ref{tpp} are satisfied. (See Lemma~\ref{sfar} for proving that $f$ is homogeneously irreducible.) Therefore, 
    $$
    \FML(X)\subseteq \bigcap_{i=1}^p\Quot\left(\Ker \frac{\partial}{\partial z_i}\right)\cap  \bigcap_{j=1}^q\Quot\left(\Ker \frac{\partial}{\partial w_j}\right)=\KK.
    $$
\end{proof}

Now let us consider cylinders over a chain of $m$-suspensions.  

\begin{theor}\label{mtt}
Suppose $V$ is an irreducible affine algebraic variety and $\xi$ is an LND on it. Let $r_1,\ldots, r_n$ be local slices for $\xi$ and $f_i=\alpha_ir_i^{p_i}+\beta_i$, where $\alpha_i,\beta_i\in\Ker\xi$, $\beta_i\neq 0$. Suppose that there exists a factorial $G$-grading on $\KK[V]$ for some abelian group $G$, such that all $f_i\in\KK[V]$ are homogeneously irreducible and $\gcd(f_i,\xi(f_i))=1$.
 Put $Z_0=V$,  $Z_i=\Susp(Z_{i-1},f_i,1, k_{i1},\ldots,k_{im_i})$ for $i\geq 1$. 
    Denote $X=Z_n\times \KK$. Then $\FML(X)\subseteq \Quot(\Ker \xi)\subseteq \KK(V)$.
\end{theor}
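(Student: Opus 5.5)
We will argue by induction on $n$, running the iterative generalized Bhatwadekar technique of Proposition~\ref{tpp} at each level of the chain. The case $n=1$ is exactly Proposition~\ref{tpp}. For the general step we first arrange the bookkeeping. Each $\KK[Z_i]$ carries a factorial grading by Lemma~\ref{facgrlem}, applied successively. This requires that $f_i$ remain homogeneously irreducible in $\KK[Z_{i-1}]$. By the description of the irreducibles in the proof of Lemma~\ref{facgrlem}, an irreducible element of $\KK[V]$ not associated with $f_1,\ldots,f_{i-1}$ stays irreducible, so this holds provided the $f_i$ are pairwise non-associated. If they are associated, the corresponding suspensions can be merged into one, and we treat that case separately. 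The LND $\xi$ lifts along the chain: define $\xi_0=\xi$ and $\xi_i=\overline{\xi_{i-1}}$ via Lemma~\ref{lprl}. Then $\xi_i(x_i)=\xi_{i-1}(f_i)$, and $\xi_i$ equals $\xi$ multiplied by the product of all the $y$-monomials on $\KK[V]$. In particular each $r_j$ is a local slice of $\xi_i$, and $\alpha_j,\beta_j\in\Ker\xi_i$.

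The plan is to reduce, one $y$-exponent at a time, all exponents $k_{ij}$ to $1$, working from the top suspension $Z_n$ downward. At the top level, take $D_1=y_{n1}\frac{\partial}{\partial u}+\overline{\xi_{n-1}}$ on $\KK[Z_n][u]$. The kernel is computed by Lemma~\ref{pl3} with Remark~\ref{pr2}. For this we need $\gcd(y_{n1},\xi_{n-1}(f_n))=1$ in the factorial grading. This follows since $\xi_{n-1}(f_n)=(\text{monomial in }y_{ij},\,i<n)\cdot\xi(f_n)$ and $\gcd(f_n,\xi(f_n))=1$. The lemma identifies the kernel with the same chain but with $k_{n1}$ lowered by one. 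Corollary~\ref{l7}, applied with $V$ replaced by $Z_{n-1}$ and $\xi$ by $\xi_{n-1}$, gives $h=\beta_n\xi_{n-1}(r_n)\in\mathrm{pl}(D_1)$, and $h$ lies in the kernel. Iterating as in Proposition~\ref{tpp} brings the top level to $\Susp(Z_{n-1},f_n,1,\ldots,1)$ in conditions ($\#$) with common $h=h_n$. Using the LNDs $\overline{(\xi_{n-1})_j}$ together with $\frac{\partial}{\partial z}$, Proposition~\ref{fmlcap} and Lemma~\ref{intker} then give
$$\FML(X)\subseteq\Quot(\Ker\xi_{n-1})\cdot(\text{extra variables}).$$

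The main obstacle is that this bound is not yet the goal. The intersection has to be taken with LNDs that kill $h_n$ while still moving the lower suspensions, so a single application only removes the top level. A naive induction would give $\FML(X)\subseteq\FML(Z_{n-1}\times\KK^{\,s})$, where $s$ counts the new polynomial variables from the $z$'s and $v$'s, and $\FML$ of a cylinder is not obviously controlled by the induction hypothesis. To handle this, we would instead continue the chain of conditions ($\#$) downward. After the top level is reduced to $\Susp(\,\cdot\,,f_n,1,\ldots,1)$, we reduce the exponents of level $n-1$ inside the algebra $A[z]$, with the LNDs $D=y_{n-1,j}\frac{\partial}{\partial z}+(\text{lift of }\xi_{n-2})$. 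These have plinth elements $h_{n-1}=\beta_{n-1}\xi_{n-2}(r_{n-1})$ by Corollary~\ref{nt0n} together with Remark~\ref{replpl}, since the upper levels appear as a modification-type extension. We then work with the common element $h=h_1h_2\cdots h_n$, which lies in all the kernels because each $h_i\in\Ker\xi$ up to $y$-monomials. After localizing at $h$, all the rings $B_h$ agree, which gives conditions ($\#$).

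Finally, on the fully reduced chain with all exponents equal to $1$, we take the LNDs $\overline{\xi_j}$ at each level, together with $\partial/\partial z_\ell$ for every adjoined variable. All of these kill $h$, up to multiplying by monomials, which we need to check. By repeated use of Lemma~\ref{intker}, the intersection of the quotient fields of their kernels equals $\Quot(\Ker\xi)$. Proposition~\ref{fmlcap} then yields $\FML(X)\subseteq\Quot(\Ker\xi)$. Verifying the kernel computations at the lower levels, where the upper equations make the ring no longer a plain suspension, is where the proof will require the most care. We expect to handle it by viewing the whole chain as a Multi-Danielewski-type system and reapplying Lemma~\ref{pl3} with the factorial grading.
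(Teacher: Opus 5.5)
Your proposal has a genuine gap, and it sits in the steps you defer. In your single descending chain with common element $h=h_1\cdots h_n$, the plinth elements carry $y$-variables from other levels. For instance, $h_n=\beta_n\xi_{n-1}(r_n)$ is divisible by every $y_{lj}$ with $l<n$. This is not an accident of your choice. By Proposition~\ref{2pp1}, plinth membership is tested modulo $(y,\widehat g,\widehat{h_1},\ldots)$, and $\widehat g=\delta(x)$ is divisible by the $y$-monomials of the other levels. For the same reason your $h_{n-1}$ is not justified: the upper equations are not a modification in the sense of Definition~\ref{nd316}, so Remark~\ref{replpl} does not apply.

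Kernels of LNDs are factorially closed, and your Dixmier maps fix the $y$'s. Hence any $E$ with $E(h)=0$ must annihilate every $y_{lj}$ dividing $h$. But the LNDs you need at the end to eliminate the lower-level $y$'s via Lemma~\ref{intker} are exactly those with $E(y_{lj})\neq 0$. They do not kill $h$, so Proposition~\ref{fmlcap} says nothing about them. Every field in your final intersection therefore contains all $y_{lj}$ with $l<n$, which are transcendental over $\KK(V)$, and the bound cannot reach $\Quot(\Ker\xi)$. So the check ``all of these kill $h$'' fails; it is not merely pending.

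The missing idea is that you never need to go below the top level of a chain. Put $X_i=Z_i\times\KK$. Then $X_i=\Susp(X_{i-1},f_i,1,k_{i1},\ldots,k_{im_i})$, with the cylinder coordinate simply riding along. So Lemma~\ref{lmlsunov} gives $\FML(X_i)\subseteq\FML(X_{i-1})(y_{i1},\ldots,y_{im_i})$. Only one cylinder factor is ever involved, which removes your worry about $\FML(Z_{n-1}\times\KK^{s})$.

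Your top-level computation is literally Proposition~\ref{tpp} applied over $Z_{p-1}$ with the lifted LND $\xi_{p-1}$. It gives $\FML(X_p)\subseteq\Quot(\Ker\xi_{p-1})$ with no extra variables, since the adjoined $z$'s are already eliminated inside that proposition. By Lemma~\ref{lprl}, $\Quot(\Ker\xi_{p-1})=\Quot(\Ker\xi)(y_{ij}\mid i<p)$. Pushing up with Lemma~\ref{lmlsunov} gives $\FML(X)\subseteq\Quot(\Ker\xi)(y_{ij}\mid i\neq p)$ for each $p$. Since the $y_{ij}$ are algebraically independent over $\KK(V)$, intersecting over $p$ gives $\Quot(\Ker\xi)$; this is the paper's proof.

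Equivalently, your induction closes at once. The hypothesis for $X_{n-1}$ plus Lemma~\ref{lmlsunov} gives $\FML(X)\subseteq\Quot(\Ker\xi)(y_{n1},\ldots,y_{nm_n})$, and you intersect this with your top-level bound. Your remark that iterating Lemma~\ref{facgrlem} needs the $f_i$ pairwise non-associated is fair. It is exactly what is needed to verify the grading hypotheses of Proposition~\ref{tpp} over $Z_{p-1}$.
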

\begin{proof}
    Let us denote $X_i=Z_i\times \KK$. Then $X_{i}=\Susp(X_{i-1}, f_i, k_{i1},\ldots, k_{im_i})$.

We have 
$$
Z_i=\{x_iy_{i1}^{k_{i1}}\ldots y_{im_i}^{k_{im_i}}-f_i\}\subseteq Z_{i-1}\times \KK^{m_i+1}.
$$
Let us consider the chain of LNDs 
$\delta_0=\xi$, $\delta_i=\overline{\delta_{i-1}}$ is an LND on $Z_i$; see Lemma~\ref{lprl}.
Then 
$$\Quot(\Ker\delta_n)=\Quot(\Ker\xi)(y_{ij}\mid 1\leq i\leq n, 1\leq j\leq m_i).$$

Now let us apply Proposition~\ref{tpp} for $Z_{p}=\Susp(Z_{p-1},f_p,1,k_{p1},\ldots, k_{pm_p})$ and the LND $\delta_{p-1}$ on $Z_{p-1}$. It is easy to see that since $r_p$ is a local slice for $\xi$, then it is a local slice for $\delta_{p-1}$. Also $\alpha_{p}$ and $\beta_p$ are in $\Ker\delta_{p-1}$.  We have 
$$
\FML(X_p)\subseteq \Quot(\Ker\delta_{p-1})=\Quot(\Ker\xi) (y_{ij}\mid i<p).
$$
But $X_i=\Susp(X_{i-1},f_i,1,k_{i1},\ldots, k_{im_i})$. By Lemma~\ref{lmlsunov} $$\FML(X_{i})\subseteq \FML(X_{i-1})(y_{i1},\ldots, y_{im_i}).$$ This implies 
$$
\FML(X)=\FML(X_n)\subseteq \FML(X_p)(y_{ij}\mid i>j)\subseteq\Quot(\Ker\xi)(y_{ij}\mid i\neq p).
$$
Since this is true for all $p$, we have $\FML(X)\subseteq\Quot(\Ker\xi)$.

\end{proof}

\begin{re}
One can take as $r_i$ a local slice for $\delta_i$ but not for $\xi$.
\end{re}

\begin{re}
One can take $f_i$ such that $h=af+b\delta(f)r\neq 0\in\Ker\delta$ for some $a,b\in \Ker\xi$ and use Corollary~\ref{nt0n}.
\end{re}

\begin{cor}\label{cc1}
    If in conditions of Theorem~\ref{mtt} suppose we have LNDs $\xi_1,\ldots, \xi_l$ on $V$. Assume that each $f_i$ can be represent as $f_i=\alpha_{ki}r_{ki}^{p_{ki}}+\beta_{ki}$, where $r_{ki}$ is a local slice of $\xi_k$ and $\alpha_{ki},\beta_{ki}\in\Ker\xi_{k}$, $\beta_{ki}\neq 0$. Then $\FML(X)\subseteq\bigcap_{i=1}^l\Quot(\Ker\xi_i)$.
\end{cor}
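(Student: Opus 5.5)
The plan is to apply Theorem~\ref{mtt} once for each $\xi_k$ and intersect the resulting inclusions. Fix $k\in\{1,\ldots,l\}$. I would check that all hypotheses of Theorem~\ref{mtt} hold for the data $(V,\xi_k)$, with local slices $r_{k1},\ldots,r_{kn}$ and decompositions $f_i=\alpha_{ki}r_{ki}^{p_{ki}}+\beta_{ki}$.

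\begin{itemize}
\item Each $r_{ki}$ is a local slice of $\xi_k$, and $\alpha_{ki},\beta_{ki}\in\Ker\xi_k$ with $\beta_{ki}\neq 0$, by assumption.
\item The factorial $G$-grading on $\KK[V]$ and the homogeneous irreducibility of each $f_i$ do not depend on the choice of LND, so they are inherited verbatim from the standing conditions of Theorem~\ref{mtt}.
\item The variety $X=Z_n\times\KK$ is built only from $V$, the $f_i$ and the exponents $k_{ij}$. So it is the same variety for every $k$.
\end{itemize}

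Once these are verified, Theorem~\ref{mtt} gives $\FML(X)\subseteq\Quot(\Ker\xi_k)\subseteq\KK(V)\subseteq\KK(X)$ for each $k$. Since $\FML(X)$ is one fixed subfield of $\KK(X)$, intersecting over $k=1,\ldots,l$ gives $\FML(X)\subseteq\bigcap_{k=1}^l\Quot(\Ker\xi_k)$, which is the claim.

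The only delicate point is the hypothesis $\gcd(f_i,\xi(f_i))=1$ in Theorem~\ref{mtt}, which depends on the LND. I would read the corollary's phrase ``in conditions of Theorem~\ref{mtt}'' as including $\gcd(f_i,\xi_k(f_i))=1$ for every $k$. I would also take the homogeneity of $\xi_k$ with respect to the grading to be part of those conditions, since it is needed in Proposition~\ref{tpp}, which the proof of Theorem~\ref{mtt} invokes.

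If only the gcd for $\xi_1$ were intended, I would try to derive it for the other $\xi_k$ from the representation $f_i=\alpha_{ki}r_{ki}^{p_{ki}}+\beta_{ki}$. I expect this to be the main obstacle, and I would not rely on it without that hypothesis. Every other step is a formal consequence of Theorem~\ref{mtt}.
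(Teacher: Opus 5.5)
Your proposal is correct and is the intended argument: the paper gives no separate proof of this corollary, and it follows by applying Theorem~\ref{mtt} to each $\xi_k$ and intersecting the resulting inclusions. The gcd hypothesis you flag is harmless once $\xi_k$ is homogeneous: $f_i$ is homogeneously irreducible, and $f_i\mid\xi_k(f_i)$ forces $\xi_k(f_i)=0$ for an LND, so $\gcd(f_i,\xi_k(f_i))=1$ is equivalent to $\xi_k(f_i)=p_{ki}\alpha_{ki}r_{ki}^{p_{ki}-1}\xi_k(r_{ki})\neq 0$, i.e.\ to $\alpha_{ki}\neq 0$ and $p_{ki}\geq 1$.
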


\begin{ex}
    Consider $Z\subset\KK^{12}$ given by the system
    $$
    \begin{cases}
        xy_1^2y_2^3=(z_1^5+z_2^3z_3)(z_4^3+5z_5^7)(z_6^2+7z_7^7)+1;\\
        vw^5=(z_1^3-2z_3^5)(z_2^3+z_5^4+z_4^2z_7^2)(1-z_6^7)+1.
    \end{cases}
    $$
    Then $\mathrm{FML}(Z\times \KK)=\KK$. Indeed, we can consider $V=\KK^7$ is the affine space with coordinates $z_1,\ldots, z_7$. There are LNDs $\xi_1, \ldots, \xi_7$ on $V$, where $\xi_j=\frac{\partial}{\partial z_j}$. We have 
    $$Z_1=\Susp(V,f_1, 1,2,3), \qquad Z=Z_2=\Susp(Z_1,f_2, 1,5),\text{ where}$$
    $$
    f_1=(z_1^5+z_2^3z_3)(z_4^3+5z_5^7)(z_6^2+7z_7^7)+1,\qquad f_2=(z_1^3-2z_3^5)(z_2^3+z_5^4+z_4^2z_7^2)(1-z_6^7)+1.
    $$
    For each $i,j$ we have 
    $$
    f_i=g(z_1,\ldots, z_{j-1}, z_{j+1},\ldots, z_7)z_j^{p_{ij}}+h(z_1,\ldots, z_{j-1}, z_{j+1},\ldots, z_7)
    $$
    is an irreducible polynomial. Therefore, 
    all conditions of Corollary~\ref{cc1} are satisfied with respect to the trivial grading on $\KK^7$. So, 
    $$\mathrm{FML}(Z\times\KK)\subseteq\bigcap_{j=1}^7\Quot(\Ker\xi_j)=\KK.$$
\end{ex}

\section{Bisuspensions}\label{secbis}

The construction of $m$-suspension being given by a variety $X$ and a regular function $f$ on it builds a new variety, which is subvariety of $X\times \mathbb{K}^m$ given by $y_1^{k_1}\ldots y_m^{k_m}=f$. In this section, we introduce a similar construction which we call {\it bisuspension}.  For a bisuspension we replace the monomial $y_1^{k_1}\ldots y_m^{k_m}$ by a sum of two monomials. 

\begin{de}
    Let $f$ be a nonzero regular function on an affine variety $X$. We fix positive integers $m, n$ and $k_1,\ldots, k_m$, $s_1,\dots, s_n$. By {\it $m,n$-bisuspension} we mean a subvariety in $\KK^{m+n}\times X$ cut by 
    $$
    y_1^{k_1}\ldots y_m^{k_m}-z_1^{s_1}\ldots z_n^{s_n}=f,
    $$
    where $y_1,\ldots, y_m, z_1,\ldots, z_n$ are coordinate functions on $\KK^{m+n}$.
    We denote this variety by $\Bisusp_{m,n}(X,f, k_1,\ldots, k_m, s_1,\dots, s_n)$.
\end{de}
  \begin{re}\label{bissu}
        Of couse, 
        \begin{multline*}
        \Bisusp_{m,n}(X,f, k_1,\ldots, k_m, s_1,\dots, s_n)\cong\\
        \cong
        \Susp(X\times \KK^n,f+z_1^{s_1}\ldots z_n^{s_n},k_1,\ldots, k_m).
        \end{multline*}
        I.e. a bisuspension is just a particular case of an $m$-suspension. But since the variety $X\times \KK^n$ and the function $f+z_1^{s_1}\ldots z_n^{s_n}$ have special forms, a bisuspension has some properties  that an arbitrary $m$-suspension does not have. 
    \end{re}

In this section we are interested on bisuspensions of the form 
    $$Y=\Bisusp_{m,n}(X,f,2, 2p_2,\ldots, 2p_m, 2, 2q_2,\ldots, 2q_n).$$
    Having an LND $\delta$ on $X$ we can extend it to an LND on $Y$ by two different ways. We obtain LNDs $\delta_{+}$ and $\delta_{-}$ respectively. To define these two LNDs we need the following notations 
    $\gamma=y_1y_2^{p_2}\ldots y_m^{p_m}+z_1z_2^{q_2}\ldots z_n^{q_n}$, 
    $\rho=y_1y_2^{p_2}\ldots y_m^{p_m}-z_1z_2^{q_2}\ldots z_n^{q_n}$. 
    So, we have
    $$
    \gamma\rho=y_1^2y_2^{2p_2}\ldots y_m^{2p_m}-z_1^2z_2^{2q_2}\ldots z_n^{2q_n}.
    $$
    By definition
    $$
    \delta_+(y_i)=\delta_+(z_j)=0 \text{ for all } i,j\geq 2;
    $$
    $$
    \delta_+(y_1)=z_2^{q_2}\ldots z_n^{q_n}\delta(f); \qquad\delta_+(z_1)=y_2^{p_2}\ldots y_m^{p_m}\delta(f);
    $$
    $$
    \delta_+(h)=2\rho  y_2^{p_2}\ldots y_m^{p_m}z_2^{q_2}\ldots z_n^{q_n}\delta(h) \text{ for }h\in\KK[X].
    $$
    It is easy to see that $\delta_{+}(\rho)=0$. Using this one can check that $\delta_+$ is a well-defined locally nilpotent derivation on $\KK[Y]$.

Similarly 
 $$
    \delta_-(y_i)=\delta_-(z_j)=0 \text{ for all } i,j\geq 2;
    $$
    $$
    \delta_-(y_1)=z_2^{q_2}\ldots z_n^{q_n}\delta(f); \qquad\delta_-(z_1)=-y_2^{p_2}\ldots y_m^{p_m}\delta(f);
    $$
    $$
    \delta_-(h)=2\gamma  y_2^{p_2}\ldots y_m^{p_m}z_2^{q_2}\ldots z_n^{q_n}\delta(h) \text{ for }h\in\KK[X].
    $$
    Again one can check that $\delta_-$ is a well-defined locally nilpotent derivation on $\KK[Y]$ satisfying $\delta_-(\gamma)=0$.
    \begin{re}
        These extensions of $\delta$ are analogic to some LNDs given in \cite{G} for trinomial hypersurfaces.
    \end{re}

        \begin{lem}\label{vkerl}
    The quotient fields of the kernels of $\delta_+$ and $\delta_-$ are as follows:
    \begin{enumerate}
        \item $\Quot(\Ker\delta_+)=\Quot(\Ker\delta)(y_2,\ldots, y_m, z_2,\ldots, z_n, \rho)$;
        \item
        $\Quot(\Ker\delta_-)=\Quot(\Ker\delta)(y_2,\ldots, y_m, z_2,\ldots, z_n, \gamma)$.
    \end{enumerate}
    \end{lem}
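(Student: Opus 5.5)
The plan follows the proof of Lemma~\ref{lprl}: take a local slice $r$ of $\delta$, show it is also a local slice of $\delta_+$, and compute the Dixmier map $\pi_{\delta_+,r}$ on generators. By Lemma~\ref{yadrofild}, $\Quot(\Ker\delta_+)=\Quot(\pi_{\delta_+,r}(\KK[Y]))$. By symmetry (replace $z_1$ by $-z_1$, which swaps $\gamma$ and $\rho$) it suffices to treat $\delta_+$.

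First the easy containment "$\supseteq$". The elements $y_i,z_j$ ($i,j\geq 2$) lie in $\Ker\delta_+$ by definition. We have $\delta_+(\rho)=z_2^{q_2}\cdots z_n^{q_n}\cdot y_2^{p_2}\cdots\delta(f)-y_2^{p_2}\cdots z_2^{q_2}\cdots\delta(f)=0$, so $\rho\in\Ker\delta_+$. For $a\in\Ker\delta$ we get $\delta_+(a)=2\rho M\delta(a)=0$, where $M=y_2^{p_2}\cdots y_m^{p_m}z_2^{q_2}\cdots z_n^{q_n}$. Hence the right-hand side sits in $\Quot(\Ker\delta_+)$.

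For "$\subseteq$", write $c=2\rho M\in\Ker\delta_+$. Then $\delta_+=c\,\delta$ on $\KK[X]$, so $r$ is a local slice with $\delta_+(r)=c\delta(r)$. Exactly as in Lemma~\ref{lprl}, the factors $c^i$ cancel and $\pi_{\delta_+,r}(F)=\pi_{\delta,r}(F)\in\Quot(\Ker\delta)$ for $F\in\KK[X]$. Also $\pi_{\delta_+,r}$ fixes $y_i,z_j$ for $i,j\geq2$ and fixes $\rho$.

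It remains to handle $y_1$ and $z_1$. Since $\pi$ is a homomorphism, $\pi(\rho)=\rho$ gives
\[
\pi(y_1)\,y_2^{p_2}\cdots y_m^{p_m}-\pi(z_1)\,z_2^{q_2}\cdots z_n^{q_n}=\rho .
\]
The defining relation $\gamma\rho=f$ gives $\pi(\gamma)\rho=\pi_{\delta,r}(f)$, so $\pi(\gamma)=\pi_{\delta,r}(f)/\rho$. These two linear equations in $\pi(y_1),\pi(z_1)$ are solved by
\[
2y_2^{p_2}\cdots y_m^{p_m}\pi(y_1)=\rho+\pi_{\delta,r}(f)/\rho,
\]
and similarly for $\pi(z_1)$. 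Thus both lie in $\Quot(\Ker\delta)(y_{\geq2},z_{\geq2},\rho)$. Since $\KK[Y]$ is generated by $\KK[X]$ together with the $y_i,z_j$, the image $\pi(\KK[Y])$ lies in this field, which proves (1).

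The main obstacle is a technical point: the formal manipulation in $\KK[Y]_{\delta_+(r)}$ requires $\rho\neq0$ and $M\neq0$ in $\KK[Y]$, i.e.\ that $Y$ is irreducible (a domain) with these elements nonzero. I would justify this from the $m$-suspension description in Remark~\ref{bissu}, which gives irreducibility, and then argue $\rho\neq0$ by a degree argument. The remaining steps are routine computation.
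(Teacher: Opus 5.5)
Your proposal is correct and is essentially the paper's proof. Both take a local slice $r$ of $\delta$, use $\pi_{\delta_+,r}=\pi_{\delta,r}$ on $\KK[X]$, and write $\pi_{\delta_+,r}(y_1)$ and $\pi_{\delta_+,r}(z_1)$ explicitly. The only differences are that you get these formulas from $\pi_{\delta_+,r}(\rho)=\rho$ and $\pi_{\delta_+,r}(\gamma)\rho=\pi_{\delta,r}(f)$ instead of summing the series, and that you deduce (2) from the automorphism $z_1\mapsto -z_1$ (which conjugates $\delta_+$ to $\delta_-$) rather than redoing the computation.

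On your technical point, which the paper leaves unaddressed: the blanket claim that $m$-suspensions over irreducible varieties are irreducible fails when all exponents are even (e.g.\ $y^2=g^2$). Here, however, $\KK[Y]$ is a domain because $f\neq 0$: this makes $f+z_1^2z_2^{2q_2}\cdots z_n^{2q_n}$ a non-square and each $y_i$, $i\geq 2$, a nonzerodivisor. Then $\rho\neq 0$ follows at once from $\gamma\rho=f\neq 0$.
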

    \begin{proof}
        Let $r$ be a local slice of $\delta$. Let us consider the Dixmier map $\pi_{\delta_+,r}$. We have $\pi_{\delta_+,r}(y_i)=y_i$ and $\pi_{\delta_+,r}(z_j)=z_j$ for $i,j\geq 2$. For $F\in\KK[X]$,
        \begin{multline*}
        \pi_{\delta_+,r}(F)=\sum_{i=0}^\infty\frac{(-1)^i(2\rho  y_2^{p_2}\ldots y_m^{p_m}z_2^{q_2}\ldots z_n^{q_n})^i\delta^i(F)r^i}{i!(2\rho  y_2^{p_2}\ldots y_m^{p_m}z_2^{q_2}\ldots z_n^{q_n}\delta(r))^i}=\\=
        \sum_{i=0}^\infty\frac{(-1)^i\delta^i(F)r^i}{i!(\delta(r))^i}=\pi_{\delta,r}(F);
        \end{multline*}
         \begin{multline*}
         \pi_{\delta_+,r}(y_1)=y_1+\sum_{i=1}^\infty\frac{(-1)^i(2\rho y_2^{p_2}\ldots y_m^{p_m})^{i-1}(z_2^{q_2}\ldots z_n^{q_n})^i\delta^i(f)r^i}{i!(2\rho  y_2^{p_2}\ldots y_m^{p_m}z_2^{q_2}\ldots z_n^{q_n}\delta(r))^i}=\\=
         y_1-\frac{f}{2\rho y_2^{p_2}\ldots y_m^{p_m}}+\frac{1}{2\rho y_2^{p_2}\ldots y_m^{p_m}}\sum_{i=0}^\infty\frac{(-1)^i(2\rho y_2^{p_2}\ldots y_m^{p_m})^{i}(z_2^{q_2}\ldots z_n^{q_n})^i\delta^i(f)r^i}{i!(2\rho  y_2^{p_2}\ldots y_m^{p_m}z_2^{q_2}\ldots z_n^{q_n}\delta(r))^i}=\\=
         y_1-\frac{f}{2\rho y_2^{p_2}\ldots y_m^{p_m}}+\frac{1}{2\rho y_2^{p_2}\ldots y_m^{p_m}}\pi_{\delta,r}(f)=\\
         =\frac{2\rho y_1y_2^{p_2}\ldots y_m^{p_m}-(y_1^2y_2^{2p_2}\ldots y_m^{2p_m}-z_1^2z_2^{2q_2}\ldots z_n^{2q_n})}{2\rho y_2^{p_2}\ldots y_m^{p_m}}+\frac{1}{2\rho y_2^{p_2}\ldots y_m^{p_m}}\pi_{\delta,r}(f)=\\
         =\frac{\rho^2}{2\rho y_2^{p_2}\ldots y_m^{p_m}}+\frac{1}{2\rho y_2^{p_2}\ldots y_m^{p_m}}\pi_{\delta,r}(f)=\frac{\rho}{2 y_2^{p_2}\ldots y_m^{p_m}}+\frac{1}{2\rho y_2^{p_2}\ldots y_m^{p_m}}\pi_{\delta,r}(f).
         \end{multline*}
         In a similar way we obtain 
         $$\pi_{\delta_+,r}(z_1)=-\frac{\rho}{2z_2^{q_2}\ldots z_n^{q_n}}+\frac{1}{2\rho z_2^{q_2}\ldots z_n^{q_n}}\pi_{\delta,r}(f).$$
         So, 
         \begin{multline*}
         \Ker\delta_+=\pi_{\delta_+,r}(\KK[Y])\cap\KK[Y]\subseteq \\\subseteq
         \Ker\delta\left[y_2,\ldots, y_m, z_2,\ldots, z_n,\rho, \frac{1}{y_2^{p_2}\ldots y_m^{p_m}},\frac{1}{z_2^{q_2}\ldots z_n^{q_n}}\right]\cap\KK[Y]\subseteq\\\subseteq 
         \Quot(\Ker\delta)(y_2,\ldots, y_m, z_2,\ldots, z_n, \rho).
         \end{multline*}
         Conversely, $\Ker\delta[y_2,\ldots, y_m, z_2,\ldots, z_n, \rho]\subseteq \Ker\delta_+$, therefore, $$\Quot(\Ker\delta)(y_2,\ldots, y_m, z_2,\ldots, z_n, \rho)\subseteq \Quot(\Ker\delta_+).$$ 
         This proves part (1). Part (2) is analogous. 
    \end{proof}

    A straightforward consequence of the previous lemma is the following statement.
\begin{lem}\label{vlmlsu}
    Suppose $X$ is an irreducible affine variety and $f\in\KK[X]$. Consider 
    $$Y=\Bisusp_{m,n}(X,f,2, 2p_2,\ldots, 2p_m, 2, 2q_2,\ldots, 2q_n).$$ 
    Then $\FML(Y)\subseteq \FML(X)(y_2,\ldots, y_m, z_2,\ldots, z_n)$.
\end{lem}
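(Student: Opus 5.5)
The plan mirrors the derivation of Lemma~\ref{lmlsunov} from Lemma~\ref{lprl}. We intersect the fields of invariants of the two lifted derivations $\delta_+$ and $\delta_-$ over all $\delta\in\LND(X)$, and use Lemma~\ref{vkerl} to identify these fields.

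First step: fix an arbitrary $\delta\in\LND(X)$. By the definition of $\FML$ and the fact that $\delta_\pm\in\LND(Y)$,
$$\FML(Y)\subseteq \Quot(\Ker\delta_+)\cap\Quot(\Ker\delta_-).$$
Put $L=\KK(y_2,\ldots,y_m,z_2,\ldots,z_n)$ and $K_\delta=\Quot(\Ker\delta)$. By Lemma~\ref{vkerl}, the right-hand side equals $K_\delta L(\rho)\cap K_\delta L(\gamma)$.

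Second step: show that
$$K_\delta L(\rho)\cap K_\delta L(\gamma)\subseteq K_\delta L\bigl(\rho^2\bigr)=K_\delta L\bigl(\gamma^2\bigr),$$
or at least reduce the intersection so that, combined over all $\delta$, it lands in $\FML(X)L$ after one more step.

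\begin{itemize}
\item Note that $\rho+\gamma=2y_1y_2^{p_2}\cdots y_m^{p_m}$ and $\gamma\rho=f$, which is the defining equation of $Y$. Hence $\KK(Y)=\KK(X)L(\rho)$, and $\gamma=f/\rho$.
\item Since $f\in\KK(X)$ is transcendental over $K_\delta$ whenever $\delta(f)\neq 0$, the element $\rho$ is transcendental over $\KK(X)L$. Indeed, $\dim Y=\dim X+m+n-1$, so $\rho$ is a free coordinate over $\KK(X)L$.
\item The intersection $K_\delta L(\rho)\cap K_\delta L(f/\rho)$ inside $\KK(X)L(\rho)$ is governed by a Lüroth-type comparison of two simple transcendental extensions. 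When $f\notin K_\delta$, the generators $\rho$ and $f/\rho$ are algebraically independent over $K_\delta L$, and the intersection is $K_\delta L$.
\item When $\delta(f)=0$, the two fields coincide, but that case can be ignored because we intersect over all $\delta$.
\end{itemize}

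Third step: intersect over all $\delta$. We get
$$\FML(Y)\subseteq\bigcap_{\delta}\bigl(K_\delta L\bigr)=\Bigl(\bigcap_\delta K_\delta\Bigr)L=\FML(X)L.$$
The middle equality uses that $y_2,\ldots,z_n$ are algebraically independent over $\KK(X)$, with the usual argument via coefficients in a transcendence basis, as implicitly used in Lemma~\ref{intker}.

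The main obstacle is the second step, namely controlling the intersection of the two kernel fields. If it fails for the $\delta$ with $f\in\Ker\delta$, the fallback is to keep those $\delta$ giving only $K_\delta L(\rho)$. One then uses separately that $\rho\notin\FML(Y)$, arguing by a symmetry swapping the roles of $\rho$ and $\gamma$, namely $z_1\mapsto -z_1$.
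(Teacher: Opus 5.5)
Your overall route is the one the paper intends: apply Lemma~\ref{vkerl} to $\delta_+$ and $\delta_-$, intersect the two fields, then intersect over $\delta\in\LND(X)$. (The paper only calls the lemma a straightforward consequence of Lemma~\ref{vkerl}.) Your treatment of an LND with $\delta(f)\neq 0$ is correct. With $E=K_\delta L$, the elements $\rho$ and $\gamma=f/\rho$ are algebraically independent over $E$, so $E(\rho)\cap E(\gamma)=E$: any $g$ in the intersection but not in $E$ would make both $\rho$ and $\gamma$ algebraic over $E(g)$. The displayed equality $K_\delta L(\rho^2)=K_\delta L(\gamma^2)$ is false in this case, since it would force $f^2\in K_\delta L$; just drop it.

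The genuine gap is the claim that the $\delta$ with $\delta(f)=0$ ``can be ignored''. For such $\delta$ you only know $\FML(Y)\subseteq K_\delta L(\rho)$. So your Step~3, which intersects $K_\delta L$ over \emph{all} $\delta$, is unjustified. Discarding these $\delta$ only yields $\bigl(\bigcap_{\delta(f)\neq 0}K_\delta\bigr)L$, which can be strictly larger than $\FML(X)L$. Your fallback does not close this. Knowing $\rho\notin\FML(Y)$ would not give $\FML(Y)\cap K_\delta L(\rho)\subseteq K_\delta L$, and the involution $z_1\mapsto -z_1$ only shows $\rho\in\FML(Y)\Leftrightarrow\gamma\in\FML(Y)$.

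In fact the statement needs an extra hypothesis: if every LND of $X$ kills $f$, it is false. Take $f=1$. Then $\rho\gamma=1$, so $\rho$ is a unit of $\KK[Y]$ and lies in the kernel of every LND, hence in $\FML(Y)$. Yet $\rho$ is transcendental over $\KK(X)L$ by your dimension count. Concretely, $X=\KK$ and $m=n=1$ give $Y\cong\KK\times\KK^\times$ with $\rho\in\FML(Y)$ but $\FML(X)=\KK$. So one must assume $f\notin\FML(X)$, i.e.\ $\delta_1(f)\neq 0$ for some $\delta_1\in\LND(X)$; the paper's one-line derivation from Lemma~\ref{vkerl} tacitly needs this as well.

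Under that assumption the missing step is short:
\begin{itemize}
\item Your Step~2 applied to $\delta_1$ gives $\FML(Y)\subseteq K_{\delta_1}L\subseteq F:=\KK(X)L$.
\item For any $\delta$ with $\delta(f)=0$ it follows that $\FML(Y)\subseteq F\cap K_\delta L(\rho)=K_\delta L$. Indeed, any $g\in K_\delta L(\rho)\setminus K_\delta L$ makes $\rho$ algebraic over $K_\delta L(g)$, so such a $g$ cannot lie in $F$, over which $\rho$ is transcendental.
\end{itemize}
Now every $\delta$ contributes $K_\delta L$. Your Step~3 then finishes the proof; it is correct, e.g.\ by uniqueness of reduced fractions with monic denominators, one variable at a time.
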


Let us consider the case when all $p_i$ and $q_j$ equal $1$. So, 
$$
\KK[Y]=\KK[X][y_1,\ldots,y_m, z_1,\ldots, z_n]/(y_1^2\ldots y_m^2- z_1^2\ldots z_n^2-f).
$$
Suppose we are given by nonzero LND $\delta$ of $\KK[X]$. Then we can consider extensions $\delta_+$ and $\delta_-$. But since all exponents are equal to $2$, we can obtain more extensions changing the order of variables. So we obtain $2mn$ extensions of $\delta$, which we denote $\delta_{ij+}$ and $\delta_{ij-}$, where $1\leq i\leq m$ and $1\leq j\leq n$. (Here $\delta_+=\delta_{11+}$ and $\delta_-=\delta_{11-}$.)

\begin{lem}\label{vnlnl}
    In the above notations 
    $$\bigcap_{1\leq i\leq m,1\leq j\leq n}\Quot(\Ker\delta_{ij+})\cap\bigcap_{1\leq i\leq m,1\leq j\leq n} \Quot(\Ker\delta_{ij-})=\Quot(\Ker\delta).$$
\end{lem}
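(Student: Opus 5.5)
By Lemma~\ref{vkerl}, applied after relabeling the variables, every field on the left-hand side has an explicit description. For $\delta_{ij+}$ we get
$$\Quot(\Ker\delta_{ij+})=\Quot(\Ker\delta)(y_s, z_t \mid s\neq i,\ t\neq j)(\rho_{ij}),$$
where $\rho_{ij}=\prod_s y_s-\prod_t z_t$. Since all exponents equal $1$, $\rho$ does not depend on $(i,j)$; similarly $\gamma_{ij}=\gamma=\prod_s y_s+\prod_t z_t$. Denote $K=\Quot(\Ker\delta)$ and $L_{ij}^{\pm}=K(y_s,z_t\mid s\neq i, t\neq j)$. Then $\Quot(\Ker\delta_{ij+})=L_{ij}^{+}(\rho)$ and $\Quot(\Ker\delta_{ij-})=L_{ij}^{+}(\gamma)$. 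The inclusion $\supseteq$ is obvious, so the plan is to prove the reverse inclusion.

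\textbf{Step 1: fix transcendence data.} Work in $\KK(Y)$. The elements $y_1,\ldots,y_m,z_2,\ldots,z_n$ together with $\KK(X)$ generate $\KK(Y)$, because $z_1$ is algebraic over them of degree at most $2$ via the defining equation. Moreover $y_1,\ldots,y_m,z_2,\ldots,z_n$ are algebraically independent over $\KK(X)$. Since $K\subseteq\KK(X)$ is algebraically closed in $\KK(X)$, I will treat $K$ as the base field.

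\textbf{Step 2: intersect over $j$ for fixed $i$ and a fixed sign.} Fix $i$ and first take $m\geq 2$ or $n\geq 2$. Consider $L_{ij}(\rho)$ and $L_{ij'}(\rho)$ for $j\neq j'$. Together they contain $\rho$, all $y_s$ with $s\neq i$, and every $z_t$. From $\rho=\prod y-\prod z$ we see that $y_i$ lies in the compositum, though this does not help directly with the intersection. Instead I use transcendence degree. Over $K$, the field $L_{ij}(\rho)$ has transcendence degree $m+n-1$. It is algebraically closed in $\KK(Y)$, being the quotient field of an LND kernel (Principle 1.29-type facts in \cite{F}: kernels are factorially closed, so their quotient fields are algebraically closed). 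For $(i,j)\neq(i',j')$ the two fields $L_{ij}(\rho)$ and $L_{i'j'}(\rho)$ are distinct, since $z_j\notin L_{ij}(\rho)$ by a degree count. Hence their intersection has transcendence degree at most $m+n-2$ over $K$. I will then show directly that the intersection equals $K(y_s,z_t \mid s\neq i,\ t\neq j,j')(\rho)$ or a similar explicit field. Iterating this, the intersection of all the ``$+$'' fields will be $K(\rho)$, and likewise the intersection of all the ``$-$'' fields will be $K(\gamma)$.

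\textbf{Step 3: combine the two signs.} Since $\rho$ and $\gamma$ are algebraically independent over $K$ (they differ by $2\prod z_t$), any element lying in both $K(\rho)$ and $K(\gamma)$ is algebraic over $K$ in both descriptions. Both fields are purely transcendental of degree one over $K$ and generated by independent elements, so $K(\rho)\cap K(\gamma)=K$. The degenerate case $m=n=1$ is handled directly: then the fields are $K(\rho)$ and $K(\gamma)$, and Step 3 applies at once.

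\textbf{Main obstacle.} The hard step is the explicit intersection in Step 2. Showing that an intersection of subfields of the form $F(a)(\rho)$ is the expected subfield is not automatic. The plan is to express everything in the purely transcendental field $\KK(X)(y_1,\ldots,y_m,z_2,\ldots,z_n)$, where $z_1=\sqrt{\cdot}$ is avoided by choosing generators. Concretely, $L_{i1}(\rho)$ for $i\neq 1$ can be rewritten as $K(y_s\ (s\neq i),z_2,\ldots,z_n,\rho)$, which is a purely transcendental extension of $K$ with an explicit transcendence basis. Intersections of such fields, when their bases share a common subset and each adds an independent variable, reduce to the classical fact that $F(a,b)\cap F(a,c)=F(a)$ for algebraically independent $a,b,c$. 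I expect to have to choose bases carefully, replacing some $y_s$ or $z_t$ by $\rho$, so that this fact applies at each stage.
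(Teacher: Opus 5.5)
Your route is the paper's: apply Lemma~\ref{vkerl} to each $\delta_{ij\pm}$, show that the $+$-fields intersect in $K(\rho)$ and the $-$-fields in $K(\gamma)$, and finish with $K(\rho)\cap K(\gamma)=K$. The paper just asserts the two intersections. However, the two steps that carry the content are not established in your write-up.

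In Step~2, the transcendence-degree bound is true but does not identify the intersection. The ambient field you propose, $\KK(X)(y_1,\ldots,y_m,z_2,\ldots,z_n)$, does not contain $z_1$. So it does not contain $\Quot(\Ker\delta_{ij\pm})$ for $j\neq 1$. Relatedly, your Step~1 claim that these elements generate $\KK(Y)$ is false: $\KK(Y)$ is a quadratic extension of that field.

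The missing observation is that $E=K(y_1,\ldots,y_m,z_1,\ldots,z_n)$ is purely transcendental of degree $m+n$ over $K$. This holds because $y_1,\ldots,y_m,z_2,\ldots,z_n$ are independent over $\KK(X)$, and $f=\prod y_s^2-\prod z_t^2\in E$ is transcendental over $K$ provided $\delta(f)\neq 0$. With this, Step~2 goes through as follows.
\begin{itemize}
\item Fix $i$. The set $B_i=\{y_a\}_{a\neq i}\cup\{z_1,\ldots,z_n,\rho\}$ generates $E$, since $y_i=(\rho+\prod z_t)/\prod_{a\neq i}y_a$. Hence $B_i$ is a transcendence basis, and $\Quot(\Ker\delta_{ij+})=K(B_i\setminus\{z_j\})$.
\item By $K(S)\cap K(T)=K(S\cap T)$ for subsets of a transcendence basis, $\bigcap_j\Quot(\Ker\delta_{ij+})=K(y_a,\rho\mid a\neq i)$.
\item The set $\{y_1,\ldots,y_m,z_2,\ldots,z_n,\rho\}$ is also a transcendence basis, because $z_1=(\prod y_s-\rho)/\prod_{t\geq 2}z_t$. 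All the fields $K(y_a,\rho\mid a\neq i)$ are generated by subsets of it, so intersecting over $i$ gives $K(\rho)$.
\item The same argument with $\gamma$ in place of $\rho$ gives $K(\gamma)$ for the $-$-fields.
\end{itemize}

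In Step~3, ``they differ by $2\prod z_t$'' does not show that $\rho$ and $\gamma$ are algebraically independent over $K$. The right reason is that $\rho\gamma=f$ and $\rho+\gamma=2\prod y_s$, where $f$ is transcendental over $K$ and $\prod y_s$ is transcendental over $\KK(X)$.

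This again needs $\delta(f)\neq 0$, and the statement is false without it. If $\delta(f)=0$, then $\gamma=f/\rho\in K(\rho)$, so for $m=n=1$ both fields equal $K(\rho)\neq K$. The paper leaves this hypothesis implicit as well. Your proof should state it and invoke it at both places where it is used.
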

\begin{proof}
By Lemma~\ref{vkerl} we have
$$\Quot(\Ker\delta_{ij+})=\Quot(\Ker\delta)(y_a,z_b,\rho\mid a\neq i, b\neq j).$$ 
Therefore, 
$$\bigcap_{1\leq i\leq m,1\leq j\leq n}\Quot(\Ker\delta_{ij+})=\Quot(\Ker\delta)(\rho).$$
In a similar way
$$\bigcap_{1\leq i\leq m,1\leq j\leq n}\Quot(\Ker\delta_{ij-})=\Quot(\Ker\delta)(\gamma).$$
The assertion follows.
\end{proof}

Let $\delta$ be a nonzero LND on a variety $V$. Denote $$X=\Bisusp_{m,n}(V,f,2, 2p_2,\ldots, 2p_m, 2, 2q_2,\ldots, 2q_n)\times\KK.$$ Let us consider an LND $D=y_2\frac{\partial}{\partial u}+\delta_+$ of $\KK[X]$.
\begin{lem}\label{vl1}
Suppose $p_2>1$. Assume that there exists a factorial $G$-grading (may be a trivial one) on $\KK[V]$ such that $f$ is nonzero homogeneous element. Then $$\mathrm{Spec} (\Ker D)\cong \Bisusp_{m,n}(V,f,2, 2p_2-2,\ldots, 2p_m, 2, 2q_2,\ldots, 2q_n).$$
\end{lem}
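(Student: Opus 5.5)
We reduce the statement to Lemma~\ref{pl3} and Lemma~\ref{pl2} by placing $D$ in settings $(*')$. Write $C=\KK[Y]$ with
$$Y=\Bisusp_{m,n}(V,f,2,2p_2,\ldots,2p_m,2,2q_2,\ldots,2q_n),$$
defined by the single relation
$$h=y_1^2y_2^{2p_2}\cdots y_m^{2p_m}-z_1^2z_2^{2q_2}\cdots z_n^{2q_n}-f=0$$
together with the relations of $\KK[V]$. In the notation of $(*)$ we take $y=y_2$ and $x=y_1$. Since $p_2>1$, we have $2p_2\geq 4$, so $y_1$ enters $h$ only through $y_1^2y_2^{2p_2}$. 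This is not literally the product $xy^2$, so we first treat $y_1$ via the product $y_1\cdot y_2^{2}\cdot(y_1y_2^{2p_2-2}\cdots)$.

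A cleaner route is to use the variable $\rho=y_1y_2^{p_2}\cdots y_m^{p_m}-z_1z_2^{q_2}\cdots z_n^{q_n}$, which lies in $\Ker\delta_+$. We would not use it directly. Instead we check the hypotheses of $(*)$ for $\delta_+$:
\begin{itemize}
\item $\delta_+(y_2)=0$;
\item $\delta_+(y_1)=z_2^{q_2}\cdots z_n^{q_n}\delta(f)$, which is not divisible by $y_2$;
\item $\delta_+(z_1)=y_2^{p_2}\cdots$, which is divisible by $y_2^2$ since $p_2\geq 2$;
\item $\delta_+$ of elements of $\KK[V]$ carry the factor $y_2^{p_2}$, hence are divisible by $y_2^2$.
\end{itemize}
The relation $h$ must be written as $\overline h(y_1y_2^2,y_2,\ldots)$. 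Here $y_1^2y_2^{2p_2}=(y_1y_2^2)^2y_2^{2p_2-4}$, so this holds.

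Replacing $y_1y_2^2$ by $y_1y_2$ turns $y_1^2y_2^{2p_2}$ into $y_1^2y_2^{2p_2-2}$, so $\widetilde C$ is exactly the coordinate ring of
$$\Bisusp_{m,n}(V,f,2,2p_2-2,\ldots,2p_m,2,2q_2,\ldots,2q_n).$$
We must check that $\widetilde C$ is a domain, so that $(*')$ holds. By Remark~\ref{bissu} this variety is an $m$-suspension $\Susp(V\times\KK^n,f+z_1^2\cdots,2,2p_2-2,\ldots)$. Lemma~\ref{sfar} gives a factorial grading on $\KK[V\times\KK^n]$ in which $f+z_1^2z_2^{2q_2}\cdots$ is homogeneously irreducible, since $f$ is homogeneous and nonzero. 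Lemma~\ref{facgrlem} then shows that the suspension is factorially graded. A factorially graded algebra is a domain, since $0$ is not a product of nonzero homogeneous elements and leading-term arguments in a torsion-free part handle the rest. If torsion in $G$ causes trouble here, we would instead argue directly by irreducibility of the defining polynomial over $\KK(V)$.

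The remaining hypothesis of Lemma~\ref{pl3} is that $y_2\mid gw$ in $C$ implies $y_2\mid w$, where $g=\delta_+(y_1)=z_2^{q_2}\cdots z_n^{q_n}\delta(f)$. We would obtain it through Remark~\ref{pr2}. The same construction (Remark~\ref{bissu}, Lemma~\ref{sfar}, Lemma~\ref{facgrlem}) makes $C$ factorially graded with $y_2$ homogeneously irreducible. The $z_j$ are homogeneously irreducible and distinct from $y_2$. However, $\delta(f)$ need not be homogeneous, which is the main obstacle.

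We would try the following. Modulo $y_2$ the relation becomes $z_1^2\cdots=-f$. Divisibility $y_2\mid gw$ means $gw=0$ in $C/(y_2)$. This ring is $\KK[V][y_1,y_3,\ldots,z]/(z_1^2z_2^{2q_2}\cdots+f)$, and $y_1,y_3,\ldots,y_m$ are free variables in it. It therefore suffices that $z_2\cdots z_n\delta(f)$ is a nonzerodivisor modulo $F=f+z_1^2z_2^{2q_2}\cdots z_n^{2q_n}$ in $\KK[V][z]$. Since $F$ is irreducible by Lemma~\ref{sfar}, this reduces to $F\nmid z_j$ and $F\nmid\delta(f)$. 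The first holds because $f\neq0$. The second holds by comparing degrees in $z_1$, since $\delta(f)$ does not involve $z_1$ and is nonzero, using $\delta(f)\neq0$ (we may assume $f\notin\Ker\delta$; otherwise this step needs separate treatment). If the resulting ideal $I\cap\KK[v]$ computation goes through, Lemma~\ref{pl3} together with Lemma~\ref{pl2} yields $\Ker D\cong\widetilde C$, which is the claim.
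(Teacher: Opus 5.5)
Your overall route is the paper's: put $D$ in settings $(*)$ with $x=y_1$, $y=y_2$ (using $p_2\ge 2$) and conclude by Lemmas~\ref{pl3} and~\ref{pl2}. You are also right about a weakness in the paper's proof. It verifies the hypothesis of Lemma~\ref{pl3} through Remark~\ref{pr2}, which tacitly requires $g=z_2^{q_2}\cdots z_n^{q_n}\delta(f)$ to be homogeneous, and Lemma~\ref{vl1} does not assume this. Your reduction of that hypothesis to ``$g$ is a nonzerodivisor on $\KK[V][z_1,\ldots,z_n]/(F)$'', where $F=f+z_1^2z_2^{2q_2}\cdots z_n^{2q_n}$, is correct.

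The gap is the next step: ``since $F$ is irreducible by Lemma~\ref{sfar}, this reduces to $F\nmid z_j$ and $F\nmid\delta(f)$''. Lemma~\ref{sfar} only gives that $F$ is \emph{homogeneously} irreducible, i.e.\ prime against homogeneous factors. For the non-homogeneous $\delta(f)$ you are therefore back at exactly the obstacle you wanted to avoid. Moreover, $F$ need not be prime at all. If $f=-a^2$ with $a\in\KK[V]$ homogeneous, then $F=(z_1z_2^{q_2}\cdots z_n^{q_n}-a)(z_1z_2^{q_2}\cdots z_n^{q_n}+a)$. The quotient by $F$ then has zero divisors, and ``not divisible by $F$'' does not imply ``nonzerodivisor modulo $F$''.

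The needed statement is nevertheless true and follows from Lemma~\ref{vspomlem2} applied in the domain $R=\KK[V][z_1,\ldots,z_n]$:
\begin{itemize}
\item Modulo $\delta(f)$, $F$ is a polynomial over $\KK[V]/(\delta(f))$ whose coefficient at its leading monomial $z_1^2z_2^{2q_2}\cdots z_n^{2q_n}$ is $1$, so it is a nonzerodivisor.
\item Modulo $z_j$ ($j\ge2$), $F$ becomes $f\neq0$ in the domain $\KK[V][z_i\mid i\neq j]$.
\end{itemize}
Hence $\delta(f)$ and the $z_j^{q_j}$ are nonzerodivisors modulo $F$, so $g$ is a nonzerodivisor on $C/(y_2)$. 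This needs $\delta(f)\neq 0$, which the lemma implicitly requires anyway. Completed this way, your argument proves the lemma with no homogeneity assumption on $\delta$, which is more than the paper's proof gives.

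A secondary point concerns $(*')$. The definition of a factorial grading does not by itself exclude zero divisors; for example, $\KK[x,y]/(xy)$ with $\deg x=(1,0)$, $\deg y=(0,1)$ satisfies it. So your justification that $\widetilde C$ is a domain is not a proof. One way to fix it: $y_2,\ldots,y_m$ are nonzerodivisors on $\widetilde C$ by the same Lemma~\ref{vspomlem2} argument. After inverting them, the relation becomes $y_1^2=F/N$ with $N=y_2^{2p_2-2}y_3^{2p_3}\cdots y_m^{2p_m}$ a square. But $F$ is not a square in the fraction field, since $f\neq0$ (view $F$ as a quadratic in $z_1$ with zero linear term). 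The paper does not check this point either.
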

\begin{proof}

Note that the derivation $D$ satisfies the conditions ($*$) for $x=y_1$, $y=y_2$.
By Lemma~\ref{sfar}, $\KK[V][z_1,\ldots,z_n]$ is factorially graded by an abelian group such that $f+z_1^2z_2^{2q_2}\ldots z_n^{2q_n}$ is a homogeneously irreducible element.
By Lemma~\ref{facgrlem} and Remark~\ref{bissu}, the algebra 
$$\KK[\Bisusp_{m,n}(V,f,2, 2p_2,\ldots, 2p_m, 2, 2q_2,\ldots, 2q_n)]$$ 
is factorially graded and $y$ is a graded irreducible element. Since $D(x)=z_2^{q_2}\ldots z_n^{q_n}\delta(f)$, we have $\gcd(y,D(x))=1$. Therefore, by Lemma~\ref{pl3} and Remark~\ref{pr2} we are done. 
\end{proof}

\begin{lem}\label{vl2}
Let $\delta$ be a nonzero LND on a variety $V$. Denote $$X=\Bisusp_{m,n}(V,f,2, 2p_2,\ldots, 2p_m, 2, 2q_2,\ldots, 2q_n)\times\KK.$$ Consider LNDs $D=y_2\frac{\partial}{\partial u}+\delta_+$ and
$E=y_2\frac{\partial}{\partial u}+\delta_-$.
Suppose there exists $r\in \KK[V]$ and $a,b\in \Ker\delta$ such that $$h=af+b\delta(f)r\in\Ker\delta.$$ 
Then
\begin{enumerate}
\item There exists an element $v\in \KK[X]$ such that $D(v)=(h+a\rho^2)z_2^{q_2}\ldots z_n^{q_n}$;

\item There exists an element $w\in \KK[X]$ such that $E(w)=(h+a\gamma^2)y_2^{p_2}\ldots y_m^{p_m}$. 
\end{enumerate}

\end{lem}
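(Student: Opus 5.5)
The plan is to run the converse half of the proof of Proposition~\ref{2pp1} again by hand. Write $y'=y_2^{p_2}\ldots y_m^{p_m}$ and $z'=z_2^{q_2}\ldots z_n^{q_n}$. We need $p_2\geq 1$, so $y_2\mid y'$. Then $D(u)=y_2$, $D(y_1)=z'\delta(f)$, $D(z_1)=y'\delta(f)$, and $D(F)=2\rho y'z'\delta(F)$ for $F\in\KK[V]$. All $y_i$ and $z_j$ with $i,j\geq 2$ lie in $\Ker D$, and so does $\rho$ (and $\gamma$ for $E$).

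First I check that $\sigma=(h+a\rho^2)z'$ lies in $\Ker D$. This holds because $h,a\in\Ker\delta$ and $\rho,z'\in\Ker\delta_+$.

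Next I reduce $\sigma$ modulo $y_2$. Since $y_2\mid y'$, we get $\rho=y_1y'-z_1z'\equiv -z_1z'$. The defining relation $y_1^2y'^2-z_1^2z'^2=f$ gives $\rho^2\equiv z_1^2z'^2\equiv -f$ modulo $(y_2)$ in $\KK[X]$. Hence
$$
\sigma\equiv (af+b\delta(f)r-af)z'=b\,r\,z'\delta(f)=b\,r\,D(y_1)\pmod{y_2}.
$$
So there is $S\in\KK[X]$ with $\sigma=y_2S+b\,r\,D(y_1)$. This is exactly the membership $\sigma\in(y,\widehat g,\widehat h)$ that appears in Proposition~\ref{2pp1}, with $x=y_1$, $y=y_2$ and $\widehat g=z'\delta(f)$.

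To produce $v$, I follow the converse computation of Proposition~\ref{2pp1}. We have $y_2\sigma=D(u\sigma)$, and we want to write $u\sigma$ as $y_2 R$ plus an element of $\Ker D$ plus a multiple of the relation. Then $y_2\sigma=y_2D(R)$ and $v=R$. The term $b\,r\,z'\delta(f)\,u$ is handled as in that proof. The $\pi_u$-image of $y_1$, call it $\widetilde{y_1}\in\Ker D$, equals $u\,z'\delta(f)+y_2(\ldots)$. Also $\pi_u(b r)=br+y_2(\ldots)$, because $y_2\mid D(br)$. So $u\,b\,r\,z'\delta(f)$ differs from $\pi_u(br)\widetilde{y_1}\in\Ker D$ by a multiple of $y_2$. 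The term $uy_2S$ is already a multiple of $y_2$.

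I expect the main obstacle to be justifying this rearrangement, since Proposition~\ref{2pp1} was stated for $F\in\KK[y,z_1,\ldots,z_k]$, while $\sigma$ involves $y_1$ through $\rho$. I would first check that its proof never uses this restriction: it only uses $\sigma\in\Ker D$ and a decomposition $\sigma=y_2S+P\,D(x)+(\text{relation})$ with $y_2\mid D(P)$. If that fails, I would instead compute $D$ of the explicit candidate $v=b\,r\,y_1+u\,T$, where $T=(\sigma-b\,r\,D(y_1)-2b\,y_1\rho y'z'\delta(r))/y_2$, and verify that $D(T)=0$ directly from $D(\rho)=0$.

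For part (2) the situation is easier. The target $(h+a\gamma^2)y'$ lies in $\Ker E$, and it is divisible by $y_2$ because $p_2\geq 1$. So I take $w=u(h+a\gamma^2)y'/y_2$, and then $E(w)=y_2\cdot(h+a\gamma^2)y'/y_2=(h+a\gamma^2)y'$.
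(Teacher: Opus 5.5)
Your route for part (1) is the paper's. You reduce $\sigma=(h+a\rho^2)z'$ modulo $y_2$ to $b\,r\,D(y_1)$ via $\rho^2\equiv z_1^2z'^2\equiv -f$. You then write $u\sigma=-\pi_u(br)\,\widetilde{y_1}+y_2R$, so that $y_2\sigma=D(u\sigma)=y_2D(R)$.

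\emph{The gap is the hypothesis.} You say $p_2\ge 1$ suffices, but both congruences you use need $y_2^2\mid y'$, i.e.\ $p_2\ge 2$.
\begin{itemize}
\item Since $D^i(br)=b(2\rho y'z')^i\delta^i(r)$, the $i$-th term of $\pi_u(br)$ is $\frac{(-1)^i}{i!}\,b\,(2\rho z')^i(y'/y_2)^i\delta^i(r)u^i$. This is divisible by $y_2$ only if $y_2\mid y'/y_2$, so the criterion ``$y_2\mid D(br)$'' is not enough.
\item The $u^2$-term of $\widetilde{y_1}=y_2\pi_u(y_1)$ is $\rho\,(y'/y_2)\,z'^2\delta^2(f)\,u^2$. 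Hence $\widetilde{y_1}\not\equiv -u z'\delta(f)\pmod{y_2}$ when $p_2=1$.
\end{itemize}

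This is not only a defect of the argument: for $p_2=1$ part (1) is false. Take $V=\KK$ with coordinate $t$, $\delta=\partial/\partial t$, $f=t^2+1$, $m=2$, $n=1$, $p_2=1$, $r=t$, $a=1$, $b=-\tfrac12$, so that $h=1$.
\begin{itemize}
\item Here $\KK[X]=\KK[y_1,y_2,z_1,t,u]/(y_1^2y_2^2-z_1^2-t^2-1)$. $D$ sends $u,y_1,z_1,t$ to $y_2,\ 2t,\ 2ty_2,\ 2\rho y_2$, and the target is $1+\rho^2=2\rho y_1y_2-t^2$.
\item Modulo $y_2$ the algebra is $\bar B_0[y_1]$ with $\bar B_0=\KK[z_1,t,u]/(z_1^2+t^2+1)$ a domain. $D$ induces $2t\,\partial/\partial y_1$, and $1+\rho^2$ reduces to $-t^2$.
\item So $D(v)=1+\rho^2$ forces $v=-\tfrac12 ty_1+c+y_2v_1$ with $c\in\KK[z_1,t,u]$. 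Here $D(-\tfrac12 ty_1)=-\rho y_1y_2-t^2$, and $D(c)=y_2c'$ with $c'=2t\frac{\partial c}{\partial z_1}+2\rho\frac{\partial c}{\partial t}+\frac{\partial c}{\partial u}$, whose reduction lies in $\bar B_0$.
\item Cancelling $y_2$ gives $D(v_1)=3\rho y_1-c'$. Reducing mod $y_2$ again, with $\rho\equiv -z_1$, gives $2t\,\partial\bar v_1/\partial y_1=-3z_1y_1-\bar c'$.
\item Comparing coefficients of $y_1$ gives $3z_1\in t\bar B_0$. This is false, since $z_1\neq 0$ in $\bar B_0/(t)\cong\KK[z_1,u]/(z_1^2+1)$.
\end{itemize}

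So you must assume $p_2\ge 2$. This is what puts $D$ in settings $(*)$ with $x=y_1$, $y=y_2$. The paper's proof uses it silently, in its claims that $F\in\KK[X]$ and $\widetilde r=r+y_2B$. It is also the regime in which the lemma is used in Proposition~\ref{bsp}, alongside Lemma~\ref{vl1}, which assumes $p_2>1$. Under $p_2\ge 2$ your main argument is correct.

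\emph{Drop the fallback.} For $v=bry_1+uT$ with $T=(\sigma-D(bry_1))/y_2$ you get $D(v)=\sigma+uD(T)$, and $D(T)=-D^2(bry_1)/y_2$. Here $D^2(bry_1)=2b\rho y'z'^2\bigl(2\rho y'y_1\delta^2(r)+2\delta(r)\delta(f)+r\delta^2(f)\bigr)$, which is nonzero in general, so $D(\rho)=0$ does not kill it. The higher-order corrections in $u$ are exactly what the Dixmier map supplies.

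\emph{Part (2).} As literally stated, your one-line argument is right: $(h+a\gamma^2)y'\in\Ker E$ and $y_2\mid y'$. The paper only says part (2) is analogous to (1). The non-trivial analogue for $E$ would have $z'$ in place of $y'$.
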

\begin{proof}

Let us do some computations. Denote by $\pi_u$ the Dixmier map $\pi_{D,u}$.
We have: $\pi_u(y_1)=y_i, \pi_u(z_j)=z_j$ for all $i,j\geq 2$, and 
    \begin{multline*}
    \pi_u(y_1)=y_1-\frac{z_2^{q_2}\ldots z_n^{q_n}\delta(f)u}{y_2}+\frac{2\rho  y_2^{p_2}\ldots y_m^{p_m}z_2^{2q_2}\ldots z_n^{2q_n}\delta^{2}(f)u^2}{2 y_2^2}-\ldots=\\
    =F-\frac{z_2^{q_2}\ldots z_n^{q_n}\delta(f)u}{y_2}=\frac{1}{y_2}\widetilde{y_1},\text{ where }F\in \KK[X];
    \end{multline*}
    $$
    \pi_u(z_1)=z_1-\frac{y_2^{p_2}\ldots y_m^{p_m}\delta(f)u}{y_2}+\frac{2\rho  y_2^{2p_2}\ldots y_m^{2p_m}z_2^{q_2}\ldots z_n^{q_n}\delta^2(f)u^2}{2 y_2^2}-\ldots=\widetilde{z_1}\in\KK[X];
    $$
    $$
    \pi_u(r)=r-\frac{2\rho  y_2^{p_2}\ldots y_m^{p_m}z_2^{q_2}\ldots z_n^{q_n}\delta(r)u}{y_2}+\ldots =\widetilde{r}\in\KK[X].
    $$
So, the functions
$$\widetilde{y_1}=y_2\pi_u(y_1)=z_2^{q_2}\ldots z_n^{q_n}\delta(f)u+y_2A\text{ and }
    \widetilde{r}=\pi_u(r)=r+y_2B,$$ where $A,B\in\KK[X]$, belong to $\Ker D$.
    Let us also compute that
    $$
    \rho^2=(y_1y_2^{p_2}\ldots y_m^{p_m}-z_1z_2^{q_2}\ldots z_n^{q_n})^2=z_1^2z_2^{2q_2}\ldots z_n^{2q_n}+y_2H,\qquad H\in \KK[X].
    $$
So, we have
\begin{multline*}
    y_2(h+a\rho^2)z_2^{q_2}\ldots z_n^{q_n}=D(u(h+a\rho^2)z_2^{q_2}\ldots z_n^{q_n})=\\
=D(u(af+b\delta(f)r+a\rho^2)z_2^{q_2}\ldots z_n^{q_n})
=D(u(af+b\delta(f)r+a\rho^2)z_2^{q_2}\ldots z_n^{q_n}-b\widetilde{y_1}\widetilde{r}) 
=\\
=D(u(af+a\rho^2)z_2^{q_2}\ldots z_n^{q_n}+y_2 C)
=D(u(a(y_1^2y_2^{2p_2}\ldots y_m^{2p_m}-z_1^2z_2^{2q_2}\ldots z_n^{2q_n})+\\+a(z_1^2z_2^{2q_2}\ldots z_n^{2q_n}+y_2H))z_2^{q_2}\ldots z_n^{q_n}+y_2 C)
=D(y_2v).
\end{multline*}
Here $C=b(-z_2^{q_2}\ldots z_n^{q_n}\delta(f)uB-rA+y_2AB)\in\KK[X]$.
Since $X$ is irreducible, and $y_2\neq 0$, we have $D(v)=(h+a\rho^2)z_2^{q_2}\ldots z_n^{q_n}$. This proves part (1). Part (2) is analogous.
\end{proof}

\begin{prop}\label{bsp}
Let $\delta$ be a nonzero LND on a variety $V$. Suppose $r$ is a local slice of $\delta$. Let $f=\alpha r^k+\beta$, where $\alpha,\beta\in \Ker\delta$, $k\in \mathbb{Z}_{>0}$. Suppose that there exists a factorial $G$-grading on $\KK[V]$ for some abelian group $G$, such that $f\in\KK[V]$ is a homogeneous element. 
 Denote $$X=\Bisusp_{m,n}(V,f,2, 2p_2,\ldots, 2p_m, 2, 2q_2,\ldots, 2q_n)\times\KK.$$ Then 
 $\FML(X)\subseteq\Quot(\Ker\delta)\subseteq \KK[V].$
 \end{prop}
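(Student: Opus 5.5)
The plan is to mimic the proof of Proposition~\ref{tpp}, replacing the $m$-suspension machinery with its bisuspension analogues (Lemmas~\ref{vl1}, \ref{vl2}, \ref{vkerl}), and to run the argument twice, once with $\delta_+$ and once with $\delta_-$. The two resulting bounds will then be intersected.

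\textbf{Step 1: a chain of kernels for $\delta_+$.} Put $D_1=y_2\frac{\partial}{\partial u}+\delta_+$ on $\KK[X]$. By Lemma~\ref{vl1}, as long as $p_2>1$, $\Spec(\Ker D_1)$ is the bisuspension with $2p_2$ lowered to $2p_2-2$. I iterate this with fresh cylinder variables $z_1',z_2',\ldots$, exactly as in Proposition~\ref{tpp}, lowering each exponent $2p_i$ in turn. When an exponent reaches $2$, the roles of $y_1$ and $y_i$ can be swapped. At the end the last kernel $A_N$ is $\Bisusp_{m,n}(V,f,2,\ldots,2,2,2q_2,\ldots,2q_n)$.

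\textbf{Step 2: a common $h$ to reach conditions $(\#)$.} I take $a=\delta(r)$, $b=-\frac1k$, so that, as in Corollary~\ref{l7}, $af+b\delta(f)r=\beta\delta(r)=:h_0\in\Ker\delta$. Lemma~\ref{vl2} then gives $v$ with $D(v)=(h_0+a\rho^2)z_2^{q_2}\cdots z_n^{q_n}$. This element lies in $\Ker D$ because $\rho\in\Ker\delta_+$, and it is built from elements that survive every step of the chain. The main obstacle is checking that the same element $h=(h_0+a\rho^2)z_2^{q_2}\cdots z_n^{q_n}$ is a partial-slice image at every step. My plan is to re-run the computation of Lemma~\ref{vl2} on each intermediate bisuspension, which has the same shape, with $\rho$ and $\widetilde{y_1}$ transported through the isomorphisms of Lemma~\ref{vl1}. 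This relies on $\rho$ being expressed by the same formula at each stage.

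A second worry is that $\beta\neq0$ is not assumed here. If $\beta=0$, the $\rho^2$ term still keeps $h\neq0$, so $h\neq0$ in every case. The factorial grading hypothesis is needed only to apply Lemma~\ref{vl1}, via Lemmas~\ref{sfar} and~\ref{facgrlem}.

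\textbf{Step 3: reducing to $\FML$ of the top kernel.} Once $(\#)$ holds, Proposition~\ref{fmlcap} gives $\FML(X)\subseteq\Quot(\Ker E)$ for every LND $E$ of $A_N[v_N]$ with $E(h)=0$. On $A_N$ with all exponents $2$, I use the $2mn$ extensions $\delta_{ij\pm}$ (extended by $E(v_N)=0$), together with $\frac{\partial}{\partial v_N}$.

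The extensions $\delta_{ij+}$ kill $\rho$ and the $z$'s, so they kill $h$; I use these for the $+$ chain. For the $-$ chain I repeat Steps 1 and 2 with $\delta_-$ and $\gamma$ in place of $\delta_+$ and $\rho$, using part (2) of Lemma~\ref{vl2}, and then use the $\delta_{ij-}$, which kill $\gamma$.

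Care is needed that each $\delta_{ij+}$ kills the particular $h$ involved; a $\delta_{ij+}$ with a different pairing kills a different $\rho$. If this fails for some indices, I would restrict to $i=j=1$. In that case I would instead use Lemma~\ref{vkerl} to get $\Quot(\Ker\delta)(y_a,z_b,\rho)$ and cut down the extra variables by repeating the argument with the variables reordered.

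Intersecting via Lemma~\ref{vnlnl} gives $\Quot(\Ker\delta)(\rho)\cap\Quot(\Ker\delta)(\gamma)=\Quot(\Ker\delta)$, so $\FML(X)\subseteq\Quot(\Ker\delta)$.
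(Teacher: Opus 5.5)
Your Steps~1 and~2 are essentially the paper's argument. You build the chain of kernels from Lemma~\ref{vl1}, take the common element $h=(h_0+a\rho^2)z_2^{q_2}\cdots z_n^{q_n}$ from Lemma~\ref{vl2} transported via $\pi_u(\rho)=\rho$, and then apply Proposition~\ref{fmlcap}. Your choice $a=\delta(r)$, $b=-\frac1k$ is the correct one; the paper's ``$h=\beta$'' is a slip.

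\textbf{The gap is in Step~3.} Lemma~\ref{vnlnl} cannot be invoked there, for two reasons.
\begin{enumerate}
\item It needs every exponent, including the $2q_j$, to equal $2$. Your chain lowers only the $p_i$, so $\delta_{ij\pm}$ with $j\ge 2$ is in general not defined.
\item Even when $q_j=1$, $\delta_{ij+}$ with $j\neq 1$ does not kill $z_j$, so your assertion that the $\delta_{ij+}$ kill ``the $z$'s'' is wrong. Hence it does not kill the factor $z_2^{q_2}\cdots z_n^{q_n}$ of $h$.
\end{enumerate}
The usable derivations are exactly $\delta_{i1+}$, $1\le i\le m$. Once all $p_i=1$, $\rho=y_1\cdots y_m-z_1z_2^{q_2}\cdots z_n^{q_n}$ is symmetric in the $y$'s, so each of them kills $h$; this also settles your pairing worry. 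Together with $\partial/\partial v_N$, this gives only $\FML(X)\subseteq\Quot(\Ker\delta)(\rho,z_2,\ldots,z_n)$.

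In the $-$ chain, Lemma~\ref{vl2}(2) as printed produces the factor $y_2^{p_2}\cdots y_m^{p_m}$, which $\delta_{i1-}$ for $i\ge 2$ does not kill. Instead, redo the computation of part~(1) with $\delta_-$. It works with the factor $z_2^{q_2}\cdots z_n^{q_n}$ because $f+\gamma^2=2\gamma\, y_1y_2^{p_2}\cdots y_m^{p_m}$ is divisible by $y_2$. You then get $\FML(X)\subseteq\Quot(\Ker\delta)(\gamma,z_2,\ldots,z_n)$. We have $\rho\gamma=f$, which is transcendental over $\Quot(\Ker\delta)$, and $\gamma-\rho=2z_1z_2^{q_2}\cdots z_n^{q_n}$. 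So $\rho$ and $\gamma$ are algebraically independent over $K=\Quot(\Ker\delta)(z_2,\ldots,z_n)$, and the two bounds intersect in $K$. Keep $z_1$ out of the base field: over any field containing $z_1z_2^{q_2}\cdots z_n^{q_n}$, $\rho$ and $\gamma$ generate the same extension.

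\textbf{The missing idea is the step that removes $z_2,\ldots,z_n$.} Run the entire argument a second time with the two blocks exchanged. That is, view $X$ via $z_1^2z_2^{2q_2}\cdots z_n^{2q_n}-y_1^2y_2^{2p_2}\cdots y_m^{2p_m}=-f$, where $-f=(-\alpha)r^k+(-\beta)$ satisfies the same hypotheses, and lower the $q_j$ instead. This yields $\FML(X)\subseteq\Quot(\Ker\delta)(y_2,\ldots,y_m)$. Since $y_2,\ldots,y_m,z_2,\ldots,z_n$ are algebraically independent over $\KK(V)$, intersecting with $K$ gives $\Quot(\Ker\delta)$. This is the paper's ``Similarly, $\FML(X)\subseteq\Quot(\Ker\delta)(y_1,\ldots,y_m,u)$''.

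Your fallback of reordering variables does not achieve this. Permutations inside the $y$-block never touch the $z_b$, and a $z_b$ with $q_b>1$ cannot be moved into the slot of $z_1$. Restricting to $i=j=1$ only weakens the bound.
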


 \begin{proof}
By definition,
$$
\KK[X]=\KK[V][y_1,\ldots, y_m,z_1,\ldots, z_n, u]/(y_1^2y_2^{2p_2}\ldots y_m^{2p_m}-z_1^2z_2^{2q_2}\ldots y_n^{2q_n}-f). 
$$
We consider the LND $D_1=y_2\frac{\partial}{\partial u}+\delta_+$ on $\KK[X]$. Note that  By Lemma~\ref{vl1},
$$\mathrm{Spec} (\Ker D_1)\cong \Bisusp_{m,n}(V,f,2, 2p_2-2,\ldots, 2p_m, 2, 2q_2,\ldots, 2q_n).$$
That is 
\begin{multline*}
A_1=\Ker D_1=\\=\pi_u(\KK[V])[\widetilde{y_1},y_2,\ldots, y_m, \widetilde{z_1},z_2,\ldots, z_n]/(\widetilde{y_1}^2y_2^{2p_2-2}\ldots y_m^{2p_m}-\widetilde{z_1}^2z_2^{2q_2}\ldots y_n^{2q_n}-\widetilde{f}).
\end{multline*}
Here $\widetilde{z_1}=\pi_u(z_1)$, $\widetilde{f}=\pi_u(f)$, and $\widetilde{y_1}=y_2\pi_u(y_1)$. 

Consider $h=f-\frac{1}{k}r\delta(r)=\alpha r^k+\beta-\frac{1}{k}r(k\alpha r^{k-1})=\beta\in\Ker\delta_+$. 
By Lemma~\ref{vl2}, $(\beta+\rho^2)z_2^{q_2}\ldots z_n^{q_n}\neq 0\in \mathrm{pl}(D_1)$. Let us denote $\widetilde{\rho}=\widetilde{y_1}y_2^{p_2-1}\ldots y_m^{p_m}-\widetilde{z_1}z_2^{q_2}\ldots y_n^{q_n}$. Then 
$$
    \pi_u(\rho)=\pi_u(y_1y_2^{p_2}\ldots y_M^{p_M}-z_1 z_2^{q_2}\ldots y_N^{q_N})
    =\frac{\widetilde{y_1}}{y_2}y_2^{p_2}\ldots y_m^{p_m}-\widetilde{z_1} z_2^{q_2}\ldots y_n^{q_n}=\widetilde{\rho}. 
$$
Therefore, 
$$
\pi_u((\beta+\rho^2)z_2^{q_2}\ldots z_n^{q_n})=(\pi_u(\beta)+\widetilde{\rho}^2)z_2^{q_2}\ldots z_n^{q_n}.
$$
This shows that if we fix the above isomorphism $$\mathrm{Spec}(A_1)\cong\Bisusp_{m,n}(V,f,2,2p_2-2,2p_3,\ldots, 2p_m,2,2q_2,\ldots, 2q_n),$$ 
it takes $(\beta+\rho^2)z_2^{q_2}\ldots z_n^{q_n}$ (in the initial coordinates) to $(\beta+\rho^2)z_2^{q_2}\ldots z_n^{q_n}$ (in the new coordinates). 

Then we define $D_2=y_2\frac{\partial}{\partial u}+\delta_+$ on $\Ker D_1[u]$ and so on. Finally, we obtain a chain of LNDs $D_1,\ldots, D_{l}$, where $l=p_2+\ldots+p_n-n+1$ and $D_{j+1}\in\LND((\Ker{D_j})[u])$. We have 
$$\mathrm{Spec}(\Ker D_l)=\Bisusp_{m,n}(V,f,2,\ldots,2,2,2q_2,\ldots, 2q_n).$$
For each $D_j$ we have 
$h_j=(\beta+\rho^2)z_2^{q_2}\ldots z_n^{q_n}\in \mathrm{pl}(D_j)$.

By Lemma~\ref{vkerl}, we have $m$ LNDs $\delta_{i1+}$ on $\Ker D_l$ such that 
$$\bigcap_{i=1}^m\Quot(\Ker \delta_{i1+})\subseteq \Quot(\Ker\delta)(\rho, z_2,\ldots, z_n).$$

Let us take one of these LNDs $E=\delta_{i1+}\in \LND(\Ker(D_l)[u])$. We put $E(u)=0$. Then 
$$
E(h_l)=E((\beta+a\rho^2)z_2^{q_2}\ldots z_n^{q_n})=0. 
$$
So, we can apply iterated generalized Bhatwadekar's technique and obtain the LND $E_l\in\LND(B)$ for which $\Quot(\Ker E_l)=\Quot(\Ker E)$. Applying this for all $E=\delta_{i1+}$ we obtain 
$$\FML(X)\subseteq \Quot(\Ker \delta)(\rho,z_1,\ldots, z_n,u).$$
Similarly, considering $\delta_{i1-}$, we can prove that 
$$\FML(X)\subseteq \Quot(\Ker \delta)(\gamma,z_1,\ldots, z_n,u).$$
Hence, 
$$\FML(X)\subseteq \Quot(\Ker \delta)(z_1,\ldots, z_n,u).$$
Similarly,
$\FML(X)\subseteq \Quot(\Ker \delta)(y_1,\ldots, y_m,u)$. Therefore, $$\FML(X)\subseteq \Quot(\Ker\delta)(u).$$ Since $\frac{\partial}{\partial u}$ is an LND on $X$, we have $\FML(X)\subseteq \Quot(\Ker\delta)$. 
\end{proof}
\begin{ex}\label{efmlgf}
    Let $Z=\Bisusp_{m,n}(\mathbb{K}^d, f, 2, 2p_2,\ldots, 2p_m, 2, 2q_2,\ldots, 2q_n)$,  
    where $f=x_{11}^{l_{11}}\ldots x_{1n_1}^{l_{1d_1}}+\ldots+x_{s1}^{l_{s1}}\ldots x_{sn_s}^{l_{sd_s}}$. Here $x_{ij}$ are coordinates on $\mathbb{K}^d$ and $d_1+\ldots+d_s\leq d$, $d_j\geq 0$, and  $s>1$. Denote $X=Z\times \KK$. Then $X$ is generically flexible. Indeed, for $\delta=\frac{\partial}{\partial x_{ij}}$ the conditions of Proposition~\ref{bsp} are satisfied. Therefore, $\FML(X)\subseteq \Quot(\Ker \frac{\partial}{\partial x_{ij}})$. That is $\FML(X)=\KK$.
\end{ex}

The following example is just a particular case of Example~\ref{efmlgf}

\begin{ex}
    Let $Z=\{y_1^2-z_1^2z_2^4=w^3-1\}$. Then $$Z=\Bisusp_{1,2}(\KK,w^{3}-1,2,2,4).$$ 
    Put $X=Z\times \KK$. Then $\FML(X)=\KK$.
\end{ex}

\section{Cylinders over trinomial varieties}\label{cyltrv}

Consider a trinomial variety $X=X(A)=\mathrm{Spec}\,R(A)$; see Definition~\ref{trvarde}. We use notations of Construction~\ref{odyn}. By definition $X(A)$ is isomorphic to a product of another trinomial variety $Y=Y(A)$ and the $\theta$-dimensional affine space. 

If $\theta=1$ and $Y$ is rigid (see the regidity criterion in Proposition~~\ref{EGSh}), then by Makar-Limanov's result~\cite[Theorem~2.24]{F} the variety $X$ is semi-rigid, and therefore, $\ML(X)=\KK[Y]\neq \KK$. The cases 
\begin{itemize}
    \item $Y$ is rigid  and $\theta>1$
    \item $\theta=0$ 
\end{itemize} 
require further investigations; see Section~\ref{lastsec}. Trinomial varieties that are cylinders over non-rigid initial varieties $Y$ turn out to be generically flexible.

\begin{theor}\label{mlml}
    If the variety $Y(A)$ is non-rigid and $\theta>0$, the variety~$X$ is generically flexible. 
\end{theor}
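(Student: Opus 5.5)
The plan is to prove that $\FML(X)=\KK$. Generic flexibility then follows from \cite[Proposition~5.1]{AFKKZ}.

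\textbf{Reduction to $\theta=1$.} By Remark~\ref{initr}, $X\cong Y\times\KK^\theta$ with coordinates $S_1,\ldots,S_\theta$. Each LND of $\KK[Y\times\KK_{S_1}]$ extends to $X$ by killing $S_2,\ldots,S_\theta$, and the derivations $\partial/\partial S_k$ are LNDs of $X$. Hence $\FML(X)\subseteq\FML(Y\times\KK)(S_2,\ldots,S_\theta)$. Intersecting with $\Quot(\Ker\partial/\partial S_k)$ for $k\geq 2$ removes $S_2,\ldots,S_\theta$. So it suffices to show $\FML(Y\times\KK)=\KK$. Since $Y$ is non-rigid with no $S$-variables, Proposition~\ref{EGSh} leaves three cases: type I, condition 2); type II, condition 3a); type II, condition 3b).

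\textbf{Type I and type II(a): realize $Y$ as a chain of suspensions and apply Theorem~\ref{mtt}.}
\begin{itemize}
\item In case 2), renumber so that the distinguished index is $a$ and $l_{i1}=1$ for every $i\neq a$. Each equation can be rewritten as $T_{i1}\prod_{j\geq2}T_{ij}^{l_{ij}}=T_a^{l_a}+c_i$ with constants $c_i\neq 0$, which are nonzero because the $a_i$ are pairwise distinct. Thus $Y=Z_n$ is an iterated chain $Z_i=\Susp(Z_{i-1},f_i,1,\ldots)$ over $V=\KK^{n_a}$, with $f_i=T_a^{l_a}+c_i$.
\item In case 3a), a linear change of the equations, using that any two columns of the coefficient matrix are independent, brings every other monomial $T_i^{l_i}$ to $\lambda_iT_a^{l_a}+\mu_iT_b^{l_b}$. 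This gives the same chain structure over $V=\KK^{n_a+n_b}$ with $f_i=\lambda_iT_a^{l_a}+\mu_iT_b^{l_b}$ and $\lambda_i\mu_i\neq0$.
\end{itemize}
In either case I take $\xi=\partial/\partial T_{aj}$ (and also $\partial/\partial T_{bj}$ in case 3a) with local slice $r=T_{aj}$. Then $f_i=\alpha_i r^{l_{aj}}+\beta_i$ with $\alpha_i,\beta_i\in\Ker\xi$ and $\beta_i\neq 0$.

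For the grading I take the finest grading on $\KK[V]$ making all $T_{aj},T_{bj}$ and all $f_i$ homogeneous. By Lemma~\ref{sfar} this grading is factorial and each $f_i$ is homogeneously irreducible. The condition $\gcd(f_i,\xi(f_i))=1$ holds because $\xi(f_i)$ is a monomial while $f_i$ is a homogeneously irreducible binomial not associated with a variable.

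Corollary~\ref{cc1}, applied to all these $\xi$, then gives $\FML(Y\times\KK)\subseteq\bigcap_j\Quot(\Ker\partial/\partial T_{aj})\cap\ldots=\KK$. This intersection is taken inside $\KK(V)$, and it is trivial because it runs over all coordinates of $V$.

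\textbf{Type II(b).} Here there are three distinguished indices $a,b,c$, and the monomials $T_a^{l_a}$, $T_b^{l_b}$ are squares with some exponent $2$. I would realize $Y$ as a chain whose first step is a bisuspension $Z_1=\Bisusp_{n_a,n_b}(\KK^{n_c},\mu T_c^{l_c},2,\ldots,2,\ldots)$; the coefficient $\mu$ is absorbed by rescaling. Every further index $i$ then enters through a suspension with $l_{i1}=1$ and $f_i$ a linear combination of $T_a^{l_a}$ and $T_c^{l_c}$.

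For the first step, Proposition~\ref{bsp} with $\delta=\partial/\partial T_{cj}$ gives $\FML(Z_1\times\KK)\subseteq\Quot(\Ker\partial/\partial T_{cj})$. Intersecting over $j$, and over the analogous choices with the roles of $a,b,c$ permuted among the square-type indices where possible, should shrink this to $\KK$.

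The later suspension steps I would handle in the manner of the proof of Theorem~\ref{mtt}:
\begin{itemize}
\item use Lemma~\ref{lmlsunov} (and Lemma~\ref{vlmlsu} for the bisuspension step) to bound $\FML$ of the whole cylinder by $\FML$ of an intermediate cylinder adjoined with the new $y$-variables;
\item then use a different ordering of the chain, so that each suspension variable is eliminated by some other LND.
\end{itemize}
I expect this case to be the main obstacle. Theorem~\ref{mtt} is stated only for a plain suspension base, so the bisuspension step must be combined with the later suspensions by hand. One also has to check that the $\delta_\pm$-extensions and the iterated Bhatwadekar chain survive the passage to the further suspensions. If this combination fails, my fallback is to treat $T_i^{l_i}$ for $i\neq a,b,c$ as suspension coordinates over the bisuspension and redo Proposition~\ref{bsp} with $f$ replaced by the corresponding chain of functions. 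This is the same iterated argument with $h=(\beta+\rho^2)z_2^{q_2}\cdots z_n^{q_n}$ kept in the kernels at every step.
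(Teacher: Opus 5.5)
Your reduction to $\theta=1$ is correct, and so is your treatment of Type~I (condition~2) and of condition~3a); these coincide with the paper's case~(1). There $Y$ is a chain of suspensions over an affine space with functions $\lambda_iT_a^{l_a}+\mu_iT_b^{l_b}$ (one monomial replaced by a nonzero constant in Type~I), and Corollary~\ref{cc1} with the coordinate derivations gives $\FML(Y\times\KK)=\KK$; you verify its hypotheses via Lemma~\ref{sfar}. In case~3b) your bisuspension step is also fine. Proposition~\ref{bsp} with $\delta=\partial/\partial T_{cj}$ gives $\FML(Z_1\times\KK)\subseteq\KK(T_{cj'}\mid j'\neq j)$, and intersecting over $j$ already yields $\KK$. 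So 3b) with only the indices $a,b,c$ is settled.

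The genuine gap is 3b) with further indices $k\notin\{a,b,c\}$. There the bisuspension step plus Lemma~\ref{lmlsunov} only give $\FML(X)\subseteq\KK(T_{kj}\mid k\notin\{a,b,c\},\ j\neq j(k))$, and neither of your mechanisms removes these variables. Your fallback ends with lifts of the $\delta_{i1\pm}$ through the suspensions in the sense of Lemma~\ref{lprl}, and by that lemma every $T_{kj}$ with $j\neq j(k)$ lies in their kernels. Reordering the chain does not help either: each $T_k^{l_k}$ is determined by $T_a^{l_a},T_b^{l_b}$ but not by $T_c^{l_c}$ alone, so no suspension can be placed below the bisuspension.

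To eliminate $T_{kj}$ you must, as in the proof of Theorem~\ref{mtt}, run Proposition~\ref{tpp} for $\Susp(Z_{k-1},f_k,1,\ldots)$ with an LND $\xi$ on the bisuspension-based variety $Z_{k-1}$, namely the lift of $\delta_+$. This is exactly the step the paper covers by citing Proposition~\ref{bsp} together with Theorem~\ref{mtt}, and it is not automatic.

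\begin{itemize}
\item Write $T_a^{l_a}=P^2$, $T_b^{l_b}=Q^2$, normalized so that $P^2-Q^2=T_c^{l_c}$, and put $\gamma=P+Q$, $\rho=P-Q$ as in Section~\ref{secbis}. Then $f_k=\lambda P^2+\mu Q^2$ with $\lambda\mu(\lambda+\mu)\neq0$.
\item For $\delta_+$ built from $\partial/\partial T_{cj}$ we have $\rho\in\Ker\delta_+$ and $\gamma=T_c^{l_c}/\rho$, so $f_k=\tfrac{\lambda+\mu}{4}(\gamma^2+\rho^2)+\tfrac{\lambda-\mu}{2}\gamma\rho$.
\item As a polynomial in the local slice $T_{cj}$ over $\Quot(\Ker\delta_+)$, this has non-constant terms of degrees $2l_{cj}$ and $l_{cj}$ when $\lambda\neq\mu$. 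So it is not of the form $\alpha r^e+\beta$ demanded by Corollary~\ref{l7}.
\item Moreover, $f_k$ is a product of two linear forms in $P,Q$. Homogeneity of $\delta_+$ forces $\rho$ to be homogeneous, and then both linear forms are homogeneous. So no grading with homogeneous coordinates makes $\delta_\pm$ homogeneous and $f_k$ homogeneously irreducible at the same time. The route through Lemma~\ref{pl3} used in Proposition~\ref{tpp} is therefore unavailable.
\end{itemize}

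A proof of this case must supply two inputs directly at each such step:
\begin{itemize}
\item a nonzero $h\in\Ker\xi\cap(f_k,\xi(f_k))$ to feed Corollary~\ref{nt0n}. A resultant in $T_{cj}$, after clearing powers of $\rho$, does this, since $f_k$ is squarefree in $T_{cj}$.
\item a proof that $\xi(f_k)$ is a non-zero-divisor modulo $f_k$ in $\KK[Z_{k-1}]$, which is what the hypothesis of Lemma~\ref{pl3} reduces to here.
\end{itemize}
Your proposal flags this case as the obstacle but provides neither input, so 3b) is not proved.
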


By Proposition~\ref{EGSh}, the variety $Y$ is not rigid if and only if one of the following conditions holds

1) $Y$ can be included as $Y=Y_k$ in the following chain of varieties. The initial variety $Y_1=\KK^{n_0+n_1}$ is the affine space with coordinates $T_{01},\ldots, T_{0n_0}, T_{11},\ldots, T_{1n_1}$. For $j>1$ the variety $Y_j$ is defined iteratively as
$$
Y_j=\Susp(Y_{j-1}, f, 1, l_{j2},\ldots, l_{jn_j}),
$$
where $f=\lambda_j T_0^{l_0}+\mu_j T_1^{l_1}$, for some $\lambda_j, \mu_j\in \KK\setminus \{0\}$. 

2) $Y$ can be included as $Y=Z_{k-1}$ in the following chain of varieties. The initial variety $Z_1=\KK^{n_1}$ is the affine space with coordinates $T_{31},\ldots, T_{3n_3}$. The variety $Z_2$ is defined by 
$$
Z_2=\Bisusp_{n_1,n_2}(Z_1,T_3^{l_3},2, 2p_2,\ldots, 2p_{n_1}, 2, 2q_2,\ldots, 2q_{n_2}).
$$
For $j\geq 2$ the variety $Z_{j+1}$ is defined iteratively as
$$
Z_{j+1}=\Susp(Z_{j}, f, 1, l_{j2},\ldots, l_{jn_j}),
$$
where $f=\lambda_j T_0^{l_0}+\mu_j T_1^{l_1}$, for some $\lambda_j, \mu_j\in \KK\setminus\{0\}$. 

Applying Theorem~\ref{mtt} in case (1) and Proposition~\ref{bsp} and Theorem~\ref{mtt} in case (2) we prove the assertion of the theorem. 
\begin{cor}
    In conditions of Theorem~\ref{mlml} we have $\ML(X)=\KK$.
\end{cor}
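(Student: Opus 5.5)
The corollary follows from Theorem~\ref{mlml} together with the comparison between $\ML$ and $\FML$ recalled in Section~\ref{fmlgf}. The plan has three steps.

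First, Theorem~\ref{mlml} gives that $X$ is generically flexible, i.e. $\SAut(X)$ acts on $X$ with an open orbit. By \cite[Proposition~5.1]{AFKKZ}, quoted in Section~\ref{fmlgf}, this is equivalent to triviality of the field Makar-Limanov invariant, so $\FML(X)=\KK$.

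Second, we pass from $\FML$ to $\ML$. For every $D\in\LND(X)$ we have $\Ker D\subseteq \Quot(\Ker D)$. Intersecting over all $D$ and using the description $\FML(X)=\bigcap_{D}\Quot(\Ker D)$ gives
$$\ML(X)=\bigcap_{D\in\LND(X)}\Ker D\subseteq \KK[X]\cap\FML(X)=\KK[X]\cap\KK=\KK.$$
The reverse inclusion $\KK\subseteq\ML(X)$ is trivial, since constants lie in every kernel. Hence $\ML(X)=\KK$. Equivalently, as noted in the introduction, $\FML(X)=\KK$ implies $\ML(X)=\KK$ because $\ML(X)=\KK[X]^{\SAut(X)}$ is contained in $\KK(X)^{\SAut(X)}$.

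Third, there is no genuine obstacle once Theorem~\ref{mlml} is granted; all the difficulty lies in that theorem, which reduces to Theorem~\ref{mtt} and Proposition~\ref{bsp} via the chain decompositions. The only point to check is that the quoted equivalence from \cite{AFKKZ} applies. It does, because $X$ is an irreducible affine variety: it is a trinomial variety, and $R(A)$ is an integral domain.
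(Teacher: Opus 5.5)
Your proposal is correct and matches the paper's (implicit) argument: Theorem~\ref{mlml} gives $\FML(X)=\KK$, and then $\ML(X)\subseteq \KK[X]\cap\FML(X)=\KK$, exactly as noted in the introduction. The only difference is that the paper's proof of Theorem~\ref{mlml}, via Theorem~\ref{mtt} and Proposition~\ref{bsp}, already yields $\FML(X)=\KK$ directly, so your detour through \cite[Proposition~5.1]{AFKKZ} is harmless but not needed.
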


\begin{re}
    In a forthcoming paper the second author proves that in conditions of Theorem~\ref{mlml} the trinomial variety $X(A)$ is flexible.
\end{re}

\section{Concluding remarks and questions}
\label{lastsec}

In Section~\ref{secvar} we give an example of an affine variety $X$ of dimension $n$ such that $\Aut(X)$ contains maximal tori of dimensions $1,2,\ldots, n-1$. It is natural to investigate possible sets of dimensions of maximal tori in the automorphism group of an affine variety. It is known that a torus of dimension greater than $n$ can not act on an $n$-dimensional affine space effectively. Also it is obvious that if $\Aut(X)$ admits a maximal torus of dimension zero, it does not admit any other maximal torus. Thus, we obtain the following question.
\begin{q}\label{q1}
    Which subsets of the set $\{1,2,\ldots, n\}$ can be sets of dimensions of maximal tori in automorphism group of $n$-dimensional affine space.
\end{q}
It is proved in~\cite{AG} that a rigid variety admits a unique maximal torus. Using this it is not difficult to construct an $n$-dimensional variety with a unique maximal torus of dimension $k$ for each $0\leq k\leq n$. 
\begin{prop}\label{mns}
    Consider the subvariety $X$ in $\KK^{2n}$ cut by 
    $$
    \begin{cases}
        x_1^3+y_1^2=0;\\
        \ldots\\
        x_k^3+y_k^2=0;\\
        x_{k+1}^3+y_{k+1}^2=1;\\
        \ldots\\
        x_n^3+y_n^2=1.
    \end{cases}
    $$
    Then $X$ is rigid with a unique maximal torus of dimension $k$. 
\end{prop}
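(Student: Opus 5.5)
The variety $X$ is the product $X=\prod_{i=1}^n C_i$. For $i\le k$ the factor is the cuspidal cubic $C_i=\{x_i^3+y_i^2=0\}$; for $i>k$ it is the elliptic-type affine curve $C_i=\{x_i^3+y_i^2=1\}$. The plan has three steps: show $X$ is rigid, exhibit a $k$-dimensional torus, and then use the result of~\cite{AG} that a rigid variety has a unique maximal torus, so that only its dimension needs to be computed.

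\textbf{Rigidity.} Each $C_i$ is a rigid curve. An affine curve with a nontrivial $\BG_a$-action is isomorphic to $\KK$. The cuspidal cubic is singular, so it is not $\KK$. The curve $x^3+y^2=1$ is smooth of genus $1$ with one point at infinity, so it is not rational. For the product I will use the standard fact that a product of rigid affine varieties is rigid. If this needs justification, take a nonzero LND $\partial$ on $\KK[X]=\bigotimes_i\KK[C_i]$ and a general orbit of the corresponding $\BG_a$-action; this orbit is an image of $\KK$ in $\prod C_i$. Projecting to a factor $C_i$, the image of $\KK$ is either a point or a dense rational curve. It cannot be dense when $C_i$ is non-rational. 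For the cuspidal factors the image of $\KK$ may be dense, since the normalization of $C_i$ is $\KK$, so a more careful argument is needed there. I would use the natural gradings: $\KK[C_i]$ for $i\le k$ is graded by $\deg x_i=2$, $\deg y_i=3$, and $\partial$ can be decomposed into homogeneous parts in the spirit of the lemma from~\cite{Re}. I expect this to show that a nonzero LND induces a nonzero LND on some $C_i$, which is impossible. Rigidity of products is also known in the literature (for example, from the ML-invariant of a product with rigid factors), and I would cite that if it is available. This is the step I expect to be the main obstacle.

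\textbf{The torus.} For each $i\le k$ there is a $\KK^\times$-action $t\cdot(x_i,y_i)=(t^2x_i,t^3y_i)$ on $C_i$. Acting factorwise gives a faithful action of $(\KK^\times)^k$ on $X$, so $\Aut(X)$ contains a torus of dimension $k$. By~\cite{AG} the maximal torus $\mathbb{T}$ of $\Aut(X)$ is unique, so it contains this one and $\dim\mathbb{T}\ge k$.

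\textbf{Upper bound $\dim\mathbb{T}\le k$.} Since $\mathbb{T}$ is unique, it is normal in $\Aut(X)$. I would like to show that $\mathbb{T}$ preserves the product decomposition, or at least the projections to the factors. The plan is to use that the factors are curves of two distinct types and that the singular locus of $X$ is the union of the loci where some cuspidal coordinate vanishes, $\bigcup_{i\le k}\{x_i=y_i=0\}$. The connected group $\mathbb{T}$ preserves each irreducible component of this locus. From this I expect to deduce that $\mathbb{T}$ acts on each factor separately. A torus acting effectively on a curve has dimension at most $1$. The curves $C_i$ for $i>k$ admit no $\KK^\times$-action, since they are not rational. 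Therefore $\dim\mathbb{T}\le k$.

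A cleaner alternative for the upper bound is to compare with a maximal torus of $\Aut(X)$ acting on the normalization $\KK^k\times\prod_{i>k}C_i$. A torus action on $\prod_{i>k}C_i$ is trivial, because this product carries no nonconstant $\KK^\times$-orbits: every orbit closure would be a rational curve, and the product contains no rational curves. So the torus lies in the automorphisms of the $\KK^k$-part that preserve the cusp structure, which has dimension at most $k$.
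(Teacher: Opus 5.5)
Your outline is the same as the paper's: rigidity, the explicit $k$-torus on the cuspidal coordinates, uniqueness of the maximal torus from \cite{AG}, and maximality. Your normalization argument for $\dim\mathbb{T}\le k$ is sound and is a reasonable way to fill in the paper's ``easy to see'': the units of the normalization are constants, so each $t_i$ becomes a $\mathbb{T}$-semi-invariant.

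The genuine gap is rigidity, which the paper obtains from the AB-theorems. You rest it on ``the standard fact that a product of rigid affine varieties is rigid''. You cannot quote this in that generality without a precise reference. Your fallback also does not work as written. You correctly observe that a general $\BG_a$-orbit may dominate a cuspidal factor. But knowing that the extreme homogeneous components of $D$ are locally nilpotent does not by itself produce a nonzero LND on a single factor $C_i$. So the cuspidal factors are not handled. Some use of non-normality is unavoidable here: in $\KK[s]$ the pair $(-s^2,s^3)$ satisfies $x^3+y^2=0$ and is moved by $\partial/\partial s$.

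The missing idea is the one you already use for the torus. Put $x_i=-t_i^2$, $y_i=t_i^3$. The normalization of $\KK[X]$ is $\widetilde B=\KK[t_1,\ldots,t_k]\otimes\KK[\prod_{j>k}C_j]$.

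A derivation $D\in\LND(\KK[X])$ extends to a derivation $\widetilde D$ of $\widetilde B$ by Seidenberg's theorem. By Vasconcelos' theorem, $\widetilde D$ is again locally nilpotent.

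The conductor $\mathfrak c=\{b\in\widetilde B\mid b\widetilde B\subseteq\KK[X]\}$ is $\widetilde D$-stable, because $\widetilde D(c)b=D(cb)-c\widetilde D(b)$ for $c\in\mathfrak c$. Here $\mathfrak c=t_1^2\cdots t_k^2\,\widetilde B$. Hence $f=t_1^2\cdots t_k^2$ divides $\widetilde D(f)$, which for an LND forces $\widetilde D(f)=0$. Since $\Ker\widetilde D$ is factorially closed, $\widetilde D(t_i)=0$ for all $i\le k$. Geometrically, the lifted action preserves each hyperplane $\{t_i=0\}$ lying over the non-normal locus.

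For $j>k$, the relation $x_j^3+y_j^2=1$ puts a nonzero constant into $\Ker\widetilde D$. The AB-theorem (Mason--Stothers in $\Quot(\Ker\widetilde D)[r]$, with $r$ a local slice) then gives $x_j,y_j\in\Ker\widetilde D$. Thus $\widetilde D=0$, so $D=0$ and $X$ is rigid. With this step in place, the rest of your proposal goes through.
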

\begin{proof}
    AB-theorems imply rigidity of $X$; see~\cite[Theorems~2.48 and~2.50]{F}. It is easy to see that the torus acting by
    $$
    (t_1,\ldots,t_k)\cdot(x_1,y_1,\ldots, x_n, y_n)=
    (t_1^2x_1,t_1^3y_1,\ldots, t_k^2x_k, t_k^3y_k, x_{k+1},y_{k+1},\ldots,x_n,y_n)
    $$
    is maximal.
\end{proof}
\begin{re}
    The variety in Proposition~\ref{mns} is not normal. One can obtain a normal example for $k<n$ using a trinomial equation with sufficiently large exponents instead of the first $k$ equations. For $k=n$ the affine space is a natural candidate for such an example, but it is an open question whether it admits maximal tori of dimensions other than $n$. 
\end{re} 

Using similar ideas we can construct an $n$-dimensional variety with maximal tori of dimensions $1,2,\ldots, k$ for any $k<n$. 

\begin{prop}\label{mnsmns}
Let $Y$ be a $(k+1)$-dimensional variety from Theorem~\ref{raztor} with maximal tori of dimensions $1,2,\ldots, k$ in $\Aut(Y)$. Consider the subvariety $Z$ in $\KK^{2(n-k-1)}$ cut by 
    $$
    \begin{cases}
        x_1^3+y_1^2=1;\\
        \ldots\\
        x_{n-k-1}^3+y_{n-k-1}^2=1.
    \end{cases}
    $$
    Then the set of dimensions of maximal tori in $\Aut(X)$, where $X=Y\times Z$, is $\{1,2,\ldots, k\}$. 
\end{prop}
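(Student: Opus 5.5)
Our plan is to show that every maximal torus of $\Aut(X)$ preserves the product decomposition and acts trivially (up to finite group) on the $Z$-factor, so that maximal tori of $\Aut(X)$ correspond bijectively, up to conjugacy, to maximal tori of $\Aut(Y)$. First, $Z$ is rigid: by the AB-theorems, as in Proposition~\ref{mns}, each equation $x_i^3+y_i^2=1$ defines a rigid curve, and a product of such curves is rigid. Moreover $\Aut(Z)$ contains no nontrivial torus. Indeed, a one-parameter torus on $Z$ would induce a $\mathbb{Z}$-grading of $\KK[Z]$. The curves $x^3+y^2=1$ have no $\KK^\times$-action, since $\{x^3+y^2=1\}$ is an elliptic curve minus a point. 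By the result of~\cite{AG}, a rigid variety has a unique maximal torus, and here that torus is trivial.

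Next we show that $\Aut(X)$ respects the factors, at least as far as tori are concerned. Since $Z$ is rigid, every LND of $\KK[X]=\KK[Y]\otimes\KK[Z]$ kills $\KK[Z]$. To see this, pick a point $z\in Z$: restricting to $\KK[Y]\otimes\KK[Z]$ over the function field of $Y$ would give an LND of $\KK(Y)\otimes\KK[Z]$, which is still rigid because the AB-theorem arguments are field-independent. Hence $\KK[Z]\subseteq\ML(X)$. Let $\mathbb{T}\subseteq\Aut(X)$ be a maximal torus. We need $\mathbb{T}$ to preserve $\KK[Z]$. The cleanest route is to pass to the $\KK^\times$-actions: a $\mathbb{T}$-action gives a grading, and its homogeneous components of elements of $\KK[Z]$ are again killed by all LNDs, since conjugating LNDs by $\mathbb{T}$ gives LNDs. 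Hence $\ML(X)$ is $\mathbb{T}$-stable. Then we argue that $\ML(X)=\ML(Y)\otimes\KK[Z]$, which is plausible since $Y$ is a Danielewski variety with known $\ML$, and that the $\mathbb{T}$-action on the factor $\KK[Z]$ must be trivial because $Z$ has no torus. This last point is where we expect real difficulty: a torus acting on $\ML(X)$ need not a priori preserve the subalgebra $\KK[Z]$ rather than mixing it with $\ML(Y)$. We would first try to characterize $\KK[Z]$ intrinsically inside $\ML(X)$, for instance as the subalgebra generated by elements $F$ such that $\KK[X]/(F-c)$ behaves in a certain way. Alternatively, we would use that the $\mathbb{T}$-weights of elements of $\KK[Z]$ must vanish, because $Z$ has only constant invertible functions and no grading compatible with the equations $x_i^3+y_i^2=1$. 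Fallback: use the Cartan-type argument that $\mathbb{T}$ acts on $\mathrm{Spec}\,\ML(X)$, and project onto $Z$ via the morphism $X\to Z$, defined as the quotient by all $\mathbb{G}_a$-actions on the $Z$-part.

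Granting that $\mathbb{T}$ fixes $\KK[Z]$ pointwise, $\mathbb{T}$ acts on $X$ over $Z$. Restricting to a general fibre $Y\times\{z\}$, a connected group fixing $\KK[Z]$ acts by $\KK[Z]$-algebra automorphisms of $\KK[Y]\otimes\KK[Z]$. Using the explicit description $\Aut(Y)\cong\mathbb{G}\rightthreetimes\mathbb{U}(\delta)$ from~\cite[Theorem~7.15]{Ga}, this reduces to a torus in $\Aut(Y)$ with parameters, and conjugation yields a torus of $\Aut(Y)$. Maximality of $\mathbb{T}$ then shows $\dim\mathbb{T}$ is the dimension of a maximal torus of $\Aut(Y)$. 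Conversely, a maximal torus $T$ of $\Aut(Y)$ acting on the first factor is maximal in $\Aut(X)$, by the same argument as Lemma~\ref{lemum}: any bigger torus acts over $Z$ and restricts to a torus containing $T$. Combined with Theorem~\ref{raztor} applied to $Y$, the set of dimensions is exactly $\{1,\ldots,k\}$.
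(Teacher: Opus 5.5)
There is a genuine gap at exactly the point you flag and then simply grant. You never show that a torus $\mathbb{T}\subseteq\Aut(X)$ preserves $\KK[Z]$ and acts trivially on it. The options you list are not carried out: an intrinsic characterization of $\KK[Z]$ inside $\ML(X)$, vanishing of weights, or a ``Cartan-type'' projection.

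The missing observation is that $\ML(Y)=\KK$. Here $Y$ is not a Danielewski variety but the cylinder of Theorem~\ref{raztor}: $Y=\Susp(\KK,f,1,k_1,\ldots,k_m)\times\KK$, which by Corollary~\ref{nc1} is isomorphic to $\{xy_1\cdots y_m=f(z)\}\times\KK$. For this variety, the extensions $\overline{\xi}_j$ of $\xi=\partial/\partial z$ from Lemma~\ref{intker} already have kernels intersecting in $\KK$.

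Extending LNDs of $Y$ by zero on $\KK[Z]$ then gives $\ML(X)\subseteq\ML(Y)\otimes\KK[Z]=\KK[Z]$. Together with $\KK[Z]\subseteq\ML(X)$, this yields $\ML(X)=\KK[Z]$. Since $\ML(X)$ is preserved by every automorphism, $\KK[Z]$ is stable under any torus. The induced torus action on $Z$ is trivial, because $Z$ is a product of affine genus-one curves, and these receive no nonconstant maps from $\KK^\times$. This is precisely the paper's argument. Your worry about $\mathbb{T}$ ``mixing'' $\KK[Z]$ with $\ML(Y)$ arises only because you took $\ML(Y)$ to be nontrivial.

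The same misidentification breaks your later step. \cite[Theorem~7.15]{Ga} describes the automorphism group of a Danielewski variety, not of its cylinder. $\Aut(Y)$ cannot be of the form $\mathbb{G}\rightthreetimes\mathbb{U}(\delta)$ with all maximal tori conjugate, since by hypothesis it has maximal tori of different dimensions. That step is unnecessary anyway. Once $\mathbb{T}$ acts over $Z$, restrict it to a general fibre $Y\times\{z\}$. The action there is effective, because an effective torus action has trivial generic stabilizer. This embeds $\mathbb{T}$ into $\Aut(Y)$, giving $\dim\mathbb{T}\le k$, and forces $\mathbb{T}=T$ whenever $\mathbb{T}$ contains a maximal torus $T$ of $\Aut(Y)$.

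Finally, your derivation of $\KK[Z]\subseteq\ML(X)$ ``over the function field of $Y$'' is not valid. An LND of $\KK[X]$ need not be $\KK[Y]$-linear, and rigidity of $Z$ alone does not in general force $\KK[Z]$ into the Makar-Limanov invariant of a product. Instead, apply the AB-theorem directly to $x_i^3+y_i^2=1\in\Ker D$ for an arbitrary $D\in\LND(\KK[X])$.
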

\begin{proof}
    AB-theorems imply $\ML(X)=\KK[Z]$. Therefore, $\KK[Z]$ is invariant under any action of a torus on $X$. Since $Z$ has a trivial maximal torus, all functions in $\KK[Z]$ are $T$-invariant for any subtorus $T$ of $\Aut(X)$. That is $\dim T\leq k$. Maximal tori in $\Aut(Y)$ give maximal tori in $\Aut(X)$ of the same dimensions. 
\end{proof}

Taking the above into account we can clarify Question~\ref{q1} and ask the following particular cases.

\begin{q}\label{q2}
    Is it possible that $\Aut(X)$ contains maximal tori of different dimensions but it does not admit a maximal torus of dimension one? 
\end{q}

\begin{q}\label{q3}
    Is it possible that the set $S$ of dimensions of maximal tori in $\Aut(X)$ has gaps, i.e., $a,c\in S$ and $b\notin S$ for some $a<b<c$?
\end{q}

A fundamental result by Białynicki-Birula~\cite{BB} implies that the $n$-dimensional affine space does not contain an $(n-1)$-dimensional maximal torus. Therefore, the negative answer to Question~\ref{q3} will imply that $\Aut(\KK^n)$ does not admit any maximal torus of dimensions other that $n$. (The question whether it is true is known as the Linearization Problem for tori.) This implies that non-toric varieties can not be counterexamples to the Zariski Cancellation Problem; see Lemma~\ref{lemum}. Since a toric variety has a unique structure of a toric variety, these varieties are not counterexamples to the Zariski Cancellation Problem, too. Thus, negative answer to Question~\ref{q3} will imply the affirmative answer to the Zariski Cancellation Problem. 

One can use the above arguments to prove that a particular variety is not a counterexample to the Zariski Cancellation Problem. Indeed, any variety $X$ with a torus action of complexity one (i.e. admitting an effective action of a torus of dimension $\dim X-1$) is not a counter-example. Indeed, it is either toric or admits a maximal torus of dimension $\dim X-1$, which by Lemma~\ref{lemum} contradicts Białynicki-Birula's result. If a variety $Y$ is stably isomorphic to 
a variety $X\ncong\KK^n$ with a torus action of complexity one, then $Y$  is not a counterexample either. In particular, all modifications of $m$-suspensions $\Susp(\KK^n, z_1^{s_1}\ldots z_p^{s_p}-1,k_1,\ldots, k_m)$ are not counterexamples to the Zariski Cancellation Problem.

\begin{ex}
    The variety 
    $$
Y=\{xy^2=zw^2+yw+1\}
$$ 
is not a counterexample to the Zariski Cancellation Problem. Indeed, 
$$Y\times\KK\cong \{xy^2=zw^2+1\}\times\KK$$ 
and has a maximal torus of dimension $3$; see Example~\ref{e6}. 
\end{ex}

For Danielewski varieties $\{xy_1^{k_1}\ldots y_m^{k_m}=z^2-1\}$ we proved in Sections~\ref{secconj} and~\ref{secvar} that there exist infinitely many non-conjugate tori of dimension $n-1$ and maximal tori of different dimensions. It seems that one can use the isolated irreducible semi-invariants technique to check whether there are infinitely many non-conjugate tori of other possible dimensions. A natural question is whether this is a general fact.
\begin{q}\label{q4}
Are there implications between the following statements:
\begin{enumerate}
    \item the variety $X$ is non-cancellative;
    \item the group $\Aut(X\times\KK)$ admits infinitely many non-conjugate maximal tori;
    \item the group $\Aut(X\times\KK)$ admits maximal tori of different dimensions.
\end{enumerate}
\end{q}

Section~\ref{cyltrv} is devoted to proving generic flexibility of cylinders over trinomial varieties $X(A)$ with non-rigid initial variety $Y(A)$. It is an intriguing question to compute Makar-Limanov invariant and to obtain criteria of flexibility and generic flexibility for arbitrary trinomial varieties (at least for trinomial hypersurfaces). Some partial results on this topic can be found in~\cite{G} and  \cite{IV}. But for some rather simple varieties we do not know the Makar-Limanov invariant. 

\begin{ex}
    Let $X=\{xyz^2=w^2-1\}\subseteq \mathbb{K}^4$. It is easy to write down two LNDs $\delta_1$ and $\delta_2$ on $X$ such that $\Ker \delta_1\cap\Ker\delta_2=\KK[z]$. We do not know any LND on $X$ such that $z$ is not in the kernel. But it is an open question whether $z\in \ML(X)$. Note that by Corollary~\ref{jal}, $X\times \KK$ is generically flexible, and hence, $\ML(X\times \KK)=\KK$. 
\end{ex}

One more interesting problem in this field is to describe $\ML(Y\times\KK^\theta)$ for rigid $Y$ and $\theta>1$. As we have mentioned in Section~\ref{cyltrv}, when $\theta=1$, then the Makar-Limanov invariant equals $\KK[Y]$. But in~\cite{D} Dubouloz introduced examples of non-isomorphic rigid varieties with isomorphic $\mathbb{K}^2$-cylinders. So, it is a natural question, whether this is possible for trinomial varieties. Therefore, it is not clear, whether the stable Makar-Limanov invariant, i.e. $\mathrm{SML}(X)=\bigcap_{n=1}^\infty \ML(X\times \KK^n)$,  of a rigid trinomial variety $X$ is isomorphic to $\KK[X]$. Note that for some trinomial varieties $X$ this is the case.
\begin{ex}
Let $X$ be the trinomial hypersurface 
    $\{x_1^2\ldots x_l^2+y_1^3\ldots y_m^3=1\}.$
Then by $AB$-Theorem~\cite[Theorem~2.50]{F}, $x_1\ldots x_l$ and $y_1\ldots y_m$ are in $\ML(X\times\KK^n)$. Thus, all $x_i$ and $y_j$ are in $\ML(X\times\KK^n)$. So, $\ML(X\times \KK^n)=\KK[X]$ for all $n$. 
\end{ex}

Let us ask the following question.

\begin{q}
    For which trinomial varieties $X$ is it true that $\mathrm{SML}(X)=\KK[X]$?
\end{q}


\begin{thebibliography}{}
\fontsize{10pt}{10pt}\selectfont

\bibitem{Ar} I. Arzhantsev. {\it On rigidity of factorial trinomial hypersurfaces}. International Journal of Algebra and Computation. 26 (2016), no. 5, 1061-1070


\bibitem{AFKKZ}
I. Arzhantsev, H. Flenner, S. Kaliman, F. Kutzschebauch, and M. Zaidenberg. {\it Flexible varieties and automorphism groups}. Duke Math. J. 162 (2013), no. 4, 767-823

\bibitem{AG} I. Arzhantsev and S. Gaifullin. {\it  The automorphism group of a rigid affine variety}. Math. Nachr. 290 (2017), no. 5-6, 662–671

%

\bibitem{AKZ}
I. Arzhantsev, K. Kuyumzhiyan, and M. Zaidenberg. {\it Flag varieties, toric varieties, and
suspensions: three instances of infinite transitivity}. Sb. Math. 203 (2012), no.~7, 923-949

\bibitem{BB} A. Białynicki-Birula. {\it Linearity of the action of a maximal torus on affine space}. Bull. Acad. Polon. Sci. Sér. Sci. Math. Astronom. Phys. 14 (1966), 569–574

\bibitem{BGSh} I. Boldyrev, S. Gaifullin, and A. Shafarevich. {\it Modified Derksen invariant}. arXiv:2312.08421
%
\bibitem{BG} V. Borovik and S. Gaifullin. {\it Isolated torus invariants and automorphism groups of rigid varieties}. J. of Algebra.  666 (2025), 821-839



\bibitem{Da} W. Danielewski. {\it On a cancellation problem and automorphism groups of affine algebraic varieties}.
Preprint, Warsaw, 1989.

\bibitem{DGO} A.K. Dutta, N. Gupta, and N. Onoda. {\it Some patching results on algebras over two-dimensional factorial domains}. J. Pure Appl. Algebra. 216 (2012), 1667– 1679

\bibitem{Du} A. Dubouloz. {\it Additive group actions on Danielewski varieties and the cancellation problem}. Math. Z.
255 (2007), no. 1, 77–93

\bibitem{D} A. Dubouloz, \emph{The cylinder over the Koras-Russell cubic threefold has a trivial Makar-Limanov invariant}. Transform.\ Groups. 14 (2009), 531-539 

\bibitem{Du2} A. Dubouloz. {\it Rigid affine surfaces with isomorphic $\mathbb{A}^2$-cylinders}.  arXiv:1507.05802, 2015


\bibitem{DF} A. Dubouloz and D. R. Finston. {\it On exotic affine 3-spheres}. J. Algebraic Geom. 23 (2014), no. 3, 445-469

\bibitem{DG} A. Dubouloz and P.Ghosh. {\it Algebraic families of higher dimensional $\mathbb{A}^1$-contractible affine varieties non-isomorphic to affine spaces}. arXiv:2501.09613 



\bibitem{Ess} A. van den Essen. Polynomial Automorphisms and the Jacobian Conjecture. Birkhauser, Boston, 2000 


\bibitem{E-G-S}
P. Evdokimova, S. Gaifullin and A. Shafarevich. {\it Rigid trinomial varieties}, arXiv:2307.06672, 2023

\bibitem{Fi} K.-H. Fieseler. {\it On complex affine surfaces with $\mathbb{C}^{+}$-action.} Commentarii Mathematici Helvetici. 69 (1994), 5-27

\bibitem{FKZ} H. Flenner, Sh. Kaliman, and M. Zaidenberg. {\it Cancellation for Surfaces Revisited}. Memoirs of the American Mathematical Society 278, 2022


\bibitem{F} G. Freudenburg.  Algebraic Theory of Locally Nilpotent Derivations, Second Edition. Springer-Verlag, Berlin, Heidelberg, 2017


\bibitem{Ga} S. Gaifullin. {\it Automorphisms of Danielewski varieties}. J. of Algebra.  573 (2021), 364-392

\bibitem{G} S. Gaifullin. {\it On Rigidity of Trinomial Hypersurfaces and Factorial Trinomial Varieties}. arXiv:1902.06136

\bibitem{Ga2} S. Gaifullin. {\it Generically flexible affine varieties with invariant divisors}. arxiv:2507.14745

\bibitem{GSh} S. Gaifullin and A. Shafarevich.
{\it Modified Makar-Limanov and Derksen invariants}.
J. Pure and Appl. Algebra,
228 
(2024), no. 6, 107616

\bibitem{Gu2} N. Gupta. {\it On the cancellation problem for the aﬃne space $\mathbb{A}^3$ in characteristic~$p$}.
Inventiones Mathamaticae. 195 (2014) 279–288

\bibitem{Gu} N. Gupta. {\it On Zariski’s Cancellation Problem in positive characteristic}. Adv. Math.  264 (2014), 296–307

\bibitem{GS} N. Gupta and S. Sen. {\it On Double Danielewski Surfaces and the Cancellation Problem}. J of Algebra. 533 (2019), 25-43

\bibitem{H-H}
J. Hausen and E. Herppich. \textit{Factorially graded rings of complexity one}. Torsors, $\acute{\mathrm{e}}$tale homotopy
and applications to rational points, 414–428, London Math. Soc. Lecture Note Ser., 405,
Cambridge Univ. Press, Cambridge, 2013.

\bibitem{H-W}
J. Hausen and M. Wrobel. \textit{Non-complete rational T-varieties of complexity one}. Math. Nachr. 290 (2017), no. 5-6, 815-826



\bibitem{IV} M. Ignatev and T. Vilkin. {\it On Flexibility of Trinomial Varieties}. Mediterr. J. Math. 23 (2026), 70

\bibitem{KML} S. Kaliman and L. Makar-Limanov. {\it On the Russell-Koras contractible threefolds}. J. Algebraic Geom.  6 (1997), 247–268

\bibitem{KZ}
Sh. Kaliman and M. Zaidenberg. {\it Affine modifications and affine hypersurfaces with a very transitive automorphism group}. Transform. Groups. 4 (1999), 53-95
%

\bibitem{KrZ} H. Kraft and M. Zaidenberg. {\it Automorphism groups of affine varieties and their Lie algebras}. arXiv:2403.12489



\bibitem{ML96} L. Makar-Limanov. \emph{On the hypersurface $x + x^2y + z^2 + t^3 = 0$  in $\mathbb{C}^4$ or a $\mathbb{C}^3$-like threefold which is not $\mathbb{C}^3$}.  Israel J. Math.  96 (1996), 419-429

\bibitem{ML01} L. Makar-Limanov. {\it On the group of automorphisms of a surface $x^ny = P(z)$}. Israel J. Math. 121
(2001), 113-123

\bibitem{MJ} L. Moser-Jauslin. \emph{Automorphism groups of Koras-Russell threefolds of the first kind}. Affine algebraic geometry, 261-270, CRM Proc. Lecture Notes, 54, Amer.
Math. Soc., Providence, RI, 2011

\bibitem{P} C. Petitjean.  \emph{Automorphism groups of Koras–Russell threefolds of the second kind}.  Beitr. Algebra Geom. 57 (2016), 599–605

\bibitem{Po} V. Popov. {Tori in the Cremona groups}. Izvestiya: Mathematics, 77 (2013), no. 4, 742–771

\bibitem{Re} R. Rentschler. {\it Operations du groupe additif sur le plane affine}. C. R. Acad. Sci. 267 (1968), 384-387


\end{thebibliography}
\end{document}